\documentclass[11pt,a4paper]{amsart}%
\usepackage{amsfonts,amsmath,amssymb}
\usepackage{hyperref}
\usepackage{amssymb,amsthm,amsxtra}
\usepackage[usenames]{color}
\usepackage{amscd}
\usepackage{amsthm}
\usepackage{amsfonts}
\usepackage{amssymb}
\usepackage{amsmath}
\usepackage{graphicx}
\usepackage{enumerate}%
\usepackage[all]{xy}
\usepackage{aurical}
\usepackage[T1]{fontenc}
\setcounter{tocdepth}{3}
\let\oldtocsubsection=\tocsubsection
\let\oldtocsubsubsection=\tocsubsubsection
\renewcommand{\tocsubsection}[2]{\hspace{1em}\oldtocsubsection{#1}{#2}}
\renewcommand{\tocsubsubsection}[2]{\hspace{2em}\oldtocsubsubsection{#1}{#2}}

\theoremstyle{plain}

\newtheorem*{theorem*}{Theorem}
\newtheorem{lemma}{Lemma}[subsection]

\newtheorem{theorem}[lemma]{Theorem}
\newtheorem{corollary}[lemma]{Corollary}

\newtheorem*{conjecture*}{Conjecture}

\newtheorem{thm}[lemma]{Theorem}
\newtheorem{prop}[lemma]{Proposition}
\newtheorem{lem}[lemma]{Lemma}
\newtheorem{cor}[lemma]{Corollary}

\newtheorem{introtheorem}{Theorem}

\newtheorem{introcor}[introtheorem]{Corollary}
\newtheorem{introthm}[introtheorem]{Theorem}

\theoremstyle{definition}

\newtheorem{definition}[lemma]{Definition}
\newtheorem{defn}[lemma]{Definition}

\newtheorem{notn}[lemma]{Notation}

\newtheorem{example}[lemma]{Example}

\newtheorem*{example*}{Example}

\theoremstyle{remark}

\newtheorem*{remark*}{Remark}

\newtheorem{remark}[lemma]{Remark}

\oddsidemargin=-0.2cm

\evensidemargin=-0.2cm
\baselineskip 18pt
\textwidth 15.6cm
\sloppy \theoremstyle{plain}

\newcommand{\ind}{\operatorname{ind}}

\newcommand{\ad}{\operatorname{ad}}

\newcommand{\Rep}{\operatorname{Rep}}
\newcommand{\Hom}{\operatorname{Hom}}

\newcommand{\Rad}{\operatorname{Rad}}
\newcommand{\oH}{\operatorname{H}}

\newcommand{\diag}{\operatorname{diag}}

\newcommand{\A}{\mathbb{A}}

\newcommand{\eps}{\varepsilon}

\newcommand{\length}{\operatorname{length}}

\renewcommand{\Im}{\operatorname{Im}}
\newcommand{\Ker}{\operatorname{Ker}}

\newcommand{\Z}{{\mathbb Z}}
\newcommand{\Q}{{\mathbb Q}}
\newcommand{\R}{{\mathbb R}}

\newcommand{\C}{{\mathbb C}}

\newcommand{\proofend}{\hfill$\Box$\smallskip}

\newcommand{\Exp}{\operatorname{Exp}}

\newcommand{\Id}{\operatorname{Id}}

\newcommand{\alp}{{\alpha}}

\newcommand{\gam}{{\gamma}}

\newcommand{\lam}{{\lambda}}

\newcommand{\Fre}{{Fr\'{e}chet \,}}

\newcommand{\bfG}{{\mathbf{G}}}
\newcommand{\bfL}{{\mathbf{L}}}

\newcommand{\bfN}{{\mathbf{N}}}
\newcommand{\bfU}{{\mathbf{U}}}

\newcommand{\cE}{{\mathcal{E}}}
\newcommand{\cF}{{\mathcal{F}}}

\newcommand{\cH}{{\mathcal{H}}}

\newcommand{\g}{{\mathfrak{g}}}

\newcommand{\fa}{{\mathfrak{a}}}
\newcommand{\fb}{{\mathfrak{b}}}

\newcommand{\fg}{{\mathfrak{g}}}

\newcommand{\fn}{{\mathfrak{n}}}
\newcommand{\fk}{{\mathfrak{k}}}
\newcommand{\fu}{{\mathfrak{u}}}
\newcommand{\fv}{{\mathfrak{v}}}
\newcommand{\fw}{{\mathfrak{w}}}
\newcommand{\fz}{{\mathfrak{z}}}

\newcommand{\cO}{{\mathcal{O}}}
\newcommand{\GL}{\operatorname{GL}}

\newcommand{\SL}{\operatorname{SL}}
\newcommand{\Sp}{\operatorname{Sp}}

\newcommand{\SO}{\operatorname{SO}}
\newcommand{\gl}{{\mathfrak{gl}}}

\newcommand{\Sc}{{\mathcal S}}
\newcommand{\Lie}{\operatorname{Lie}}

\newcommand{\fp}{\mathfrak{p}}
\newcommand{\fl}{\mathfrak{l}}
\newcommand{\fm}{\mathfrak{m}}

\newcommand{\fr}{\mathfrak{r}}
\newcommand{\fs}{\mathfrak{s}}

\newcommand{\cW}{\mathcal{W}}

\newcommand{\Dima}[1]{{{#1}}}
\newcommand{\DimaA}[1]{{{#1}}}
\newcommand{\DimaB}[1]{{{#1}}}
\newcommand{\DimaC}[1]{{{#1}}}
\newcommand{\DimaD}[1]{{{#1}}}
\newcommand{\DimaE}[1]{{{#1}}}
\newcommand{\DimaF}[1]{{{#1}}}
\newcommand{\DimaG}[1]{{{#1}}}
\newcommand{\DimaH}[1]{{{#1}}}
\newcommand{\DimaK}[1]{{{#1}}}
\newcommand{\DimaL}[1]{{{#1}}}
\newcommand{\DimaM}[1]{{{#1}}}
\newcommand{\DimaN}[1]{{{#1}}}
\newcommand{\DimaO}[1]{{{#1}}}
\newcommand{\DimaP}[1]{{{#1}}}
\newcommand{\DimaQ}[1]{{{#1}}}
\newcommand{\DimaR}[1]{{{#1}}}
\newcommand{\DimaS}[1]{{{#1}}}
\newcommand{\DimaT}[1]{{{#1}}}
\newcommand{\DimaU}[1]{{{#1}}}

\newcommand{\onto}{{\twoheadrightarrow}}

\newcommand{\into}{{\hookrightarrow}}

\newcommand{\Orb}{\mathcal{O}}
\newcommand{\WF}{\operatorname{WF}}
\newcommand{\WO}{\operatorname{WO}}
\newcommand{\WS}{\operatorname{WS}}
\newcommand{\QWS}{\operatorname{QWS}}
\newcommand{\QWO}{\operatorname{QWO}}

\renewcommand{\sl}{\mathfrak{sl}}
\newcommand{\F}{{F}}

\begin{document}

\author{Raul Gomez}
\address{Raul Gomez, UANL FCFM Av. Universidad, San Nicolas de los Garza, N.L., Mexico}
\email{raul.gomez.rgm@gmail.com}

\author{Dmitry Gourevitch}
\address{Dmitry Gourevitch,
Faculty of Mathematics and Computer Science,
Weizmann Institute of Science,
234 Herzl Street, Rehovot 7610001 Israel}
\email{dmitry.gourevitch@weizmann.ac.il}
\urladdr{\url{http://www.wisdom.weizmann.ac.il/~dimagur}}

\author{Siddhartha Sahi}
\address{Siddhartha Sahi, Department of Mathematics, Rutgers University, Hill Center -
Busch Campus, 110 Frelinghuysen Road Piscataway, NJ 08854-8019, USA}
\email{sahi@math.rugers.edu}

\date{\today}
\title{Whittaker supports for representations of reductive groups}

\keywords{Fourier coefficient,    wave-front set, oscillator representation, Heisenberg group, metaplectic group, admissible orbit, distinguished orbit, cuspidal representation, automorphic form.
2010 MS Classification:         20G05, 20G20, 20G25, 20G30, 20G35, 22E27, 22E46, 22E50, 22E55, 17B08.
}

\begin{abstract}
Let $F$ be either $\R$ or
a finite extension of $\Q_p$, and let $G$ be \DimaE{a finite central extension of} the group of $F$-points of a reductive group defined over  $F$. Also let  $\pi$ be  a smooth representation of $G$ (\Fre of moderate growth if $F=\R$). For each nilpotent orbit $\cO$ we consider a certain Whittaker quotient $\pi_{\cO}$ of $\pi$.
We define the Whittaker support $\WS(\pi)$ to be the set of maximal $\cO$ among those for which $\pi_{\cO}\neq 0$.

In this paper we prove that all $\cO\in\WS(\pi)$ are quasi-admissible nilpotent orbits, generalizing some of the results in \cite{Mog,JLS}. If $F$ is $p$-adic and $\pi$ is quasi-cuspidal then we show that all $\cO\in\WS(\pi)$  are $F$-distinguished, i.e. do not intersect the Lie algebra of any proper Levi subgroup of $G$ defined over $F$.

We also give an adaptation of our argument to automorphic representations, generalizing some results from \DimaA{\cite{GRS_Sp,Shen,JLS,Cai}} and confirming \DimaL{some conjectures of} \cite{Ginz}.

\DimaA{Our methods are a synergy of the methods of the above-mentioned papers, and of our preceding paper \cite{GGS}.}

\end{abstract}

\maketitle
\tableofcontents


%

\section{Introduction}

The study of Whittaker and degenerate Whittaker models for
representations of reductive groups over local fields evolved in
connection with the theory of automorphic forms (via their Fourier
coefficients), and has found important applications in both areas.
See for example \cite{Sh74,NPS73,Kos,Ka85,Ya86, WaJI,Ginz,Ji07,GRS_Book,Ginz2}.

Let $F$ be  either $\R$ or a finite extension of $\Q_p$, and let $G$ be \DimaE{a finite central extension of} the group of $F$-points of a connected reductive algebraic group defined over $F$. Let $\Rep^{\infty}(G)$ denote the category of smooth representations of $G$ (see \S \ref{subsec:ScInd} below).
Let $\fg$ denote the Lie algebra of $G$ and $\fg^*$ denote its dual space. \DimaG{To every coadjoint nilpotent orbit $\cO\subset \fg^*$ and every $\pi\in\Rep^{\infty}(G)$ we associate a certain generalized Whittaker quotient $\pi_{\cO}$ (see \S \ref{subsec:DefMod} below)}. Let $\WO(\pi)$ denote the set of all nilpotent orbits $\cO$ with $\pi_{\cO}\neq 0$ and $\WS(\pi)$ the set of maximal orbits in $\WO(\pi)$  with respect to the closure ordering.

We recall the notion of admissible nilpotent orbit. It has to do with splitting of a certain metapletic double cover of  the centralizer $G_{\varphi}$ for any $\varphi$ in the orbit.
\DimaA{We also define a \DimaB{  weaker} notion of a quasi-admissible orbit.}
\DimaC{See \S \ref{subsec:cov} below for both notions.}

For split $p$-adic groups, admissibility is also related to the notion of a special nilpotent orbit in the sense of Lusztig (see \S\ref{subsubsec:RelSpecOrb} below). \DimaB{In particular, for $p$-adic classical groups the two notions are equivalent (\cite{Nev}).}


\begin{introthm}[\S \ref{sec:PfAdm}]\label{thm:adm}
Let $\pi\in \Rep^{\infty}(G)$ and let $\cO\in \WS(\pi)$. Then
$\cO$ is a \DimaA{quasi-}admissible orbit.
\end{introthm}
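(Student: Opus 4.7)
The plan is to fix $\cO \in \WS(\pi)$ and a representative $\varphi \in \cO$, and then attach to $\varphi$ a neutral Whittaker pair $(h, \varphi)$ in the sense of \cite{GGS}, i.e.\ a rational semisimple element $h \in \fg$ with $\ad^*(h)\varphi = -2\varphi$, arising from a Jacobson--Morozov $\sl_2$-triple. Writing the $\ad h$-grading $\fg = \bigoplus_i \fg_i$, the form $\omega_\varphi(X,Y) := \varphi([X,Y])$ makes $\fg_1$ into a symplectic space, and the nilpotent subalgebra $\fg_{\geq 2}$ exponentiates to a unipotent subgroup $N$ endowed with the character $\psi_\varphi := \psi \circ \varphi$. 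The centralizer $L := G_\varphi$ acts on $(\fg_1, \omega_\varphi)$ by symplectic transformations, yielding a homomorphism $L \to \Sp(\fg_1)$; pulling back the metaplectic double cover produces a double cover $\widetilde L \to L$, and quasi-admissibility of $\cO$ is precisely the existence of the appropriate splitting (or genuine character extension) of $\widetilde L \to L$.

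The next step is to give a concrete model for $\pi_\cO$. Combining $\fg_1$ with $\fg_{\geq 2}/\ker\psi_\varphi$ produces a Heisenberg group $H_\varphi$ whose center carries the character $\psi$, and $\pi_\cO$ is naturally a smooth module over $H_\varphi \rtimes \widetilde L$ in which $H_\varphi$ acts with central character $\psi$. By the Stone--von Neumann theorem, any such module factors as $\pi_\cO \simeq \omega_\psi \hot \tau$, where $\omega_\psi$ is the oscillator representation of $H_\varphi$, and $\tau$ is a smooth representation of $\widetilde L$. Crucially, the $L$-action on $\omega_\psi$ only extends to the metaplectic cover, so the assumption $\pi_\cO \neq 0$ forces the existence of a non-zero smooth representation $\tau$ of $\widetilde L$ on which the kernel of $\widetilde L \to L$ acts genuinely.

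The heart of the proof, and its main obstacle, is to show that this non-vanishing of $\tau$ forces the splitting required for quasi-admissibility. I would argue by contraposition: if quasi-admissibility were to fail, then using the deformation techniques from \cite{GGS} applied to the pair $(h, \varphi)$, one would construct an element $X \in \fg_1$ (extracted from the non-triviality data of $\widetilde L$) such that the modified Whittaker pair $(h, \varphi + \alpha_X)$ has associated nilpotent orbit $\cO'$ strictly dominating $\cO$ in the closure order. Pairing the corresponding matrix coefficient of $\omega_\psi$ against $\tau$ would then produce a non-zero element in $\pi_{\cO'}$, contradicting the maximality of $\cO$ in $\WO(\pi)$.

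The main technical difficulty is formalising the passage from \emph{non-splitting of $\widetilde L$} to \emph{existence of a strictly larger orbit in $\WO(\pi)$}. This requires careful bookkeeping of coadjoint orbit closures in $\fg^*$, together with the explicit descent from the Heisenberg--oscillator picture to an honest non-zero Whittaker vector for $\cO'$; it is here that the transitivity and exchange results for Whittaker pairs from \cite{GGS} are essential. In the archimedean case, one must also verify the Stone--von Neumann decomposition in the smooth Fréchet category with moderate growth, via a Casselman--Wallach-type argument, while the $p$-adic case reduces to standard manipulations of smooth modules with fixed central character; the finite central extension $G$ creates no essential difficulty, since all constructions are compatible with the cover.
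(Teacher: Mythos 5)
There is a genuine gap at what you yourself call the heart of the proof. Your setup only extracts from $\pi_{\cO}\neq 0$ a \emph{non-zero genuine smooth} representation $\tau$ of a metaplectic cover, and then asserts that maximality of $\cO$ should force ``the splitting required for quasi-admissibility.'' But non-vanishing of a genuine smooth representation is vacuous information: every non-split double cover (e.g.\ $\widetilde{\SL_2}$) has plenty of genuine infinite-dimensional representations, so nothing about splitting or quasi-admissibility can be read off from the mere existence of $\tau$. Quasi-admissibility asks for a \emph{finite-dimensional} genuine representation of $\widetilde{G_{\varphi}}$, and the actual content of the theorem is that maximality of $\cO$ forces finiteness, not existence. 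Your proposed mechanism for the contraposition — extracting an element $X\in\fg_1$ ``from the non-triviality data of $\widetilde L$'' and deforming to $(h,\varphi+\alpha_X)$ — is not substantiated and points in the wrong direction: the covering-theoretic failure of (quasi-)admissibility does not produce such an $X$, and the auxiliary functionals used in the paper's deformation argument come from nilpotent elements $f'$ of the \emph{reductive centralizer} $\fg_{\gamma}$ (degree $0$ for $h$, paired with the Killing form), not from $\fg_1$. Also note a modeling slip: $G_{\varphi}$ does not preserve $\fg^h_1$ (only $G_{\gamma}$ does), and the Whittaker quotient $\pi_{\cO}$ is not a module over the Heisenberg group — it carries a genuine action of $\widetilde{G_{\gamma}}$; there is no Stone–von Neumann factorization $\pi_{\cO}\simeq\omega_{\psi}\hot\tau$ to invoke.

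The paper's route, which you would need to reproduce, is: (1) for every $\sl_2$-triple $(e',h',f')$ inside $\fg_{\gamma}$, show that the quasi-Whittaker quotients $\pi_{h+h'/2,\varphi,\lambda\varphi'}$ vanish for $\lambda\neq 0$ — otherwise Proposition \ref{prop:step} and Corollary \ref{cor:quasi2gen} produce an orbit strictly above $\cO$ in $\WS(\pi)$, contradicting maximality; (2) conclude that $\pi_{\varphi}$ is non-generic for each $\widetilde{\SL_2(F)}\subset\widetilde{M_{\gamma}}$, hence (by \cite[Lemma 5.10]{BZ} in the $p$-adic case and Proposition \ref{prop:AV0}/Corollary \ref{cor:FinSum} in the real case — not a Casselman–Wallach or Fréchet Stone–von Neumann argument) the $\widetilde{M_{\gamma}}$-action is locally finite (Theorem \ref{thm:MaxFin}), yielding a finite-dimensional genuine representation of $\widetilde{M_{\gamma}}$; (3) extend this to $\widetilde{(G_{\gamma})_{ss}}$, then to $\widetilde{G_{\gamma}}$, then pull back to $\widetilde{G_{\varphi}}$ (Propositions \ref{prop:CommTilde}, \ref{prop:MtoGss}, \ref{prop:MtoG}), which involves the compact factor $\widetilde{K_{\gamma}}$, extension of central characters, and finite-index induction. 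Your proposal contains neither the correct vanishing statement that maximality actually buys, nor any version of step (3); as written it does not close.
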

\DimaB{Note that in the Archimedean case, $\cO$ is not always admissible, \emph{e.g.} for minimal representations of $U(2,1)$, see \S\ref{subsubsec:RelSpecOrb} below. The notions of admissible and quasi-admissible also differ for the split real forms of $E_7$ and $E_8$, though we do not know whether the non-admissible quasi-admissible orbits appear in Whittaker supports of representations.
For the symplectic and orthogonal groups the two notions are equivalent.
\begin{prop}[\S \ref{subsubsec:RelSpecOrb}]\label{prop:ClasQuas}
Let $G$ be \DimaC{either $\Sp_{2n}(F)$, or $O(V)$ or $\SO(V)$ (for a quadratic space $V$ over $F$),}  and let $\cO\subset \fg^*$ be a nilpotent orbit. Then the following are equivalent.
\begin{enumerate}[(a)]
\item $\cO$ is admissible
\item $\cO$ is quasi-admissible
\item $\cO$ is special
\end{enumerate}
\end{prop}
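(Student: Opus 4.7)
The implication $(a) \Rightarrow (b)$ is immediate from the definitions in \S \ref{subsec:cov}, since quasi-admissibility is by design a weakening of admissibility, and $(a) \Leftrightarrow (c)$ in the $p$-adic case is exactly the theorem of Nevins \cite{Nev} already quoted. Thus the real content is $(b) \Rightarrow (a)$, together with the corresponding equivalences in the Archimedean case. My plan is to reduce all three conditions to combinatorial conditions on the partition (and, over $\R$, the signed partition) labelling $\cO$, and then to verify the equivalence one part at a time.

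First, I would recall that for $G = \Sp_{2n}(F)$, $O(V)$, or $\SO(V)$, the coadjoint nilpotent orbits $\cO$ are parametrized by partitions $\lambda$ satisfying the familiar parity restriction (odd parts with even multiplicity for $\Sp$; even parts with even multiplicity for $O$, $\SO$), augmented in the real case by sign data on the odd (resp.\ even) parts. For any $\varphi \in \cO$, the reductive quotient of the centralizer $G_\varphi$ decomposes as a product of classical groups, one factor for each distinct part of $\lambda$: parts $k$ of $\lambda$ that carry an $\Sp$-type form contribute an orthogonal factor, parts that carry an $O$-type form contribute a symplectic factor, and pairs of equal parts coming from a symplectic non-degenerate pairing contribute $\GL$-factors. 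This is standard (see \emph{e.g.} Collingwood--McGovern for the complex case, and Springer--Steinberg/Djokovi\'c for the real classification).

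Next, the (quasi-)admissibility of $\cO$ is the question of whether the metaplectic double cover, pulled back via the $\fsl_2$-triple associated to $\varphi$, splits over (respectively, over the identity component of) this reductive centralizer. On each $\GL$-factor the cover splits automatically, so only the symplectic and orthogonal factors contribute. For each such factor the splitting condition is well known to reduce to the parity of the corresponding part of $\lambda$, and the resulting combinatorial criterion coincides \emph{on the nose} with the Lusztig--Spaltenstein description of special partitions in types $B$, $C$, $D$ (symmetric under transposition within the appropriate parity class). Since the identity component of the reductive centralizer has index at most a power of $2$ in the full centralizer, and since every such $\pm 1$-ambiguity in the classical groups is already encoded by the partition's parities, admissibility and quasi-admissibility yield the same partition-level condition. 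This gives $(b) \Rightarrow (a)$ and simultaneously $(b) \Leftrightarrow (c)$, completing the cycle.

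The main obstacle I anticipate is the Archimedean bookkeeping: for real orthogonal groups $O(p,q)$ the component group of $G_\varphi$ and the action of its signature data on the metaplectic cover have to be handled explicitly, and one must confirm that passing from the full centralizer to its identity component (which is the only difference between admissible and quasi-admissible) cannot change the splitting status for the classical factors actually appearing. This is a finite case analysis over the possible sign patterns on each Jordan block, relying on the fact that the metaplectic cocycle on $\Sp_{2m}(\R)$ is already trivial on any orthogonal or unitary subgroup arising in our centralizer decomposition, so that no genuinely new obstruction can emerge on component groups.
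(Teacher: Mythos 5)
Your reduction of (a)$\Leftrightarrow$(c) to the combinatorics of Ohta/Nevins versus the Lusztig--Spaltenstein (Collingwood--McGovern) description of special partitions, and of (a)$\Rightarrow$(b) to the fact that quasi-admissibility weakens admissibility, matches the paper. The gap is in the only hard direction, (b)$\Rightarrow$(a), where your argument rests on a mischaracterization of the two notions. The difference between admissible and quasi-admissible is \emph{not} ``full centralizer versus identity component'': admissibility asks that the cover $\widetilde{G_\varphi}\to G_\varphi$ \emph{split} over $(G_\varphi)_0$, while quasi-admissibility asks only that $\widetilde{G_\varphi}$ admit a genuine \emph{finite-dimensional representation}. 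A non-split cover can perfectly well carry genuine finite-dimensional representations (covers of anisotropic or toral factors always do; indeed the paper points out that the minimal orbit of $U(2,1)$ and certain orbits of split real $E_7$, $E_8$ are quasi-admissible but not admissible), so ``admissibility and quasi-admissibility yield the same partition-level condition'' cannot be deduced by bookkeeping of $\pm1$-ambiguities on component groups. Moreover, your supporting claim that the metaplectic cocycle of $\Sp_{2m}(\R)$ is trivial on any orthogonal or unitary subgroup is false for unitary subgroups (the restriction of the metaplectic cover to $U(n)\subset \Sp_{2n}(\R)$ is the nontrivial $\det^{1/2}$-cover -- this is exactly why most orbits of $U(p,q)$ fail to be admissible, cf. the paper's Theorem on Ohta's classification and its use of Kazhdan's lemma for $SU(V)$), and in any case the obstruction relevant here does not sit on orthogonal or unitary factors.

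What is actually needed, and what the paper does, is the following. If $\cO$ is non-admissible, the combinatorial criterion (Ohta for $\R$, Nevins for $p$-adic) produces an odd part $\lambda_i$ of multiplicity $2m$ such that the number of longer even parts is odd; by Nevins' explicit description of the cover on the centralizer, $G_\varphi$ then contains a subgroup $H\cong \Sp_{2m}(F)$ over which the cover is the \emph{nontrivial} metaplectic double cover. One must then show that this forces the nonexistence of genuine finite-dimensional representations: over $\R$ because $\Sp_{2m}(\R)$ is simple with no nontrivial algebraic covers, so every finite-dimensional representation of $\widetilde H$ factors through $\Sp_{2m}(\R)$ and hence cannot be genuine; over non-Archimedean $F$ via the paper's criterion that quasi-admissibility is equivalent to splitting of the cover over an open normal subgroup of finite index (which fails for the metaplectic cover of $\Sp_{2m}(F)$, a group generated by unipotents). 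Your proposal contains neither the location of this symplectic obstruction inside $G_\varphi$ nor the representation-theoretic fact about the metaplectic group, so the implication (b)$\Rightarrow$(a) is not established.
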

We deduce this proposition from \cite{Oht,Nev}.
It is possible  that the notions of admissible and quasi-admissible are equivalent for all $G$ in the case when $F$ is non-Archimedean.}
In this  case, and under the additional assumption that $G$ is \DimaB{classical}, it is shown in \cite{Mog,JLS} that all $\cO\in \WS(\pi)$ are admissible, for any $\pi\in \Rep^{\infty}(G)$.
For exceptional $G$, a slightly weaker result is shown in \cite{JLS}.

For p-adic $F$ it is conjectured that \DimaE{if $G$ is algebraic then} all  orbits in $\WS(\pi)$ are special.
\DimaB{For classical $G$ this follows from \cite{Mog,JLS}.} For $G=G_2(F)$ this follows from \cite{JLS,LS}.
For $F=\R$ the analogous statement does not hold. Namely,    \cite{VogG2} constructs a small unitary irreducible representation $\pi$ of $G_2(\R)$. We show in \S \ref{subsubsec:RelSpecOrb} that $\WS(\pi)$ is also small, i.e. consists of the minimal orbit for $G_2$. This orbit is non-special but admissible.
\DimaD{For classical \DimaE{algebraic} groups over all local fields, all special orbits are quasi-admissible. It is possible that this holds for all groups.}

%

It is quite probable that  if \DimaE{$G$ is algebraic and} $\pi$ is admissible and has integral infinitesimal character then all  $\cO\in \WS(\pi)$ are special, cf. \cite[Theorem D]{BVClass} and \cite[Theorem 1.1]{BVExc}.

We also prove that for quasi-cuspidal $\pi\in \Rep^{\infty}(G)$, the  orbits in $\WS(\pi)$ are $F$-distinguished. Here, $F$ is non-Archimedean, and $\pi$  is  quasi-cuspidal if all its Jacquet reductions vanish (i.e. $r_P\pi=0$ for any parabolic subgroup $P\subset G$) and a nilpotent orbit $\cO\subset \fg^*$ is $F$-distinguished if the corresponding orbit in $\fg$ does not intersect the Lie algebra of a Levi subgroup of any proper parabolic subgroup  $P\subset G$ defined over $F$.
\begin{introtheorem}[\S \ref{subsec:Cusp}]\label{thm:cusp}
Let $F$ be non-Archimedean and $\pi\in \Rep^{\infty}(G)$ be a quasi-cuspidal representation.   Then all
$\cO\in \WS(\pi)$ are $F$-distinguished.
\end{introtheorem}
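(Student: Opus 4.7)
The plan is to argue by contradiction: assuming that some $\cO \in \WS(\pi)$ meets the Lie algebra of a Levi subgroup $M$ of a proper $F$-parabolic $P = MN \subset G$, we will show that $r_P\pi \neq 0$, contradicting quasi-cuspidality.

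Fix an identification $\fg \cong \fg^*$ via an invariant form and choose a representative $\varphi \in \cO$ with $\varphi \in \fm^*$ (embedded in $\fg^*$ by extension by zero on $\fn \oplus \bar{\fn}$). Applying Jacobson--Morozov inside $\fm$, complete $\varphi$ to an $\sl_2$-triple with neutral element $H \in \fm$; then $(H, \varphi)$ is a neutral Whittaker pair for $\fg$, so by the definition of \S\ref{subsec:DefMod} we have $\pi_{H, \varphi} = \pi_\cO \neq 0$. Next, let $Z \in \fz(\fm)$ be a rational semisimple element whose adjoint action has zero eigenvalues on $\fm$ and strictly positive eigenvalues on $\fn$ (a suitable cocharacter defining $P$). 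For a sufficiently large rational $t > 0$, set $S_t := H + tZ$. Because $Z$ is central in $\fm$ and $\varphi \in \fm^*$ one checks $\ad^*(Z)\varphi = 0$, hence $\ad^*(S_t)\varphi = -2\varphi$, and $(S_t, \varphi)$ is a Whittaker pair. For $t$ large enough, the grading defined by $S_t$ arranges the associated unipotent subgroup $U_{S_t}$ to factor as $U_{S_t} = N \cdot U_M$, where $U_M \subset M$ is the unipotent group attached to $(H, \varphi)$ viewed inside $\fm$.

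Since $\varphi$ is zero on $\fn$, the character $\chi_\varphi$ is trivial on $N$. Computing twisted coinvariants in two stages then yields a natural identification
\[
\pi_{S_t, \varphi} \;\cong\; (r_P\pi)_{H, \varphi},
\]
where the right-hand side is the Whittaker quotient of the (unnormalized) Jacquet module $r_P\pi$ over $M$. Quasi-cuspidality gives $r_P\pi = 0$, hence $\pi_{S_t, \varphi} = 0$. On the other hand, apply the deformation/comparison result of \cite{GGS} to the one-parameter family of Whittaker pairs $(H + sZ, \varphi)$ with $s \in [0, t]$. That result compares $\pi_{H, \varphi}$ and $\pi_{S_t, \varphi}$ up to cokernels (resp. kernels) that are Whittaker quotients associated to nilpotent orbits strictly greater than $\cO$ in the closure ordering. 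Since $\cO \in \WS(\pi)$ is maximal, every such quotient vanishes, so the comparison collapses to an isomorphism (or surjection) $\pi_{S_t, \varphi} \cong \pi_{H, \varphi} = \pi_\cO$. Combined with $\pi_{S_t, \varphi} = 0$ and $\pi_\cO \neq 0$, this is the desired contradiction.

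The main obstacle is calibrating the deformation argument from \cite{GGS} to this specific one-parameter family and verifying that all intermediate ``jumps'' in the Whittaker quotient contribute only via strictly larger nilpotent orbits, so that the maximality hypothesis on $\cO$ forces these contributions to vanish. A secondary but essential technical point is the identification $\pi_{S_t, \varphi} \cong (r_P\pi)_{H, \varphi}$: one must choose the (unnormalized) Jacquet module correctly, check the decomposition $U_{S_t} = N \cdot U_M$ for the chosen $Z$ and $t$, and note that smoothness of $\pi$ allows the iterated coinvariants over $N$ and then $U_M$. Once these are in place, quasi-cuspidality immediately kills the left-hand side, completing the contradiction.
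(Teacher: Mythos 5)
Your reduction is exactly the paper's: choose $\varphi\in\cO$ lying in $\fl^*$ for a Levi $L$ of a proper parabolic $P$, take a neutral element $h$ for $\varphi$ inside $\fl$, pick a rational semisimple $Z$ centralizing $\fl$ that defines $P$, and set $S=h+tZ$ with $t$ large enough that $\fn\subset\fg^S_{>2}$; then $\chi_\varphi$ is trivial on $N$ and $\pi_{S,\varphi}$ factors through $r_P\pi$ (your finer identification $\pi_{S,\varphi}\cong(r_P\pi)_{h,\varphi}$ is fine but more than is needed), so quasi-cuspidality gives $\pi_{S,\varphi}=0$, and one wants to contradict maximality of $\cO$.

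The gap is in the last step, which you yourself flag as ``the main obstacle'' but do not resolve, and which you attribute to the wrong source. What \cite{GGS} provides is only the epimorphism $\pi_{\varphi}\onto\pi_{S,\varphi}$; that goes the wrong way here, since vanishing of the target says nothing about $\pi_\varphi$. The statement you actually need --- that $G\cdot\varphi\in\WS(\pi)$ forces $\pi_{S,\varphi}\neq0$ (indeed $\pi_{S,\varphi}\simeq\pi_\varphi$ in the $p$-adic case) --- is Theorem \ref{thm:MaxModAction} of the present paper, and its proof is the substantial content of \S\ref{sec:MaxOrb}. Moreover, your description of that deformation argument is inaccurate: the ``jumps'' along the family $(h+sZ,\varphi)$ are not Whittaker quotients attached to strictly larger orbits, but quasi-Whittaker quotients $\pi_{S_T,\varphi,\psi'}$ (Lemma \ref{lem:step}), and relating their non-vanishing to larger orbits requires the additional machinery of Proposition \ref{prop:step} and Proposition \ref{prop:quasi2gen} (i.e.\ Corollary \ref{cor:quasi2gen}, $\QWS(\pi)=\WS(\pi)$); none of this can be ``calibrated'' out of \cite{GGS} alone. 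If you instead simply invoke Theorem \ref{thm:MaxModAction}, your argument closes and coincides with the paper's proof in \S\ref{subsec:Cusp}; as written, the decisive comparison is unproved.
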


For classical $G$,  it was shown  in \cite{Mog} that  all the  orbits in the Whittaker support of all tempered admissible (finitely generated) $\pi$ are $F$-distinguished.
\DimaH{For a similar result in} the case $F=\R$  see \cite{Harris}.

\DimaP{One of our basic tools is Lemma \ref{lem:Heis} below, that follows from the Stone-von-Neumann theorem. An analogous lemma first appeared in the non-Archimedean case in   \cite[Lemma 2.2]{GRS}, and is sometimes referred to as  the ``root exchange" lemma.} 
\DimaO{We also use \cite[Lemma 5.10]{BZ} (see Lemma \ref{lem:BZ} below)  and its Archimedean analog Proposition \ref{prop:AV0} that we prove in \S \ref{sec:Arch} below using some properties of modules over  algebras of Schwartz functions established in \cite{dCl}. Two more central tools we use are the deformation technique of \cite{GGS} and the notion of quasi-Whittaker quotients introduced in this paper, see \S \ref{subsec:basic} below. For our strategy of proof see \ref{subsec:struc} below. }

\subsection{Additional results}
 Let $\gamma=(f,h,e)$ be an $\sl_2$-triple.
Let $G_{\gamma}$ be the centralizer of $\gamma$ in $G$, and $\tilde G_{\gamma}$ be its metaplectic cover (see \S \ref{subsec:cov} below). Let $\varphi\in \fg^*$  be given by the Killing form pairing with $f$. Then $\tilde G_{\gamma}$ acts on the generalized Whittaker quotient $\pi_{\varphi}=\pi_{\gamma}$ (see \S \ref{subsec:cov} below).

We denote by $M_{\gamma}$  the  subgroup of $G_{\gamma}$ \DimaA{ generated by the unipotent elements.}
Let $\widetilde{M_{\gamma}}$ denote the preimage of  $M_{\gamma}$ under the projection $\widetilde{G_{\gamma}}\to G_{\gamma}$.
\begin{introthm}[\S \ref{subsec:MaxFin}]\label{thm:MaxFin}
Let $\pi\in \Rep^{\infty}(G)$ and assume that $G \cdot \varphi \in \WS(\pi)$.
Then
\DimaP{
\begin{enumerate}[(i)]
\item If $F$ is non-Archimedean then $\widetilde{M_{\gamma}}$ acts on $\pi_{\varphi}$ by $\pm \Id$.

\item If $F$ is Archimedean then
 the action of $\widetilde{M_{\gamma}}$ on the dual space $\pi_{\varphi}^*$ is locally finite.
\end{enumerate}
}
\end{introthm}

\DimaH{
Let $S\in \fg$ be such that the adjoint action $ad(S)$ diagonalizes over $\Q$ and $ad(S)^*(\varphi)=-2\varphi$.
\DimaH{We will call such pairs $(S,\varphi)$ \emph{Whittaker pairs}}.
Following \cite{MW} we attach to $(S,\varphi)$ a certain  degenerate Whittaker quotient $\pi_{S,\varphi}$. If  $S=h$ then this is the  generalized (a.k.a. neutral) Whittaker quotient (see \S \ref{subsec:DefMod} for the definitions).
\begin{introthm}[\S \ref{sec:MaxOrb}]\label{thm:MaxModAction}
Let $\pi\in \Rep^{\infty}(G)$ and let $(S,\varphi)$ be a Whittaker pair such that $G \cdot \varphi \in \WS(\pi)$.  Then $\pi_{S,\varphi}\neq 0$. Moreover, if $F$ is non-Archimedean then the epimorphism $\pi_{\varphi}\onto \pi_{S,\varphi}$ constructed in \cite[Theorem A]{GGS} is an isomorphism.
\end{introthm}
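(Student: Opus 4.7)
The plan is to deform the neutral pair $(h,\varphi)$ into the pair $(S,\varphi)$ through a finite sequence of intermediate Whittaker pairs, analyze the kernel at each step via the root-exchange technique, and exploit the maximality of $G\cdot\varphi$ to show this kernel vanishes. Concretely, I would choose a rationally-diagonal path $S_t$ connecting $h$ and $S$ while keeping $\varphi$ fixed, and factor the canonical map of [GGS, Theorem A]
\[
\pi_{\varphi}\;=\;\pi_{h,\varphi}\;\onto\;\pi_{S,\varphi}
\]
as a composition of epimorphisms between quotients attached to successive pairs $(S_{t_i},\varphi)$. These epimorphisms are built by Mellin/Fourier expansion along one-parameter unipotent subgroups appearing as $S_t$ crosses an integer eigenvalue of $\ad(S_t)$, together with an application of the root-exchange technique of [GRS, Ch.~7], exactly as in the proof of [GGS, Theorem A].

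The heart of the argument is the identification of the kernel of each step. Each time a positive eigenspace of $\ad(S_{t_i})$ is traded for the dual negative eigenspace via root exchange, the kernel can be filtered by objects of the form $\pi_{S',\varphi'}$, where $(S',\varphi')$ is a Whittaker pair and $G\cdot\varphi'$ is strictly larger than $G\cdot\varphi$ in the closure ordering. Applying [GGS, Theorem A] once more to each such $\pi_{S',\varphi'}$ exhibits it as a quotient of a neutral Whittaker quotient $\pi_{\varphi''}$ with $G\cdot\varphi''\geq G\cdot\varphi'>G\cdot\varphi$. Since $G\cdot\varphi\in\WS(\pi)$, no such orbit $G\cdot\varphi''$ can lie in $\WO(\pi)$, so $\pi_{\varphi''}=0$ and hence each kernel vanishes. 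In the $p$-adic case this immediately gives the isomorphism claim, and in particular $\pi_{S,\varphi}\cong\pi_{\varphi}\neq 0$, yielding both assertions simultaneously.

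For $F=\R$, I would still run the same interpolation but record only the resulting nonzero maps: at each step, if the source is nonzero and the kernel is zero, then the Hausdorff quotient that defines $\pi_{S_{t_{i+1}},\varphi}$ remains nonzero. To make this rigorous I would invoke the properties of modules over Schwartz algebras proved in [dCl] (as flagged in the additional-results discussion), which ensure that the intermediate quotients are Hausdorff and that root exchange produces genuine topological isomorphisms rather than merely algebraic ones. Theorem~\ref{thm:MaxFin} provides a useful auxiliary input, since the local finiteness of $\widetilde{M_\gamma}$ on $\pi_{\varphi}$ makes it possible to pass nonvanishing through the successive Fourier expansions without loss.

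The main obstacle is the kernel identification in the Archimedean case: one must verify that the subquotients produced by root exchange really can be rewritten as $\pi_{S',\varphi'}$ with $G\cdot\varphi'>G\cdot\varphi$, and not merely as limits or completions of such objects. This requires delicate control over closed subspaces of Fréchet quotients, which is where the results of [dCl] do the crucial work. Once this is settled, the inductive propagation of nonvanishing is routine, and the $p$-adic isomorphism statement follows formally because in that setting exact sequences of smooth representations split neatly after taking coinvariants.
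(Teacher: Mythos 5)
Your overall skeleton (deform $S_t=h+tZ$, factor the map of \cite[Theorem A]{GGS} through the critical values, kill the obstruction at each step using maximality of $G\cdot\varphi$, use a BZ-type statement $p$-adically and a Schwartz-module statement over $\R$) matches the paper. But the heart of your argument --- the claim that the kernel of each step ``can be filtered by objects of the form $\pi_{S',\varphi'}$ where $(S',\varphi')$ is a Whittaker pair and $G\cdot\varphi'$ is strictly larger'' --- is exactly the point that does not work as stated, and it is why the paper has to introduce Whittaker triples and quasi-Whittaker quotients. The characters of the abelian quotient $A_i=L_{t_{i+1}}/R_{t_i}$ obstructing the $i$-th step are given by functionals $\psi'\in(\fg^*)^{S_{t_{i+1}}}_{-1}\cap(\fg^*)^e$, i.e. of $S$-degree $-1$, so $\varphi+\psi'$ is \emph{not} part of a Whittaker pair and the obstruction space is $(\pi_{S_{t_{i+1}},\varphi,\psi'})^*$, a quasi-Whittaker quotient, not a degenerate Whittaker quotient for a bigger orbit; \cite[Theorem A]{GGS} says nothing about such objects, so you cannot ``apply it once more'' to dominate them by a neutral quotient $\pi_{\varphi''}$. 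The paper needs an extra layer: Lemma \ref{lem:step2} and Proposition \ref{prop:step} further deform $t$ to trade the degree-$(-1)$ perturbation for a triple $(S_T,\Phi,\Phi')$ with $\varphi\in\overline{G\Phi}\setminus G\Phi$ (using Corollary \ref{cor:tor} and Slodowy-slice transversality), and then the key Proposition \ref{prop:quasi2gen} (proved by induction on $in(S,\varphi)$, using Lemma \ref{lem:GHconj}) shows $\QWS(\pi)=\WS(\pi)$, which is what finally lets the maximality of $G\cdot\varphi$ kill the obstructions. Without this (or an equivalent substitute) your kernel identification is a genuine gap, not a routine verification.

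Two secondary issues. Over $\R$ the correct mechanism is not Hausdorffness of the quotients or ``topological isomorphisms from root exchange'': the paper only proves the successive maps are \emph{non-zero} (whether they are isomorphisms is left open even for unitary $\pi$), and the tool is Proposition \ref{prop:AV0} --- local nilpotency of $\fn$ on the dual of a non-generic representation of $B^0$, which yields non-vanishing of coinvariants --- rather than the splitting/Hausdorff statements you invoke from \cite{dCl} (those enter only inside the proof of Proposition \ref{prop:AV0}). Also, using Theorem \ref{thm:MaxFin} as an auxiliary input is circular in this paper's architecture: its proof runs through the same Propositions \ref{prop:step} and \ref{prop:quasi2gen}, so you would need an independent argument for it before leaning on it here.
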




The special case of $p$-adic $F$ and admissible $\pi$ follows from \cite{MW,Var,GGS}.

Another natural question to ask is - given $\WS(\pi)$, what smaller orbits lie in $\WO(\pi)$? In the case of $\GL_n$ the answer is - all the orbits lying in the closure of orbits in $\WS(\pi)$. For general reductive groups we deduce from Theorem \ref{thm:MaxModAction} a partial result, see Theorem \ref{thm:ComparOrbit} below.

\subsection{Global case}

\DimaD{We also \DimaB{provide}  global analogs  of Theorems \ref{thm:adm}-\ref{thm:MaxModAction}, see \S \ref{sec:Glob} below.
These analogs generalize several results from \cite{GRS_Sp,Shen,Cai,JLS}. \DimaL{In particular, this puts restrictions on Whittaker supports of cuspidal representations, confirming conjectures from \cite[\S 4]{Ginz}.}} We also provide an analog of Theorem \ref{thm:ComparOrbit} and deduce the following corollary.
\begin{introcor}\label{intcor:GlobGL}
Let $K$ be a number field and let $\pi$ be an automorphic representation of $\mathrm{GL}_n(K)$. Let  $\cO\in \WS(\pi)$ and let $\cO'\subset \overline{\cO}$, where $\overline{\cO}$ denotes the Zariski closure of $\cO$ in $\gl_n(K)$. Then $\cO'\in \WO(\pi)$.
\end{introcor}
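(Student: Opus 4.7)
The plan is to derive this corollary from the global analog of Theorem \ref{thm:ComparOrbit}, stated in Section \ref{sec:Glob}, combined with the specific combinatorics of nilpotent orbits in $\gl_n$. Nilpotent orbits in $\gl_n(K)$ are classified by partitions of $n$ via Jordan normal form, and the Zariski closure order coincides with the dominance order on partitions. Write $\lambda$ and $\mu$ for the partitions parametrizing $\cO$ and $\cO'$, so $\mu\le\lambda$.

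The global analog of Theorem \ref{thm:ComparOrbit} provides a family of orbits $\cO'\in\WO(\pi)$ below each $\cO\in\WS(\pi)$, described in terms of chains of root exchanges applied to Whittaker pairs $(S,\varphi)$ with $\varphi\in\cO$ and output nilpotent in $\cO'$. The task therefore reduces to a combinatorial verification: show that for $\GL_n$, every $\cO_\mu\subseteq\overline{\cO_\lambda}$ is reachable by such a chain starting from $\cO_\lambda\in\WS(\pi)$. Using Brylawski's theorem on covers in the dominance lattice, every $\mu\le\lambda$ is connected to $\lambda$ by a sequence of elementary box-moves, and each such move should correspond to a single abelian root-exchange step in the Whittaker pair construction.

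Concretely, one proceeds by induction on the distance $k$ between $\mu$ and $\lambda$ in the dominance lattice. For the inductive step, given a cover $\nu\lessdot\nu'$ by a single box-move, one constructs a Whittaker pair $(S,\varphi)$ with $\varphi\in\cO_{\nu'}$ and an element $\varphi'\in\cO_\nu$ in the $-2$ weight space of $\ad(S)^{*}$, such that the degenerate Whittaker-Fourier coefficient attached to $(S,\varphi)$ equals, after a root exchange and Fourier expansion along an abelian unipotent subgroup in the spirit of \cite[Ch.~7]{GRS_Book}, the neutral Whittaker-Fourier coefficient attached to $\varphi'$. Applying the global analog of Theorem \ref{thm:MaxModAction} to $(S,\varphi)$ yields non-vanishing of the former, hence of the latter.

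The main technical obstacle is the propagation along the chain: after one covering step we only know $\cO_{\nu}\in\WO(\pi)$, not $\cO_{\nu}\in\WS(\pi)$, so we cannot simply re-apply the global Theorem \ref{thm:MaxModAction} with $\nu$ in place of $\lambda$. To bypass this one has to carry out the entire descent in a single application of the global Theorem \ref{thm:MaxModAction}, by constructing a Whittaker pair $(S,\varphi)$ with $\varphi\in\cO_\lambda$ and $S$ \emph{simultaneously} encoding the full target partition $\mu$, and then composing all the elementary root exchanges into a single Fourier expansion along the nilradical of a parabolic adapted to $\mu$. Such a construction uses the flexibility of the Young-diagram combinatorics of $\GL_n$ in an essential way and has no direct analog for general reductive $G$, which is why the corresponding general statement (Theorem \ref{thm:ComparOrbit}) gives only a partial description of $\WO(\pi)$ below $\WS(\pi)$.
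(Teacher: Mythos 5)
Your high-level strategy matches the paper's: Corollary~\ref{intcor:GlobGL} is deduced from the global comparison theorem (Theorem~\ref{thm:GlobComparOrbit}) together with a purely combinatorial/linear-algebraic fact about nilpotent orbits of $\gl_n$. Your closing observation is also correct in spirit: the maximality hypothesis can only be used once, at the top orbit, so the descent from $\lambda$ to an arbitrary $\mu\le\lambda$ must be packaged into a \emph{single} Whittaker pair $(S,\Phi)$ with $\Phi\in\cO_\lambda$ and with $S$ adapted to the smaller orbit. This is exactly what the paper does: it takes a neutral pair $(h,\varphi')$ with $\varphi'\in\cO_\mu$ and produces $S=h+Z$, $\Phi=\varphi'+\psi\in\cO_\lambda$ with $\psi\in(\fg^*)^Z_{<0}\cap(\fg^*)^{h+Z}_{-2}$, so that Theorem~\ref{thm:GlobComparOrbit} applies directly.

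The genuine gap is that you never construct this data. The entire nontrivial content of the corollary, beyond the general comparison theorem, is Proposition~\ref{prop:GL}: for \emph{every} pair $\mu\le\lambda$ (not just covers) and every neutral pair $(h,\varphi')$ with $\varphi'\in\cO_\mu$, there exist a rational semisimple $Z$ commuting with $h$ and $\psi\in(\fg^*)^{Z}_{<0}\cap(\fg^*)^{h+Z}_{-2}$ with $\varphi'+\psi\in\cO_\lambda$. Your proposal replaces this by two gestures, neither of which is a proof: first, an induction along Brylawski covers with a claimed correspondence between box-moves and ``single abelian root-exchange steps'', which you yourself then discard because intermediate orbits lie only in $\WO(\pi)$ and the maximality theorem cannot be re-applied; second, the assertion that a single pair $(S,\Phi)$ ``encoding $\mu$'' exists ``by the flexibility of the Young-diagram combinatorics'', with no construction. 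The paper's construction is an induction on $n$ (not on dominance distance): using Lemma~\ref{lem:part} one finds $i$ with $\lambda_i>\mu_i>\lambda_{i+1}$, strips off the part $\mu_i$ and merges $\lambda_i,\lambda_{i+1}$ into $p=\lambda_i+\lambda_{i+1}-\mu_i$, and the two-block base case is the explicit matrix computation of Lemma~\ref{lem:TwoBlocks}; one must also check $[h,Z]=0$, $\varphi'\in(\fg^*)^{S}_{-2}$ and $\Phi-\varphi'\in(\fg^*)^{S-h}_{<0}$, which is what makes Theorem~\ref{thm:GlobComparOrbit} applicable. A further small inaccuracy: the deformation underlying the comparison theorem passes through isotropic subalgebras $\fl_t,\fr_t\subset\fg^{S_t}_{\ge1}$ that are in general not nilradicals of parabolic subgroups, so ``one Fourier expansion along the nilradical of a parabolic adapted to $\mu$'' does not describe the mechanism correctly; but since you invoke the comparison theorem as a black box, the essential missing piece remains the explicit construction of $(Z,\psi)$ for an arbitrary pair $\mu\le\lambda$.
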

\DimaK{For a certain analogous statement for $SL_n(K)$ see Corollary \ref{cor:SLGlob} below.} \DimaN{For other groups, we only have a partial result - see Theorem \ref{thm:GlobComparOrbit} below.}
}

\DimaO{
\subsection{Open questions}
Let us summarize some open questions that arise naturally from the results discussed above.
\begin{enumerate}
\item Over a non-archimedean $F$ - do there exist representations of linear reductive groups with non-special Whittaker support?

\item Over a non-archimedean $F$ - are all special orbits admissible?

\item Analogs of the two questions above over a global field $K$.

\item Over $\R$, do non-admissible quasi-admissible orbits of split groups appear in Whittaker supports of representations?

\item Over all fields - are all special orbits quasi-admissible?

\item Given $\WS(\pi)$, how does $\WO(\pi)$ look like?
\end{enumerate}
Another deep conjecture, posed in \cite{MW}, says that for any irreducible $\pi$, all the orbits in $\WS(\pi)$ lie in the same orbit over the algebraic closure. This is conjectured for all global and local fields, but known only in some examples, in particular for $GL_n$, see \cite[Ch. II]{MW} for the non-Archimedean case and \cite[Theorem D]{Ros} and \cite[Theorem B]{GGS} for the Archimedean case. For further open questions we refer the reader to \cite[\S 5]{Ginz}, and \cite[\S 1]{GS_survey}.}
%
\subsection{Structure of the paper}\label{subsec:struc}
In \S \ref{sec:prel} we give the necessary preliminaries on $\sl_2$-triples, smooth representations, oscillator representations, Schwartz induction, generalized and degenerate Whittaker models and covering groups.

In \S \ref{sec:Arch} we prove several statements on non-generic $\pi\in \Rep^{\infty}(B)$.
Here, $B$\ is a Borel subgroup of the metaplectic group $\widetilde{SL_2(\R)}$, and we say that $\pi$ is non-generic if it has no non-zero functionals equivariant under the nilradical $N$ of $B$ by a non-trivial unitary character. Over $p$-adic fields, \cite[Lemma 5.10]{BZ} implies that the action of $N$ on $\pi$ is trivial. Over $\R$ we prove in Proposition \ref{prop:AV0} that for any non-generic $\pi$, the action of the Lie algebra $\fn$ of $N$ on $\pi^*$ is locally nilpotent.

In \S \ref{sec:MaxOrb} we   prove Theorem \ref{thm:MaxModAction}. The proof uses the epimorphism $\cW_{\varphi}\onto \cW_{S,\varphi}$ constructed in \cite{GGS}. \Dima{Here, $\cW_{\varphi}$ and $\cW_{S,\varphi}$ are degenerate Whittaker models, that define the quotients $\pi_{\varphi}$ and $\pi_{S,\varphi}$ as the coivariants $\pi_{\varphi}=(\cW_{\varphi}\otimes \pi)_G$ and $\pi_{S,\varphi}=(\cW_{S,\varphi}\otimes \pi)_G$ (see \S \ref{subsec:DefMod} below). Let us recall the construction of the epimorphism $\cW_{\varphi}\onto \cW_{S,\varphi}$}.
\DimaH{One can show that $S$ can be presented as $h+Z$, where $h$ is a neutral element for $\varphi$ and $Z$ commutes with $h$ and $\varphi$.}
Consider a deformation $S_t=h+tZ$, and denote by $\fu_t$ the sum of eigenspaces of $ad(S_t)$ with eigenvalues at least 1. We call a rational number $0< t <1 $ {\it regular} if $\fu_t=\fu_{t+\eps}$ for any small enough rational $\eps$, and {\it critical} otherwise. Note that there are finitely many critical numbers, and denote them by $t_1<\dots<t_n$. Denote also $t_0:=0$ and $t_{n+1}:=1$. For each $t$ we define two subalgebras $\fl_t,\fr_t\subset \fu_t$. Both $\fl_t$ and $\fr_t$ are maximal isotropic subspaces with respect to the form $\omega_{\varphi}$, $\fr_t$ contains all the eigenspaces of $Z$ in $\fu_t$ with positive eigenvalues and $\fl_t$ contains all the eigenspaces with negative eigenvalues. Note that the restrictions of $\varphi$ to $\fl_t$ and $\fr_t$ define characters of these subalgebras.
Let $L_t:=\Exp(\fl_t)$ and $R_t:=\Exp(\fr_t)$ denote the corresponding subgroups and $\chi_\varphi$ denote their characters defined by $\varphi$.
The Stone-von-Neumann theorem implies
\begin{equation}\label{=RE}
\cW_{S_t,\varphi}\simeq \ind_{L_t}^G(\chi_{\varphi})\simeq \ind_{R_t}^G(\chi_{\varphi}).
\end{equation}
\DimaP{This is an analog of the root exchange lemma \cite[Lemma 2.2]{GRS}.}
We show that for any $0\leq i\leq n, \, \fr_{t_i}\subset \fl_{t_{i+1}}$. This gives a natural epimorphism
$$\cW_{S_{t_i},\varphi}\simeq \ind_{\DimaS{R_{t_{i}}}}^G(\chi_{\varphi})\onto \ind_{\DimaS{L_{t_{i+1}}}}^G(\chi_{\varphi})\simeq \cW_{S_{t_{i+1}},\varphi}.$$
Altogether, we get
$$\cW_{h,\varphi}=\cW_{S_{t_0},\varphi}\onto \cW_{S_{t_1},\varphi}\onto \cdots \onto \cW_{S_{t_{n+1}},\varphi}=\cW_{S,\varphi}.$$
This sequence of epimorphisms naturally defines a sequence of epimorphisms
\begin{equation}\label{=EpiSeq}
\pi_{h,\varphi}=\pi_{S_{t_0},\varphi}\onto \pi_{S_{t_1},\varphi}\onto \cdots \onto \pi_{S_{t_{n+1}},\varphi}=\pi_{S,\varphi}.
\end{equation}
We see that for each $i$, $\pi_{S_{t_{i+1}},\varphi}$ is the quotient of $\pi_{S_{t_i},\varphi}$ by the group $A_i:=L_{t_{i+1}}/R_t$, that we show to be commutative. By Proposition \ref{prop:AV0}  and \cite[Lemma 5.10]{BZ} discussed above, in order to prove  the theorem it is enough to show that   $\pi_{S_{t_i},\varphi}$ is a non-generic representation of $A_i$. For that purpose we show that every unitary character of $A_i$ is given by some $\varphi'\in\fg^*$ with $ad^*(S_{t_{i+1}})\varphi'=-\varphi'$ such that $\varphi'$ does not lie in the tangent space to $\cO$ at $\varphi$.
We then define a quasi-Whittaker quotient  $\pi_{S_{t_{i+1}},\varphi,\varphi'}$, and show that its dual is the space of $(A_i,\chi_{\varphi'})$-equivariant functionals on $\pi_{S_{t_i},\varphi}$. Then we generalize \eqref{=EpiSeq} to quasi-Whittaker quotients, construct some additional epimorphisms and deduce the vanishing of $\pi_{S_{t_{i+1}},\varphi,\varphi'}$ from the vanishing of $\pi_{\cO'}$ for all $\cO'\neq \cO$ with $\cO\subset \overline{\cO'}$. We find quasi-Whittaker quotients to be an important new notion. For an additional evidence for that see Remark \ref{rem:qWhit}.

To prove Theorem \ref{thm:MaxFin} we show in \S \ref{subsec:MaxFin} that the  action of any subgroup of $\widetilde{ G_{\gamma}}$ isomorphic to a quotient of $\widetilde{\SL_{2}(F)}$ is locally finite. By a corollary from Proposition \ref{prop:AV0}  and \cite[Lemma 5.10]{BZ}  it is enough to show that it is non-generic. To show that we choose an $\sl_2$-triple $(e',h',f')$ in the Lie algebra of such a subgroup and let $\varphi'\in \fg$ denote the nilpotent element given by the Killing form pairing with $f'$.
\DimaS{Consider the deformation $S_t:=h+th'$. For $t>1/2$, $e'$ acts trivially on the Whittaker quotient $\pi_{S_t,\varphi}$. We show that the action of $e'$ commutes with the maps in \eqref{=EpiSeq}, and deduce that $\pi_{S_t,\varphi}$ has no $(e',\varphi')$-equivariant functionals for any $t\geq 0$.}

In \S \ref{subsec:Cusp} we deduce Theorem \ref{thm:cusp} from Theorem \ref{thm:MaxModAction}, by way of contradiction. Namely, for any not $F$-distinguished $\cO$ we find a Whittaker pair $(S,\varphi)$ with $\varphi$ in $\cO$ such that $\pi_{S,\varphi}$ is a quotient of a Jacquet module of $\pi$, and thus vanishes. By
Theorem \ref{thm:MaxModAction} we get $\cO\notin \WS(\pi)$.

\DimaD{In \S \ref{sec:PfAdm} we discuss quasi-admissible orbits.
In \S \ref{subsec:PfA}} we deduce Theorem \ref{thm:adm} from Theorem \ref{thm:MaxFin} in the following way. \DimaB{We first note that $\cW_{\varphi}(\pi)$ is a genuine representation of $\widetilde{M_{\gamma}}$, which by Theorem \ref{thm:MaxFin} has a finite-dimensional irreducible  subrepresentation $\rho$. Then we construct from $\rho$ a finite-dimensional genuine representation of $\widetilde{G_{\gamma}}$ and extend it  trivially to  $\widetilde{G_{\varphi}}$.}
In \S \ref{subsubsec:RelSpecOrb} we state several geometric results from \cite{Nev,Nev2,Noel,Oht,PT} and discuss the connection between the notions of special, admissible and quasi-admissible. We also deduce from Theorem \ref{thm:MaxFin} and from \cite{Mat} that the Whittaker support of minimal representations is also minimal.

\DimaH{In \S \ref{sec:compar} we formulate and prove Theorem \ref{thm:ComparOrbit} that provides information on $\WO(\pi)$ given $\WS(\pi)$. The proof is based on the method of \S \ref{sec:MaxOrb}, including the quasi-Whittaker quotients, and on Theorem \ref{thm:MaxModAction}.
 We deduce that for $\pi\in \Rep^{\infty}(\GL_n(F))$, the set $\WO(\pi)$ is closed under the closure order. \DimaK{We also prove a partial analog for $\SL_n(F)$.}}

\DimaD{In \S \ref{sec:Glob} we formulate
global analogs  of Theorems \ref{thm:adm}-\ref{thm:MaxModAction} and explain how to adapt the proofs from \S \ref{sec:MaxOrb}-\ref{sec:compar} to the global case. For example, \cite[Lemma 5.10]{BZ} is replaced by the Fourier decomposition.
Our exposition follows \cite[Chapter 5]{SH}.
}


\subsection{Acknowledgements}

We thank Joseph Bernstein, Guillaume Bossard, David Ginzburg, Maxim Gurevich, \DimaD{Henrik Gustafsson}, Anthony Joseph, Dihua Jiang, \DimaD{Joseph Hundley, David Kazhdan, Axel Kleinschmidt}, Erez Lapid, Baiying Liu, \DimaD{Daniel Persson}, Gordan Savin, Eitan Sayag, and David Soudry for fruitful discussions\DimaN{, and the anonymous referee for useful remarks}.

D.G. was partially supported by ERC StG grant 637912, and ISF grant 249/17. S.S. was partially supported by Simons Foundation grant 509766.

Part of the work on this paper was done during the visit of the three authors to the Max Planck Institute fur Mathematik in Bonn. We thank the administration of the MPI for the invitation and for wonderful working conditions.
\DimaD{
Another part of this paper was written while two of us participated in the program ``Automorphic forms, mock modular forms and string theory" at the Simons Center for Geometry and Physics, Stony Brook University. We are grateful to  the administration of the SCGP and the organizers of the program for wonderful working conditions and for very interesting and helpful talks and discussions on the topic.
}

\section{Preliminaries}\label{sec:prel}

\subsection{Notation}$\,$
\Dima{Let $F$  be either $\R$ or a finite extension of $\Q_p$ and let $\fg$ be a reductive Lie algebra over $F$.}
We say that an element $S\in \fg$ is \emph{rational semi-simple} if its adjoint action on $\fg$ is diagonalizable with  eigenvalues in $\Q$.
For a rational semi-simple element $S$ and a rational number $r$ we denote by $\fg_r^S$ the $r$-eigenspace of the adjoint action of $S$ and by $\fg_{\geq r}^S$ the sum $\bigoplus_{r'\geq r}\fg_{r'}^S$. We will also use the notation $(\fg^*)_{ r}^S$ and $(\fg^*)_{\geq r}^S$ for the corresponding grading and filtration of the dual Lie algebra $\fg^*$.  For $X\in \fg$ or $X\in \fg^*$ we denote by $\fg_X$ the centralizer of $X$ in $\fg$, and by $G_X$ the centralizer of $X$ in $G$.

If $(f,h,e)$ is an $\sl_2$-triple in $\fg$, we will say that $e$ is a nil-positive element for $h$, $f$ is a nil-negative element for $h$, and $h$ is a neutral element for $e$. For a representation $V$ of $(f,h,e)$ we denote by $V^e$ the space spanned by the highest-weight vectors and by $V^f$ the space spanned by the lowest-weight vectors.

From now on we fix a non-trivial unitary additive character
\begin{equation}\label{=chi}
\chi:\F\to \mathrm{S}^1
\end{equation} such that if $\F=\R$  we have $\chi(x)=\exp(2\pi i x)$ and if $\F$ is non-Archimedean the kernel of $\chi$ is the ring of integers.

\subsection{$\sl_2$-triples}

We will need the following lemma which summarizes several well known facts about $\sl_2$-triples.

\begin{lemma}[{See \cite[\S 11]{Bou} or \cite{KosSl2}}]\label{lem:sl2}
$\,$
\begin{enumerate}[(i)]
\item Any nilpotent element is the nil-positive element of some $\sl_2$-triple in $\fg$.
\item If $h$ has a nil-positive element then $e$ is a nil-positive element for $h$ if and only if $e\in \fg^h_2$ and $ad(e)$ defines a surjection $\fg^h_0\onto \fg^h_2$. The set of nil-positive elements for $h$ is open in $\fg^h_2$. 
\item If $e$ is nilpotent then $h$ is a neutral element for $e$ if and only if  $e\in \fg^h_2$ and $h\in \Im(ad(e))$. All such $h$ are conjugate under $G_e$.
\item If $(f,h,e)$ and $(f',h,e)$ are $\sl_2$-triples then $f=f'$.
\item If $(f,h,e)$ is an $\sl_2$-triple and $Z$ commutes with two of its elements then it commutes also with the third one.
\end{enumerate}
\end{lemma}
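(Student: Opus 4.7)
The plan is to reduce everything to the Jacobson--Morozov theorem together with the representation theory of $\sl_2$ on $\fg$ (via the adjoint action of the triple). Part (i) \emph{is} Jacobson--Morozov, so I would simply cite it. For parts (ii)--(v) the strategy is to use the weight-space decomposition of $\fg$ as an $\sl_2$-module under $(f,h,e)$: each irreducible summand is a string of weight spaces on which $\ad(e)$ maps $\fg_r^h \to \fg_{r+2}^h$ surjectively for $r\ge 0$ and injectively for $r\le -2$, and $\ad(f)$ dually. I would keep this fact as the workhorse.

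For (ii), the forward direction is immediate: if $(f,h,e)$ is an $\sl_2$-triple then $e\in\fg_2^h$ by definition, and the $\sl_2$-theory gives surjectivity of $\ad(e)\colon \fg_0^h\onto \fg_2^h$. For the converse, assume $h$ admits some nil-positive element $e_0$, so by Jacobson--Morozov there is already an $\sl_2$-triple $(f_0,h,e_0)$; then the surjectivity hypothesis lets me choose $f\in\fg_{-2}^h$ with $[e,f]=h$, and applying $\ad(h)$ to $[e,f]=h$ forces $[h,f]=-2f$, so $(f,h,e)$ is an $\sl_2$-triple. Openness in $\fg_2^h$ is clear: surjectivity of $\ad(e)$ is an open condition on $e$.

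For (iii), the necessity of $e\in\fg_2^h$ and $h=[e,f]\in\Im\ad(e)$ is immediate. For sufficiency and for conjugacy under $G_e$, I would invoke Kostant's theorem (this is the substantive input): any two neutral elements for a given nilpotent $e$ are $G_e$-conjugate, which also implies existence once $h\in\Im\ad(e)$ and $e\in\fg_2^h$ hold. Part (iv) is a short $\sl_2$ argument: if $f,f'$ both complete $(h,e)$ to an $\sl_2$-triple then $f-f'\in\fg_{-2}^h\cap\ker\ad(e)$, but the $\sl_2$-module structure shows $\fg_{-2}^h\cap\ker\ad(e)=0$ (a highest-weight vector of weight $-2$ would force a negative-dimensional string), so $f=f'$.

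For (v), the only non-vacuous case (using (iv) for the symmetric situations) is to show that if $[Z,h]=[Z,e]=0$ then $[Z,f]=0$. By the Jacobi identity, $[e,[Z,f]]=[Z,[e,f]]=[Z,h]=0$ and $[h,[Z,f]]=[Z,[h,f]]=-2[Z,f]$, so $[Z,f]\in\fg_{-2}^h\cap\ker\ad(e)=0$ by the observation used in (iv); the other two cases are symmetric. The main obstacle is part (iii), where the conjugacy of all neutral elements under $G_e$ is genuinely Kostant's theorem rather than formal $\sl_2$-theory; everything else is a direct consequence of Jacobson--Morozov plus weight-space bookkeeping.
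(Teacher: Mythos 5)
The paper itself offers no proof of this lemma---it is quoted with references to Bourbaki [Bou, \S 11] and Kostant---so your plan of citing Jacobson--Morozov for (i) and Kostant's conjugacy theorem for the $G_e$-conjugacy in (iii) is in the same spirit, and your weight-space arguments for (iv), for (v), for the forward direction of (ii), and for the necessity direction of (iii) are correct.

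There is, however, a genuine gap in the converse of (ii): the step ``the surjectivity hypothesis lets me choose $f\in\fg^h_{-2}$ with $[e,f]=h$'' is a non sequitur. The hypothesis concerns the map $\ad(e)\colon \fg^h_0\onto\fg^h_2$, whereas what you need is $h\in \ad(e)(\fg^h_{-2})$, i.e.\ surjectivity onto $h$ of a \emph{different} map $\ad(e)\colon\fg^h_{-2}\to\fg^h_0$. With respect to a nondegenerate invariant form $B$ (which pairs $\fg^h_r$ with $\fg^h_{-r}$), the latter map is, up to sign, the transpose of the former, so your hypothesis only yields its \emph{injectivity}, not that $h$ lies in its image. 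Bridging this is exactly the nontrivial content of the Bourbaki statement: one must either show $B(h,y)=0$ for every $y\in\fg^h_0$ with $[e,y]=0$ (the orthogonal complement of $[e,\fg^h_{-2}]$ in $\fg^h_0$), or argue over the algebraic closure that the elements of $\fg^h_2$ satisfying the surjectivity condition form a single open orbit of the centralizer of $h$ containing $e_0$, and then descend the $F$-linear system $[e,f]=h$, $f\in\fg^h_{-2}$, back to $F$; either way it is a real argument, not a formal consequence of the hypothesis. Note also that the openness claim rests on this converse, so it inherits the gap. Two smaller points: ``applying $\ad(h)$ to $[e,f]=h$ forces $[h,f]=-2f$'' is not a valid deduction (it only gives $[e,[h,f]+2f]=0$), though it is redundant once $f$ is taken inside $\fg^h_{-2}$; and in (iii) the \emph{existence} of $f$ given $e\in\fg^h_2$ and $h\in\Im(\ad(e))$ is Morozov's lemma (Bourbaki VIII, \S 11), not a consequence of Kostant's conjugacy theorem, so it should be cited as such.
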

It is easy to see that the lemma continues to hold true if we replace the nil-positive elements by nil-negative ones (and $\fg_2^h$ by $\fg_{-2}^h$).

\begin{defn}\label{def:nuet}
We will say that $h\in \fg$ is a neutral element for $\varphi\in \fg^*$ if $h$ has a nil-positive element in $\fg$, $\varphi\in (\fg^*)^h_{-2}$, and the linear map $\fg^h_0\to (\fg^*)^h_{-2}$ given by $x \mapsto ad^*(x)(\varphi)$ is an epimorphism.   We also say that $0\in \fg$ is a neutral element for $0\in \fg^*$.
\end{defn}
\Dima{
Note that if we identify $\fg$ with $\fg^*$ in a $G$-equivariant way and assume $\varphi\neq 0$, this property becomes equivalent to $\varphi$ being a nilnegative element for $h$, or $-h$ being a neutral element for $\varphi$.
}

\subsection{Schwartz induction}\label{subsec:ScInd}

For non-Archimedean $F$ we will work with  $l$-groups, {\it i.e.} Hausdorff topological groups that have a basis for the topology at the identity consisting of open compact subgroups. This generality includes $F$-points of algebraic groups defined over $F$, and their finite covers (see \cite[\DimaN{\S 1}]{BZ}).

For $F=\R$ we will work with affine Nash groups, {\it i.e.} groups that are given in $\R^n$ by semi-algebraic equations, and so is the graph of the multiplication map.  This generality includes $\R$-points of algebraic groups defined over $\R$, and their finite covers (see \DimaN{\cite[\S 1.1]{dCl}, \cite[\S 2,3]{AG}, \cite[\S 3]{Sun}, \cite{FS}}).

\Dima{
\begin{notn}\label{not:rep}
If $G$ is an $l$-group, we denote by $\Rep^{\infty}(G)$ the category of smooth representations of $G$ in complex vector spaces. For $V,W\in \Rep^{\infty}(G)$, $V\otimes W$ will denote the usual tensor product over $\C$ and $V^*$ will denote the linear dual.

If $G$ is an affine Nash group, we denote by $\Rep^{\infty}(G)$ the category of smooth nuclear \Fre representations of $G$ of moderate growth. This is essentially the same definition as in \cite[\S 1.4]{dCl} with the additional assumption that the representation spaces are nuclear (see e.g. \cite[\S 50]{Tre}). For $V,W\in \Rep^{\infty}(G)$, $V\otimes W$ will denote the completed projective tensor product and $V^*$  will denote the continuous linear dual, endowed with the strong dual topology.
\end{notn}}

\begin{defn}\label{def:ind}
If $G$ is an $l$-group, $H\subset G$ a closed subgroup and $\pi\in \Rep^{\infty}(H)$, we denote by $\ind_H^G(\pi)$ the smooth compactly-supported induction as in \cite[\S 2.22]{BZ}.

If $G$ is an affine Nash group, $H\subset G$ a closed  Nash subgroup and $\pi\in \Rep^{\infty}(H),$ we denote by $\ind_H^G(\pi)$ the Schwartz induction as in \cite[\S 2]{dCl}. More precisely, in \cite{dCl} du Cloux considers the space  $\Sc(G,\pi)$ of Schwartz functions from $G$ to the underlying space of $\pi,$ and defines a map from  $\Sc(G,\pi)$  to the space $C^{\infty}(G,\pi)$ of all smooth $\pi$-valued functions on $G$ by $f\mapsto \overline f$, where \begin{equation}\label{=ind}
\overline f (x)=\int_{h\in H}\pi(h)f(xh)dh,
\end{equation}
and $dh$ denotes a fixed left-invariant measure on $H$. The Schwartz induction $\ind_H^G(\pi)$ is defined to be the image of this map. Note that $\ind_H^G(\pi)\in \Rep^{\infty}(G)$.
\end{defn}

From now till the end of the subsection let $G$ be either an $l$-group or an affine \Dima{Nash} group, and $H'\subset H \subset G$ be  closed \Dima{(Nash)} subgroups.

\begin{lem}[{\cite[Proposition 2.25(b)]{BZ} and \cite[Lemma 2.1.6]{dCl}}]\label{lem:IndSt}
For any $\pi\in \Rep^{\infty}(H')$ we have
$$\ind_{H'}^G(\pi)\simeq \ind_{H}^G\ind_{H'}^{H}(\pi).$$
\end{lem}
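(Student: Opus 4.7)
The plan is to construct the same explicit natural transformation in both the $l$-group and the affine Nash settings, and then argue that the verification splits into an algebraic bookkeeping part (common to both) and an analytic part which is substantive only in the Archimedean case. The map in both cases is
$$\Phi : \ind_{H'}^G(\pi) \to \ind_{H}^G \ind_{H'}^H(\pi), \qquad (\Phi f)(g)(h) = f(hg),$$
with inverse $F \mapsto (g \mapsto F(g)(e))$.

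I would first dispatch the $l$-group case. The inner function $h \mapsto f(hg)$ is locally constant, is $H'$-equivariant on the left by the equivariance of $f$, and is compactly supported mod $H'$ because the support of $f$ is; hence it lies in $\ind_{H'}^H(\pi)$. The outer function $g \mapsto (\Phi f)(g)$ is locally constant with values in $\ind_{H'}^H(\pi)$, and its support is compact mod $H$ because the image of $\Supp(f)$ under $G \to H \backslash G$ is compact. The compatibility with right $G$-translation and the fact that the two assignments are mutually inverse are immediate. This is essentially the argument of \cite[Proposition 2.25(b)]{BZ}.

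For the affine Nash case, I would reduce the statement to a compatibility between the averaging operators and a Schwartz--Fubini decomposition. By Definition \ref{def:ind} an element of $\ind_{H'}^G(\pi)$ is $\overline{f}$ for some $f \in \Sc(G,V)$, where $V$ underlies $\pi$. The plan is to identify $\Sc(G,V)$ with $\Sc(H \backslash G, \Sc(H,V))$ via a Nash trivialization of the fibration $G \to H \backslash G$ (patched using a Nash partition of unity on the base), then to perform the averaging over $H'$ in two stages: first along each fiber $H$, producing an element of $\Sc(H \backslash G, \ind_{H'}^H(\pi))$, and then along $H/H'$ (which amounts to averaging in the base). Comparing with the one-step averaging $\overline{f}$ and using the explicit formula for $\Phi$ above yields the desired isomorphism, and the algebraic inverse transfers verbatim from the $l$-group case.

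The main obstacle will be the analytic content of the Schwartz--Fubini step: one must show that partial averaging over $H'$ in the fiber direction sends $\Sc(G,V)$ into the space of functions on $H\backslash G$ that are Schwartz in the base and take values in $\ind_{H'}^H(\pi)$, and that this operation intertwines the full and iterated averagings. This is precisely the content of \cite[Lemma 2.1.6]{dCl}, so in practice I would quote that statement and use it as a black box rather than reproduce the nuclearity and moderate-growth estimates that underlie it.
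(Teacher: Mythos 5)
Your proposal is correct and matches the paper's treatment: the paper gives no independent argument for this lemma, simply citing \cite[Proposition 2.25(b)]{BZ} for the $l$-group case and \cite[Lemma 2.1.6]{dCl} for the Nash/Schwartz case, which is exactly what your proof reduces to (your explicit map $(\Phi f)(g)(h)=f(hg)$ being the standard transitivity isomorphism verified in those references). Since the Archimedean analytic content you defer to du Cloux is precisely his induction-in-stages lemma, nothing further is needed.
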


\begin{cor}
For any $\pi\in \Rep^{\infty}(H)$ we have
a natural epimorphism $\ind_{H'}^G(\pi|_{H'})\onto \ind_{H}^G(\pi)$.
\end{cor}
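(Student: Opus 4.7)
The plan is to reduce, via induction in stages, to producing a natural epimorphism $\ind_{H'}^H(\pi|_{H'}) \onto \pi$ for each $\pi\in \Rep^{\infty}(H)$. Indeed, Lemma \ref{lem:IndSt} provides a natural isomorphism $\ind_{H'}^G(\pi|_{H'}) \simeq \ind_H^G \ind_{H'}^H(\pi|_{H'})$, and the functor $\ind_H^G$ carries epimorphisms to epimorphisms: for $l$-groups this is the exactness of compactly supported induction \cite[\S 2.22]{BZ}, and for affine Nash groups this is the right-exactness of du Cloux's Schwartz induction \cite[\S 2]{dCl}. Applying $\ind_H^G$ to the desired epimorphism $\ind_{H'}^H(\pi|_{H'}) \onto \pi$ and composing with the isomorphism above then yields the corollary.

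The map $\Phi_\pi : \ind_{H'}^H(\pi|_{H'}) \to \pi$ is the counit of the adjunction between restriction and induction. Concretely, it may be realized as a fiber integration: a function $f$ in $\ind_{H'}^H(\pi|_{H'})$ is sent to $\int_{H'\backslash H} \pi(h)^{-1} f(h)\, d\dot h$, where the integrand descends to $H'\backslash H$ thanks to the equivariance $f(h'h) = \pi(h') f(h)$, after the standard renormalization by the ratio of modulus characters of $H$ and $H'$ to ensure that a suitable invariant measure on $H'\backslash H$ exists. The $H$-equivariance of $\Phi_\pi$ is immediate from a change of variables.

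Surjectivity of $\Phi_\pi$ is then the content. In the $l$-group case, given $v \in \pi$, pick a compact open subgroup $K \subset H$ fixing $v$ and let $f$ be the function supported on $H'K$ with $f(h'k) = \pi(h') v$; then $\Phi_\pi(f)$ is a nonzero scalar multiple of $v$, and rescaling provides a preimage. In the Nash setting, one argues analogously starting from a Schwartz function $\phi \in \Sc(H)$ with $\int_{H'\backslash H} \phi\, d\dot h \neq 0$, and averaging $\phi \otimes v$ via the construction $f \mapsto \bar f$ of du Cloux recalled in Definition \ref{def:ind}; this produces a preimage of a nonzero multiple of $v$ inside $\ind_{H'}^H(\pi|_{H'})$.

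The main obstacle is the Archimedean case, where an epimorphism in $\Rep^{\infty}(H)$ must be interpreted in the appropriate topological sense: the map must be continuous with dense (in fact full) image, and compatibly with the nuclear \Fre moderate-growth structure. The requisite closedness and full-image properties follow from the open-mapping theorem for \Fre spaces combined with the existence of a continuous linear section built from Schwartz cutoffs, which is precisely what du Cloux's machinery in \cite{dCl} supplies. With these ingredients in place the construction goes through and the corollary follows.
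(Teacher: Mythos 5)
Your overall route---induction in stages (Lemma \ref{lem:IndSt}) to reduce to a natural epimorphism $\ind_{H'}^{H}(\pi|_{H'})\onto\pi$, and then applying $\ind_H^G$---is exactly the argument the paper leaves implicit, and your $p$-adic surjectivity check (the function supported on $H'K$ with $K$ fixing $v$) is fine. However, your Archimedean surjectivity step has a genuine gap: averaging $\phi\otimes v$ by du Cloux's map $f\mapsto\bar f$ produces the vector $\int_H\phi(h)\pi(h)v\,dh$, which is a G\aa{}rding-type vector and \emph{not} a scalar multiple of $v$; unlike the $p$-adic case, a smooth vector of a \Fre representation is in general fixed by no open subgroup, so this construction does not yield a preimage of $v$. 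Closing this requires a Dixmier--Malliavin-type statement (every smooth vector is a finite sum of vectors $\pi(\phi)w$ with $\phi$ Schwartz, cf. \cite{DM} and the non-degeneracy results in \cite{dCl}); the appeal to the open mapping theorem plus an unconstructed ``continuous linear section'' is not a substitute. A cleaner way to bypass this entirely is to use that, by Definition \ref{def:ind}, $\ind_H^G(\pi)$ is \emph{by definition} the image of $\Sc(G,\pi)$ under averaging over $H$, and that this average factors (Fubini) as the average over $H'$ followed by integration over $H'\backslash H$; the induced map $\ind_{H'}^G(\pi|_{H'})\to\ind_H^G(\pi)$ is then surjective by construction, with no separate counit-surjectivity or right-exactness input needed.

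Second, your modulus remark is incorrect as stated: no ``renormalization'' produces an $H$-invariant measure on $H'\backslash H$ when $\Delta_H|_{H'}\neq\Delta_{H'}$, and in that case there is in general no nonzero $H$-map $\ind_{H'}^{H}(\pi|_{H'})\to\pi$ at all. For instance, for $H$ a Borel subgroup of $\SL_2(F)$, $H'$ its torus and $\pi$ trivial, Lemma \ref{lem:Frob} gives $\Hom_H(\ind_{H'}^H\C,\C)\cong\Hom_{H'}(\C,\Delta_{H'}^{-1}\Delta_H)=0$, so the counit (and the corollary read completely literally) requires either equal modular functions on $H'$ or a twist by $\Delta_{H'}\Delta_H^{-1}$. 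This imprecision is already present in the paper's formulation and is harmless for its applications, where $H'\subset H$ are unipotent and hence unimodular, but your proof should either impose that hypothesis or carry the twist explicitly rather than assert that an invariant measure exists after rescaling.
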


\begin{defn}
For $\pi\in \Rep^{\infty}(G)$, denote by $\pi_G$ the space of coinvariants, i.e. quotient of $\pi$ by the intersection of kernels of all $G$-invariant functionals. Explicitly,
$$\pi_G=\pi/\overline{\{\pi(g)v -v\, \vert \,v\in \pi, \, g\in G\}}, $$
where the closure is needed only for $F=\R$.
\end{defn}
Note that if $F=\R$ and $G$ is connected then $\pi_G=\pi/\overline{\fg_{\C}\pi}$ which in turn is equal to the quotient of $\oH_0(\fg,\pi)$ by the closure of zero.

\begin{lem}\label{lem:Frob2} Let $\rho \in \Rep^{\infty}(H)$, $\pi\in \Rep^{\infty}(G)$.
Consider the diagonal actions of $H$ on $(\pi\otimes \rho)$ and of $G$ on $\pi \otimes \ind_H^G(\rho)$. Then
$(\pi\otimes \rho\Delta_H\Delta_G^{-1})_{H}=(\pi \otimes \ind_H^G(\rho))_{G}$.
\end{lem}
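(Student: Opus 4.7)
The plan is to prove the lemma by splitting it into two standard ingredients: a projection formula and a Shapiro-type identity for coinvariants of induction. Both are well-known, and the lemma will follow by composing them.

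\textbf{Step 1 (Projection formula).} I would first establish a natural isomorphism in $\Rep^{\infty}(G)$:
\[ \pi \otimes \ind_H^G(\rho) \ \cong \ \ind_H^G(\pi|_H \otimes \rho), \]
where on the right the induction is of the $H$-diagonal action, and the intrinsic $G$-action of the induction appears on the outside. The map sends $v \otimes f$ (with $f: G \to \rho$ in the appropriate function class) to the function $F(g) := \pi(g)v \otimes f(g)$. A direct check shows that $F$ satisfies the left $H$-equivariance $F(hg) = (\pi(h) \otimes \rho(h)) F(g)$ and lies in the correct function class (compactly-supported mod $H$ in the non-Archimedean case; Schwartz mod $H$ in the Archimedean case, using the description from \cite[\S 2]{dCl}). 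Furthermore the resulting map is $G$-equivariant, and one constructs an inverse by decomposing against a test function concentrated near the identity.

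\textbf{Step 2 (Shapiro's lemma for coinvariants).} For any $\sigma \in \Rep^{\infty}(H)$, I would then prove
\[ (\ind_H^G \sigma)_G \ \cong \ (\sigma \otimes \Delta_H \Delta_G^{-1})_H. \]
The map is integration against a quasi-invariant measure $d\mu$ on $H\backslash G$: the class of $f$ in $G$-coinvariants is sent to the class of $\int_{H\backslash G} f(g)\,d\mu(g)$ in $H$-coinvariants. The twist by $\Delta_H \Delta_G^{-1}$ appears exactly because $d\mu$ transforms by this modular character under the $H$-action on $H\backslash G$. $G$-invariance on the left follows from right-translation invariance of $d\mu$, and surjectivity/injectivity are standard (in the non-Archimedean case this is \cite[Prop.~2.29]{BZ}; in the Archimedean case one uses the Schwartz realization).

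\textbf{Conclusion.} Applying Step 2 with $\sigma = \pi|_H \otimes \rho$, and chaining it with Step 1, gives
\[ (\pi \otimes \ind_H^G(\rho))_G \ \cong \ (\ind_H^G(\pi|_H \otimes \rho))_G \ \cong \ (\pi|_H \otimes \rho \otimes \Delta_H \Delta_G^{-1})_H, \]
which is the content of the lemma.

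\textbf{Main obstacle.} The substantive technical work is in the Archimedean case, where one has to verify that the constructed maps are continuous and have closed image in the relevant nuclear Fréchet topologies, so that passage to coinvariants (which involves a closure) commutes with the identifications. The tools of \cite{dCl}, particularly the realization of Schwartz induction via fiber integration $f \mapsto \bar f$, make both the projection formula and the integration map topologically well-behaved. In the non-Archimedean case the argument reduces to the standard Frobenius reciprocity of Bernstein--Zelevinsky.
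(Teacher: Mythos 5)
Your argument is correct in substance, but it is not the route the paper takes. The paper disposes of this lemma by duality: since the coinvariants are by definition the quotient by the common kernel of the invariant functionals (with a closure in the Archimedean case), it suffices to identify $\Hom_G(\pi\otimes\ind_H^G\rho,\C)=\Hom_G(\ind_H^G\rho,\pi^*)$ with $\Hom_H(\rho,\pi^*\Delta_H^{-1}\Delta_G)=\Hom_H(\pi\otimes\rho\,\Delta_H\Delta_G^{-1},\C)$, and the first identification is just tensor--Hom adjunction while the second is exactly the cited Frobenius reciprocity of Bernstein--Zelevinsky and of \cite[Lemma 2.3.4]{GGS}. You instead work covariantly: a projection formula $\pi\otimes\ind_H^G\rho\cong\ind_H^G(\pi|_H\otimes\rho)$ followed by a Shapiro-type identity $(\ind_H^G\sigma)_G\cong(\sigma\,\Delta_H\Delta_G^{-1})_H$. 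Your Shapiro step is essentially the predual of the special case $\pi=\C$ of the Frobenius lemma, so the two proofs are close relatives; the difference is that the paper absorbs the factor $\pi$ into the target of the Hom (costing nothing), whereas you absorb it into the induction, which obliges you to prove the projection formula for Schwartz induction (completed projective tensor products, nuclearity) and to construct the semi-invariant integration map -- noticeably more topological overhead in the Archimedean case, though it buys you explicit maps at the level of the representations rather than only of their duals. Two small points of care: the measure on $H\backslash G$ is in general only semi-invariant, so the $G$-invariance of your integration functional is not ``right-translation invariance of $d\mu$'' but holds precisely on the $\Delta_H\Delta_G^{-1}$-twisted function space, which is where the modular twist enters; and in the Archimedean case one must check, as you note, that the identifications are continuous so that they descend to the Hausdorff (closed-kernel) coinvariant quotients -- the paper's duality argument sidesteps most of this.
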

\DimaT{
\begin{proof}
It is easy to see (cf.  \cite[Appendix A]{GGS}) that the integration map $f\mapsto \bar f$ as in \eqref{=ind} gives the following natural isomorphism of representations of $G$, where $G$ acts on functions by left shifts, and coinvariants are taken under the diagonal action on the representation and by right multiplication on the argument: $f^h(x):=\rho(h) f(xh)$:
 \begin{equation}
 \Sc(G,\rho\otimes \Delta_H)_H\cong \ind_H^G(\rho).
 \end{equation}
Substituting in this formula $G$ itself as the subgroup $H$,  we obtain
$\Sc(G,\pi)_G\cong \pi\otimes \Delta_G^{-1}$.

We also have a natural isomorphism of representations of $G\times G\times G$
\begin{equation}
\Sc(G,\pi)\cong \Sc(G)\otimes \pi
\end{equation}
In the non-Archimedean case this is evident, and in the Archimedean case this is \cite[Proposition 1.2.6]{dCl}.
We will also use the linear automorphism of $\Sc(G,\pi)$ given by
\begin{equation}
T(f)(x):=\pi(x)f(x)
\end{equation}
Note that $T$ intertwines the action of $G\times G$ on $\Sc(G,\pi)$ given by $f^{(g_1,g_2)}(x):=\pi(g_2)f(g_1^{-1}xg_2)$ with the action given by $f^{(g_1,g_2)}(x):=\pi(g_1)f(g_1^{-1}xg_2)$. Altogether we have
\begin{equation*}
(\pi \otimes \ind_H^G(\rho))_{G}\cong (\pi\otimes \Sc(G)\otimes \rho\otimes \Delta_H)_{G\times H}\cong (\Sc(G,\pi)\otimes \rho\otimes \Delta_H)_{ G\ \times H}\cong (\pi\otimes \rho\otimes \Delta_H\otimes \Delta_G^{-1})_{H}
\end{equation*}
\end{proof}
}



\subsection{Oscillator representations of the Heisenberg group}
\begin{defn}
Let $W_n$ denote the  2n-dimensional $\F$-vector space $(\F^n)^* \oplus \F^n$ and let $\omega$ be the standard symplectic form on $W_n$.
The Heisenberg group $H_n$ is the algebraic group with underlying algebraic variety $W_{n} \times \F$ with the group law given by $$(w_1,z_1)(w_2,z_2) = (w_1+w_2,z_1+z_2+(1/2)\omega(w_1,w_2)).$$
Note that $H_0=\F$.
\end{defn}


\begin{defn}
Let $\chi$ be the additive character  of $\F$, as in \eqref{=chi}.  Extend $\chi$ trivially to a character of the commutative subgroup $0\oplus \F^n\oplus \F \subset H_n$.
The oscillator representation ${\varpi}_\chi$ is the unitary induction of $\chi$ from $0\oplus \F^n\oplus \F$ to $H_n$. Define the smooth oscillator representation
$\sigma_\chi$ to be the space of smooth vectors in ${\varpi}_\chi$.
\end{defn}

\begin{lemma}[{see {\it e.g.} \cite[Corollary 2.4.5]{GGS}}]\label{lem:OscInd}
$\sigma_{\chi}=\ind_{0\oplus \F^n\oplus \F}^{H_n}(\chi)$
\end{lemma}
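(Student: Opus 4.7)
The plan is to realize both sides as function spaces on $F^n$ via the coset decomposition of $H_n$ modulo the polarizing subgroup $P := 0 \oplus F^n \oplus F$, and then verify that the smoothness/decay conditions match. Since $P$ is a closed normal subgroup of $H_n$ with abelian quotient $H_n/P \cong F^n$, the map $s : F^n \to H_n$, $x \mapsto ((x,0),0)$, is a section. Restriction of functions along $s$ identifies the set of functions $f : H_n \to \C$ satisfying $f(pg)=\chi(p)f(g)$ (for $p \in P$, $g \in H_n$) with arbitrary functions on $F^n$.

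Next, I would spell out both sides in this model (the Schr\"odinger model). For the unitary induction $\varpi_\chi$, one checks directly from the Heisenberg multiplication that the three standard families of one-parameter subgroups act on functions $f : F^n \to \C$ by: translation $((y,0,0)\cdot f)(x) = f(x+y)$ for $y \in F^n$, multiplication by a character $((0,\eta,0)\cdot f)(x) = \chi(\eta(x))f(x)$ for $\eta \in (F^n)^*$, and central scalar multiplication $((0,0,z)\cdot f)(x) = \chi(z)f(x)$. Under this identification the Hilbert space is $L^2(F^n)$, since $P$ is unimodular and the induced measure on $H_n/P$ is Lebesgue measure on $F^n$. On the right-hand side, Definition~\ref{def:ind} realizes $\ind_P^{H_n}(\chi)$ as (Schwartz or locally constant compactly supported) $\chi$-equivariant functions on $H_n$; restriction along $s$ identifies these with $\mathcal{S}(F^n)$ in the Archimedean case and with $C_c^\infty(F^n)$ in the non-Archimedean case.

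It remains to identify the smooth vectors of $\varpi_\chi$ with the same space. In the non-Archimedean case this is routine: a vector is fixed by some open compact subgroup of $H_n$ iff the corresponding function on $F^n$ is locally constant and compactly supported, by the explicit action above. In the Archimedean case I would argue that the derived action of the Heisenberg Lie algebra produces both the partial derivatives $\partial/\partial x_i$ (from the $F^n$-translations) and the multiplications by the coordinate functions $x_i$ (from the derivative of $\eta \mapsto \chi(\eta(x))$ at $0$, up to a factor of $2\pi i$). Hence the common smooth vectors for these operators are exactly the functions $f \in L^2(F^n)$ such that every polynomial differential operator with polynomial coefficients preserves $L^2$-integrability, i.e.\ the Schwartz space $\mathcal{S}(F^n)$. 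The moderate growth requirement in Notation~\ref{not:rep} is automatic for this polynomially bounded action.

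The main obstacle is the Archimedean smooth vectors identification, for which I would cite (or recall) the standard fact that $\mathcal{S}(F^n)$ is precisely the space of smooth vectors of the Schr\"odinger model of the Heisenberg group, together with the observation that the Schwartz induction of du~Cloux, applied to the unitary character $\chi$ of the abelian subgroup $P$ with quotient $F^n$, literally produces the Schwartz functions on $F^n$ via the section $s$. Combining these two identifications yields the asserted equality $\sigma_\chi = \ind_P^{H_n}(\chi)$.
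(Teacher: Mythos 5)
Your argument is correct and is essentially the standard one: the paper itself gives no proof of this lemma beyond the citation to \cite[Corollary 2.4.5]{GGS}, and what you write is exactly the Schr\"odinger-model computation underlying that citation — identify both the smooth vectors of the unitary induction and the compact/Schwartz induction with $C_c^\infty(\F^n)$ (non-Archimedean case), resp.\ $\mathcal{S}(\F^n)$ (Archimedean case), via a section of $H_n/(0\oplus \F^n\oplus \F)$. The only points to tidy are minor: the quotient is naturally $(\F^n)^*$ rather than $\F^n$ (harmless after fixing an identification), the explicit action formulas pick up innocuous factors coming from the $\tfrac{1}{2}\omega$ cocycle in the group law, and the two Archimedean identifications you invoke (smooth vectors of the Schr\"odinger representation equal $\mathcal{S}(\R^n)$, and du Cloux's Schwartz induction of a character along a closed subgroup with Nash quotient $\F^n$ equals $\mathcal{S}(\F^n)$) should indeed be backed by the references you indicate and stated as topological, not merely linear, isomorphisms, since the category $\Rep^{\infty}$ here is one of Fr\'echet spaces.
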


%
%

\begin{theorem}[Stone-von-Neumann]\label{thm:StvN}
The oscillator representation $\varpi_\chi$ is the only irreducible unitary representation of $H_n$ with central character $\chi$.
\end{theorem}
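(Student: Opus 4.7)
My plan is to use the classical Mackey-style uniqueness argument. Let $A := 0 \oplus \F^n \oplus \F \subset H_n$ (a Lagrangian polarization together with the center), and let $Z := 0\oplus 0 \oplus \F$ be the center. Since $H_n$ is two-step nilpotent with commutator subgroup exactly $Z \subset A$, the subgroup $A$ is closed, normal, and abelian; a direct commutator computation shows it is even maximal abelian. The character $\chi$ of $Z$ extends to $A$ by declaring it trivial on the $\F^n$-factor; call this extension $\tilde\chi$. By Lemma \ref{lem:OscInd} (and the fact that unitary and compactly-supported induction agree on smooth vectors for the unitary character $\tilde\chi$), the oscillator representation $\varpi_\chi$ is the unitary induction of $\tilde\chi$ from $A$ to $H_n$.

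First I would check that $\varpi_\chi$ itself is irreducible. Realizing it on $L^2(\F^n)$ via the usual Schrödinger model (translations by the $\F^n$-factor, multiplications by unitary characters coming from the $(\F^n)^*$-factor, and the center acting by $\chi$), any bounded operator in its commutant commutes with multiplication by $L^\infty(\F^n)$ and with all translations; Pontryagin duality then forces it to be scalar.

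Next, given an arbitrary irreducible unitary representation $\pi$ with central character $\chi$, I would restrict to $A$ and invoke the SNAG theorem (Pontryagin duality in the $p$-adic case) to get a direct integral decomposition $\pi|_A = \int^{\oplus}_{\widehat A} \lambda\, dE(\lambda)$. The constraint $\pi|_Z = \chi\cdot\mathrm{Id}$ forces the spectral measure $E$ to be supported on $X := \{\lambda \in \widehat A : \lambda|_Z = \chi\}$, a principal homogeneous space for $\widehat{A/Z} \cong \F^n$. A direct computation shows the conjugation action of $g = (u,0,0) \in H_n$ sends $\lambda \in X$ to the character $(0,v,z) \mapsto \lambda(0,v,z)\chi(u(v))$; by non-degeneracy of $\omega$, the action of $H_n/A \cong (\F^n)^*$ on $X$ is simply transitive. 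Irreducibility of $\pi$ then forces $E$ to be of full support and quasi-invariant, which pins down its measure class to be the unique Haar class on $X$.

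By Mackey's imprimitivity theorem, applied with stabilizer $A$ and inducing data $\tilde\chi$, we conclude $\pi \cong \mathrm{Ind}_A^{H_n}(\tilde\chi) = \varpi_\chi$. The most delicate step is the spectral decomposition: in the Archimedean case this is standard, while in the non-Archimedean case one must use Pontryagin duality for the $l$-group $A$ to ensure a genuine direct integral over $\widehat A$ exists. Once the decomposition is in place, the orbit analysis and uniqueness of invariant measure class proceed uniformly in both settings, and no further subtleties arise.
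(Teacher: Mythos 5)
The paper gives no proof of Theorem \ref{thm:StvN} at all: it is quoted as the classical Stone--von Neumann theorem and used as a black box (only its Corollary \ref{cor:StvN} is what the paper actually needs), so there is no in-paper argument to compare yours with. Your sketch is the standard Mackey-machine proof and is essentially correct, but two steps are stated with slightly misplaced logic. First, quasi-invariance of the spectral measure class of $\pi|_A$ is not a consequence of irreducibility; it follows from normality of $A$ via the covariance relation $\pi(g)E(S)\pi(g)^{-1}=E(g\cdot S)$, and full support then follows because the support is a closed invariant set in the (simply) transitive $H_n/A$-space $X$. Irreducibility is what you need later, to kill multiplicity. Second, the imprimitivity theorem by itself only yields $\pi\cong\mathrm{Ind}_A^{H_n}\sigma$ for \emph{some} unitary representation $\sigma$ of $A$; to identify $\sigma$ as $\tilde\chi\otimes V$ for a multiplicity space $V$ you must use that $E$ is precisely the spectral measure of $\pi|_A$ together with the fact that the orbit map $H_n/A\to X$ is a homeomorphism (regularity of the orbit), and then irreducibility of $\pi$ forces $\dim V=1$, giving $\pi\cong\varpi_\chi$. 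Finally, the delicacy you flag in the non-Archimedean case is not really there: the SNAG theorem applies to any locally compact abelian group, and an irreducible unitary representation of a second countable locally compact group is automatically realized on a separable Hilbert space (the orbit of any vector is total and separable), so the direct integral and Mackey theory work uniformly in both settings.
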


\begin{cor}\label{cor:StvN}
 Let $L\subset W$ be a Lagrangian subspace. Extend $\chi$ trivially to the abelian subgroup $L\oplus \F \subset H_n$. Then $\ind_{L\oplus \F}^{H_n}\chi \cong \sigma_\chi$.
\end{cor}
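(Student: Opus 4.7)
The plan is to reduce the case of an arbitrary Lagrangian $L$ to the standard Lagrangian $L_0:=0\oplus F^n$ (which is already handled by Lemma \ref{lem:OscInd}) by means of a symplectic automorphism of $H_n$, and then to apply Theorem \ref{thm:StvN} to eliminate the resulting twist. First I would observe that the center $Z:=0\oplus 0\oplus F$ of $H_n$ is contained in $L\oplus F$, and that the trivial extension of $\chi$ restricts to $\chi$ on $Z$; consequently $\ind_{L\oplus F}^{H_n}\chi$ has central character $\chi$, and in particular its unitary completion is a unitary representation of $H_n$ with central character $\chi$.

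Next, invoking Witt-type transitivity of $\Sp(W_n)$ on $F$-rational Lagrangian subspaces of $W_n$, I would choose $g\in\Sp(W_n)$ with $g(L_0)=L$ and extend it to an automorphism $\tilde g$ of $H_n$ defined by $\tilde g(w,z):=(g(w),z)$; this is well defined because $g$ preserves $\omega$, and it restricts to the identity on $Z$. Since $\tilde g$ sends $L_0\oplus F$ onto $L\oplus F$ while fixing $\chi$ on the center, the pullback $f\mapsto f\circ\tilde g$ gives an isomorphism of underlying spaces
\[
\ind_{L\oplus F}^{H_n}\chi \;\cong\; \ind_{L_0\oplus F}^{H_n}\chi \;=\; \sigma_\chi,
\]
where the final equality is Lemma \ref{lem:OscInd}. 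This pullback, however, intertwines the two $H_n$-actions only up to the automorphism $\tilde g$, so as an object of $\Rep^\infty(H_n)$ the left-hand side is isomorphic to the twisted representation $\sigma_\chi\circ\tilde g$.

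Finally, to absorb the twist, I would view $\sigma_\chi\circ\tilde g$ as the space of smooth vectors in the unitary representation $\varpi_\chi\circ\tilde g$, which is irreducible and has central character $\chi$ because $\tilde g$ fixes $Z$ pointwise. By Theorem \ref{thm:StvN}, $\varpi_\chi\circ\tilde g\cong\varpi_\chi$ as unitary representations of $H_n$, and passing to smooth vectors yields $\sigma_\chi\circ\tilde g\cong\sigma_\chi$ in $\Rep^\infty(H_n)$, which combined with the previous step gives the claim. The only slightly delicate point, particularly in the Archimedean case, is the compatibility between the Schwartz induction used in the statement and the unitary induction to which Stone--von-Neumann directly applies; but since $H_n/(L\oplus F)\cong F^n$ is abelian and unimodular, the Schwartz (respectively smooth compactly supported) induction coincides with the space of smooth vectors in the unitary induction, so this passage is routine.
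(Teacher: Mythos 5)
Your argument is correct. The paper states this corollary without an explicit proof, as an immediate consequence of Theorem \ref{thm:StvN}: the implicit argument is to note that the unitary induction of $\chi$ from $L\oplus\F$ is irreducible with central character $\chi$, identify it with $\varpi_\chi$ by uniqueness, and identify the Schwartz (resp.\ compactly supported smooth) induction with its smooth vectors, exactly as Lemma \ref{lem:OscInd} does for the standard Lagrangian. You take a slightly different, and entirely legitimate, route: using transitivity of $\Sp(W_n)$ on Lagrangian subspaces you pick $g$ with $g(0\oplus\F^n)=L$, pull back along the automorphism $\tilde g(w,z)=(g(w),z)$ of $H_n$ (which fixes the center pointwise), and invoke Theorem \ref{thm:StvN} only to absorb the resulting twist, since $\varpi_\chi\circ\tilde g$ is again irreducible unitary with central character $\chi$ and smooth vectors are unchanged under twisting by an automorphism. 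What this buys is that the only analytic input is Lemma \ref{lem:OscInd} for the standard Lagrangian: you never need irreducibility of the induction from an arbitrary $L$, nor the comparison of Schwartz induction with smooth vectors of unitary induction for general $L$ --- so your closing caveat is not actually needed, as in your argument that comparison is used only for $0\oplus\F^n\oplus\F$, where it is precisely Lemma \ref{lem:OscInd}. The direct route is marginally shorter but requires the Mackey-type irreducibility statement for a general Lagrangian, which your conjugation trick supplies for free. One harmless bookkeeping point: with the usual induction conventions the pullback $f\mapsto f\circ\tilde g$ intertwines the action with the twist by $\tilde g^{-1}$ rather than $\tilde g$, which does not affect the argument.
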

%

\subsection{Degenerate Whittaker models}\label{subsec:DefMod}
Let $G$ be \DimaE{a finite central extension of the group $G^{\mathrm{alg}}$ of $F$-points of a reductive algebraic group defined over $F$. Let $G^{ad}$ denote the corresponding adjoint algebraic group.

\begin{lem}[{\cite[Appendix I]{MW_Scr}}]\label{lem:cov}
Let $U\subset G^{\mathrm{alg}}$ be a unipotent subgroup, and $\hat U$ be the preimage of $U$ in $G$. Then there exists a unique open subgroup $U'\subset \hat U$ that projects isomorphically onto $U$.
\end{lem}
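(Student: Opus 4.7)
The plan is to construct a continuous group-theoretic splitting $\sigma: U \to \hat U$ of the projection $\hat U \to U$, and then to set $U' := \sigma(U)$. Writing $Z$ for the kernel of $G \to G^{\mathrm{alg}}$ (finite and central by assumption), $\hat U$ fits into a short exact sequence $1 \to Z \to \hat U \to U \to 1$ of topological (or Nash) groups, with $Z$ central in $\hat U$.

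For existence, I would use the exponential map. Since $F$ has characteristic zero and $U$ is unipotent, the Lie algebra $\mathfrak{u}$ of $U$ is nilpotent, the Baker--Campbell--Hausdorff operation $\star$ on $\mathfrak{u}$ is polynomial (the BCH series terminates on a nilpotent Lie algebra), and $\exp_U: (\mathfrak{u}, \star) \to U$ is a bijective group isomorphism. Since $Z$ is discrete, $\hat U$ is an $F$-Lie group with Lie algebra also equal to $\mathfrak{u}$, and $\exp_{\hat U}$ is defined on some neighborhood $V$ of $0$ as a local homomorphism from $(V,\star)$ to $\hat U$ lifting $\exp_U$. Using the unique divisibility of $(\mathfrak{u},\star)$ --- which follows from the BCH identity $x \star \cdots \star x = nx$ --- I extend $\exp_{\hat U}$ to a global map $\widetilde{\exp}: \mathfrak{u} \to \hat U$ by the formula $\widetilde{\exp}(x) := \exp_{\hat U}(x/n)^n$ for any $n$ large enough that $x/n \in V$; independence of $n$ follows from the local homomorphism property. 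Setting $\sigma := \widetilde{\exp} \circ \log_U : U \to \hat U$ yields a continuous group section of the projection, and $U' := \sigma(U)$ contains the open set $\exp_{\hat U}(V)$ (open by the inverse function theorem applied to $\exp_{\hat U}$), so $U'$ is an open subgroup projecting isomorphically onto $U$ via $\sigma^{-1}$.

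For uniqueness, suppose $U''$ is another such open subgroup, with corresponding section $\sigma': U \to \hat U$. Then $z_u := \sigma(u)\sigma'(u)^{-1}$ lies in $Z$ for every $u \in U$, and centrality of $Z$ yields the relation $z_{uv} = z_u z_v$, so that $u \mapsto z_u$ is a group homomorphism $U \to Z$. Since $U$ is uniquely $n$-divisible for $n := |Z|$ (by the exponential) and $Z$ has exponent dividing $n$, writing $u = v^n$ forces $z_u = z_v^n = 1$; hence $\sigma = \sigma'$ and $U'' = U'$. The main obstacle is verifying that $\widetilde{\exp}$ is a global group homomorphism, not merely a local one on $V$: the discrepancy $\widetilde{\exp}(x\star y)\, \widetilde{\exp}(y)^{-1}\, \widetilde{\exp}(x)^{-1}$ lies in $Z$ by projecting to $U$, and one shows it vanishes by a divisibility argument analogous to the one used for uniqueness (alternatively by induction on the nilpotence class of $\mathfrak{u}$, reducing to the abelian case, where the extension splits explicitly because $\mathrm{Hom}(U,Z) = 0$ and $H^2(U,Z) = 0$ for divisible $U$ and finite $Z$ of exponent $n$).
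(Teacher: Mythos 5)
First, note that the paper does not prove this lemma at all: it is quoted from \cite[Appendix I]{MW_Scr}, so your argument has to stand on its own. Your uniqueness argument does: two subgroups projecting isomorphically give two group-theoretic sections, their ratio is a homomorphism $U\to Z$ by centrality of $Z$, and unique divisibility of $U$ (valid over both $\R$ and $p$-adic $F$, since $u^{1/n}=\exp(\log(u)/n)$) kills any homomorphism into a finite group. That part is correct and in fact gives uniqueness among all, not just open, lifts.

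The existence argument, however, has a genuine gap in the non-Archimedean case, which is the main case here. Your globalization $\widetilde{\exp}(x):=\exp_{\hat U}(x/n)^n$ ``for $n$ large enough that $x/n\in V$'' silently assumes Archimedean intuition: over a $p$-adic field, $|x/n|_p\geq |x|_p$ for every positive integer $n$, so dividing by integers never moves $x$ into a small neighborhood $V$ of $0$, and the construction does not even get started (unique divisibility of $\mathfrak u$ is not the issue; smallness of $x/n$ is). Moreover, your proposed fix for the acknowledged obstacle --- showing the defect $c(x,y)=\widetilde{\exp}(x\star y)\widetilde{\exp}(y)^{-1}\widetilde{\exp}(x)^{-1}\in Z$ vanishes ``by a divisibility argument analogous to the one used for uniqueness'' --- does not apply verbatim: $c$ is a $2$-cocycle, not a character, so it is not a priori multiplicative in either variable and you cannot write $c(x,y)=c(x/n,y)^n$. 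Over $\R$ there is a cheap repair (the map $c$ is continuous into the discrete group $Z$ with connected source, hence trivial; or, simpler still, $U$ is simply connected so the identity component of $\hat U$ already projects isomorphically), but over $p$-adic $F$ connectedness is unavailable and one needs a genuinely global argument on the group itself, e.g.\ the canonical section $\sigma(u):=\bigl(\widehat{u^{1/n}}\bigr)^{n}$ with $n=|Z|$ (well defined since $Z$ is central of exponent dividing $n$), whose multiplicativity is then proved by induction on the nilpotency class, reducing to the abelian case along the lines of your final parenthetical remark --- this is essentially the argument of Moeglin--Waldspurger that the paper invokes. As written, your proof establishes the lemma only for $F=\R$.
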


We will therefore identify the unipotent subgroups of $G^{\mathrm{alg}}$ with their liftings in $G$.}

\begin{defn}
\begin{enumerate}[(i)]
\item A \emph{Whittaker pair} is an ordered pair $(S,\varphi)$ such that $S\in \fg$ is rational semi-simple, and $\varphi\in (\fg^*)^{S}_{-2}$. Given such a Whittaker pair, we define the space of \emph{degenerate Whittaker models} $\cW_{S,\varphi}$ in the following way: let $\fu:=\fg_{\geq 1}^S$.
Define an anti-symmetric form $\omega_\varphi$ on $\fg$ by $\omega_\varphi(X,Y):= \varphi([X,Y])$. Let $\fn$ be the radical of $\omega_\varphi|_{\fu}$. Note that $\fu,\fn$ are nilpotent subalgebras of $\fg$, and $[\fu,\fu]\subset \fg^S_{\geq 2}\subset \fn$.
 Let $U:=\Exp(\fu)$ and $N:=\Exp(\fn)$ be the corresponding nilpotent subgroups of $G$.
Let  $\fn' :=\fn\cap \Ker(\varphi), \, N':=\Exp(\fn')$. If $\varphi=0$ we define \begin{equation}\cW_{S,0}:=\ind_{U}^G(\C).\end{equation}
Assume now that $\varphi$ is non-zero.
Then $U/N'$ has a natural structure of a Heisenberg group, and its center is $N/N'$. Let $\chi_{\varphi}$ denote the unitary character of $N/N'$ given by $\chi_{\varphi}(\exp(X)):=\chi(\varphi(X))$.
Let $\sigma_\varphi$ denote the oscillator representation of $U/N'$ with central character $\chi_{\varphi}$, and $\sigma'_\varphi$ denote its trivial lifting to $U$. Define \begin{equation}\cW_{S,\varphi}:=\ind_{U}^G(\sigma'_\varphi).\end{equation}


\item For a nilpotent element $\varphi\in \fg^*$, define the \emph{generalized Whittaker model} $\cW_{\varphi}$  corresponding to $\varphi$ to be $\cW_{S,\varphi}$, where $S$ is a neutral element for $\varphi$ if $\varphi\neq 0$ and $S=0$ if $\varphi=0$. We will also call $\cW_{S,\varphi}$ \emph{neutral degenerate Whittaker model}. By Lemma \ref{lem:sl2} $\cW_{\varphi}$ depends only on the coadjoint orbit of $\varphi$, and does not depend on the choice of $S$. Thus we will also use the notation $\cW_{\cO}$ for a nilpotent coadjoint orbit $\cO\subset \fg^*$. See \cite[\S 5]{GGS} for a formulation of this definition without choosing $S$.

\item \DimaG{To} $\pi\in \Dima{\Rep^{\infty}}(G)$ \DimaG{associate} the degenerate and generalized Whittaker quotients  by
\begin{equation}\pi_{S,\varphi}:=(\cW_{S,\varphi}\otimes\pi)_G \text{ and }\pi_{\varphi}:=(\cW_{\varphi}\otimes\pi)_G.
\end{equation}
\end{enumerate}
\end{defn}

\begin{lem}\label{lem:WhitFrob}
Let $\fl\subset \Dima{\fg_{\geq 1}^S}$ be a maximal isotropic subalgebra and $L:=\Exp(\fl)$. Let $\pi\in \DimaD{\Rep^{\infty}}(G)$. Then
$$\pi_{S,\varphi}\cong (\pi\otimes\chi_{\varphi})_L.$$
\end{lem}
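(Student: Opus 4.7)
The plan is to reduce the statement to the Stone--von~Neumann theorem (Corollary~\ref{cor:StvN}), induction in stages (Lemma~\ref{lem:IndSt}), and the Frobenius-type reciprocity for coinvariants (Lemma~\ref{lem:Frob2}).

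First I would observe that since $\fn$ is the radical of $\omega_\varphi|_\fu$, it is contained in every maximal isotropic subspace of $\fu$. Thus $\fn\subset\fl$, so $N'\subset N\subset L\subset U$, and the image $\fl/\fn'$ is a Lagrangian subspace of the symplectic quotient $\fu/\fn$ underlying the Heisenberg group $U/N'$ with center $N/N'$. Next I would verify that the formula $\chi_\varphi(\exp X):=\chi(\varphi(X))$ for $X\in\fl$ defines a unitary character of the unipotent group $L$. By the Baker--Campbell--Hausdorff formula it suffices to check that $\varphi$ annihilates every iterated bracket of elements of $\fl$. For the first bracket this is the isotropy of $\fl$: $\varphi([X,Y])=\omega_\varphi(X,Y)=0$. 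For higher iterated brackets one uses that any such bracket of elements of $\fu=\fg_{\geq 1}^S$ lies in $\fg_{\geq 3}^S$, which is annihilated by $\varphi\in(\fg^*)_{-2}^S$.

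Then by Corollary~\ref{cor:StvN} applied to the Heisenberg group $U/N'$ with central character $\chi_\varphi$ and Lagrangian $L/N'$, we obtain
$$\sigma_\varphi\cong\ind_{L/N'}^{U/N'}(\chi_\varphi).$$
Inflating along the projection $U\onto U/N'$ gives $\sigma'_\varphi\cong\ind_L^U(\chi_\varphi)$, and induction in stages (Lemma~\ref{lem:IndSt}) then yields
$$\cW_{S,\varphi}=\ind_U^G(\sigma'_\varphi)\cong\ind_L^G(\chi_\varphi).$$
Finally, I apply Lemma~\ref{lem:Frob2} with $H=L$, $\rho=\chi_\varphi$, to get
$$\pi_{S,\varphi}=(\cW_{S,\varphi}\otimes\pi)_G\cong(\pi\otimes\chi_\varphi\,\Delta_L\Delta_G^{-1})_L=(\pi\otimes\chi_\varphi)_L,$$
where the last equality uses that $L$ is unipotent (hence unimodular, $\Delta_L=1$) and $G$ is a finite central extension of a reductive group (hence also unimodular, $\Delta_G=1$). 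The degenerate case $\varphi=0$ is covered as well: then $\omega_\varphi=0$, so $\fn=\fu$ and the unique maximal isotropic subalgebra is $\fl=\fu$, giving $L=U$ and $\chi_\varphi=\mathbf{1}$, consistent with $\cW_{S,0}=\ind_U^G(\C)$.

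There is no serious obstacle; the only step that requires a small verification is the character condition on $L$, which is handled by the weight argument for $\varphi\in(\fg^*)^S_{-2}$ together with isotropy of $\fl$. Everything else is a bookkeeping exercise combining Stone--von~Neumann, induction in stages, and Frobenius reciprocity; this structure is identical in the $l$-group and Nash-group settings thanks to the parallel formalisms recalled in \S\ref{subsec:ScInd}.
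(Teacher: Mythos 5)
Your proposal is correct and follows essentially the same route as the paper: Stone--von Neumann (Corollary \ref{cor:StvN}) plus induction in stages (Lemma \ref{lem:IndSt}) to identify $\cW_{S,\varphi}\cong\ind_L^G(\chi_\varphi)$, and then Lemma \ref{lem:Frob2} to pass to coinvariants. The extra verifications you include (that $\fn\subset\fl$, that $\chi_\varphi$ is indeed a character of $L$ via the weight argument, and the unimodularity of $L$ and $G$) are details the paper leaves implicit, and they check out.
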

\begin{proof}
By Corollary \ref{cor:StvN} and Lemma \ref{lem:IndSt} we have
$\cW_{S,\varphi}\cong \ind_L^G(\chi_{\varphi}).$ Using Lemma \ref{lem:Frob2} we obtain
$$\pi_{S,\varphi}\cong(\ind_L^G(\chi_{\varphi})\otimes\pi)_G\cong (\pi\otimes\chi_{\varphi})_L.$$
\end{proof}

Slightly different degenerate Whittaker models are considered in \cite{GGS} and denoted
$\cW_{S,\varphi}(\pi)$. By Lemma \ref{lem:WhitFrob} and \cite[Lemma 2.5.2]{GGS} they relate to $\pi_{S,\varphi}$ by $\cW_{S,\varphi}(\pi)=\pi_{S,\varphi}^*$.
We changed the notion in this paper since for p-adic $F$, $\pi_{S,\varphi}$ are the models considered in \cite{MW,Var} and for $F=\R$, $\pi_{S,\varphi}$ are (nuclear) \Fre spaces.

\begin{remark} \label{rem:RealGen}
If $\F=\R$, one can
define $\cW_{S,\varphi}$ for any semi-simple $S$ with real eigenvalues in the same way, and all our proofs  will be valid for this case without changes.\end{remark}


\begin{lem}\label{lem:torus}
Assume that $G$ is an adjoint group, and let
$S\in \fg$ be semi-simple. Then there exists an algebraic group morphism  $\nu:\F^{\times}\to G$ (defined over $\F$) with $d\nu(1)=S$ if and only if all the eigenvalues of $S$ in the adjoint action on $\fg(\bar \F)$ are integers, where $\bar F$ denotes the algebraic closure of $F$.
\end{lem}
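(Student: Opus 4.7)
For the forward direction, given $\nu:\mathbb{G}_m\to G$ defined over $F$ with $d\nu(1)=S$, I would compose with the adjoint representation to obtain a rational representation $\Ad\circ\nu:\mathbb{G}_m\to\GL(\fg)$. Over $\oF$ this decomposes $\fg(\oF)=\bigoplus_{n\in\Z}\fg_n$ into weight spaces on which $\nu(t)$ acts by $t^n$, so $\ad(S)=d(\Ad\circ\nu)(1)$ acts as the scalar $n$ on $\fg_n$ and all its eigenvalues lie in $\Z$.

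For the converse, the plan is to first place $S$ inside an $F$-rational maximal torus of $G$, and then use the adjoint hypothesis to recognize $S$ as the differential of an $F$-rational cocharacter of that torus. As a first step I would produce a maximal $F$-torus $T\subset G$ with $S\in\Lie(T)(F)$. Since $S$ is semisimple, the connected centralizer $Z_G(S)^0$ is a reductive $F$-subgroup of the same rank as $G$, and $S$ lies in the Lie algebra of its identity component center; any maximal $F$-torus of $Z_G(S)^0$ therefore contains $S$ in its Lie algebra and, being of maximal rank, is simultaneously a maximal $F$-torus of $G$.

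Next, working over $\oF$ where $T$ splits, I would invoke the standard identification: the differential map $\nu\mapsto d\nu(1)$ embeds $X_*(T_{\oF})$ into $\Lie(T)(\oF)$ with image the lattice $\{x\in \Lie(T)(\oF):d\chi(x)\in\Z\text{ for all }\chi\in X^*(T_{\oF})\}$, the perfect pairing between $X_*(T_{\oF})$ and $X^*(T_{\oF})$ being the key input. Since $G$ is adjoint, $X^*(T_{\oF})$ coincides with the root lattice, so this integrality condition on $S$ reduces to $\alpha(S)\in\Z$ for every root $\alpha$, which is exactly the hypothesis that $\ad(S)$ has integer eigenvalues on $\fg(\oF)$. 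Hence there is a unique $\nu\in X_*(T_{\oF})$ with $d\nu(1)=S$.

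Finally I would check descent to $F$: for any $\sigma\in\mathrm{Gal}(\oF/F)$, the cocharacter $\sigma(\nu)$ also takes values in $T_{\oF}$ (as $T$ is $F$-rational) and has differential $\sigma(S)=S$, so by the above uniqueness $\sigma(\nu)=\nu$. Thus $\nu\in X_*(T_{\oF})^{\mathrm{Gal}(\oF/F)}=X_*(T)$ is defined over $F$, giving the desired morphism $\mathbb{G}_m\to T\subset G$. The crucial step where the adjoint hypothesis enters, and the one I expect to require the most care, is the lattice identification in the third paragraph: for non-adjoint $G$ the character lattice would strictly exceed the root lattice and the integrality of $\ad(S)$-eigenvalues would no longer guarantee that $S$ lifts to a cocharacter.
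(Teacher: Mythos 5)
Your proposal is correct, but it follows a genuinely different route from the paper. The paper's proof is a three-line reduction to the general linear group: since $G$ is adjoint, the adjoint representation embeds $G$ into $G':=\GL(\fg)$, and in $\GL(\fg)$ a semisimple element is the differential at $1$ of an $F$-rational cocharacter if and only if its eigenvalues on $\fg(\oF)$ are integers (the eigenspaces of an $F$-rational matrix with integer eigenvalues are defined over $F$); the resulting cocharacter $\nu'$ of $G'$ has $\Im(d\nu')\subset\fg$, hence (in characteristic zero) $\Im(\nu')\subset G$, so it is the desired $\nu$. You instead stay inside $G$: you run $S$ through the reductive, maximal-rank centralizer $Z_G(S)^0$ to find a maximal $F$-torus $T$ with $S\in\Lie(T)(F)$, identify $X_*(T_{\oF})$ inside $\Lie(T)(\oF)$ as the lattice dual to $X^*(T_{\oF})$, use adjointness in the form $X^*(T_{\oF})=\text{root lattice}$ to translate the eigenvalue hypothesis into membership in that lattice, and descend to $F$ by uniqueness of $\nu$ plus Galois invariance. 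Both arguments are valid; the paper's is shorter and needs only elementary linear algebra plus the characteristic-zero fact that a connected subgroup with Lie algebra inside $\fg$ lies in $G$ (adjointness entering only through faithfulness of $\Ad$), while yours invokes more machinery (existence of maximal $F$-tori, reductivity of centralizers of semisimple elements, the perfect character--cocharacter pairing, Galois descent of cocharacters) but is more structural: it exhibits $\nu$ as a cocharacter of an explicit torus through $S$ and isolates precisely where adjointness is needed, as your $\SL$-type counterexample remark correctly indicates.
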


\begin{proof}
Embed $G$  into $G':=\GL(\fg)$ using the adjoint action. Then any $\nu:\F^\times \to G$ defines $\nu': \F^\times \to G'$. Note that a semi-simple $S'\in \fg'$ equals $d\nu'(1)$ for some $\nu': \F^\times \to G'$ if and only if all the eigenvalues of $S'$ on $\fg(\bar \F)$ are integers. The ``only if'' part follows. For the ``if'' part, note that if $S=d\nu'(1)$ then $\Im(d\nu')\subset \fg$, hence $\Im(\nu')\in G$ and thus $\nu'$ defines $\nu:\F^{\times}\to G$ with the required property.
\end{proof}


\begin{cor}\label{cor:tor}
Let $S\in \fg$ be rational semi-simple. Then there exists an algebraic group morphism  $\nu:\F^{\times}\to \DimaE{G^{ad}}$ and a central element $Z\in \fg$ such that $S-Z\in \Im(d\nu)$.
\end{cor}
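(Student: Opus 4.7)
The plan is to reduce to Lemma \ref{lem:torus} by first isolating the central component of $S$ and then using the fact that the image of $d\nu$ is a whole $F$-line, which allows us to absorb a denominator. Since $\fg$ is a reductive Lie algebra over a characteristic zero field, I would decompose $\fg=\fz\oplus\fg_{ss}$, where $\fz$ is the center and $\fg_{ss}=[\fg,\fg]$, and write $S=Z+S'$ accordingly, with $Z\in\fz$ and $S'\in\fg_{ss}$. The adjoint projection $\fg\twoheadrightarrow\fg/\fz\cong\Lie(G^{ad})$ restricts to an isomorphism on $\fg_{ss}$, so I can view $S'$ as an element of $\Lie(G^{ad})$ whose adjoint action on $\fg^{ad}(\bar F)$ has the same eigenvalues as $\ad(S)$ on $\fg(\bar F)$, namely rational numbers.

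Next, I would pick a positive integer $N$ clearing the denominators of these eigenvalues, so that $NS'\in\Lie(G^{ad})$ is rational semisimple with integer adjoint eigenvalues on $\fg^{ad}(\bar F)$. Applying Lemma \ref{lem:torus} to the adjoint group $G^{ad}$ and to the element $NS'$ then yields an algebraic group morphism $\nu:F^{\times}\to G^{ad}$ defined over $F$ with $d\nu(1)=NS'$. Since $d\nu:F\to\Lie(G^{ad})$ is $F$-linear, its image is the entire $F$-line $F\cdot d\nu(1)$, which contains $\tfrac{1}{N}\,d\nu(1)=S'=S-Z$. This exhibits the desired $\nu$ and $Z$.

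The only subtle points are the identification of $\fg_{ss}$ with $\Lie(G^{ad})$ (standard in characteristic zero, as the kernel of $\fg\to\Lie(G^{ad})$ is exactly $\fz$), and the observation that $\Im(d\nu)$ is a whole $F$-line rather than just the vector $d\nu(1)$, so that the integer $N$ introduced to clear denominators can be cancelled inside $F$. No part of the argument should be genuinely hard; the corollary is essentially a translation of Lemma \ref{lem:torus} from the adjoint setting to the reductive setting, together with this trivial rescaling.
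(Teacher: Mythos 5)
Your proof is correct and follows essentially the same route as the paper: split off the central part of $S$, rescale by an integer to make the adjoint eigenvalues integral, apply Lemma \ref{lem:torus} in $G^{ad}$, and use that $\Im(d\nu)$ is the full $F$-line through $d\nu(1)$ to undo the rescaling. The paper's own proof is just a terser version of this argument, so no further comparison is needed.
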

\begin{proof}
Replacing $S$ by an integer multiple we can assume that all the eigenvalues of $S$ in the adjoint action on $\fg(\bar \F)$ are integers.
Thus there exists an algebraic group morphism from $F^{\times}$ to the adjoint group of $G$ which includes the projection of $S$ in its image.
\end{proof}

\subsection{Covering groups}\label{subsec:cov}

Let $\gamma=(e,h,f)$ be an $\sl_2$-triple in $\fg$.
 Let $G_{\gamma}$ denote the joint centralizer of the three elements of $\gamma$. It is well known that $G_{\gamma}$ is a Levi subgroup of $G_{f}$.
Let  $\varphi \in \g^{\ast}$ be given by the Killing form pairing with $f$. Recall that  it induces a nondegenerate symplectic form $\omega_{\varphi}$ on $\g^h_1$ and note that $G_{\gamma}$ acts on $\g^h_1$ preserving the symplectic form. That is, there is a natural map $G_{\gamma}\rightarrow \Sp(\g^h_1)=\Sp(\omega_{\varphi})$. Let $\widetilde{\Sp(\omega_{\varphi})}\rightarrow \Sp(\omega_{\varphi})$ be the metaplectic double covering, and set
\[
 \widetilde{G_{\gamma}}=G_{\gamma}\times_{\Sp(\omega_{\varphi})} \widetilde{\Sp(\omega_{\varphi})}.
\]
Observe that the natural map $\widetilde{G_{\gamma}}\rightarrow G_{\gamma}$ defines a double cover of $G_{\gamma}$. \DimaA{We denote by $M_{\gamma}$  the  subgroup of $G_{\gamma}$  generated by the unipotent elements.}
Let $\widetilde{M_{\gamma}}$ denote the preimage of  $M_{\gamma}$ under the projection $\widetilde{G_{\gamma}}\to G_{\gamma}$. Note that  different choices of $\gamma$ with the same $f$ lead to conjugate groups $G_{\gamma}$ and $M_{\gamma}$.

One can also define a covering $\widetilde{G_{\varphi}}$ of the group $G_{\varphi}=G_f$, using the symplectic form defined by $\varphi$ on $\fg/\fg^{\varphi}$. It is easy to see that this cover splits over the unipotent radical of $G_{\varphi}$, and that the preimage of $G_{\gamma}$ in $\widetilde{G_{\varphi}}$ is isomorphic to $\widetilde{G_{\gamma}}$, see {\emph e.g.} \cite[\S 4]{Nev}.

\begin{defn}
Let $\bf H$ be a linear algebraic group defined over $F$, and
fix an embedding $\bf H \into \GL_n$. Denote by $H_0$ the open normal subgroup of ${\bf H}(F)$
generated by the image of the exponential map.
\end{defn}
Note that $H_0$ does not depend on the embedding of $\bf H$ into $\GL_n$. Note also that if $\bf H$ is semi-simple \DimaO{and connected} then $H_0={\bf H}(F)$ and if $F=\R$ then $H_0$ is the connected component of ${\bf H}(F)$. For a \DimaE{ finite central extension $H'$ of $H$, we  define $H'_0$ to be the preimage of $H_0$ under the projection $H'\onto H$.}

\begin{definition}[{\cite[\S 4]{Nev}}]
 We say that a nilpotent orbit $\Orb\subset \g^{\ast}$ is \emph{admissible} if for some (equivalently, for any) choice of  $\varphi \in \cO$, the covering $\widetilde{G_{\varphi}}\rightarrow G_{\varphi}$ splits over $(G_{\varphi})_0$.
\end{definition}

As observed in \cite{Nev}, this definition of admissibility is compatible with Duflo's original definition for the Archimedean case, given in  \cite[\S II.2]{D}.

\DimaA{
\begin{definition}
 We say that a nilpotent orbit $\Orb\subset \g^{\ast}$ is \emph{quasi-admissible} if for some (equivalently, for any) 
 $\varphi \in \cO$, the covering $\widetilde{G_{\varphi}}\rightarrow G_{\varphi}$ admits a finite dimensional \emph{genuine} representation, that is, a finite dimensional representation on which the
  non-trivial element $\eps$ in the preimage of $1\in G_{\varphi}$ acts by $-\Id$. 
\end{definition}

%
}

\begin{defn}\label{def:act}
\DimaD{Let us define the action of $\widetilde{G_{\gamma}}$ on $\pi_{\varphi}$.
Since the adjoint action of $G_{\gamma}$ preserves $\fg^h_1$ and the symplectic form on it, it preserves $U/N'$. Since $\sigma_{\varphi}$ is the unique smooth irreducible representation of $U/N'$ with central character $\chi_{\varphi}$, we have a projective action of $G_{\gamma}$ on $\sigma_{\varphi}$. By \cite{Weil} this action lifts to a genuine representation of $\widetilde{G_{\gamma}}$. This gives \DimaS{rise} to an action of $\widetilde{G_{\gamma}}$ on $\cW_{\varphi}$
\DimaG{by $(\tilde g f)(x)=\tilde g( f(x\DimaS{g}))$. This action  commutes with the action of $G$ and thus} defines an action of $\widetilde{G_{\gamma}}$ on $\pi_{\varphi}=(\cW_{\varphi}\otimes \pi)_G.$
}

\DimaS{More generally, for a Whittaker pair $(S,\varphi)$ let $G(S,\varphi)$ denote the subgroup of $G$ corresponding to the Lie algebra $(\fg^{S}_{\geq 0}\cap \fg_{\varphi})\oplus \fg^{S}_{> 1}$. Then $\DimaT{\widetilde{G(S,\varphi)}}$ normalizes the groups $U$ and $N'$ corresponding to the pair $(S,\varphi)$ and acts on $\cW_{S,\varphi}$ and on $\pi_{S,\varphi}$ as above.}
\end{defn}

\DimaS{It is well known that the metaplectic cover splits over the Siegel parabolic corresponding to any Lagrangian. Indeed, the Weil representation on the Siegel parabolic is given by the action on the Heisenberg group by automorphisms (in the realization of the representation functions on the Heisenberg group as in Lemma \ref{lem:OscInd}). These considerations imply the following lemma.

\begin{lem}\label{lem:actions}Let $(S,\varphi)$ be a Whittaker pair,  let $\fl\subset \Dima{\fg_{\geq 1}^S}$ be a maximal isotropic subalgebra and $L:=\Exp(\fl)$. Let $G'$ denote the normalizer of $L$ in $G(S,\varphi)$.   Then
\begin{enumerate}[(i)]
\item The metaplectic cover of $G(S,\varphi)$ splits over $G'$.
\item The action of $G'$ on $\cW_{S,\varphi}$ defined by splitting the metaplectic cover corresponds under the isomorphism $\cW_{S,\varphi}\cong\ind_{L}^G(\chi_\varphi)$
to the action $(gf)(x):=f(xg)$.
\item For any  $\pi\in \DimaD{\Rep^{\infty}}(G)$, the action of $G'$ on $\pi_{S,\varphi}$ defined by splitting the metaplectic cover corresponds under the isomorphism
$$\pi_{S,\varphi}\cong (\pi\otimes\chi_{\varphi})_L$$
from Lemma \ref{lem:WhitFrob} to the action on $(\pi\otimes\chi_{\varphi})_L$ given by action on representatives in $\pi$.
\end{enumerate}
\end{lem}
}


%

%

\section{Some Archimedean technical lemmas}\label{sec:Arch}
\setcounter{lemma}{0}

Let $P_2(\R)$ denote the group of affine transformations of the line.
Let $N$ denote the unipotent radical of $P_2(\R)$ and $\fn$ denote the Lie algebra of $N$. Let $B^0$ denote the connected component of the identity in $P_2(\R)$

\begin{prop}\label{prop:AV0}
Let $V\in\Rep^{\infty}(B^0)$.
Suppose that $V$ is not generic, i.e. $(V^*)^{N,\psi}=0$ for any non-trivial unitary character $\psi$ of $N$. Then $\fn$ acts locally nilpotently on $V^*$.
\end{prop}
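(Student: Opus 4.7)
The strategy is to prove, for each $\lambda \in V^*$, that $X^{k+1}\lambda = 0$ for some $k = k(\lambda)\in\N$, where $X$ is a generator of $\fn \cong \R$. The analysis proceeds via the smooth matrix coefficient $f_v(t) := \lambda(\pi(\exp(tX))v)$ on $N \cong \R$, viewed as a tempered distribution.

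By continuity of $\lambda$ together with the moderate growth assumption on $V$, there exist continuous seminorms $p,q$ on $V$ and an integer $k=k(\lambda)$ with $|\lambda(v)|\le p(v)$ and $p(\pi(\exp(tX))v)\le (1+|t|)^k q(v)$. Hence $|f_v(t)|\le (1+|t|)^k q(v)$ uniformly in $v$, so $f_v,\hat f_v\in\Sc'(\R)$. The central claim is that $\Supp(\hat f_v)\subseteq\{0\}$ for every $v$. Granting this, $\hat f_v$ is a finite linear combination of derivatives of $\delta_0$, whence $f_v$ is a polynomial in $t$; the polynomial growth bound then forces $\deg f_v\le k$ uniformly in $v$. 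Since $f_v^{(n)}(0)=\lambda(X^n v)$, we conclude $\lambda(X^{k+1}v)=0$ for every $v\in V$, i.e.\ $X^{k+1}\lambda=0$.

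To prove the support claim, one exploits the integrated Schwartz action $\phi_*v=\int_\R \phi(t)\pi(\exp tX)v\,dt$, which makes $V$ a nondegenerate $\Sc(N)$-module in the sense of \cite{dCl}. A preliminary observation is that moderate growth rules out $(V^*)^{N,\chi}$ for any non-unitary continuous character $\chi(t)=e^{\alpha t}$ of $N$ (otherwise $|\mu(\pi(\exp tX)v)|=|e^{\alpha t}\mu(v)|$ would grow exponentially in $t$, contradicting the polynomial bound); combined with the hypothesis, $(V^*)^{N,\chi}=0$ for every nontrivial continuous character $\chi$. Now suppose $\Supp(\hat f_{v_0})$ contains some $\xi_0\neq 0$. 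Choose $\varphi\in C_c^\infty(\widehat\R\setminus\{0\})$ supported near $\xi_0$ with $\langle\varphi,\hat f_{v_0}\rangle\neq 0$, and set $\mu\in V^*$ by $\mu(v):=\int_\R \hat\varphi(t) f_v(t)\,dt$, so that $\mu(v_0)\neq 0$. Using the conjugation identity $\pi(s)\pi(X)\pi(s)^{-1}=s\pi(X)$ for $s\in A^0$, which rescales Fourier supports, one obtains an $A^0$-averaging of $\mu$ whose spectral mass concentrates at $\xi_0$, producing a nonzero element of $(V^*)^{N,\psi_{\xi_0}}$, contradicting the hypothesis. The main obstacle is the rigorous extraction of this limiting $\xi_0$-eigenfunctional; the cleanest path is via the Schwartz-module spectral theory of \cite{dCl}, which reduces the claim to an explicit local computation at $0\in\widehat\R$.
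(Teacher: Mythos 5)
Your second half is fine: granting that $\Supp(\hat f_v)\subseteq\{0\}$ for all $v$, the finite-order/polynomial argument plus the moderate-growth bound (with exponent $k$ depending only on the seminorm dominating $\lambda$) does give $\lambda(X^{k+1}v)=0$ for all $v$. This is essentially the paper's own reduction, phrased dually: the paper encodes ``spectral support at $0$'' as the statement $AV=0$, where $A\subset\Sc(\R)$ is the ideal of functions with all moments vanishing, and then deduces local nilpotency from the fact that a distribution supported at a point has finite order (the $\C[[t]]$-module lemma, with Baire category supplying the uniformity you get instead from the growth bound).

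The genuine gap is the support claim itself, which is the heart of the proposition, and your sketch of it does not work as stated. Averaging $\mu$ over $A^0\cong\R_{>0}$ is an integral over a noncompact group with no convergence or nontriviality argument, and ``spectral mass concentrates at $\xi_0$'' does not by itself produce a continuous $(N,\psi_{\xi_0})$-equivariant functional; making a limit of dilated functionals converge to a nonzero eigenfunctional is exactly the hard analytic point. Nor is the appeal to \cite{dCl} a real reduction: du Cloux's localization theorem (the paper's Theorem \ref{thm:dCl}) applies only to \emph{non-degenerate} $(B^0,\Sc(X))$-modules over a transitive Nash space, and $V$ itself is not one — the $\Sc(\R\setminus\{0\})\simeq A$-action on $V$ is precisely what you are trying to prove is zero, and the relevant point is a nonzero character in one of the open orbits $X_\pm$, not $0\in\widehat\R$. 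The paper's proof has to (i) pass to $E:=\overline{AV}$, on which the $\Sc(X)$-action is non-degenerate, and apply du Cloux there to get $(E^*)^{N,\psi}\neq 0$ for some nontrivial unitary $\psi$; and (ii) lift this functional from $E$ back to $V$, using the exact sequence $0\to(V/E)^*\to V^*\to E^*\to 0$ and the vanishing $\oH^1(\fn,(V/E)^*\otimes\psi)=0$, which in turn uses the locally nilpotent action of $\fn$ on $(V/E)^*$ (available because $A(V/E)=0$). Both steps are absent from your outline, and without them the contradiction with non-genericity is not established. (The preliminary remark about non-unitary characters is not needed: the characters arising from $\xi_0\in\R\setminus\{0\}$ are unitary.)
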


In the p-adic case, an analogous lemma is proven by Bernstein and Zelevinsky using l-sheaves. We will prove this proposition in \S \ref{subsec:PfAV0} using \cite[\S 2]{dCl}. Let us now derive some corollaries.

\begin{cor}\label{cor:ExFun}
Let $V\in\Rep^{\infty}(B^{0})$ be non-zero.
Then $(V^*)^{N,\psi}\neq0$ for some (possibly trivial) unitary character $\psi$ of $N$.
\end{cor}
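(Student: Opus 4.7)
The plan is to derive this immediately from Proposition \ref{prop:AV0}. First, I would split into two cases depending on whether some non-trivial unitary character already works. If there exists a non-trivial unitary character $\psi$ of $N$ with $(V^*)^{N,\psi}\neq 0$, there is nothing to prove, so assume $(V^*)^{N,\psi}=0$ for every non-trivial unitary character $\psi$. In other words, $V$ is non-generic in the sense of Proposition \ref{prop:AV0}, which then tells us that $\fn$ acts locally nilpotently on $V^*$.

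Next I would produce a non-zero vector in $V^*$ annihilated by $\fn$. Since $V\in\Rep^\infty(B^0)$ is a non-zero Fréchet space, the Hahn--Banach theorem gives a non-zero continuous functional $v_0\in V^*$. The Lie algebra $\fn$ is one-dimensional; fix a generator $X$. Local nilpotence produces a minimal integer $k\geq 1$ with $X^k v_0=0$, and then $v:=X^{k-1}v_0\in V^*$ is non-zero and satisfies $Xv=0$.

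Finally I would upgrade this infinitesimal invariance to group invariance. Since $N$ is isomorphic (as a Lie group) to $(\R,+)$ and hence connected, and the action of $N$ on $V^*$ is smooth (differentiating to the $\fn$-action), the condition $Xv=0$ implies $n\cdot v=v$ for every $n\in N$. Thus $v\in (V^*)^{N,\psi}$ for $\psi$ the trivial character, which is exactly what the corollary asserts. The only genuinely non-trivial input is Proposition \ref{prop:AV0} itself; everything after it is elementary, so no real obstacle is expected in this corollary.
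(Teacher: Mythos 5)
Your argument is correct and is essentially the derivation the paper intends (the corollary is left as an immediate consequence of Proposition \ref{prop:AV0}): in the non-generic case, local nilpotence of $\fn$ on $V^*$ yields a non-zero functional killed by $\fn$, hence invariant under the connected group $N$, giving the trivial character. One cosmetic point: the passage from $Xv=0$ to $N$-invariance is more safely justified weakly, by differentiating $t\mapsto v(\exp(-tX)w)$ for $w\in V$ (which is a smooth vector), rather than by asserting smoothness of the $N$-action on the strong dual $V^*$.
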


Let $Mp$ denote the metaplectic group $\widetilde{\SL_2(\R)}$, and $\fs:=\Lie(Mp)=\sl_2(\R)$ denote its Lie algebra.

%

\begin{cor}\label{cor:FinQuot}
Let $V\in \Rep^{\infty}(Mp)$ be non-zero and non-generic. Then any $\lam \in V^*$ generates a finite-dimensional subrepresentation, and $V$ has a (non-zero) finite-dimensional quotient. In particular, every irreducible genuine $V\in \Rep^{\infty}(Mp)$ is generic.
\end{cor}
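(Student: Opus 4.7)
The plan is to apply Proposition~\ref{prop:AV0} to two opposite Borel subgroups of $Mp$ and then combine the resulting local nilpotency statements with basic representation theory of $\sl_2$ and $Mp$. A lift $\tilde w\in Mp$ of the non-trivial Weyl element of $\SL_2(\R)$ conjugates $N$ to the opposite unipotent $\bar N$ and sends non-trivial unitary characters of $N$ bijectively onto those of $\bar N$, so the non-genericity hypothesis on $V$ also yields $(V^{*})^{\bar N,\psi}=0$ for every non-trivial unitary character $\psi$ of $\bar N$. Applying Proposition~\ref{prop:AV0} to $V$ as a representation of $B^0$ and to $V$ as a representation of the opposite Borel $\bar B^0$, I conclude that both $e$ and $f$ act locally nilpotently on $V^{*}$.

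By the standard integrability lemma for $\sl_2$-modules, any module on which $e$ and $f$ both act locally nilpotently decomposes as a direct sum of finite-dimensional irreducibles: indeed $\exp(te)$ and $\exp(tf)$ become polynomial in $t$ on every vector and assemble into a rational action of the semisimple algebraic group $\SL_2(\C)$. Hence for every $\lambda\in V^{*}$ the submodule $W_\lambda:=U(\sl_2)\lambda\subset V^{*}$ is finite-dimensional. I next show that $\mathrm{span}(Mp\cdot\lambda)\subset W_\lambda$. For $X\in\sl_2$ acting locally nilpotently on $V^{*}$, a short computation using the contragredient action and the density of analytic vectors in the \Fre space $V$ shows that the action of $\exp(X)\in Mp$ on $\lambda$ is given by the finite sum $\sum_{k\geq 0} X^{k}\lambda/k!\in W_\lambda$. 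Since the subgroup of $Mp$ generated by $N$ and $\bar N$ projects onto $\SL_2(\R)$ and therefore equals all of $Mp$ (otherwise it would split the non-trivial metaplectic cover), iteration gives $Mp\cdot\lambda\subset W_\lambda$, proving the first assertion. Picking any non-zero $\lambda\in V^{*}$ (which exists by Hahn--Banach), the evaluation map $V\to W_\lambda^{*}$ is a continuous $Mp$-equivariant surjection onto the non-zero finite-dimensional space $W_\lambda^{*}$, proving the second assertion.

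For the final clause, if $V$ is irreducible and genuine then the kernel of the surjection $V\to W_\lambda^{*}$ is a closed invariant subspace, hence zero, so $V\cong W_\lambda^{*}$ is itself finite-dimensional. But every finite-dimensional continuous representation of $Mp$ is determined by its differential, which is an algebraic $\sl_2(\C)$-module and hence integrates to $\SL_2(\R)$; thus the kernel of $Mp\to\SL_2(\R)$ acts trivially, contradicting genuineness. The main obstacle I anticipate is justifying the integration step identifying $\mathrm{span}(Mp\cdot\lambda)$ with the $\sl_2$-submodule $W_\lambda$; the algebraic integrability statement and the remaining ingredients follow quickly from Proposition~\ref{prop:AV0} and classical $\sl_2$-representation theory.
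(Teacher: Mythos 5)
Your proposal is correct and takes essentially the same route as the paper: Proposition \ref{prop:AV0} applied to the two opposite Borel subgroups gives local nilpotency of both $e$ and $f$ on $V^*$, the standard $\sl_2$ integrability fact (for which the paper simply cites \cite[Lemma C.0.3]{AG_HC}) gives that any $\lam\in V^*$ generates a finite-dimensional submodule, and the non-degenerate pairing with $V$ then produces the finite-dimensional quotient. Your extra steps---transferring non-genericity to $\bar N$ via a Weyl lift, verifying $Mp$-invariance of $W_\lam$ through the (finite) exponential series, and excluding genuine finite-dimensional representations of $Mp$---are elaborations of points the paper leaves implicit, not a different argument.
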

\begin{proof}
Choose a standard basis $e,h,f$ for $\fs$.
Let $B$ denote a Borel subgroup of $\SL_2(\R)$ and $\widetilde B\subset Mp$ denote its preimage. Note that the connected component of $\widetilde B$ is isomorphic to $B^0$ as a Nash group. Thus Proposition \ref{prop:AV0} implies that both $e$ and $f$ act locally nilpotently on $V^*$. By e.g. \cite[Lemma C.0.3]{AG_HC} this implies that any $\lam \in V^*$ generates a finite-dimensional subrepresentation $W$. This $W$ non-degenerately pairs with a quotient  of $V$, and thus this quotient is finite-dimensional. \end{proof}
\DimaP{
\begin{remark}
Under the assumption of the corollary we cannot in general claim that the action on $V$ is locally finite. For example, $V$ could be the direct product of all irreducible finite-dimensional representations of $\SL_2(\R)$, with the topology given by projections.
\end{remark}


In order to apply Corollary \ref{cor:FinQuot} we will need the following lemma\DimaQ{s.

\begin{lem}\label{lem:Finsl2}
Let  $\fm$ be a Lie algebra generated by ad-nilpotent elements.
Then $\fm$ has a basis consisting of ad-nilpotent elements.
\end{lem}
\begin{proof}
Let $\fr\subset \fm$ denote the subspace spanned by ad-nilpotent elements. Then $\fr$ is a module for the Lie group corresponding to $\fm$, and thus a subalgebra (in fact, an ideal) of $\fm$. Since $\fr$ generates $\fm$, we obtain $\fr=\fm$.
\end{proof}
Note that if $\fm $ is semi-simple then ad-nilpotent is the same as nilpotent.

\begin{lem}\label{lem:FinPBW}
Let $\fm$ be a Lie algebra and $V$ be an $\fm$-module. Suppose that $\fm$ has a basis $X_1,\dots X_n$ such that each $X_i$ acts locally finitely on $V$. Then $\fm$ acts locally finitely on $V$.
\end{lem}
\begin{proof}
Let $\mathcal{U}$ denote the universal enveloping algebra of $\fm$.
For each $i$, let $p_i$ denote the subspace of $\mathcal{U}$ spanned by all the powers of $X_i$. By the Poincar\'e-Birkoff-Witt theorem, $\mathcal{U}=p_np_{n-1}\dots p_1$.
By the assumption of the lemma, for any finite-dimensional subspace $W\subset V$, and any $i$, $p_iW$ is finite-dimensional. By induction we obtain that for any $v\in V$, and $k\leq n$, $p_kp_{k-1}\dots p_1v$ is finite-dimensional. Taking $k=n$ we obtain that $\mathcal{U}v$ is finite-dimensional.
\end{proof}
}

}

\subsection{Proof of Proposition \ref{prop:AV0}}\label{subsec:PfAV0}

\begin{lemma}
Let $V$ be a \Fre module over the \Fre algebra of power series $\C[[t]]$. Then $t$ acts locally nilpotently on the dual space $V^*$.
\end{lemma}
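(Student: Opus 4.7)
My plan is to show that for every $\lambda\in V^{*}$ there exists an integer $N$ such that $\lambda(t^{n}v)=0$ for all $v\in V$ and all $n\geq N$, which is exactly the local nilpotence of $t$ on $V^{*}$ under the dual action $(t\cdot\lambda)(v):=\lambda(tv)$.

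First, fix $\lambda\in V^{*}$ and $v\in V$ and consider the orbit map $\mu_{v}\colon\C[[t]]\to V$, $f\mapsto f\cdot v$, which is continuous by the definition of a Fr\'echet module. Composing with $\lambda$ produces a continuous linear functional $\phi_{v}:=\lambda\circ\mu_{v}$ on the Fr\'echet algebra $\C[[t]]$. The key input I plan to use is that the continuous dual of $\C[[t]]$ (with its natural Fr\'echet topology, given either by coefficient-wise convergence, i.e.\ the product topology on $\C^{\N}$, or equivalently by the $t$-adic topology) is the polynomial ring $\C[t]$: any continuous functional must vanish on the subspace $t^{n}\C[[t]]$ for some $n$, hence factors through the finite-dimensional quotient $\C[[t]]/t^{n}\C[[t]]$. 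Applied to $\phi_{v}$, this produces an integer $N_{v}$ with $\phi_{v}(t^{n})=\lambda(t^{n}v)=0$ for every $n\geq N_{v}$.

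Next I will pass from this pointwise statement to a uniform one by a Baire category argument. For each integer $N\geq 0$ define
\[
V_{N}\;:=\;\{\,v\in V\;:\;\lambda(t^{n}v)=0\text{ for every }n\geq N\,\}\;=\;\bigcap_{n\geq N}\Ker\bigl(v\mapsto\lambda(t^{n}v)\bigr).
\]
Each $V_{N}$ is closed as an intersection of kernels of continuous linear functionals, and the previous step yields $V=\bigcup_{N\geq 0}V_{N}$. Since $V$ is a Fr\'echet space, hence Baire, some $V_{N}$ has nonempty interior, and being a linear subspace this forces $V_{N}=V$. Consequently $(t^{n}\cdot\lambda)(v)=\lambda(t^{n}v)=0$ for all $v\in V$ and all $n\geq N$, so $t^{n}\cdot\lambda=0$ in $V^{*}$.

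The only genuinely non-formal step is the identification of the continuous dual of $\C[[t]]$ with $\C[t]$; once this is in hand the rest is a textbook Baire-plus-closed-subspaces argument. I expect this identification (and the verification that the module map $f\mapsto f\cdot v$ is continuous for each fixed $v$, which is part of the Fr\'echet module hypothesis) to be the only point requiring care.
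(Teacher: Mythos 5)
Your proof is correct and is essentially the paper's own argument: the paper likewise first establishes the pointwise statement that for fixed $\lambda\in V^*$ and $v\in V$ the functional $f\mapsto\lambda(f\cdot v)$ on $\C[[t]]$ has finite order (phrased there by pulling back along the Taylor-series map to get a distribution on $\R$ supported at $0$, which is your dual-of-$\C[[t]]$-is-$\C[t]$ fact in different clothing), and then applies the Baire category theorem to the closed subspaces $\Ker\bigl(v\mapsto\lambda(t^k v)\bigr)$ exactly as you do. One minor correction that does not affect the argument: the natural Fr\'echet (product) topology on $\C[[t]]\cong\C^{\N}$ is \emph{not} the $t$-adic topology (the latter is not even a topological vector space structure over $\C$), but since your dual computation only uses the product topology, the parenthetical ``or equivalently'' should simply be dropped.
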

\begin{proof}
Any $\lam\in V^*$ and $v\in V$ define a distribution on $\R$ supported at $0$ by $\xi_{\lam,v}(f):=\lam(\bar f v)$, where $\bar f$ denotes the Taylor series of $f$ at $0$. \Dima{Then, for any $\lam\in V^*$ there exists $k$ such that $t^k\xi_{\lam,v}=0$, and thus $\lam(t^kv)=0$. Thus $V=\cup \Ker(t^k\lam)$. Since $V$ is metrizable, the Baire category theorem implies that $\Ker(t^k\lam)=V$ for some $k$.}
\end{proof}

\begin{cor}
Consider the \Fre algebra of Schwartz functions $\Sc(\R)$, under multiplication, and let $V$ be a \Fre $\Sc(\R)$-module annihilated by the ideal $\Sc(\R\setminus \{0\})$. Let $f\in \Sc(\R)$ with $f(0)=0$. Then $f$ acts locally nilpotently on $V^*$.
\end{cor}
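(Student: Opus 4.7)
The plan is to reduce the corollary to the preceding lemma by turning $V$ into a Fréchet module over $\C[[t]]$. The crucial step is to identify the quotient $\Sc(\R)/\Sc(\R\setminus\{0\})$ with $\C[[t]]$. Taylor expansion at $0$ gives a continuous algebra homomorphism $T\colon \Sc(\R)\to \C[[t]]$, whose surjectivity is Borel's theorem. Under the standard identification of $\Sc(\R\setminus\{0\})$ (Schwartz functions on the Nash-open subset $\R\setminus\{0\}$, extended by zero to $\R$) with the ideal of Schwartz functions on $\R$ vanishing to infinite order at $0$, the kernel of $T$ is precisely $\Sc(\R\setminus\{0\})$. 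The open mapping theorem then upgrades $T$ to a topological algebra isomorphism $\Sc(\R)/\Sc(\R\setminus\{0\}) \cong \C[[t]]$.

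Since $V$ is annihilated by $\Sc(\R\setminus\{0\})$, its $\Sc(\R)$-action factors through this quotient, endowing $V$ with the structure of a Fréchet $\C[[t]]$-module. By the preceding lemma, $t$ acts locally nilpotently on $V^*$. Now take $f\in \Sc(\R)$ with $f(0)=0$; its Taylor series has the form $T(f)=t\,g(t)$ for some $g(t)\in \C[[t]]$, so $f$ and $tg(t)$ induce the same operator on $V$, and hence on $V^*$. For any $\lambda\in V^*$, choose $k$ with $t^k\lambda=0$; by commutativity of $\C[[t]]$,
\[
f^k\lambda \;=\; \bigl(tg(t)\bigr)^k\lambda \;=\; g(t)^k\,(t^k\lambda) \;=\; 0,
\]
establishing the required local nilpotence.

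The only substantive input is the identification of the quotient algebra with $\C[[t]]$, i.e.\ Borel's theorem together with the description of $\Sc(\R\setminus\{0\})$ as the functions vanishing to infinite order at the origin; the rest is formal manipulation in a commutative ring acting on a module. Should one wish to bypass the clean Borel identification, an equivalent hands-on alternative would be to pick any $\eta\in\Sc(\R)$ with $\eta(0)=0$, $\eta'(0)=1$, lift the series $g$ to some $\tilde g\in\Sc(\R)$, and observe that $f-\eta\tilde g\in\Sc(\R\setminus\{0\})$ annihilates $V$, which reduces the local nilpotence of $f$ to that of $\eta$ by exactly the same commutative calculation.
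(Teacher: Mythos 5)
Your proof is correct and is essentially the paper's intended argument: the corollary is meant to follow from the preceding lemma precisely by identifying $\Sc(\R)/\Sc(\R\setminus\{0\})\cong\C[[t]]$ via Taylor expansion (Borel's theorem plus the characterization of $\Sc(\R\setminus\{0\})$ as the functions flat at $0$), so that the module structure factors through $\C[[t]]$ and $f$ acts as $t\,g(t)$, after which the commutative computation $f^k\lambda=g^k(t^k\lambda)=0$ finishes the job. (Only your optional ``hands-on'' aside is slightly off: one should lift $g\cdot u^{-1}$ where $T(\eta)=t\,u(t)$, or choose $\eta$ with $T(\eta)=t$; the main argument is unaffected.)
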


Using Fourier transform we obtain the following corollary.

\begin{cor}\label{cor:LocNilp}
Let $V\in\Rep^{\infty}(\R)$. Let
\begin{equation}\label{=A}
A:=\{f \in \Sc(\R) \text{ s.t. }\forall k. \, \int_{-\infty}^{\infty} t^k f(t) dt =0\}.
\end{equation}
Suppose that $AV=0$. Then $\frac{d}{dt}$ acts locally nilpotently on $V^*$.
\end{cor}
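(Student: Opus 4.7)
The plan is to deduce this corollary from the preceding one by transporting the $\Sc(\R)$-module structure through the Fourier transform. Extend the given $\R$-action $\pi$ on $V$ to a convolution action of $\Sc(\R)$ via $\pi(f)v:=\int f(t)\pi(t)v\,dt$, and define a new \emph{multiplicative} action
\[
\mu(g):=\pi(\hat g),\qquad \hat g(\xi)=\int g(t)e^{-2\pi i\xi t}\,dt.
\]
Because the Fourier transform intertwines multiplication and convolution, $\mu$ makes $V$ into a Fréchet $\Sc(\R)$-module for the pointwise product. For $g\in\Sc(\R\setminus\{0\})$ the Fourier inversion formula gives $\int t^k\hat g(t)\,dt=c_k g^{(k)}(0)=0$ for all $k$, so $\hat g\in A$ and hence $\mu(g)V=\pi(\hat g)V=0$ by hypothesis. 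Thus $V$ is annihilated by $\Sc(\R\setminus\{0\})$, and the previous corollary yields that $\mu(h)$ acts locally nilpotently on $V^*$ for every $h\in\Sc(\R)$ with $h(0)=0$.

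The next step is to identify $X:=\tfrac{d}{dt}$ with $\mu(h)$ for a specific such $h$. A routine integration by parts, based on $\widehat{\xi g}=-(2\pi i)^{-1}(\hat g)'$, gives the commutation identity
\[
X\,\mu(g)\ =\ 2\pi i\,\mu(\xi g)\qquad\text{for every }g\in\Sc(\R).
\]
Fix a cutoff $\rho\in C_c^\infty(\R)$ with $\rho\equiv 1$ in a neighborhood of $0$, and set $h(\xi):=\xi\rho(\xi)\in\Sc(\R)$, noting $h(0)=0$. Specializing the identity to $g=\rho$ gives $X\,\mu(\rho)=2\pi i\,\mu(h)$.

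The main remaining obstacle, and the step I expect to require the most care, is to verify that $\mu(\rho)=\Id_V$, so that $X=2\pi i\,\mu(h)$ inherits local nilpotence on $V^*$ from $\mu(h)$. Since $\rho-\rho^2$ and, for any $g\in\Sc(\R)$, $g(1-\rho)$ both vanish to infinite order at $0$, they lie in $\Sc(\R\setminus\{0\})$. Therefore $e:=\mu(\rho)$ is an idempotent on $V$ and $\mu(g)=\mu(g)\cdot e$ for every $g$. In particular $\mu(g)$ vanishes on $\Ker(e)$ for every $g\in\Sc(\R)$; since Fourier transform is a bijection of $\Sc(\R)$, this means $\pi(f)v=0$ for all $f\in\Sc(\R)$ and $v\in\Ker(e)$. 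The nondegeneracy of smooth $\R$-representations (reflected in $\pi(\phi_n)v\to v$ for an approximate identity $\phi_n\in C_c^\infty(\R)$) then forces $\Ker(e)=0$, so the idempotent $e$ must equal $\Id_V$. Combined with the commutation identity, this completes the proof.
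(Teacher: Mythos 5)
Your proof is correct and follows essentially the same route the paper intends: the paper obtains this corollary from the preceding one "using Fourier transform," i.e.\ by converting the convolution action into a multiplicative $\Sc(\R)$-module structure, under which $A$ corresponds to $\Sc(\R\setminus\{0\})$ and $\frac{d}{dt}$ becomes (a scalar multiple of) the action of a Schwartz function vanishing at $0$. Your cutoff/idempotent verification that $\mu(\rho)=\Id_V$ simply makes explicit a detail the paper leaves implicit, so there is no substantive difference in approach.
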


\Dima{
\begin{defn}[{\cite[\S 2]{dCl}}]
Let $G$ be an affine Nash group and $X$ be an affine  Nash $G$-space.
A \emph{$(G,\Sc(X))$-module} is a representation $E\in \Rep^{\infty}(G)$ with a continuous action $\pi$ of the \Fre algebra $\Sc(X)$ satisfying $g\pi(f)g^{-1}=\pi(f^g)v$, where $f^g(x):=f(g^{-1}x)$. We say that $E$ is \emph{non-degenerate} if the action map $\Sc(X)\otimes E \to E$ has dense image, and for every $v\neq 0 \in E$, there exists $f\in \Sc(X)$ with $\pi(f)v\neq 0$.

We denote the category of $(G,\Sc(X))$-modules by $\Rep_{G,X}$ and the subcategory of non-degenerate modules by $\Rep_{G,X}^{\mathrm{nd}}$.
\end{defn}

\begin{thm}[{\cite[Lemma 2.5.7 and Theorem 2.5.8]{dCl}}]\label{thm:dCl}
Let $G$ be an affine Nash group and $X$ be a transitive Nash $G$-space. Define $\tilde \Sc(X):=\C\oplus \Sc(X)$. Fix $x_0\in X$ and let $m_{x_0}\subset \tilde \Sc(X)$ denote the maximal ideal consisting of functions vanishing at $x_0$. Let $E\in \Rep_{G,X}^{\mathrm{nd}}$. Then $m_{x_0}E$ is a closed proper subspace of $E$.
\end{thm}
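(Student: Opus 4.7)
The plan is to treat the closedness and the properness of $m_{x_0}E \subset E$ as separate assertions, both hinging on a Dixmier--Malliavin-type factorization for non-degenerate Fr\'echet modules over the Schwartz algebra: every $e \in E$ should be expressible as $\pi(f)e'$ for a single pair $(f,e') \in \Sc(X) \times E$, rather than merely as a limit of finite sums of such products. With such a factorization in hand, both conclusions reduce to local analysis in a chart near $x_0$, which can be globalized using the transitive action of $G$ and a Nash partition of unity on $X$.

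For properness I would construct a non-zero continuous linear functional $\lambda : E \to \C$ annihilating $m_{x_0}E$. Fix $e_0 \neq 0$ in $E$ and use the factorization lemma together with non-degeneracy to write $e_0 = \pi(f)e'$ with $f(x_0) \neq 0$ (if every factorization of a fixed $e_0$ had $f(x_0) = 0$, one could show $e_0$ is annihilated by a cofinal family of $(G,\Sc(X))$-submodules, contradicting the density requirement in the definition of non-degeneracy). Picking any $\mu \in E^*$ with $\mu(e') \neq 0$, define $\lambda$ on the dense subspace $\pi(\Sc(X))E$ by $\pi(g)v \mapsto g(x_0)\mu(v)$, check well-definedness using uniqueness of the factorization modulo the kernel of the action, extend by continuity, and observe that $\lambda$ vanishes on $m_{x_0}E$ because $(g - g(x_0))(x_0) = 0$, while $\lambda(e_0) = f(x_0)\mu(e') \neq 0$.

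For closedness I would realize $m_{x_0}E$ as the image of the continuous linear map $m_{x_0} \hot E \to E$ induced by the action. The factorization lemma supplies a continuous right inverse of this map on its image (a section after quotienting by the kernel of the action), and the open mapping theorem for Fr\'echet spaces together with nuclearity of $m_{x_0}\hot E$ then forces the image to be closed. A preliminary reduction to the case where $X$ is a Nash chart around $x_0$, carried out via a $G$-translation argument and a Nash partition of unity, simplifies the bookkeeping substantially.

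The main obstacle will be establishing the factorization lemma itself. Classical Dixmier--Malliavin applies to convolution algebras $C_c^{\infty}(G)$, whereas $\Sc(X)$ with pointwise multiplication has no approximate unit in the strict sense, since the constant function $1$ does not lie in $\Sc(X)$. To bypass this I would work in the unital extension $\tilde\Sc(X)$, construct a system of tempered bump functions $u_n \in \Sc(X)$ tending to $1$ locally with uniform control of all Schwartz seminorms (exploiting the semi-algebraic structure of $X$ to produce globally tempered plateau functions), and combine pointwise multiplication by the $u_n$ with smoothing coming from the $G$-action on $E$. This hybrid smoothing should supply the desired single-pair factorization. Once this step is in place, the remaining functional-analytic manipulations are standard consequences of nuclearity and the open mapping theorem.
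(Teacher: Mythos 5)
Note first that the paper does not prove this statement at all: it is quoted verbatim from du Cloux (\cite{dCl}, Lemma 2.5.7 and Theorem 2.5.8) and used as a black box. So there is no argument in the paper to compare against, and your reconstruction must stand on its own. It has genuine gaps. The load-bearing step is the ``single-pair factorization'' claim, that every $e\in E$ equals $\pi(f)e'$ for one $f\in\Sc(X)$ and one $e'\in E$. This is a Cohen--Hewitt-type statement and does not follow from Dixmier--Malliavin, which yields finite sums (not single products) and is formulated for convolution algebras of a Lie group acting on smooth vectors, not for the pointwise-multiplication algebra $\Sc(X)$; the ``hybrid smoothing'' sketch does not indicate how all Schwartz seminorms are to be controlled simultaneously. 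Worse, even granting the factorization, the proposed functional $\lambda:\pi(g)v\mapsto g(x_0)\mu(v)$ is not well defined: a single vector has many factorizations with different values of $g(x_0)$, and there is no usable uniqueness. For instance, if $\pi(g)w=0$ for some $g$ with $g(x_0)\neq 0$ and some $w\neq 0$, the rule forces $\mu(w)=0$, which is not automatic; so the map does not descend. The closedness argument is also circular as stated: to invoke the open mapping theorem for the map $m_{x_0}\hot E\to E$ one either needs the image already known to be closed or a continuous linear section, and the factorization claim -- which is a pure existence statement -- furnishes neither.

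What du Cloux actually does in \S 2.5 is structurally different. For a transitive Nash $G$-space $X$ he proves an imprimitivity-type theorem: every non-degenerate $(G,\Sc(X))$-module is isomorphic to a Schwartz induction $\ind_H^G(\rho)$ from the stabilizer $H$ of $x_0$, realized concretely as a Schwartz section space, and both the closedness and the properness of $m_{x_0}E$ are then read off from that explicit model; the quotient $E/m_{x_0}E$ is identified with the underlying space of $\rho$, the ``fiber at $x_0$''. An elementary route without the imprimitivity theorem would at minimum require the intermediate fact that $m_{x_0}\cdot\Sc(X)=\{f\in\Sc(X):f(x_0)=0\}$ (a real lemma, not a tautology) and then a tensor-product argument to push it from $\Sc(X)$ to general $E$; neither step appears in your outline, and without them the closedness and the properness both remain unproven.
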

}
\begin{proof}[Proof of Proposition \ref{prop:AV0}]
 Let the algebra $A$ from \eqref{=A} act on $V$ using the identification $N\simeq \R$. By Corollary \ref{cor:LocNilp} it is enough to show that $AV=0$.
Suppose by way of contradiction that $AV\neq 0$ and let $E$ denote the closure of $AV$. Note that Fourier transform defines an isomorphism $A\simeq \Sc(\R \setminus \{0\})$. We further identify $\R \setminus \{0\}$ with the Nash manifold $X$ of non-trivial unitary characters of $N$.
The action of $P_2(\R)$ on $X$ is by multiplication by the reductive part, and the action of $B^0$ has two orbits: $X_+$ and $X_-$

We let $\Sc(X)$ act on $E$ through $A$, and note that this is compatible with the action of the group $B^0$.
 Theorem \ref{thm:dCl} implies that either $(E^*)^{N,\psi}\neq 0$ for any $\psi \in X_+$ or $(E^*)^{N,\psi}\neq 0$ for any $\psi \in X_-$. Fix  a $\psi$ with $(E^*)^{N,\psi}\neq 0$.
From the short exact sequence
$$ 0 \to (V/E)^* \to V^* \to E^* \to 0$$ we obtain the exact sequence
$$(V^*)^{(N,\psi)}\to (E^*)^{(N,\psi)}\to \oH^1(\fn,(V/E)^*\otimes \psi).$$
Note  that $\fn$ acts locally nilpotently on $(V/E)^*$ by Corollary \ref{cor:LocNilp}.
Thus the action of any generator of $\fn$ on $(V/E)^*\otimes \psi$ is invertible and hence  $H^1(\fn,(V/E)^*\otimes \psi)=0$.
Since $(E^*)^{N,\psi}\neq 0$
we obtain that $(V^*)^{(N,\psi)}\neq 0$, which contradicts the conditions of the proposition. Thus $AV=0$ and thus $\fn$ acts locally nilpotently on $V^*$.
\end{proof}

\section{Proof of Theorem \ref{thm:MaxModAction}}\label{sec:MaxOrb}
\setcounter{lemma}{0}

\Dima{Fix a Whittaker pair $(S,\varphi)$.} Let $G \cdot \varphi$ denote the coadjoint orbit of $\varphi$ and $\overline{G \cdot \varphi}$ denote its closure.

\begin{lem}[{\cite[Lemma 3.0.2]{GGS}}]\label{lem:Z}
    \label{lem:Ztogeneralized}
There exists $Z\in \fg^{S}_0$ such that $(S-Z,\varphi)$  is a neutral Whittaker pair.
\end{lem}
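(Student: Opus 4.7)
\emph{Plan.}
Fix a $G$-invariant nondegenerate bilinear form on $\fg$ and use it to identify $\fg$ with $\fg^*$, so that $\varphi$ corresponds to some $\xi \in \fg$ and the hypothesis $\varphi \in (\fg^*)^{S}_{-2}$ becomes $\xi \in \fg^{S}_{-2}$; in particular $\xi$ is nilpotent as it lies in a non-zero weight space of the rational semisimple $ad(S)$. Using Jacobson--Morozov (Lemma \ref{lem:sl2}(i)) complete $\xi$ to an $\sl_2$-triple $(\xi, h_0, e_0)$, so that $h_0 = [e_0, \xi]$ and $[h_0, \xi] = -2\xi$. The strategy is then to read off the desired $Z$ from the $ad(S)$-weight decomposition of $h_0 - S$.

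Since $[h_0 - S, \xi] = -2\xi - (-2\xi) = 0$, the element $h_0 - S$ lies in $\fg^{\xi}$. Decomposing $h_0 - S = \sum_r w_r$ under $ad(S)$ with $w_r \in \fg^{S}_r$, each bracket $[w_r, \xi]$ sits in $\fg^{S}_{r-2}$; as these weight spaces are linearly independent and their total sum vanishes, every $w_r$ already commutes with $\xi$. Set $Z := -w_0 \in \fg^{S}_0$ and $h := S - Z = S + w_0$; then $[h, \xi] = -2\xi$, giving $\varphi \in (\fg^*)^{h}_{-2}$. Writing $e_0 = \sum_s (e_0)_s$ under $ad(S)$ and matching the $ad(S)$-weight-$0$ components of the two expressions $h_0 = [e_0, \xi]$ and $h_0 = S + \sum_r w_r$ yields $h = [(e_0)_2, \xi]$, so $h \in \Im(ad(\xi))$.

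It remains to promote these infinitesimal identities to the full conclusion that $h$ is a neutral element for $\varphi$ in the sense of Definition \ref{def:nuet}, i.e., that $h$ is semisimple, admits a nil-positive element, and that the map $\fg^{h}_0 \to (\fg^*)^{h}_{-2}$, $x \mapsto ad^*(x)\varphi$, is surjective. For this I will invoke Lemma \ref{lem:sl2}(iii) with $\xi$ as the nilpotent element and $-h$ as its candidate neutral: the verified facts $[-h, \xi] = 2\xi$ and $-h \in \Im(ad(\xi))$ produce an $\sl_2$-triple containing $\xi$ as a nil-positive element and $-h$ as its neutral, and after the standard sign-swap this becomes an $\sl_2$-triple $(\xi, h, \xi')$ in which $\xi$ is the nil-negative element. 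The required surjectivity then follows from elementary $\sl_2$-representation theory applied to the adjoint action of this triple on $\fg$, where $ad(\xi)$ plays the role of the lowering operator. I expect this converse Jacobson--Morozov step to carry the real content of the argument, while everything else reduces to weight-space bookkeeping under the compatible actions of $S$ and of the $\sl_2$-triple.
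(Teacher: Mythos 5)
Your proof is correct and is essentially the argument the paper relies on: the paper gives no proof of its own but cites \cite[Lemma 3.0.2]{GGS}, and the argument there is exactly your graded Jacobson--Morozov step --- identify $\varphi$ with a nilpotent $\xi\in\fg^S_{-2}$, replace the neutral element $h_0$ of a triple through $\xi$ by its zero $ad(S)$-weight component, and confirm neutrality via the characterization in Lemma \ref{lem:sl2}(iii) (equivalently, via the remark after Definition \ref{def:nuet} that neutrality of $h$ for $\varphi$ means $\xi$ is nil-negative for $h$). The only point to add is the trivial case $\varphi=0$, where one simply takes $Z=S$ so that $S-Z=0$ is neutral for $0$ by definition.
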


\Dima{Fix $Z$  as in the lemma and let $h:=S-Z$}. For any rational number $t \geq 0$ define \begin{equation}\label{=ut}
S_t:=\Dima{S_{t,Z}:=}S+(t-1)Z,\quad \fu_t:=\fg^{S_t}_{\geq 1},\quad \fv_t:=\fg^{S_t}_{> 1},\text{ and }\fw_t:=\fg^{S_t}_{1}. \quad
\end{equation}
\begin{defn}\label{def:crit}
We call $t$ \emph{regular} if $\fu_t = \fu_{t+\eps}$ for any small enough $\eps\in \Q$. \Dima{Observe that this is equivalent to } $\fw_t\subset \fg_Z$. If $t$ is not regular we call it \emph{critical}. For convenience, we will say that 0 is critical.
\end{defn}
Note that \Dima{for any $Z$ and $\varphi$ }there are only finitely many critical numbers.
Recall the anti-symmetric form $\omega$ on $\fg$ given by $\omega(X,Y)=\varphi([X,Y])$.

\begin{lemma}[{\cite[Lemma 3.2.6]{GGS}}]\label{lem:help}$\,$
\label{lem:kernel}
\begin{enumerate}[(i)]
\item \label{it:OmInv} The form $\omega$ is $ad(Z)$-invariant.
\item \label{it:KerOm} The radical of $\omega$ is $\Rad \omega = \fg_\varphi=\fg^f\subset \fg^h_{\leq 0}$.
\item \label{it:KerNeg} $\Rad(\omega|_{\fw_t})=\Rad(\omega)\cap \fw_t$. 
\item \label{it:Kerv} $\Rad(\omega|_{\fu_t})=\fv_t\oplus \Rad(\omega|_{\fw_t}) $.
\item \label{it:LW}  $\fw_t\cap\fg_\varphi\subset \fu_{T}$  for any $t<T$.
\end{enumerate}
\end{lemma}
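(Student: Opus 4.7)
The plan is to first record two structural facts that drive all five parts. From $Z\in\fg^{S}_0$ and $h=S-Z$ one has $[Z,h]=0$, and the identity $ad^*(S)(\varphi)=-2\varphi=ad^*(h)(\varphi)$ forces $ad^*(Z)(\varphi)=0$. Via the $G$-equivariant Killing-form identification $\fg^{*}\cong\fg$ (under which $\varphi$ corresponds to $f$), this translates to $Z\in\fg_\varphi=\fg^{f}$, so Lemma \ref{lem:sl2}(v) implies that $Z$ commutes with the entire $\sl_2$-triple $(f,h,e)$. The same reasoning shows $\varphi\in(\fg^{*})^{S_t}_{-2}$ for every $t$, so $\varphi$ vanishes on $\fg^{S_t}_k$ whenever $k\neq 2$, and hence $\omega(\fg^{S_t}_a,\fg^{S_t}_b)=0$ unless $a+b=2$.

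With these in hand, part (i) is immediate from Jacobi: $\omega([Z,X],Y)+\omega(X,[Z,Y])=\varphi([Z,[X,Y]])=0$, the last equality because $Z\in\fg_\varphi$. For (ii), $\Rad\omega=\fg_\varphi$ is tautological from the definition of $\omega$, the identification $\fg_\varphi=\fg^{f}$ is Killing-form transport, and $\fg^{f}\subset\fg^{h}_{\leq 0}$ is standard $\sl_2$-representation theory: elements annihilated by $f$ are lowest-weight vectors and therefore have non-positive $h$-weight.

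For (iii) and (iv), the grading orthogonality $\omega(\fg^{S_t}_a,\fg^{S_t}_b)=0$ for $a+b\neq 2$ pins down which subspaces can pair non-trivially. For $X\in\fw_t=\fg^{S_t}_1$, the functional $\omega(X,\cdot)$ on all of $\fg$ is supported on $\fg^{S_t}_1=\fw_t$, which immediately yields $\Rad(\omega|_{\fw_t})=\Rad(\omega)\cap\fw_t$, giving (iii). For $X\in\fv_t=\fg^{S_t}_{>1}$ and $Y\in\fu_t=\fg^{S_t}_{\geq 1}$, the weights satisfy $a+b>2$, so $\omega(X,Y)=0$; thus $\fv_t\subset\Rad(\omega|_{\fu_t})$. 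Writing $\fu_t=\fw_t\oplus\fv_t$ and repeating the $\fw_t$-argument inside $\fu_t$ (using that $\omega(\fw_t,\fv_t)=0$ for the same weight reason) then yields (iv).

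Finally, for (v) I would jointly diagonalize the commuting semisimple operators $ad(h)$ and $ad(Z)$ on $\fg$, and decompose an arbitrary $X\in\fw_t\cap\fg_\varphi$ into joint $(h,Z)$-eigenvectors $X_{p,q}$. Since $\fg^{f}$ is invariant under both $ad(h)$ and $ad(Z)$, each $X_{p,q}$ still lies in $\fg^{f}$. The condition $X_{p,q}\in\fw_t$ forces $p+tq=1$, while $\fg^{f}\subset\fg^{h}_{\leq 0}$ forces $p\leq 0$; together these yield $tq\geq 1$, so $q>0$ when $t>0$ (and when $t=0$ the condition $p=1\leq 0$ is vacuous, giving $\fw_0\cap\fg^{f}=0$ directly). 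Then for any rational $T>t$ we get $p+Tq=1+(T-t)q>1$, placing $X_{p,q}\in\fg^{S_T}_{>1}\subset\fu_T$. The only real pitfall is bookkeeping on weight conventions for the coadjoint action; once $\varphi$ is pinned to $S_t$-weight $-2$ for every $t$, all five parts collapse to routine manipulation of the grading.
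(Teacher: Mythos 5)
Your proof is correct, and it is essentially the standard argument: the paper itself gives no proof (it quotes the lemma from \cite[Lemma 3.2.6]{GGS}), and your reasoning—$ad^*(Z)\varphi=0$, the $S_t$-grading forcing $\omega(\fg^{S_t}_a,\fg^{S_t}_b)=0$ unless $a+b=2$, and the joint $(h,Z)$-eigenspace decomposition inside $\fg^f\subset\fg^h_{\leq 0}$ for part (v)—is the same route taken in the cited source. No gaps; the observation that $\fw_0\cap\fg_\varphi=0$ handles the $t=0$ case cleanly.
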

Choose a Lagrangian $\fm\subset \fg^Z_0\cap \fg^S_{1}$
 and let
 \begin{equation}\label{=lt}
 \fl_t:=\fm+(\fu_t\cap \fg^{Z}_{< 0})+\Rad(\omega|_{\fu_t})\text{ and }\fr_t:=\fm+(\fu_t\cap \fg^{Z}_{> 0})+\Rad(\omega|_{\fu_t}).
\end{equation}


Note that these are maximally isotropic subspaces.

\begin{lemma}\label{lem:lrw}
Let $0\leq t<T$ and suppose that there are no critical numbers in $(t,T)$. Then
\begin{equation}\label{=lrw}
\fl_{T}=\fr_{t}\oplus (\fw_{T}\cap \fg_{\varphi}).
\end{equation}
Moreover, $\fr_{t}$ is an ideal in $\fl_{T}$ with commutative quotient and $\fv_{T}$ is an ideal in $\fr_{t}$ with commutative quotient.
\end{lemma}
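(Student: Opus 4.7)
My plan is to work throughout with the joint bigrading $\fg = \bigoplus_{(a,b) \in \Q^2} \fg_{a,b}$ coming from the commuting, rational semisimple operators $\ad(h)$ and $\ad(Z)$: $h = S - Z$ is a neutral element for $\varphi$ by Lemma~\ref{lem:Z} and $Z \in \fg_0^S$ by construction, so $[h,Z] = 0$ and both are rational semisimple. Since $\ad(S_s) = \ad(h) + s\,\ad(Z)$ acts on $\fg_{a,b}$ by $a+sb$, the subspaces $\fu_s, \fv_s, \fw_s$ become direct sums of $\fg_{a,b}$ over the obvious half-planes in the $(a,b)$-plane. The no-critical hypothesis on $(t,T)$ then translates to the key constraint: whenever $\fg_{a,b} \neq 0$ and $b \neq 0$, the transition point $s_{a,b} := (1-a)/b$ lies outside $(t,T)$. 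Two consequences I will use repeatedly are: (A) for $(a,b)$ with $b < 0$ supporting $\fv_t$, no-critical forces $a+Tb \geq 1$; and (B) for $(a,b)$ with $b > 0$ supporting $\fv_T$, no-critical forces $a+tb \geq 1$.

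First I would normalize the expressions as $\fr_t = \fm + \fv_t + \fw_t^{b>0}$ and $\fl_T = \fm + (\fu_T \cap \fg^Z_{<0}) + \fv_T + (\fg_\varphi \cap \fw_T)$, using Lemma~\ref{lem:help}(iii)--(iv) together with the observation that $\fg_\varphi \subset \fg^h_{\leq 0}$ (Lemma~\ref{lem:help}(ii)) forces $\fg_\varphi \cap \fw_s$ inside $\fw_s^{b>0}$. The decomposition $\fl_T = \fr_t \oplus (\fw_T \cap \fg_\varphi)$ then follows piece by piece in the bigrading: (A) shows $\fu_T \cap \fg^Z_{<0} \subseteq \fv_t$, while (A)--(B) together show $\fv_T \subseteq \fv_t + \fw_t^{b>0}$, giving $\fl_T \subseteq \fr_t + (\fw_T \cap \fg_\varphi)$; the reverse inclusion is immediate from the definitions. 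Directness is pure support analysis, since $\fw_T \cap \fg_\varphi$ is supported on $\{a \leq 0,\, b > 0,\, a+Tb = 1\}$, which is visibly disjoint from the support $\{(1,0)\} \cup \{a+tb > 1\} \cup \{b > 0,\, a+tb = 1\}$ of $\fr_t$.

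For the ideal assertions, the containment $[\fr_t, \fr_t] \subseteq \fv_T$ is a direct $\ad(S_T)$-weight computation using (A) in the $b<0$ subcase; it simultaneously establishes that $\fv_T$ is an ideal of $\fr_t$ with commutative quotient and that $[\fr_t, \fr_t] \subseteq \fr_t$. The only genuinely delicate step is $[\fv_t, \fg_\varphi \cap \fw_T] \subseteq \fr_t$, needed to conclude that $\fr_t$ is an ideal in $\fl_T$: here $\ad(S_t)$-weight alone does not suffice because $a' + tb' = 1 - (T-t)b' < 1$. I would proceed by case analysis on the sign of $b+b'$, in each case using that $\fg_{a+a',\, b+b'} \neq 0$ forces $s_{a+a',\,b+b'} \notin (t,T)$. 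If $b+b' > 0$, the linear function $s \mapsto (a+a') + s(b+b')$ is increasing with value $\geq 2$ at $T$ (by (A) applied to $X$), so its root is $\leq t$, placing the bracket in $\fv_t \cup \fw_t^{b>0} \subseteq \fr_t$; if $b+b' < 0$, the symmetric argument places the bracket in $\fv_t$. The main obstacle is the $b+b' = 0$ subcase, which is precisely when the bracket could land in the dangerous piece $\fg_{1,0}$ (where $\fr_t$ only contains $\fm$); this is resolved by observing that $b = -b' < 0$, so (A) gives $a+Tb \geq 1$, while $a'+Tb' = 1$ then yields $a + a' \geq 2$, forcing the bracket into $\fv_t$ rather than $\fg_{1,0}$.
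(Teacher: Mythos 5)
Your proposal is correct and follows essentially the paper's own route: both proofs decompose $\fr_t$ and $\fl_T$ into $(\ad h,\ad Z)$-bigraded pieces and use the absence of critical values in $(t,T)$ to rule out any $S_s$-eigenvalue crossing the threshold $1$ (your (A) and (B) are exactly the paper's two displayed identities), and the bracket statements, which the paper dismisses as straightforward, are confirmed by your weight computations — though your case analysis for $[\fv_t,\fg_\varphi\cap\fw_T]$ is unnecessary, since every constituent of $\fl_T$ has $S_T$-weight $\geq 1$, so all brackets lie in $\fg^{S_T}_{\geq 2}\subseteq\fv_T\subseteq\fr_t$, which also yields the commutativity of $\fl_T/\fr_t$ and the remaining routine brackets you left implicit. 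One bookkeeping slip: the inclusion $\fu_T\cap\fg^Z_{<0}\subseteq\fv_t$ is plain monotonicity and needs no hypothesis, whereas the ``immediate'' reverse inclusion $\fr_t\subseteq\fl_T$ is precisely where (A), hence no-criticality, is needed, via $\fv_t\cap\fg^Z_{<0}\subseteq\fu_T$.
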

\begin{proof}
Decomposing to the eigenspaces of $ad(Z)$ we obtain
\begin{align}
\fr_{t}&=\fm \oplus (\fw_{t}\cap \fg^Z_{>0})\oplus (\fv_{t}\cap \fg^Z_{\geq0}) \oplus (\fv_{t}\cap \fg^Z_{<0}) \\
\fl_{T}&= \fm \oplus (\fw_{T}\cap \fg^Z_{<0})\oplus (\fv_{T}\cap \fg^Z_{<0}) \oplus (\fv_{T}\cap \fg^Z_{\geq0}) \oplus (\fw_{T}\cap \fg_{\varphi})
\end{align}
Since there are no critical numbers in $(t,T)$ we have
\begin{align}
\fv_{t}\cap \fg^Z_{<0}&=(\fw_{T}\cap \fg^Z_{<0})\oplus (\fv_{T}\cap \fg^Z_{<0})\\
\fv_{T}\cap\fg^Z_{\geq0}&=(\fw_{t}\cap \fg^Z_{>0})\oplus (\fv_{t}\cap \fg^Z_{\geq0})
\end{align}
This implies $\fl_{T}=\fr_{t}\oplus (\fw_{T}\cap \fg_{\varphi})$ and $\fv_T\subset \fr_t$. The rest is straightforward.
\end{proof}

\subsection{Basic comparison lemmas}\label{subsec:basic}

\begin{lemma}[{\cite[Lemma 5.10]{BZ}}]\label{lem:BZ}
Assume that $F\neq \R$ and let $A$ be a finite-dimensional vector space over $F$, viewed as an $l$-group. Let $\rho$ be a smooth representation of $A$. Suppose that $\Hom(\rho,\chi)=0$ for every non-trivial smooth character $\chi$ of $A$. Then $\rho$ is a trivial representation.
\end{lemma}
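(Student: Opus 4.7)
My plan is to reduce via Pontryagin--Fourier duality to a statement about modules over $C^\infty$ of a compact totally disconnected space, and then use the $l$-sheaf support calculus. First I would reduce to fixed vectors under open compact subgroups: since $A$ is abelian, every open compact $B\subset A$ is normal, so $A$ preserves $\rho^B$ and acts through the discrete quotient $A/B$. The $B$-averaging projector identifies the $(A,\chi)$-coinvariants $\rho_{A,\chi}\cong (\rho^B)_{A/B,\chi}$ for any smooth character $\chi$ trivial on $B$, so the hypothesis gives $\Hom_{A/B}(\rho^B,\chi)=0$ for all non-trivial $\chi\in\widehat{A/B}$. Since $\rho=\bigcup_B \rho^B$, it suffices to show each $\rho^B$ carries a trivial $A/B$-action. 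As $F$ is non-archimedean, every element of $A/B$ is annihilated by a sufficiently high power of the residue characteristic, so $A/B$ is a countable discrete torsion abelian group, and its Pontryagin dual $X:=\widehat{A/B}$ is a compact totally disconnected abelian group.

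Next I would invoke Fourier duality. The Fourier transform yields an algebra isomorphism $\mathbb{C}[A/B]\cong C^\infty(X)$ under which $g\in A/B$ corresponds to the character $\hat g\colon\chi\mapsto\chi(g)$, which is locally constant since $g$ is torsion (its image in $\mathbb{C}^*$ lands in a finite group of roots of unity). Viewing $M:=\rho^B$ as a unital $C^\infty(X)$-module, one checks directly that $g$ acts on each fiber $M_\chi:=M/\mathfrak{m}_\chi M$ by the scalar $\chi(g)$, hence $M_\chi$ is naturally identified with the $(A/B,\chi)$-coinvariants and so vanishes for every $\chi\neq 1$ by the previous step.

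Finally I would apply the $l$-sheaf support calculus. For $m\in M$ define $\mathrm{supp}(m)\subset X$ as the complement of the union of all clopen $U$ with $\mathbf{1}_U m=0$; the vanishing of $M_\chi$ together with the clopen basis of $X$ yields $\mathrm{supp}(m)\subset\{1\}$ for every $m$ (writing $m=\sum f_i m_i$ with $f_i(\chi)=0$, each $f_i$ vanishes on a clopen neighborhood of $\chi$, and a finite intersection of such neighborhoods kills $m$). For any $g\in A/B$, the function $\hat g-1$ vanishes at $\chi=1$ and hence on a clopen neighborhood $V\ni 1$, so $(\hat g-1)m=\mathbf{1}_{X\setminus V}(\hat g-1)m$ has support contained in $\{1\}\setminus V=\emptyset$, giving $gm=m$. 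Thus $A/B$ acts trivially on $\rho^B$, and hence $A$ acts trivially on $\rho=\bigcup_B\rho^B$. The main obstacle will be this last step: the $l$-sheaf dictionary relating fiber vanishing to element-wise support vanishing uses the clopen basis in an essential way and goes back to the Bernstein--Zelevinsky theory of $l$-sheaves.
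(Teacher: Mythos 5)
Your proof is correct, and it takes essentially the same route as the source the paper relies on: the paper does not prove this lemma itself but cites \cite[Lemma 5.10]{BZ}, whose argument is precisely the reduction to $\rho^B$, Fourier duality identifying $\C[A/B]$ with locally constant functions on the compact totally disconnected dual, and the $l$-sheaf support calculus on that dual, exactly as you set it up. The one step you leave implicit — that an element with empty support is zero — is the standard compactness fact (refine a finite cover by killing clopen sets to a disjoint clopen partition and use that the identity function acts as $1$), so it is a harmless elision rather than a gap.
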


Our main tools are Lemma \ref{lem:BZ}, Proposition \ref{prop:AV0}
and the following notion \DimaO{and lemma}.

\begin{defn}
\Dima{We say that $(S,\varphi,\varphi')$ is a \emph{Whittaker triple} if $(S,\varphi)$ is a Whittaker pair and }
$\varphi'\in (\fg^*)^S_{>-2}$.

For a Whittaker triple $(S,\varphi,\varphi')$ we define a smooth representation of $G$ that we call a \emph{quasi-Whittaker model} and denote $\cW_{S,\varphi,\varphi'}$, in the following way.
Let
$$\fu:=\fg_{\geq 1}^S, \, \fv:=\fg_{> 1}^S, \, \fz:=\fv\oplus\Dima{((\fg_{1}^S)\cap \fg_{\varphi})} \text{ and } \fk \text{ be the kernel of }\varphi+\varphi' \text{ on } \fz.$$
Then $\Exp(\fu)/\Exp(\fk)$ is a Heisenberg group with center $\Exp(\fz)/\Exp(\fk)$, and $\varphi+\varphi'$ is a character of this center. Let $\sigma_{\varphi,\varphi'}$ denote the oscillator representation corresponding to this character. Continue $\sigma_{\varphi,\varphi'}$ trivially to $\Exp(\fu)$ and let \Dima{$\cW_{S,\varphi,\varphi'}:=\ind_{\Exp(\fu)}^G\sigma_{\varphi,\varphi'}$ denote its Schwartz induction to $G$ (see Definition \ref{def:ind}). }

Note that for $\varphi'=0$ we obtain the degenerate Whittaker model $\cW_{S,\varphi}$. Note also that $\varphi'$ vanishes on $[\fu,\fu]$ and thus $\varphi+\varphi'$ defines the same anti-symmetric form on $\fu$ as $\varphi$.

\begin{remark}\label{rem:qWhit}
Let $a$ be the first eigenvalue of $S$ bigger than 1. Then
$$\ind_{\Exp(\fv)}^G\chi_{\varphi+\varphi'}=\cW_{a^{-1}S,0,\varphi+\varphi'}.$$
\end{remark}

For $\pi \in \Rep^\infty(G)$ define $\pi_{S,\varphi,\varphi'}:=(\cW_{S,\varphi,\varphi'}\otimes\pi)_G$.
\Dima{Recall that if $F=\R$\ then $\otimes$ denotes the completed tensor product.} We will say that $\pi$ is $(S,\varphi,\varphi')$-distinguished if $\pi_{S,\varphi,\varphi'}\neq 0$. We will denote by $\QWO(\pi)$ the set of all orbits $\cO$ for which there exists a Whittaker triple $(S,\varphi,\varphi')$ such that $\varphi\in \cO$ and $\pi$ is $(S,\varphi,\varphi')$-distinguished. The set of maximal orbits in $\QWO(\pi)$ will be denoted $\QWS(\pi)$.
\end{defn}


Till the end of the subsection we let $T>t\geq 0$ be such that there are no critical numbers in $(t,T)$. We also
 \Dima{fix} $\varphi'\in (\fg^*)^{S_t}_{>-2}\cap (\fg^*)^{S_T}_{>-2}$ and $\psi \in (\fg^*)^{S_t}_{>-2}\cap (\fg^*)^{S_T}_{-2}$. Let $L_{t}:=\Exp(\fl_t),\, R_{t}:=\Exp(\fr_t)$ and let $\chi:=\chi_{\varphi+\varphi'+\psi}$ be the character of these groups given by $\varphi+\varphi'+\psi$.

Similarly to Lemma \ref{lem:WhitFrob} we have
\begin{lem}\label{lem:Heis}
$$(\pi\otimes \chi_{\varphi+\varphi'})_{L_t}\simeq(\pi\otimes \chi_{\varphi+\varphi'})_{R_t}\simeq\pi_{S_t,\varphi,\varphi'}.$$
\end{lem}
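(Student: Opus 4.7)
The plan is to mimic the proof of Lemma \ref{lem:WhitFrob} verbatim, once we have identified the appropriate Heisenberg-theoretic setup for quasi-Whittaker models. First I would check that $\fl_t$ and $\fr_t$ are maximal isotropic subalgebras of $\fu_t$ with respect to $\omega_\varphi|_{\fu_t}$ and contain the kernel $\fk$. Since $\varphi'\in(\fg^*)^{S_t}_{>-2}$, a weight computation shows $\varphi'$ vanishes on $\fg^{S_t}_{\geq 2}\supset[\fu_t,\fu_t]$, so the antisymmetric form on $\fu_t$ determined by $\varphi+\varphi'$ agrees with $\omega_\varphi$. Writing $S_t=S+(t-1)Z$ and using $\varphi\in(\fg^*)^S_{-2}\cap(\fg^*)^{S_t}_{-2}$ one obtains $ad^*(Z)\varphi=0$, so $\omega_\varphi$ is $Z$-invariant and therefore $\omega_\varphi(\fg^Z_a,\fg^Z_b)=0$ for $a+b\neq 0$. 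Combined with the Lagrangian hypothesis on $\fm\subset\fg^Z_0\cap\fg^S_1$ and Lemma \ref{lem:help}, this yields that $\fl_t$ and $\fr_t$ are isotropic subalgebras that contain $\Rad(\omega_\varphi|_{\fu_t})=\fz\supset\fk$, and a dimension count against the non-degenerate $\omega_\varphi$-pairing between the positive and negative $Z$-weight parts of $\fw_t/(\fw_t\cap\fg_\varphi)$ shows that their images in the symplectic quotient $\fu_t/\fz$ are Lagrangian.

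With these facts, the Heisenberg group $\Exp(\fu_t)/\Exp(\fk)$ has center $\Exp(\fz)/\Exp(\fk)$ and central character $\chi_{\varphi+\varphi'}$, and $L_t/\Exp(\fk)$, $R_t/\Exp(\fk)$ descend to Lagrangian subgroups. The character $\chi_{\varphi+\varphi'}$ extends to each of $L_t,R_t$ because $\varphi+\varphi'$ vanishes on $[\fl_t,\fl_t]$ and $[\fr_t,\fr_t]$ by isotropy. Corollary \ref{cor:StvN} then gives
\begin{equation*}
\sigma_{\varphi,\varphi'}\cong \ind_{L_t}^{\Exp(\fu_t)}\chi_{\varphi+\varphi'}\cong \ind_{R_t}^{\Exp(\fu_t)}\chi_{\varphi+\varphi'},
\end{equation*}
and by transitivity of induction (Lemma \ref{lem:IndSt}) we conclude
\begin{equation*}
\cW_{S_t,\varphi,\varphi'}=\ind_{\Exp(\fu_t)}^G\sigma_{\varphi,\varphi'}\cong \ind_{L_t}^G\chi_{\varphi+\varphi'}\cong \ind_{R_t}^G\chi_{\varphi+\varphi'}.
\end{equation*}
Tensoring with $\pi$, taking $G$-coinvariants and applying Lemma \ref{lem:Frob2} then yields
\begin{equation*}
\pi_{S_t,\varphi,\varphi'}=(\cW_{S_t,\varphi,\varphi'}\otimes\pi)_G\cong(\pi\otimes\chi_{\varphi+\varphi'})_{L_t}\cong(\pi\otimes\chi_{\varphi+\varphi'})_{R_t},
\end{equation*}
with the modular factors $\Delta_{L_t}\Delta_G^{-1}$ and $\Delta_{R_t}\Delta_G^{-1}$ being trivial because $L_t,R_t$ are unipotent.

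The only substantive point is the verification of the Lagrangian property of $\fl_t,\fr_t$ modulo $\fz$, which is the main potential obstacle; all remaining steps are straightforward adaptations of the degenerate Whittaker case. This verification, however, is really the same computation carried out in \cite{GGS} for the neutral case, the crucial new ingredient being the observation that $\varphi'$ contributes nothing to the antisymmetric form on $\fu_t$, so that the Heisenberg structure and the choice of Lagrangians are exactly as in the degenerate Whittaker setting.
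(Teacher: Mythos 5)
Your proof is correct and follows exactly the route the paper intends: the paper proves this lemma only by the remark ``similarly to Lemma \ref{lem:WhitFrob}'', and your argument is precisely that adaptation, the key observations being that $\varphi'$ vanishes on $\fg^{S_t}_{\geq 2}\supset[\fu_t,\fu_t]$ so the form is still $\omega_\varphi$, and that $\fl_t,\fr_t$ are maximal isotropic containing $\Rad(\omega_\varphi|_{\fu_t})=\fz\supset\fk$, after which Corollary \ref{cor:StvN}, Lemma \ref{lem:IndSt} and Lemma \ref{lem:Frob2} give the conclusion just as in the degenerate Whittaker case.
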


Let $f\in \fg$ be the unique nilpotent element corresponding to $\varphi$ by the Killing form.
Let $h:=S-Z$ and let $\gamma=(e,h,f)$ be an $\sl_2$-triple.

\begin{lem}\label{lem:step}
Assume that for any non-zero $\psi'\in (\fg^*)^{S_T}_{-1}\cap (\fg^*)^e$ we have
 $$\pi_{S_T,\varphi,\varphi'+\psi+\psi'}=0.$$
 Then
 \begin{enumerate}[(a)]
 \item \label{it:padicStep} If $F$ is non-Archimedean then $(\pi\otimes\chi)_{L_{T}}=(\pi\DimaS{\otimes}\chi)_{R_{t}}$.
  In other words, any $(R_{t},\chi)$-equivariant functional on $\pi$ is automatically $(L_{T},\chi)$-equivariant.
\item \label{it:RealStep} If $F=\R$ then the commutative  Lie algebra $\fa:=\fl_T/\fr_t$ acts on $((\pi\otimes\chi)_{R_{t}})^*$ locally nilpotently and \DimaS{thus} if $(\pi\otimes\chi)_{R_{t}}\neq 0$ then $((\pi\otimes\chi)_{R_{t}})_{\fa}=(\pi\otimes\chi)_{L_{T}}\neq 0$.
\end{enumerate}
\end{lem}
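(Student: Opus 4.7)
The plan exploits that $R_t$ is normal in $L_T$ with abelian quotient. By Lemma \ref{lem:lrw}, $\fl_T = \fr_t \oplus \fa$, where $\fa := \fw_T \cap \fg_\varphi$ is the Lie algebra of $A := L_T/R_t$; the character $\chi = \chi_{\varphi+\varphi'+\psi}$ extends canonically from $R_t$ to $L_T$. Hence $\rho := (\pi \otimes \chi)_{R_t}$ carries a smooth $A$-action and $\rho_A = (\pi \otimes \chi)_{L_T}$. Part \eqref{it:padicStep} thus reduces to showing that the $A$-action on $\rho$ is trivial, and part \eqref{it:RealStep} to showing that $\fa$ acts locally nilpotently on $\rho^*$ and that $\rho_A \neq 0$ whenever $\rho \neq 0$.

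The key intermediate step is an identification $(\rho^*)^{A, \chi_{\psi'}} \cong \pi_{S_T, \varphi, \varphi'+\psi+\psi'}^*$ for every $\psi' \in (\fg^*)^{S_T}_{-1} \cap (\fg^*)^e$, together with the claim that every non-trivial unitary character of $A$ has the form $\chi_{\psi'}$ for some such $\psi'$. For the first part, an $(A, \chi_{\psi'})$-equivariant functional on $\rho$ pulls back to an $(L_T, (\chi \cdot \chi_{\psi'})^{-1})$-equivariant functional on $\pi$, and a Stone--von~Neumann argument in the spirit of Lemma \ref{lem:Heis} (applied with $\fl_T$ as the Lagrangian in the quasi-Whittaker model $\cW_{S_T, \varphi, \varphi'+\psi+\psi'}$) identifies the resulting space with $\pi_{S_T, \varphi, \varphi'+\psi+\psi'}^*$. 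For the parametrization of characters, Lemma \ref{lem:kernel}\eqref{it:KerOm} gives $\fa \subset \fg^{S_T}_1 \cap \fg^f$, and the $\sl_2$-decomposition $\fg = \fg^f \oplus \Im(\ad(e))$ dually provides $(\fg^f)^* \cong (\fg^*)^e$; restricting to the $S_T$-weight $-1$ component yields lifts $\psi' \in (\fg^*)^{S_T}_{-1} \cap (\fg^*)^e$ of every character of $\fa$.

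Under this identification, the hypothesis states precisely that $\rho$ admits no non-trivial $(A, \chi_{\psi'})$-equivariant functional for any non-trivial character of $A$. In the non-Archimedean case, Lemma \ref{lem:BZ} then forces the $A$-action on $\rho$ to be trivial, whence $\rho_A = \rho$, proving \eqref{it:padicStep}. In the Archimedean case, we apply Proposition \ref{prop:AV0}: the torus $\exp(\R \cdot S_T)$ normalizes $A$ and acts on it with positive weights, giving a $P_2(\R)$-type semidirect structure; filtering $A$ into one-dimensional $S_T$-stable subquotients and iterating Proposition \ref{prop:AV0} yields local nilpotency of $\fa$ on $\rho^*$. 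A standard induction on $\dim \fa$ (each generator acting locally nilpotently and preserving the kernel of the others) gives $(\rho^*)^\fa \neq 0$ whenever $\rho^* \neq 0$, so $\rho_\fa = \rho_A = (\pi \otimes \chi)_{L_T} \neq 0$, proving \eqref{it:RealStep}.

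The main obstacle is the character-identification step, requiring careful weight tracking among $S_T$, the $\sl_2$-triple $\gamma = (e,h,f)$, and $Z$, together with the correct Heisenberg formalism for the quasi-Whittaker model; once in place, the invocations of Lemma \ref{lem:BZ} and Proposition \ref{prop:AV0} are essentially automatic. A secondary technical point is the Archimedean iteration, which relies on the positive-weight action of $\exp(\R S_T)$ on $A$.
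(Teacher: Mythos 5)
Your proposal follows the paper's own argument essentially verbatim: the same reduction to the abelian quotient $A=L_T/R_t$ acting on $\rho=(\pi\otimes\chi)_{R_t}$, the same parametrization of the characters of $A$ by $\psi'\in(\fg^*)^{S_T}_{-1}\cap(\fg^*)^e$ via $\Rad\omega=\fg_\varphi=\fg^f$, the same identification $\Hom_A(\rho,\chi_{\psi'})\cong(\pi_{S_T,\varphi,\varphi'+\psi+\psi'})^*$, and the same use of Lemma \ref{lem:BZ} in the non-Archimedean case and of Proposition \ref{prop:AV0} through $P_2(\R)$-type subgroups in the real case. It is correct, and your iteration over one-dimensional $S_T$-stable pieces of $\fa$ is just a slightly more explicit rendering of the paper's remark that for each $X\in\fa$ the subgroup generated by $S_T$ and $X$ is isomorphic to $P_2(\R)$.
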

\begin{proof}
Let $\rho:=\pi_{R_{t},\chi}$. The quotient $A:=L_{T}/R_{t}$ acts on $\rho$.

\eqref{it:padicStep}
We have to show that the action of $A$ on $\rho$ is trivial,
By Lemmas \ref{lem:BZ} and \ref{lem:lrw}, it is enough to show that for any non-trivial character $\chi'$ of  $A$, $\Hom_A(\rho,\chi')=0$.
By Lemma \ref{lem:lrw}, characters of $A$ are given by elements of $(\fw_{T}\cap \fg_{\varphi})^*\cong(\fw_{T}^*)^e$. For $\psi'\in (\fw_{T}^*)^e$ and the corresponding character $\chi'_{\psi'}$ of $A$ we have
$$\Hom_A(\rho,\chi'_{\psi'})=(\pi_{S_{T},\varphi,\varphi'+\psi+\psi'})^*=0.$$

\eqref{it:RealStep} Note that for any $X\in \fa$, the action of the Lie algebra generated by $S_T$ and $X$ on $\rho$ can be extended to the action of the corresponding subgroup of $G$. This subgroup is  isomorphic to the group $P_2(\R)$ of affine transformations of the line. Thus we use Proposition \ref{prop:AV0} instead of Lemma \ref{lem:BZ} and continue as in \eqref{it:padicStep}.
\end{proof}


\begin{lem}\label{lem:step2}
Assume that $\pi$ is
 $(S_t,\varphi,\varphi'+\psi)$-distinguished. Then for some (possibly zero) $\psi'\in (\fg^*)^{S_T}_{-1}$, $\pi$ is $(S_T,\varphi+\psi,\varphi'+\psi')$-distinguished.
\end{lem}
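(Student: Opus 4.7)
The plan is to prove the contrapositive: assuming $\pi_{S_T,\varphi+\psi,\varphi'+\psi'}=0$ for every $\psi'\in (\fg^*)^{S_T}_{-1}$, I derive $\pi_{S_t,\varphi,\varphi'+\psi}=0$. By Lemma \ref{lem:Heis} applied to the Whittaker triple $(S_t,\varphi,\varphi'+\psi)$, this target quotient is isomorphic to $(\pi\otimes\chi)_{R_t}$, where $\chi:=\chi_{\varphi+\varphi'+\psi}$. Lemma \ref{lem:lrw} exhibits $\fa:=\fl_T/\fr_t\cong \fw_T\cap\fg_\varphi$ as an abelian Lie algebra, and its unitary characters correspond to the various $\psi'\in (\fw_T^*)^e\subset (\fg^*)^{S_T}_{-1}\cap(\fg^*)^e$.

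The standing assumption, restricted to non-zero $\psi'$, is precisely the hypothesis of Lemma \ref{lem:step}; invoking it yields $(\pi\otimes\chi)_{R_t}=(\pi\otimes\chi)_{L_T}$ in the $p$-adic case, and $((\pi\otimes\chi)_{R_t})_\fa=(\pi\otimes\chi)_{L_T}$ with $\fa$ acting locally nilpotently on $((\pi\otimes\chi)_{R_t})^*$ in the archimedean case. Next, I would identify $(\pi\otimes\chi)_{L_T}$ with $\pi_{S_T,\varphi+\psi,\varphi'}$ by an analogue of Lemma \ref{lem:Heis} applied to the Whittaker triple $(S_T,\varphi+\psi,\varphi')$. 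For this identification the key point is that the subalgebra $\fl_T$ described in \eqref{=lt}, built from the Lagrangian $\fm$ together with eigenspaces of $Z$ inside $\fu_T$, still serves as a maximal isotropic subalgebra of $\fg^{S_T}_{\geq 1}$ with respect to the form $\omega_{\varphi+\psi}$ and contains the appropriate kernel defining the Heisenberg quotient for $(S_T,\varphi+\psi,\varphi')$. Combining this identification with the $\psi'=0$ case of the standing assumption gives $(\pi\otimes\chi)_{L_T}=\pi_{S_T,\varphi+\psi,\varphi'}=0$.

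In the $p$-adic case this immediately yields $(\pi\otimes\chi)_{R_t}=0$, contradicting the non-vanishing hypothesis on $\pi_{S_t,\varphi,\varphi'+\psi}$. In the archimedean case, the vanishing of $\fa$-coinvariants together with local nilpotency of $\fa$ on the continuous dual forces the whole space to vanish: any non-zero element of $((\pi\otimes\chi)_{R_t})^*$ would generate a finite-dimensional nilpotent $\fa$-module, which by Engel's theorem contains a non-zero $\fa$-invariant vector, contradicting the vanishing of coinvariants. The main obstacle is the identification $(\pi\otimes\chi)_{L_T}\cong\pi_{S_T,\varphi+\psi,\varphi'}$, since $\fl_T$ was engineered for the unshifted pair $(S_t,\varphi)$, and one must verify that replacing $\varphi$ by $\varphi+\psi$ (a weight $-2$ perturbation under $S_T$) preserves the Lagrangian property; this reduces to a direct computation using the commutation of $Z$ with the $\sl_2$-triple attached to $\varphi$ and the bigrading of $\fu_T$ by $S_T$ and $Z$.
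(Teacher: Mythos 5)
Your overall strategy (contrapositive, then Lemma \ref{lem:step}, then feed in the $\psi'=0$ case) could in principle work, but it hinges entirely on the identification $(\pi\otimes\chi)_{L_T}\cong\pi_{S_T,\varphi+\psi,\varphi'}$, and this is exactly where there is a genuine gap. The subalgebra $\fl_T$ of \eqref{=lt} is a maximal isotropic for $\omega_{\varphi}$, and the usual reason one may ignore the auxiliary part of the character on $[\fl_T,\fl_T]\subset\fg^{S_T}_{\geq 2}$ is that it has $S_T$-weight $>-2$. But here $\psi$ has $S_T$-weight exactly $-2$, so it need not vanish on $[\fu_T,\fu_T]$; concretely, $\fl_T$ contains $\fw_T\cap\fg_{\varphi}$, whose elements have positive $Z$-weights (since $\fg_{\varphi}\subset\fg^h_{\leq 0}$), so $[\fw_T\cap\fg_{\varphi},\fw_T\cap\fg_{\varphi}]$ lies in $\fg^{S_T}_{2}\cap\fg^Z_{>0}$, which is precisely where $\psi$ (being in $(\fg^*)^{S_t}_{>-2}\cap(\fg^*)^{S_T}_{-2}$, hence supported on $\fg^Z_{>0}$) can be non-zero. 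So $\fl_T$ need not be isotropic for $\omega_{\varphi+\psi}$, $\chi_{\varphi+\varphi'+\psi}$ need not even define a character of $L_T$, and the ``direct computation'' you defer to does not exist in general. The same issue undercuts your claim that your standing assumption ``is precisely the hypothesis of Lemma \ref{lem:step}'': that hypothesis concerns $\pi_{S_T,\varphi,\varphi'+\psi+\psi'}$ (Heisenberg structure built from $\omega_{\varphi}$ on $\fu_T$), whereas you assume vanishing of $\pi_{S_T,\varphi+\psi,\varphi'+\psi'}$ (built from $\omega_{\varphi+\psi}$); equating these is the same unproven transfer between the two polarizations.

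The paper avoids this entirely and argues directly rather than by contrapositive: it works with the form $\omega'$ attached to $\varphi+\psi+\varphi'$ (which on $\fu_T$ equals $\omega_{\varphi+\psi}$), observes that $\fr_t$ is isotropic for $\omega'$ because $\psi$ and $\varphi'$ have $S_t$-weight $>-2$ and hence kill $[\fr_t,\fr_t]\subset\fg^{S_t}_{\geq 2}$, and then extends $\fr_t$ to a \emph{new} maximal isotropic $\fl\subset\fu_T$ for $\omega'$ (in general different from $\fl_T$). Since $\fv_T\subset\fr_t$, the characters of $A=L/R_t$ are parametrized by a quotient of $(\fg^*)^{S_T}_{-1}$ (not only by $(\fw_T^*)^e$), and applying Lemma \ref{lem:BZ} resp.\ Corollary \ref{cor:ExFun} to the non-zero $A$-module $(\pi\otimes\chi)_{R_t}$ produces \emph{some} (possibly trivial) character $\chi'_{\psi'}$ with $0\neq\Hom_A((\pi\otimes\chi)_{R_t},\chi'_{\psi'})=(\pi_{S_T,\varphi+\psi,\varphi'+\psi'})^*$. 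To repair your argument you would have to replace $\fl_T$ by such an $\omega_{\varphi+\psi}$-adapted polarization containing $\fr_t$, at which point you are essentially reproducing the paper's proof; the contrapositive framing and the detour through Lemma \ref{lem:step} then become unnecessary.
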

\begin{proof}
Consider the form $\omega'(X,Y):=(\varphi+\psi+\varphi')([X,Y])$. The restrictions of this form to $\fr_t$ and to $\fv_T$ are trivial. Thus there exists a maximal totally isotropic subspace $\fl\subset \fu_T$ with $\fr_t\subset \fl$.  Let $L:=\Exp(\fl)$. Since $\fv_T\subset \fr_t$, the characters of $A:=L/R_t$ are given by a quotient of $(\fg^*)^{S_T}_{-1}$. Thus, by Lemma \ref{lem:BZ} and Corollary \ref{cor:ExFun}, for some $\psi'\in (\fg^*)^{S_T}_{-1}$ we have
$$0\neq \Hom_{A}(\pi_{R_{t},\chi},\chi'_{\psi'})=(\pi_{S_T,\varphi+\psi,\varphi'+\psi'})^*.$$
\end{proof}

\subsection{Key propositions}\label{subsec:key}
Let $S,\varphi, h, f,e,Z,S_t$ be as before.
\begin{prop}\label{prop:isom}
Let $T>t \geq 0$. \Dima{Let $\varphi'\in (\fg^*)^{S_t}_{>-2}\cap(\fg^*)^{S_T}_{> -2}$}. Then we have an epimorphism  $$\nu:  \pi_{S_t,\varphi,\varphi'}\onto \pi_{S_T,\varphi,\varphi'}.$$ Moreover, if  $\pi$ is not $(S_s,\varphi,\varphi'+\psi')$-distinguished
 for any $s \in (t,T)$ and any non-zero $\psi' \in (\fg^*)^{S_{s}}_{-1}\cap (\fg^*)^e$ then
\begin{enumerate}[(i)]
\item If $F$ is non-Archimedean then $\nu$ is an isomorphism.
\item $\pi$ is $(S_t,\varphi,\varphi')$-distinguished if and only if $\pi$ is $(S_T,\varphi,\varphi')$-distinguished.
\end{enumerate}
\end{prop}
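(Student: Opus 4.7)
The strategy is to build $\nu$ as a composition of elementary epimorphisms indexed by the critical numbers in $(t, T)$, and then to upgrade each step to an isomorphism using Lemma \ref{lem:step}, whose vanishing hypothesis will be supplied at each step by the non-distinguishedness assumption of the proposition.

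Let $c_1 < \cdots < c_n$ denote the critical numbers inside the open interval $(t, T)$, and form the partition $t = t_0 < t_1 = c_1 < \cdots < t_n = c_n < t_{n+1} = T$, so that no critical number lies in any open subinterval $(t_i, t_{i+1})$. The condition $\varphi' \in (\fg^*)^{S_{t_i}}_{> -2}$ holds for all $i$ by linearity of $S_t$ in $t$ (the eigenvalue condition is convex), so all intermediate quasi-Whittaker quotients $\pi_{S_{t_i}, \varphi, \varphi'}$ are well-defined. For each $i$, Lemma \ref{lem:lrw} gives $\fr_{t_i} \subset \fl_{t_{i+1}}$ with commutative quotient $\fw_{t_{i+1}} \cap \fg_{\varphi}$, and both $R_{t_i}$ and $L_{t_{i+1}}$ carry the same character $\chi_{\varphi + \varphi'}$. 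Combined with the identifications $\pi_{S_{t_i}, \varphi, \varphi'} \cong (\pi \otimes \chi_{\varphi+\varphi'})_{R_{t_i}}$ and $\pi_{S_{t_{i+1}}, \varphi, \varphi'} \cong (\pi \otimes \chi_{\varphi+\varphi'})_{L_{t_{i+1}}}$ of Lemma \ref{lem:Heis}, the natural quotient map between coinvariants yields an epimorphism $\nu_i : \pi_{S_{t_i}, \varphi, \varphi'} \onto \pi_{S_{t_{i+1}}, \varphi, \varphi'}$. Composing all these gives the unconditional $\nu$.

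For parts (i) and (ii), I apply Lemma \ref{lem:step} with $\psi = 0$ at each internal step $i = 0, \ldots, n-1$. Its vanishing hypothesis $\pi_{S_{t_{i+1}}, \varphi, \varphi' + \psi'} = 0$ for nonzero $\psi' \in (\fg^*)^{S_{t_{i+1}}}_{-1} \cap (\fg^*)^e$ follows from the non-distinguishedness assumption of the proposition, since $t_{i+1} = c_{i+1}$ lies inside $(t, T)$. Lemma \ref{lem:step}(a) then gives that $\nu_i$ is an isomorphism in the non-Archimedean case, while Lemma \ref{lem:step}(b) yields the real analogue via the locally nilpotent action of the commutative Lie algebra $\fl_{t_{i+1}}/\fr_{t_i}$ on the appropriate dual. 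For the terminal step $\nu_n$, whose right endpoint is $T$, when $T$ is regular the quotient $\fw_T \cap \fg_\varphi$ vanishes (no eigenvector of $\ad(S_T)$ has eigenvalue exactly $1$ in a regular neighborhood), so $\nu_n$ is trivially an isomorphism; when $T$ itself is critical, I insert a regular auxiliary $s \in (c_n, T)$, which lies inside $(t, T)$, factor the last step through $\pi_{S_s, \varphi, \varphi'}$, apply Lemma \ref{lem:step} to the step $(c_n, s)$ using the hypothesis at $s$, and then use the local constancy of the Whittaker data on the regular neighborhood adjacent to $T$.

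The main obstacle to navigate is the asymmetry between the vanishing hypothesis of Lemma \ref{lem:step}, which targets the right endpoint of each elementary comparison, and that of Proposition \ref{prop:isom}, which only controls $s$ strictly inside the open interval $(t, T)$. The partition above is engineered precisely so that every intermediate right endpoint $t_{i+1}$ lies in $(t, T)$, and the triviality of elementary comparisons over regular regions is used to absorb the potentially problematic terminal comparison at $T$.
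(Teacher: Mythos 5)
Your construction of $\nu$ and your treatment of the interior steps are exactly the paper's argument: partition $[t,T]$ at the critical values, use Lemmas \ref{lem:lrw}, \ref{lem:Frob2} and \ref{lem:Heis} to obtain $\pi_{S_{t_i},\varphi,\varphi'}\simeq(\pi\otimes\chi)_{R_{t_i}}\onto(\pi\otimes\chi)_{L_{t_{i+1}}}\simeq\pi_{S_{t_{i+1}},\varphi,\varphi'}$, and upgrade each step by Lemma \ref{lem:step} (with $\psi=0$), the required vanishing being supplied by the hypothesis at the critical value $t_{i+1}\in(t,T)$. The problem is your terminal step.

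For regular $T$ your conclusion is right but your reason is not: regularity does not mean $\fg^{S_T}_1=0$, it means $\fg^{S_T}_1\subset\fg_Z$; combined with $\fg_\varphi\subset\fg^h_{\leq 0}$ this gives $\fw_T\cap\fg_\varphi=0$, so $\fl_T=\fr_{c_n}$ and the last map is an equality. When $T$ is critical, your patch does not work. Inserting a regular $s\in(c_n,T)$ gains nothing: since $s$ is regular, $\fw_s\cap\fg_\varphi=0$, so $\fl_s=\fr_{c_n}$ and the comparison over $(c_n,s)$ is trivially an isomorphism with vacuous hypothesis, while the entire difficulty is concentrated in the passage from $s$ to $T$; and ``local constancy of the Whittaker data'' fails across $T$ precisely because $T$ is critical ($\fu_s$ jumps as $s$ reaches $T$). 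That last comparison has obstruction group $L_T/R_{c_n}\cong\Exp(\fw_T\cap\fg_\varphi)$ (one checks this space is nonzero only when $T$ is critical, since its elements have positive $Z$-weights), whose nontrivial characters are exactly the nonzero $\psi'\in(\fg^*)^{S_T}_{-1}\cap(\fg^*)^e$, and Lemma \ref{lem:step} then requires the vanishing $\pi_{S_T,\varphi,\varphi'+\psi'}=0$ at $s=T$ itself. This cannot be derived from the hypothesis on the open interval: in the situation of Example \ref{ex:GLSame} take $t\in(1/4,3/4)$, $T=3/4$, $\varphi'=0$ and $\pi$ generic; then $(\fg^*)^{S_s}_{-1}=0$ for all $s\in(t,T)$, so the hypothesis is vacuous, yet $\pi_{S_{3/4},\varphi,E_{32}}\simeq\pi_{\varphi+E_{32}}\neq0$, so the single step $\nu$ has a nonzero kernel. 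In other words, when $T$ is critical the ``moreover'' hypothesis must also be invoked at $s=T$; this is what the paper's one-line proof implicitly does, and in its applications (e.g.\ the proof of Theorem \ref{thm:MaxModAction}) the vanishing at the critical right endpoint is established separately before the proposition is applied. Your attempt to extract it from the open-interval hypothesis is therefore a genuine gap, not a removable technicality.
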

\begin{proof}
Let $t_0:=t, \, t_1,\dots,t_{n-1}$ be all the critical values between $t$ and $T$ and $t_n:=T$. By Lemmas \ref{lem:lrw}, \ref{lem:Frob2}, and  \ref{lem:Heis}  we have
\begin{equation*}
\pi_{S_t,\varphi,\varphi'} \simeq (\pi\otimes\chi)_{R_t}\onto (\pi\otimes\chi)_{L_{t_1}}\simeq (\pi\otimes\chi)_{R_{t_1}}\onto \dots  \onto (\pi\otimes\chi)_{L_{t_n}}\simeq  \pi_{S_T,\varphi,\varphi'}.
\end{equation*}

The ``moreover" part follows from Lemma \ref{lem:step}.
\end{proof}

\begin{prop}\label{prop:step}
Let $t\geq0$ and  let  $\eta\neq 0 \in (\fg^*)^{S_{t}}_{>-2}\cap\DimaS{(\fg^*)^{S_{t}}_{\leq -1}\cap}(\fg^*)^e$. Suppose that $\pi$ is $(S_t,\varphi,\eta)$-distinguished. Then there exist $T>t, \, \Phi\in (\fg^*)^{S_T}_{-2}$, and $\Phi'\in (\fg^*)^{S_T}_{>-2}$ such that $\varphi \in \overline{G\Phi}\setminus G\Phi$ and $\pi$ is $(S_T,\Phi,\Phi')$-distinguished.
\end{prop}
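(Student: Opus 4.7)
My plan is to deform the Whittaker triple $(S_t,\varphi,\eta)$ along the family $S_T=h+TZ$ until a non-zero portion of $\eta$ drops into the $S_T$-weight $-2$ slot, and then set $\Phi$ to be $\varphi$ plus that portion. First I observe that $Z$ commutes with the $\sl_2$-triple $(e,h,f)$: it commutes with $h$ by the definition of $Z$, while comparing $\ad^*(S_t)\varphi=-2\varphi$ with $\ad^*(h)\varphi=-2\varphi$ forces $Z\varphi=0$, hence $[Z,f]=0$, and Lemma \ref{lem:sl2}(v) then yields $[Z,e]=0$. Consequently $\eta$ decomposes as $\sum\eta_{a,b}$ into joint $(h,Z)$-eigenvectors with $a\geq 0$ (since $\eta\in(\fg^*)^e$ is supported on non-negative $h$-weights) and $a+tb>-2$ on its support.

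Next I set
\[
T\;:=\;\min\bigl\{\,(-2-a)/b\;:\;\eta_{a,b}\neq 0,\ b<0\,\bigr\},
\]
which exceeds $t$ because $a+tb>-2$ together with $b<0$ forces $t<(-2-a)/b$. Define $\psi:=\sum_{a+Tb=-2}\eta_{a,b}$ and $\varphi'':=\eta-\psi$. Then by minimality of $T$, $\psi$ is a non-zero element of $(\fg^*)^{S_T}_{-2}\cap(\fg^*)^e$ while $\varphi''\in(\fg^*)^{S_T}_{>-2}$. To propagate distinction from $t$ up to $T$ I apply Proposition \ref{prop:isom} on $(t,T)$: either its ``no intermediate obstruction'' hypothesis holds, giving $(S_s,\varphi,\eta)$-distinction for $s$ just below $T$, after which Lemma \ref{lem:step2} (with $\varphi'=\varphi''$, the same $\psi$, and $T=T$) produces $(S_T,\varphi+\psi,\varphi''+\psi')$-distinction for some $\psi'\in(\fg^*)^{S_T}_{-1}$; or the hypothesis fails at some $s\in(t,T)$ with a non-zero witness $\chi\in(\fg^*)^{S_s}_{-1}\cap(\fg^*)^e$, in which case I restart with $(s,\varphi,\eta+\chi)$ in place of $(t,\varphi,\eta)$. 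Because the relevant rational weights have bounded denominators, this recursion terminates in finitely many steps, yielding $\Phi:=\varphi+\psi$ and $\Phi':=\varphi''+\psi'$ with $\Phi\in(\fg^*)^{S_T}_{-2}$, $\Phi'\in(\fg^*)^{S_T}_{>-2}$, and $\pi$ being $(S_T,\Phi,\Phi')$-distinguished.

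The chief obstacle is the geometric claim $\varphi\in\overline{G\Phi}\setminus G\Phi$. For the closure inclusion, Corollary \ref{cor:tor} furnishes a cocharacter $\nu:F^\times\to G^{ad}$ whose differential differs from $Z$ only by a central element (acting trivially on $\fg^*$); so $\nu$ fixes $\varphi$ and acts on each $\psi_{a,b}$ by $\lambda^b$. Since every $b$ on the support of $\psi$ is negative, $\nu(\lambda)\Phi=\varphi+\sum_{a,b}\lambda^b\psi_{a,b}$ converges to $\varphi$ as $|\lambda|\to\infty$. For $\Phi\notin G\cdot\varphi$, I invoke Slodowy transversality: the Killing form identification $\fg\simeq\fg^*$ transports $\fg=[\fg,f]\oplus\fg^e$ to $\fg^*=T_\varphi(G\cdot\varphi)\oplus(\fg^*)^e$, so the slice $\varphi+(\fg^*)^e$ is transverse to $G\cdot\varphi$ at $\varphi$ and meets it, in a neighborhood of $\varphi$, only at $\varphi$. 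If $\Phi$ were in $G\cdot\varphi$, then the entire $\nu$-orbit $\{\varphi+\sum\lambda^b\psi_{a,b}:\lambda\in F^\times\}$ would lie in $(G\cdot\varphi)\cap(\varphi+(\fg^*)^e)$; yet this orbit accumulates at $\varphi$ (as $|\lambda|\to\infty$) while every point of it differs from $\varphi$ (the $Z$-eigencomponents $\psi_{a,b}$ are linearly independent, so no cancellation can occur), contradicting the local transversality.

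I anticipate the Slodowy-transversality step to be the principal technical issue, since it must be carried out in the coadjoint picture on $\fg^*$ with the identification precisely pinned down. The degenerate possibility that $\eta$ has no $(h,Z)$-component with $b<0$---equivalently, that $\cW_{S_t,\varphi,\eta}\simeq\cW_{S_t,\varphi}$ and $\eta$ is ``invisible'' to the quasi-Whittaker construction---should be ruled out by the specific way the proposition is invoked (for instance when $\eta$ is a pure $S_t$-weight $-1$ vector, forcing $b=-(1+a)/t<0$), but it warrants a separate look.
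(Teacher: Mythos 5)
Your proposal is correct and takes essentially the same route as the paper's proof: choose $T$ as the first time a component of $\eta$ reaches $S_T$-weight $-2$, split $\eta=\psi+\varphi''$ accordingly, cross the intermediate critical values via the dichotomy built into Proposition \ref{prop:isom} and Lemma \ref{lem:step2}, and finish the geometric claim with the cocharacter from Corollary \ref{cor:tor} (for $\varphi\in\overline{G\Phi}$) and Slodowy-slice transversality (for $\varphi\notin G\Phi$). The only differences are bookkeeping and detail: the paper runs an induction on the number of critical values in $(t,T)$ and leaves the step as ``easily follows,'' where you use a restart-and-terminate recursion, and the degenerate case you flag (no component of $\eta$ with negative $Z$-weight) is exactly the point the paper passes over with its unproved assertion that $\eta\in\fg^Z_{<0}$, harmless in all the invocations since there $\eta$ has $S_t$-weight $-1$.
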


\begin{proof}

Since $\eta\in (\fg^*)^e\subset \fg^h_{\geq 0},$ we have $\eta\in\fg^Z_{<0}$. Thus for some $s>t$ there exist $\psi\in (\fg^*)^{S_{\Dima{s}}}_{-2}\cap (\fg^*)^e$ and $\varphi'\in (\fg^*)^{S_{\Dima{s}}}_{>-2}\cap  (\fg^*)^e$ such that $\psi\neq 0$ and $\eta=\varphi'+\psi$. \Dima{Note that $\varphi'\in (\fg^*)^{S_{s'}}_{>-2}$  for any $s'\in [t,s]$.}

Let $a_0:=t$, let $a_1,\dots ,a_{m-1}$ be the critical values between $t$ and $s$ and $a_m:=s$. We prove the statement by induction on $m$.

The base case is $m=1$, i.e. there are no critical values between $t$ and $s$. Take $T:=s$. Then Lemma \ref{lem:step2} implies that $\pi_{S_{T},\varphi+\psi,\varphi'+\psi'}\neq 0$ for some $\psi'\in (\fg^*)^{S_{T}}_{-1}.$
Denote $\Phi:=\varphi+\psi$ and $\Phi':=\varphi'+\psi'$.

Note that $\varphi \in \overline{G\Phi}$. Indeed, by Corollary \ref{cor:tor},
there exists an algebraic group morphism  $\nu:\F^{\times}\to \DimaE{G^{ad}}$ and a central element $Z'\in \fg$ such that $Z-Z'\in \Im(d\nu)$. Let $\lambda\in F^{\times}$ be small and $g:=\nu(\lambda)$. Then $Ad^*(g)\varphi=\varphi$ and $Ad^*(g^n)\psi\to 0$. \DimaE{Note also that $G\Phi=G^{ad}\Phi$.}

Note that $\Phi$ belongs to the Slodowy slice to $G\varphi$ at $\varphi$ and thus $\varphi \notin   G\Phi$.

\DimaS{For the induction step, note that by Lemma \ref{lem:step}, $\pi$ is $(S_{a_1},\varphi,\eta+\psi'')$-distinguished for some (possibly zero) $\psi''\in (\fg^*)^{S_{a_1}}_{-1}\cap (\fg^*)^e.$ The proposition follows now from the induction hypothesis.}
\end{proof}
Note that it is possible that $\Phi'=0$.
\Dima{
\begin{example}
Let $G:=\GL_4(F), \, h:=\diag(1,-1,1,-1), Z:=\diag(0,0,1,1), \, t:=3$. Identify $\fg$ with $\fg^*$ using the trace form and let $\varphi:=f:=E_{21}+E_{43}, \, \eta:=E_{14}$, where $E_{ij}$  are elementary matrices. Then $e=E_{12}+E_{34}$ and $\eta\in \fg^{S_t}_{-1}\cap \fg^e$. We have $s=4, \, m=1, \, \varphi'=0, \, \psi =\eta, \, \Phi=\varphi+\psi$.
Then $\Phi$ is regular nilpotent and $\varphi \in \overline{G\Phi} \setminus G\Phi$. Since $\fg_1^{S_{4}}=0$, we have
$\Phi'=0$.
\end{example}
For the next proposition we will need a couple of geometric lemmas, and a definition.
}

\begin{lem}\label{lem:GHconj}
Let
$\psi\in (\fg^*)^S_{-2}\cap (\fg^*)^Z_{>0}$. Assume that $\varphi+\psi\in G\varphi$. Then $\varphi+\psi\in G_S\varphi$.
\end{lem}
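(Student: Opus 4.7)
The plan is to construct $g'\in G_S$ with $g'\varphi=\varphi+\psi$ explicitly as an exponential $\exp(X)$ for a suitable $X\in \fg^S_0$.

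First I identify $\fg^*$ with $\fg$ equivariantly via the Killing form, so that $\varphi$ corresponds to $f$ and $\psi$ corresponds to some $\psi^\#\in\fg^S_{-2}\cap\fg^Z_{>0}$. Since $[S,Z]=0$ gives $[h,Z]=0$, and the conditions $\varphi\in(\fg^*)^S_{-2}\cap(\fg^*)^h_{-2}$ force $\ad^*(Z)\varphi=0$, i.e.\ $[Z,f]=0$, Lemma~\ref{lem:sl2}(v) implies that $Z$ commutes with the whole $\sl_2$-triple $\gamma=(e,h,f)$. Consequently $\fg$ decomposes into joint $(h,Z)$-bi-weight spaces, and we may write $\psi^\#=\sum_{q\geq 1}\psi^\#_q$ with $\psi^\#_q\in\fg^h_{-2-q}\cap\fg^Z_q$.

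Next I look for $X=\sum_{q\geq 1} X_q\in\fg^S_0$ with $X_q\in\fg^h_{-q}\cap\fg^Z_q$ satisfying $e^{\ad(X)}f=f+\psi^\#$ (equivalently $\exp(X)\varphi=\varphi+\psi$). Since each $X_q$ has strictly negative $h$-weight, $X$ is ad-nilpotent, so $g':=\exp(X)$ is a finite polynomial in $X$ defining an element of the unipotent subgroup $\Exp(\fg^h_{<0})\subset G$; moreover $g'\in G_S$ since $[S,X]=0$. Matching the $Z$-weight-$Q$ components of $e^{\ad(X)}f=f+\psi^\#$ produces a triangular system: inductively, given $X_1,\dots,X_{Q-1}$, find $X_Q$ satisfying $-\ad(f)X_Q=\psi^\#_Q-P_Q(X_1,\dots,X_{Q-1})$, where $P_Q$ is an explicit polynomial in previously determined variables and the right-hand side lies in $\fg^h_{-2-Q}\cap\fg^Z_Q$. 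Only finitely many $Q$ appear since $\fg^Z_{>0}$ is finite-dimensional.

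The main step, where most of the work goes, is the surjectivity of $\ad(f)\colon\fg^h_{-Q}\cap\fg^Z_Q\to\fg^h_{-2-Q}\cap\fg^Z_Q$ for each $Q\geq 1$. Because $[Z,f]=0$ the operator $\ad(f)$ preserves the $Z$-weight decomposition, so it suffices to verify surjectivity of $\ad(f)\colon\fg^h_{-Q}\to\fg^h_{-2-Q}$. On each irreducible $\sl_2$-submodule $V$ of $\fg$ (via $\gamma$) of highest weight $m$, the weight spaces are $V_k$ for $k\in\{-m,-m+2,\dots,m\}$, and a direct case analysis on $k=-Q\leq -1$ shows that $\ad(f)\colon V_{-Q}\to V_{-Q-2}$ is always surjective --- either both spaces vanish by parity or range, or the target vanishes while the source is one-dimensional, or both are one-dimensional and the map is a nonzero isomorphism. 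Summing over irreducibles gives the required surjectivity, and hence the desired $g'=\exp(X)\in G_S$ with $\Ad(g')f=f+\psi^\#$, equivalently $g'\varphi=\varphi+\psi$.
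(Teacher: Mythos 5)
Your proof is correct, and it takes a genuinely different route from the paper's. The paper argues geometrically: via Corollary \ref{cor:tor} it produces a one-parameter torus commuting with $S$ that fixes $\varphi$ and contracts $\psi$, reducing to $\varphi+t\psi$ with $t$ small; it then shows that $\mu(X,g)=X+\Ad^*(g)\varphi$ is a submersion from $\bigl((\fg^*)^e\cap(\fg^*)^S_{-2}\bigr)\times G_S$ onto a neighborhood of $\varphi$ in $(\fg^*)^S_{-2}$, and finally uses the hypothesis $\varphi+t\psi\in G\varphi$ together with the transversality of the Slodowy slice $\varphi+(\fg^*)^e$ to kill the slice component and conclude $\varphi+t\psi\in G_S\varphi$. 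You instead build the conjugator explicitly as $\exp(X)$ with $X\in\fg^S_0\cap\fg^Z_{>0}$, solving level by level in the $Z$-grading; your key surjectivity $\ad(f)\colon\fg^h_{-Q}\cap\fg^Z_Q\onto\fg^h_{-2-Q}\cap\fg^Z_Q$ is the standard $\sl_2$ fact refined by the commuting $Z$-grading, and your weight bookkeeping for the higher terms of $\exp(\ad X)f$ is right, so the induction closes. Notably you never use the hypothesis $\varphi+\psi\in G\varphi$, so you prove the stronger unconditional statement $\varphi+\bigl((\fg^*)^S_{-2}\cap(\fg^*)^Z_{>0}\bigr)\subset \Ad^*\bigl(\Exp(\fg^S_0\cap\fg^Z_{>0})\bigr)\varphi\subset G_S\cdot\varphi$; this does not clash with Proposition \ref{prop:step}, since the orbit-raising directions there lie in $(\fg^*)^e$, hence have $h$-weight $\geq 0$ and negative $Z$-weight, whereas your $\psi$ has $h$-weights $<-2$. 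What each approach buys: the paper's proof is short given the Slodowy-slice machinery, while yours is elementary, constructive (the conjugator is unipotent and centralizes $S$), and in the spirit of the Bourbaki lemma $\exp(\ad(\fn))h=h+\fn$ quoted before Lemma \ref{lemma:trivial_quasi-Whittaker_functional}. Two small points to tighten: the eigenvalues of $Z$ are rational rather than integral, so index your induction by the finite set of positive sums of $Z$-eigenvalues (or rescale $Z$) rather than by integers $q\geq 1$; and since $G$ is a finite central extension, $\exp(X)$ should be read as the canonical lift of the unipotent element (Lemma \ref{lem:cov}), which lies in $G_S$ because $[X,S]=0$.
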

\begin{proof}
%

By Corollary \ref{cor:tor}, there exists an algebraic group morphism $\nu:F^{\times}\to \DimaE{G^{ad}}$ and a central element $C\in \fg$ such that $Z-C\in \Im(d\nu)$. Since $Z$ commutes with $\varphi$ and $\psi\in (\fg^*)^Z_{>0}$, this implies that there exists a sequence $t_n\to 0\in F$ with
$\varphi+t_n\psi\in G_S(\varphi+\psi)$ for every $n$.
Thus $\varphi+t_n\psi\in G\varphi$ for every $n$.
Consider  the decomposition
$\fg^*=(\fg^*)^e\oplus ad^*(\fg)(\varphi)$. Since $ad(S)$ preserves all these spaces we have
$$(\fg^*)^S_{-2}=(\fg^*)^e\cap (\fg^*)^S_{-2}+ad^*(\fg^{S}_0)(\varphi),$$  Thus the map
$$\mu:((\fg^*)^e\cap (\fg^*)^S_{-2})\times G_S\to (\fg^*)^S_{-2} \text{ given by }\mu(X,g):= X+Ad^*(g)\varphi$$
is a submersion.
Hence its image contains an open neighborhood of $\varphi$. Thus  $\Im\mu$ contains $\varphi+t_n\psi$ for some $n$. Since $\varphi+t_n\psi\in G\varphi$, and the Slodowy slice $\varphi+(\fg^*)^e$ is strongly transversal to $G\varphi$, we obtain  $\varphi+t_n\psi\in G_S\varphi$ and thus  $\varphi+\psi\in G_S\varphi$.
\end{proof}

\begin{defn}
We will say that $t>1$ is quasi-critical if either $\fg^{S_t}_{1}\nsubseteq \fg^{Z}_{0}$ or $\fg^{S_t}_{2}\nsubseteq \fg^{Z}_{0}$.
We denote by $in(S,\varphi)$ the number of all quasi-critical $t>1$.
\end{defn}
Let us now show that $in(S,\varphi)$ does not depend on the decomposition $S=h+Z$.

\begin{lemma}[{\cite[\S 11]{Bou}}]Let $h'\in \fg_{S}$ be a neutral element for $f$.
\begin{enumerate}[(i)]
\item $\Im(\ad(f))\cap \Ker(\ad(f))$ is a subalgebra in $\fg$, which includes $h-h'$ and lies in $\fg_{<0}^h$.
\item Let $\fn\subset \fg$ be a subalgebra such that all $Y\in \fn$ are nilpotent and $[h,\fn]=\fn$. Then $\exp(\ad(\fn))h=h+\fn$.
\end{enumerate}
\end{lemma}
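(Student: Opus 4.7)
For part (i), my plan is first to show the containment $\Im(\ad f)\cap\Ker(\ad f)\subseteq\fg^h_{<0}$ by decomposing $\fg$ into irreducibles under the adjoint action of the $\sl_2$-triple $(e,h,f)$: in an irreducible summand of highest $h$-weight $n\geq 1$ the intersection is exactly the lowest weight space (of weight $-n$), while it vanishes on trivial summands, so overall the intersection is contained in $\fg^h_{\leq -1}$. The subalgebra claim reduces to two observations: $\Ker(\ad f)=\fg^f$ is a subalgebra, and for $X=[f,X']\in\Im(\ad f)$ and $Y\in\Ker(\ad f)$, the Jacobi identity gives $[X,Y]=[f,[X',Y]]\in\Im(\ad f)$. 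Finally, for the membership of $h-h'$: both $h$ and $h'$ satisfy $[\cdot,f]=-2f$, so $h-h'\in\Ker(\ad f)$; and by Lemma \ref{lem:sl2} the elements $h,h'$ fit into $\sl_2$-triples $(e,h,f)$ and $(e',h',f)$ with a common $f$, whence $h=[e,f]$ and $h'=[e',f]$ both lie in $\Im(\ad f)$, and so does their difference. Note that this argument uses only that $h,h'$ are neutral for $f$; the hypothesis $h'\in\fg_S$ is not needed here.

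For part (ii), my plan is a filtration argument along the lower central series $\fn=\fn_1\supseteq\fn_2\supseteq\cdots\supseteq\fn_{k+1}=0$. Since $[h,\fn]=\fn$ and $\fn$ is finite-dimensional, $\ad h$ is invertible on $\fn$; because $\ad h$ preserves each $\fn_i$ (by induction and Jacobi), it stays invertible on every graded piece $\fn_i/\fn_{i+1}$. Given $Z\in\fn$, I will inductively construct $Y_i\in\fn$ with $e^{\ad Y_i}h\equiv h+Z\pmod{\fn_{i+1}}$. The base case ($i=1$) uses invertibility of $\ad h$ on $\fn/\fn_2$ to pick $Y_1$ with $[Y_1,h]\equiv Z\pmod{\fn_2}$. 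For the inductive step, writing $Y_{i+1}=Y_i+A$ with $A\in\fn_{i+1}$, the key congruence is
\[
e^{\ad(Y_i+A)}h\equiv e^{\ad Y_i}h+[A,h]\pmod{\fn_{i+2}},
\]
which follows by expanding the exponential: using $A\in\fn_{i+1}$, $(\ad Y_i)^j h\in\fn_j$, and $[\fn_a,\fn_b]\subseteq\fn_{a+b}$, every iterated bracket $(\ad w_1)\cdots(\ad w_n)h$ with $w_j\in\{Y_i,A\}$ involving at least one $A$ and $n\geq 2$ lands in $\fn_{i+2}$. Invertibility of $\ad h$ on $\fn_{i+1}/\fn_{i+2}$ then lets one choose $A$ so that $[A,h]\equiv h+Z-e^{\ad Y_i}h\pmod{\fn_{i+2}}$, completing the inductive step; the procedure terminates after at most $k$ steps, yielding the desired $Y$ with $e^{\ad Y}h=h+Z$.

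The main technical obstacle is the careful bookkeeping for the congruence in part (ii): one must verify that all the cross-terms in the expansion of $e^{\ad(Y_i+A)}h$ mixing $Y_i$ and $A$ indeed land in $\fn_{i+2}$, which uses both the filtration property of $\fn$ and the refined estimate $(\ad Y_i)^j h \in \fn_j$. Part (i) is comparatively straightforward once one notices that Lemma \ref{lem:sl2} places $h$ itself in $\Im(\ad f)$ via $h = [e, f]$, which is exactly what makes $h-h'$ lie in the image.
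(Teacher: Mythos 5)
Your proof is correct. Note that the paper offers no argument of its own for this lemma: it is quoted from Bourbaki (\S 11), and your argument is essentially the standard one from that source. For (i), decomposing $\fg$ into irreducible $(e,h,f)$-modules does identify $\Im(\ad f)\cap\Ker(\ad f)$ with the span of the lowest-weight lines of the nontrivial summands, hence places it in $\fg^h_{\leq -1}$; the Jacobi computation $[[f,X'],Y]=[f,[X',Y]]$ for $Y\in\Ker(\ad f)$ gives closure under bracket; and the membership of $h-h'$ follows, as you say, from the nil-negative version of Lemma \ref{lem:sl2}(iii), which puts both $h$ and $h'$ in $\Im(\ad f)$ while $[h,f]=[h',f]=-2f$ makes their difference commute with $f$. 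You are also right that the hypothesis $h'\in\fg_S$ is irrelevant for this lemma; it is only used in the subsequent lemma of the paper. For (ii), the successive-approximation argument along the lower central series is the standard proof, and your bookkeeping is sound: a word $(\ad w_1)\cdots(\ad w_n)h$ with $q\geq 1$ letters equal to $A\in\fn_{i+1}$ and $p=n-q$ letters equal to $Y_i$ lies in $\fn_{p+q(i+1)}$, which is contained in $\fn_{i+2}$ whenever $n\geq 2$, and invertibility of $\ad h$ on each graded piece follows from surjectivity on $\fn$ plus finite-dimensionality. Two small points you should make explicit in a final write-up: the lower central series terminates because, by Engel's theorem, a subalgebra consisting of ad-nilpotent elements is a nilpotent Lie algebra; and the statement is an equality of sets, while you only construct the inclusion $h+\fn\subseteq\exp(\ad(\fn))h$ --- the reverse inclusion is immediate, since $[Y,h]\in\fn$ and $\fn$ is a subalgebra, so every term of $e^{\ad Y}h$ beyond $h$ lies in $\fn$.
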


\begin{lemma}\label{lemma:trivial_quasi-Whittaker_functional}
Let $h'\in \fg_{S}$ be a neutral element for $f$.
Then there exists a nilpotent element $X\in \fg_S$ such that $\exp(\ad(X))(h)=h'$.
\end{lemma}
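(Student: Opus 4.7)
The plan is to combine parts (i) and (ii) of the preceding Bourbaki-type lemma with an $\ad(S)$-invariance argument. First, since $h$ and $h'$ are both neutral elements for $f$, part (i) gives $h' - h \in \fn := \Im(\ad(f)) \cap \Ker(\ad(f)) \subset \fg^h_{<0}$. The goal is to refine this to $h'-h \in \fn \cap \fg_S$ and then apply part (ii) inside this smaller subalgebra.

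The key intermediate step is that $\fn$ is $\ad(S)$-invariant. Here I would use the fact that $\varphi \in (\fg^*)^S_{-2}$ translates, via the invariant form identifying $\varphi$ with $f$, into the relation $[S,f] = -2f$. Together with the Jacobi identity this shows that $\ad(S)$ preserves both $\Ker(\ad(f))$ (directly, since $[S,f]$ is a scalar multiple of $f$) and $\Im(\ad(f))$ (for $Y = [f,Z]$ one computes $[S,Y] = -2Y + [f,[S,Z]] \in \Im(\ad(f))$), hence preserves $\fn$. Since $S$ is rational semisimple, $\fn$ decomposes into its $\ad(S)$-weight spaces, and the containment $h'-h \in \fg_S = \fg^S_0$ forces $h'-h \in \fn':=\fn \cap \fg_S$.

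Now I would apply part (ii) of the previous lemma to $\fn'$. It is a subalgebra (intersection of two subalgebras), every element is ad-nilpotent since $\fn' \subset \fn \subset \fg^h_{<0}$, and $[h,\fn']=\fn'$ because $\ad(h)$ acts on $\fn'$ with nonzero (in fact, strictly negative) eigenvalues and is thus invertible on it. Part (ii) then yields $\exp(\ad(\fn'))(h) = h + \fn'$; since $h' - h \in \fn'$, we obtain $h' = \exp(\ad(X))(h)$ for some nilpotent $X \in \fn' \subset \fg_S$, as required.

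The only delicate point is the $\ad(S)$-invariance of $\fn$, which is what forces the relevant $X$ to live in the centralizer of $S$ rather than only in $\fn$; everything else is a direct application of the cited Bourbaki lemma. No new machinery is needed beyond the Jacobi identity and the identity $[S,f] = -2f$ dictated by $\varphi \in (\fg^*)^S_{-2}$.
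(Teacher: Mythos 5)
Your argument is correct and is essentially the paper's proof: the paper likewise sets $\fb:=\Im(\ad(f))\cap\Ker(\ad(f))\cap\fg_S$, notes via the preceding Bourbaki lemma that it is a subalgebra of nilpotent elements containing $h'-h$ with $[h,\fb]=\fb$, and applies part (ii). Your $\ad(S)$-invariance detour is harmless but redundant, since $h,h'\in\fg_S$ already give $h'-h\in\fn\cap\fg_S$ directly.
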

\begin{proof}
Let $\fb:=\Im(\ad(f))\cap \Ker(\ad(f))\cap \fg_S$. By the previous lemma this is a subalgebra that includes $Y:=h'-h$ and all its elements are nilpotent.  It is easy to see that $[h,\fb]=\fb$, and thus there exists $X\in \fb$ such that $\exp(\ad(X))(h)=h+Y=h'$.
\end{proof}

\begin{cor}\label{en:indep}
The number $in(S,\varphi)$ depends only on $(S,\varphi)$ and not on $h$.
\end{cor}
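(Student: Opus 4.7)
The plan is to show that any two decompositions $S = h + Z = h' + Z'$ produced by Lemma \ref{lem:Z} are related by an automorphism of $\fg$ that fixes $S$, and hence cannot change the set of quasi-critical numbers. First I would observe that the requirement $Z, Z' \in \fg_0^S$ automatically forces $h = S-Z$ and $h' = S-Z'$ to lie in $\fg_S$, while by construction both are neutral elements for the nil-negative element $f$ attached to $\varphi$ via the Killing form.

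With this in hand, Lemma \ref{lemma:trivial_quasi-Whittaker_functional} produces a nilpotent $X \in \fg_S$ such that the automorphism $\Phi := \exp(\ad X)$ satisfies $\Phi(h) = h'$. Because $X$ centralizes $S$, we have $\Phi(S) = S$, and therefore
\[
\Phi(Z) = \Phi(S-h) = S - h' = Z', \qquad \Phi(S_t) = S + (t-1)\Phi(Z) = S'_t
\]
for every rational $t$, where $S'_t := S + (t-1)Z'$ denotes the analogue of $S_t$ built from the second decomposition.

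Since $\Phi$ is a Lie algebra automorphism, it restricts to linear isomorphisms between $\fg^{S_t}_k$ and $\fg^{S'_t}_k$ for every eigenvalue $k$, and between $\fg^Z_0$ and $\fg^{Z'}_0$. Consequently the conditions $\fg^{S_t}_1 \subseteq \fg^Z_0$ and $\fg^{S_t}_2 \subseteq \fg^Z_0$ are respectively equivalent to $\fg^{S'_t}_1 \subseteq \fg^{Z'}_0$ and $\fg^{S'_t}_2 \subseteq \fg^{Z'}_0$. Hence the set of $t > 1$ which are quasi-critical does not depend on the choice of decomposition, and the cardinality $in(S,\varphi)$ depends only on the pair $(S,\varphi)$.

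There is no real obstacle here beyond correctly invoking Lemma \ref{lemma:trivial_quasi-Whittaker_functional}; once the element $X$ is produced in $\fg_S$, the entire argument reduces to the mechanical observation that an automorphism of $\fg$ fixing $S$ permutes the $\ad(S_t)$-eigenspaces consistently with its action on $Z$.
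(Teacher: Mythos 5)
Your proof is correct and follows exactly the route the paper intends: the corollary is stated as an immediate consequence of Lemma \ref{lemma:trivial_quasi-Whittaker_functional}, and your argument simply spells out that the automorphism $\exp(\ad X)$ with $X\in\fg_S$ fixes $S$, carries $Z$ to $Z'$ and $S_t$ to $S'_t$, and hence identifies the quasi-critical sets of the two decompositions. No gaps; nothing further is needed.
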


\begin{prop}\label{prop:quasi2gen}
If  $\varphi\in \QWS(\pi)$ then $\pi_{\varphi}\neq 0$.
\end{prop}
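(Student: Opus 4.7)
The plan is to induct on $in(S,\varphi)$, which by Corollary \ref{en:indep} depends only on the Whittaker pair $(S,\varphi)$. Among all Whittaker triples $(S,\varphi,\varphi')$ for which $\pi_{S,\varphi,\varphi'}\neq 0$, I would choose one that minimizes $in(S,\varphi)$, and by Lemma \ref{lem:Z} write $S=h+Z$ with $h$ a neutral element for $\varphi$. Set $S_t:=h+tZ$, so that $S_0=h$ and $S_1=S$.

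The central tool, used throughout, is the following observation: by the maximality of $G\cdot\varphi$ in $\QWO(\pi)$ together with Proposition \ref{prop:step}, for every rational $s\geq 0$ and every nonzero $\eta\in(\fg^*)^{S_s}_{>-2}\cap(\fg^*)^e$, the representation $\pi$ is not $(S_s,\varphi,\eta)$-distinguished. Indeed, otherwise Proposition \ref{prop:step} would produce $\Phi$ with $\varphi\in\overline{G\cdot\Phi}\setminus G\cdot\Phi$ and $G\cdot\Phi\in\QWO(\pi)$, contradicting maximality. This observation is precisely the hypothesis required by the ``moreover'' part of Proposition \ref{prop:isom}.

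For the inductive step $in(S,\varphi)>0$, I would pick the smallest quasi-critical value $t_0>1$ and combine Lemma \ref{lem:step2} with Proposition \ref{prop:isom} to deform $(S,\varphi,\varphi')$ to a distinguished Whittaker triple of the form $(h+t_0Z,\varphi,\varphi'')$; the quasi-critical values $>1$ for this new pair form a proper subset of those for $(S,\varphi)$, so $in$ strictly decreases, contradicting minimality. Hence only the base case $in(S,\varphi)=0$ occurs. In the base case, I apply Proposition \ref{prop:isom} along $S_t$ going from $t=1$ down to $t=0$. The vanishing of quasi-critical values ensures that $\varphi'\in(\fg^*)^{S_t}_{>-2}$ for every $t\in[0,1]$, while the ``moreover'' hypothesis is supplied by the central observation. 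This yields $\pi_{h,\varphi,\varphi'}\neq 0$.

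To pass from $\pi_{h,\varphi,\varphi'}\neq 0$ to $\pi_\varphi=\pi_{h,\varphi,0}\neq 0$, I decompose $\varphi'=\varphi'_e+\varphi'_\perp$ according to $\fg^*=(\fg^*)^e\oplus\ad^*(\fg)(\varphi)$ and use Lemma \ref{lem:GHconj}, together with Lemma \ref{lemma:trivial_quasi-Whittaker_functional}, to conjugate the $\ad^*(\fg)(\varphi)$-component inside $G_S$; after this conjugation I may assume $\varphi'\in(\fg^*)^e$. The central observation then forces $\varphi'=0$, giving $\pi_\varphi\neq 0$. The main obstacle will be exactly this last step — reducing $\varphi'$ to zero — since Proposition \ref{prop:step} only rules out $\eta\in(\fg^*)^e$ directly; carrying it out cleanly will require a delicate use of the transversality of the Slodowy slice $\varphi+(\fg^*)^e$ to $G\cdot\varphi$ and of the $G_S$-conjugation furnished by Lemma \ref{lem:GHconj}.
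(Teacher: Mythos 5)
There is a genuine gap, and it sits in your base case. You propose to run Proposition \ref{prop:isom} downward from $S=S_1$ to $h=S_0$ keeping the auxiliary functional $\varphi'$ fixed, asserting that $in(S,\varphi)=0$ guarantees $\varphi'\in(\fg^*)^{S_t}_{>-2}$ for all $t\in[0,1]$. This is unjustified: quasi-critical values concern only $t>1$ and the inclusions $\fg^{S_t}_1,\fg^{S_t}_2\subset\fg^Z_0$; they say nothing about where $\varphi'$ sits. A component of $\varphi'$ with $ad^*(h)$-eigenvalue $a\leq -2$ and $ad^*(Z)$-eigenvalue $b>0$ with $a+b>-2$ is perfectly allowed in $(\fg^*)^{S}_{>-2}$ (and is compatible with $in(S,\varphi)=0$), yet it leaves $(\fg^*)^{S_t}_{>-2}$ as $t$ decreases, so $(h,\varphi,\varphi')$ need not even be a Whittaker triple and Proposition \ref{prop:isom} does not apply along your path. (Incidentally, for transferring non-vanishing downward you would not need the ``moreover'' part at all, since the unconditional epimorphism already goes from smaller $t$ onto larger $t$; and in any case your ``central observation'' only excludes quasi-functionals lying in $(\fg^*)^e$, whereas the ``moreover'' hypothesis concerns $\varphi'+\psi'$ with your fixed $\varphi'$, so it is not supplied unless $\varphi'\in(\fg^*)^e$.) The paper goes the opposite way: in the base case it deforms upward to a large $t$, letting Lemma \ref{lem:step2} replace the auxiliary functional along the way; the decomposition \eqref{=BigZ} then shows the resulting quasi-functional $\psi'$ vanishes on a maximal isotropic subspace of $\fu_t$, so $\pi_{S_t,\varphi,\psi'}=\pi_{S_t,\varphi}\neq0$, and the unconditional epimorphism $\pi_{\varphi}\onto\pi_{S_t,\varphi}$ of Proposition \ref{prop:isom} concludes.

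A second gap is in your reduction of $in(S,\varphi)$: you assert that crossing the smallest quasi-critical value $t_0$ yields a distinguished triple of the form $(h+t_0Z,\varphi,\varphi'')$ with the same $\varphi$. Lemma \ref{lem:step2} in general adds a nonzero $\psi\in(\fg^*)^{S_{t_0}}_{-2}\cap(\fg^*)^Z_{>0}$ to $\varphi$; to come back to $\varphi$ one must use precisely the maximality of $G\varphi$ in $\QWO(\pi)$ (to upgrade $\varphi\in\overline{G(\varphi+\psi)}$ to $\varphi\in G(\varphi+\psi)$) and then Lemma \ref{lem:GHconj} to conjugate by an element of $G_{S_{t_0}}$ --- this conjugation is exactly what makes $in$ drop. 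You invoke Lemma \ref{lem:GHconj} only at the very end, to kill $\varphi'$ at the neutral stage, where it is not needed: once $(h,\varphi,\varphi')$ is a Whittaker triple, $\omega_\varphi$ is nondegenerate on $\fg^h_1$, so one can choose a Lagrangian inside the kernel of $\varphi'|_{\fg^h_1}$ and get $\pi_{h,\varphi,\varphi'}\cong\pi_{\varphi}$ directly (cf.\ Lemma \ref{lem:quasiIso} and the $Z=0$ case of the paper's proof). So the step you flag as the main obstacle is in fact the easy one; the real difficulties are the two described above.
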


\begin{proof}
Since $\varphi\in \QWS(\pi)$, $\pi$ is $(S',\varphi,\varphi')$-distinguished for some $S',\varphi'$. Without loss of generality we can assume $S=S'$.
Suppose first that $Z=0$. In this case we can assume $\varphi'\in (\fg^*)^h_{-1}.$ Also, in this case the form on $\fu=\fg^h_{\geq 1}$ given by $\omega(X,Y)=\varphi([X,Y])$ has no radical. Thus we can choose a Lagrangian subspace of $\fu_{t}$ on which $\psi'$ vanishes. Thus $\pi_{h,\varphi,\varphi'}\neq0$ implies $\pi_{h,\varphi}\neq0$.

Now we assume $Z\neq 0$ and prove the proposition by induction on $in(S,\varphi)$.
For the base assume that $in(S,\varphi)=0$,  and let $t$ be such that all  the positive eigenvalues of $tZ$ are bigger than all the eigenvalues of $h$ by at least 2. Then we have
$(\fg^*)^Z_{>0}\subset (\fg^*)^{S_{t}}_{\geq2},$ and  $(\fg^*)^Z_{<0}\subset (\fg^*)^{S_{t}}_{\leq-2}.$ This implies
\begin{equation}\label{=BigZ}
(\fg^*)^{S_{t}}_{>-2}= (\fg^*)^{S_{t}}_{>-1}\oplus((\fg^*)^{h}_{-1}\cap (\fg^*)^{Z}_{0}),
\end{equation}
and by Lemma \ref{lem:step2}, $\pi$ is $(S_{t},\varphi,\psi')$-distinguished for some $\psi'\in (\fg^*)^{S_{t}}_{>-2}$.
By \eqref{=BigZ} we have $\psi'=\eta_1+\eta_2$ with $\eta_1\in (\fg^*)^{S_{t}}_{>-1}$ and $\eta_2\in (\fg^*)^{h}_{-1}\cap (\fg^*)^{Z}_{0}$. Then $\eta_1$ vanishes on $\fu_{t}$, and $\eta_2$ vanishes on the radical of the form $\omega_{\varphi}$ on $\fu_{t}$. Thus we can choose a maximal isotropic subspace of $\fu_{t}$ on which $\psi'$ vanishes. Thus $\pi_{S_{t},\varphi,\psi'}=\pi_{S_{t},\varphi}$. By Proposition \ref{prop:isom}, $\pi_{\varphi}$ maps onto $\pi_{S_{t},\varphi}$. Since $\pi_{S_{t},\varphi,\psi'}\neq0$ we obtain $\pi_{\varphi}\neq 0$.

For the induction step  let $t>1$ be the smallest quasi-critical number.  By Lemma \ref{lem:step2},  $\pi$ is $(S_{t},\varphi+\psi,\eta')$-distinguished  for some $\eta'\in (\fg^*)^{S_{t}}_{>-2}$ and some $\psi\in (\fg^*)^{S_{t_{i+1}}}_{-2}\cap (\fg^*)^{Z}_{>0}$. Then $\varphi\in \overline{G(\varphi+\psi)}$. Since $\varphi\in \QWS(\pi)$, we have $\varphi\in G(\varphi+\psi)$, and by Lemma \ref{lem:GHconj} $\varphi= g(\varphi+\psi)$ for some $g\in G_{S_{t}}$.
 Conjugating by $g$ we get $in(S_t,\varphi+\psi)=in(S_t,\varphi)<in(S,\varphi)$. The induction hypothesis implies now that $\pi_{\varphi+\psi}\neq 0$. Thus $\pi_{\varphi}\neq 0$.
\end{proof}

\begin{cor}\label{cor:quasi2gen}
We have $\QWS(\pi)=\WS(\pi)$.
\end{cor}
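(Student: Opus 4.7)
The plan is to derive this corollary purely formally from Proposition \ref{prop:quasi2gen} together with the trivial inclusion $\WO(\pi)\subseteq \QWO(\pi)$. First I would observe that any Whittaker pair $(S,\varphi)$ with neutral $S$ extends to a Whittaker triple $(S,\varphi,0)$, and that the construction of $\cW_{S,\varphi,\varphi'}$ specializes at $\varphi'=0$ to the generalized Whittaker model $\cW_\varphi$. Hence $\pi_\varphi=\pi_{S,\varphi,0}$, which shows immediately $\WO(\pi)\subseteq \QWO(\pi)$. Combined with Proposition \ref{prop:quasi2gen}, which states that every $\varphi\in \QWS(\pi)$ satisfies $\pi_\varphi\neq 0$, we get $\QWS(\pi)\subseteq \WO(\pi)$.

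Next I would argue the two set-theoretic inclusions, using that the number of nilpotent coadjoint $G$-orbits in $\fg^*$ is finite, so that any nonempty subset of orbits has maximal elements with respect to the closure order. For the inclusion $\QWS(\pi)\subseteq \WS(\pi)$: let $\cO\in \QWS(\pi)$; by Proposition \ref{prop:quasi2gen}, $\cO\in \WO(\pi)$. If some $\cO'\in \WO(\pi)$ satisfied $\cO\lneq \cO'$, then $\cO'\in \QWO(\pi)$ by the inclusion above, contradicting the maximality of $\cO$ in $\QWO(\pi)$. Hence $\cO\in \WS(\pi)$.

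For the reverse inclusion $\WS(\pi)\subseteq \QWS(\pi)$: let $\cO\in \WS(\pi)$. Then $\cO\in \WO(\pi)\subseteq \QWO(\pi)$, and by finiteness of nilpotent orbits we may pick a $\cO'\in \QWS(\pi)$ with $\cO\leq \cO'$ in the closure order. By Proposition \ref{prop:quasi2gen}, $\cO'\in \WO(\pi)$; maximality of $\cO$ in $\WO(\pi)$ forces $\cO'=\cO$, giving $\cO\in \QWS(\pi)$.

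There is essentially no obstacle here; the entire content is Proposition \ref{prop:quasi2gen}, and the corollary is just its packaging together with the obvious comparison of the two quotient constructions at $\varphi'=0$ and the finiteness of the orbit poset.
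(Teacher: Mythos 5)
Your argument is correct and is exactly the intended (and essentially only) deduction: the paper leaves the corollary as an immediate consequence of Proposition \ref{prop:quasi2gen} together with the observation $\cW_{S,\varphi,0}=\cW_{S,\varphi}$ (so $\WO(\pi)\subseteq\QWO(\pi)$) and the finiteness of the nilpotent orbit poset. Your careful spelling-out of the two maximality inclusions matches what the authors implicitly rely on.
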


\begin{example}
Let $G:=\GL_6(F)$  and
$$ h:=\diag(1,-1,1,-1,1,-1),\quad Z:=\diag(0,0,3,3,2.5,2.5).$$ Identify $\fg$ with $\fg^*$ using the trace form and let
$$\varphi:=E_{21}+E_{43}+E_{65}, \quad \varphi':=E_{14}+E_{45}.$$
\DimaO{Let $\pi$ be $(S,\varphi,\varphi')$-distinguished and let us show that $\varphi \notin \QWS(\pi)$.}
Then the first quasicritical  value of $t$ is $t=4/3$. We have $S_{4/3}=\diag(1,-1,5,3,4\frac{1}{3},2\frac{1}{3})$. Then $E_{14}\in \fg^{S_{4/3}}_{-2}$ and $E_{45}\in \fg^{S_{4/3}}_{-4/3}$.
By Lemma \ref{lem:step2}, $\pi$ is $(S_{4/3},\varphi+E_{14},E_{45})$-distinguished, since $\fg^{S_{4/3}}_{\DimaG{-1}}=0$. Now, $\varphi\in \overline{G(\varphi+E_{14})}\setminus G(\varphi+E_{14})$ and thus $\varphi\notin \QWS(\pi)$.
\end{example}

\begin{example}Let $G:=\GL_6(F),S=\diag(1,-1,5,3,4\frac{1}{3},2\frac{1}{3}),\, \varphi=E_{21}+E_{43}+E_{65}+E_{14},$
$$\varphi'=E_{45},\ h=\diag(-1,-3,3,1,1,-1),\ Z=\diag(0,0,0,0,4/3,4/3).
$$
The first quasicritical  value of $t$ is $3/2$.
\DimaO{Now, Proposition \ref{prop:isom} implies that any
$(S,\varphi,\varphi')$-distinguished representation $\pi$ is
$(S_{3/2},E_{21}+E_{43}+E_{65}+E_{14}+E_{45})$-distinguished, and thus has $\pi_{E_{21}+E_{43}+E_{65}+E_{14}+E_{45}}\neq 0$.}
\end{example}

\subsection{Proof of Theorem \ref{thm:MaxModAction}}\label{subsec:PfMaxMod}
First let us show that for all critical $t>0$ and all non-zero $\varphi' \in (\fg^*)^{S_{t}}_{-1}\cap(\fg^*)^e$ we have  $\pi_{S,\varphi,\varphi'}=0$. Suppose the contrary. Then by Proposition \ref{prop:step} for some $T>0$ there exist $\Phi\in (\fg^*)^{S_T}_{-2}$ and $\Phi'\in (\fg^*)^{S_T}_{>-2}$ such that $\varphi \in  \overline{G\Phi}\setminus G\Phi$ and $\pi$ is $({S_T,\Phi,\Phi'})$-distinguished. Thus there exists $\cO\in\QWS(\pi)$ that includes $\Phi$ in its closure. By Corollary \ref{cor:quasi2gen} we have $\cO\in \WS(\pi)$, which contradicts the assumption $G\varphi\in \WS(\pi)$.

Now let $0=t_0< t_1<t_2<\dots<t_{n+1}=1$ be all the critical $ t\in [0,1]$. By Proposition \ref{prop:isom}, we have a sequence of $\Dima{(\tilde G_{\gamma})_Z}$ -equivariant epimorphisms
\begin{equation}\label{=epi}
\pi_{\varphi}= \pi_{S_{t_0},\varphi}\onto \pi_{S_{t_{1}},\varphi}\onto \cdots \onto \pi_{S_{t_n},\varphi}\onto\pi_{S_{t_{n+1}},\varphi}=\pi_{S,\varphi}
\end{equation}
that in the p-adic case are isomorphisms, and in the real case are non-zero. \proofend

\begin{remark}
Under the assumption that $\pi$ is unitary one might be able to construct an invariant scalar product on $\pi_{\varphi}$ and deduce that the epimorphism  of $\pi_{\varphi}$ onto $\pi_{S,\varphi}$ is an isomorphism also for $F=\R$.
\end{remark}

\Dima{
\begin{example}\label{ex:GLSame}
Let $G:=\GL(4,\F)$ and let $S$ be the diagonal matrix $\diag(3,1,-1,-3)$. Identify $\fg$ with $\fg^*$ using the trace form and let
$f:=\varphi:=E_{21}+E_{43}$, where $E_{ij}$ are elementary matrices.
Then we have $S=h+Z$ with $h=\diag(1,-1,1,-1)$ and $Z=\diag(2,2,-2,-2)$.
Thus $S_t=\diag(1+2t,-1+2t,1-2t,-1-2t)$ and the weights of $S_t$ are as follows:
$$\left(
   \begin{array}{cccc}
     0 & 2 & 4t & 4t+2 \\
     -2 & 0 & 4t-2 & 4t \\
     -4t & -4t+2 & 0 & 2 \\
     -4t-2 & -4t & -2 & 0 \\
   \end{array}
 \right).$$
The  critical numbers are $1/4$ and $3/4$. For $t\geq3/4$, the  degenerate Whittaker model $\cW_{S_t,\varphi}$ is the induction $\ind_{N}^G\chi_{\varphi}$, where $N$ is the group of upper-unitriangular matrices.
The sequence of inclusions $\fr_{0}\subset \fl_{1/4}\sim \fr_{1/4}\subset \fl_{3/4}=\fr_{3/4}$  is:

\begin{equation}\label{=ThmEx}
\left(
   \begin{array}{cccc}
     0 & - & 0 & - \\
     0 & 0 & 0 & 0 \\
     0 & - & 0 & - \\
     0 & 0 & 0 & 0 \\
   \end{array}
 \right) \subset \left(
   \begin{array}{cccc}
     0 & - & a & - \\
     0 & 0 & 0 & a \\
     0 & * & 0 & - \\
     0 & 0 & 0 & 0 \\
   \end{array}
 \right)\sim
\left(
   \begin{array}{cccc}
     0 & - & * & - \\
     0 & 0 & 0 & * \\
     0 & 0 & 0 & - \\
     0 & 0 & 0 & 0 \\
   \end{array} \right)
\subset
\left(
   \begin{array}{cccc}
     0 & - & - & - \\
     0 & 0 & * & - \\
     0 & 0 & 0 & - \\
     0 & 0 & 0 & 0 \\
    \end{array} \right)
\end{equation}
   Here, both $*$ and $-$ denote arbitrary elements. $-$ denotes the entries in $\fv_t$ and $*$ those in $\fw_t=\fg_{1}^{S_t}$. The letter $a$ denotes an arbitrary element, but the two appearances of $a$ denote the same numbers.
   The  passage from $\fl_{1/4}$ to $\fr_{1/4}$ is denoted by $\sim$. At  $3/4$ we have $\fl_{3/4}=\fr_{3/4}.$

Let $\pi\in \Rep^{\infty}(G)$ with $G\varphi\in \WS(\pi)$.
The sequence of epimorphisms \eqref{=epi} is given by the sequence of inclusions \eqref{=ThmEx}. To see that these epimorphisms are non-zero (and are isomorphisms for $F\neq \R$) we need to analyze the dual spaces to $\fw_{1/4}^f$ and $\fw_{3/4}^f$.
These spaces are spanned by $E_{13}+E_{24}$ and by $E_{23}$ respectively. Thus, the dual spaces are spanned   by $E_{31}+E_{42}$ and by $E_{32}$ respectively.
Note that the joint centralizer of $h,Z$ and $\varphi$ in $G$ acts on these spaces by scalar multiplications, identifying all non-trivial elements.
By Proposition \ref{prop:isom} it is enough to show that $\pi_{S_{1/4},\varphi,E_{31}+E_{42}}=0$ and $\pi_{S_{3/4},\varphi,E_{32}}=0$.

This is guaranteed by Propositions \ref{prop:step} and \ref{prop:quasi2gen}, but for the sake of the example let us show this more directly.

First assume by way of contradiction that $\pi_{S_{3/4},\varphi,E_{32}}\neq 0$. Note that $E_{32}\in \fg^{S_{1}}_{-2}$ and that $\fw_1=0 $. Thus $\fu_1=\fl_1=\fr_{3/4}$ and $$\pi_{S_{1},\varphi+E_{32}}\simeq \pi_{S_{3/4},\varphi,E_{32}}\neq 0.$$ Note that $\Phi:=\varphi+E_{32}=E_{21}+E_{43}+E_{32}$ is a regular nilpotent element, and $S_{1}=S=\diag(3,1,-1,-3)$ is a neutral element for it. Thus $\pi_{\Phi}\neq 0,$ contradicting the assumption that $\Phi$ is maximal in $\WS(\pi)$.

Now assume by way of contradiction that $\pi_{S_{1/4},\varphi,E_{31}+E_{42}}\neq 0$. Note that $E_{31}+E_{42}\in \fg^{S_{1/2}}_{-2}$ and that $\fw_{1/2}=0 $. Thus $\fl_{1/2}=\fu_{1/2}=\fr_{1/4}$ and  $$\pi_{S_{1/2},\varphi+E_{31}+E_{42}}\simeq \pi_{S_{1/4},\varphi,E_{31}+E_{42}}\neq 0.$$ Note that $\Psi:=\varphi+E_{31}+E_{42}=E_{21}+E_{43}+E_{31}+E_{42}$ is a regular nilpotent element, and $S_{1/2}=\diag(2,0,0,-2)$ is a neutral element for it. Thus $\pi_{\Psi}\neq 0,$ contradicting the assumption that $G \cdot \varphi$ is maximal in $\WO(\pi)$.
\end{example}
}

\section{Proof of Theorems  \ref{thm:cusp} and \ref{thm:MaxFin}}\label{sec:cors}
\subsection{Proof of Theorem \ref{thm:MaxFin}}\label{subsec:MaxFin}

\DimaP{In the non-Archimedean case, it is enough to prove that for any \DimaD{homomorphism} $\nu:\widetilde{SL_2(F)}\into \widetilde{M_{\gamma}}$, the image acts on $\pi_{\varphi}$ by $\pm \Id$.
In the Archimedean case, \DimaQ{by definition of $\widetilde{M_{\gamma}}$ its Lie algebra $\fm$ is generated by nilpotent elements. Thus, by Lemmas \ref{lem:Finsl2} and \ref{lem:FinPBW}, it is enough to prove that any nilpotent element of $\fm$ acts locally finitely on $\pi^*_{\varphi}$. Since any such nilpotent lies in the image of the \DimaD{differential of a homomorphism} of the form $\nu:\widetilde{SL_2(F)}\into \widetilde{M_{\gamma}}$, it suffices to show that the restriction of $\pi^*_{\varphi}$ to the image of any $\nu$ as above is locally finite.}
By Lemma \ref{lem:BZ} and Corollary \ref{cor:FinQuot}, in both cases} it is enough to show that the restriction \DimaS{of $\pi_{\varphi}$ to the image of $\nu$} is non-generic.

  Fix such a \DimaQ{morphism} $\nu$ and let $(e',h',f')$ be the corresponding $\sl_2$-triple in $\fg_{\gamma}$, and let $\varphi'\in \fg^*$ denote the nilpotent element given by the Killing form pairing with $f'$.
\DimaS{Let $S_t:=h+th'$.
Let $0=t_0< t_1<t_2<\dots<t_{n+1}=2/3$ be all the critical $ t\in [0,2/3]$. By Corollary \ref{cor:quasi2gen}, $\WS(\pi)=\QWS(\pi)$. Thus $\varphi\in \QWS(\pi)$, and Proposition \ref{prop:step} implies that
\begin{equation}\label{=q0}
\pi_{S_t,\varphi,\psi}=0 \text{ for any }t \in (0,1/2) \text{ and any non-zero } \psi' \in (\fg^*)^{S_{t}}_{-1}\cap (\fg^*)^e
\end{equation}

 By Proposition \ref{prop:isom}, this implies that we have a sequence of epimorphisms
\begin{equation}\label{=epi2}
\pi_{\varphi}= \pi_{S_{t_0},\varphi}\onto \pi_{S_{t_{1}},\varphi}\onto \cdots \onto \pi_{S_{t_n},\varphi}\onto\pi_{S_{t_{n+1}},\varphi}=\pi_{S,\varphi}
\end{equation}
that in the p-adic case are isomorphisms, and in the real case are non-zero.
By Lemma \ref{lem:actions}, these epimorphisms commute with the action of $\Exp(e')$. Let $\chi'$ denote the character of  $\Exp(e')$ given by $\varphi'$, and denote
$(\pi_{S_{t_i},\varphi})_{e',\varphi'}:=(\pi_{S_{t_i},\varphi})_{\Exp(e'),\chi'}$. Then  \eqref{=epi2} induces a sequence of epimorphisms
\begin{equation}\label{=epi3}
(\pi_{\varphi})_{\Exp(e'),\chi'}= (\pi_{S_{t_0},\varphi})_{e',\varphi'}\onto (\pi_{S_{t_{1}},\varphi})_{e',\varphi'}\onto \cdots \onto (\pi_{S_{t_n},\varphi})_{e',\varphi'}\onto(\pi_{S_{t_{n+1}},\varphi})_{e',\varphi'}
\end{equation}
We note that the last element of the sequence is zero, since $e'\in \fv_{t_{n+1}}$ and $\varphi(e')=0$,
 and thus $\Exp(e')$ acts trivially on $\pi_{S_{t_{n+1}},\varphi}$.
In order to show that the restriction of  $\pi_{\varphi}$ to the image of $\nu$ is non-generic it is enough to show that all the spaces in  \eqref{=epi3} vanish.

 If $F$ is $p$-adic then this is straightforward, since in this case all the maps in \eqref{=epi2} are isomorphisms, and thus so are the maps in \eqref{=epi3}.
Let us show that in the Archimedean case $(\pi_{S_{t_i},\varphi})_{e',\varphi'}=0$, by backwards induction on $i$. The base case is $i=n+1$. For the induction step, recall that the map $(\pi_{S_{t_{i-1}},\varphi})\onto(\pi_{S_{t_{i}},\varphi})$ is given by
\begin{equation*}
\pi_{S_{t_{i-1}},\varphi}\cong \pi_{R_{t_{i-1}},\varphi} \onto \pi_{L_{t_{i}},\varphi}\cong \pi_{S_{t_{i-1}},\varphi}.
\end{equation*}
Consider the dual map $(\pi_{L_{t_{i}},\varphi})^*\into (\pi_{R_{t_{i-1}},\varphi})^*$. Its image is the space of invariants under the commutative Lie algebra $\fa:=\fl_{t_{i}}/\fr_{t_{i-1}}$. By Lemma \ref{lem:step} and \eqref{=q0}, $\fa$ acts locally nilpotently on $(\pi_{R_{t_{i-1}},\varphi})^*$. Since $[e',\fl_{t_{i}}]\subset \fr_{t_{i-1}},$ the actions of $e'$ and of $\fa$ commute. Thus $\fa$ preserves the space of
$(e',\varphi')$-semi-invariants
$((\pi_{R_{t_{i-1}},\varphi})^*)^{e',\varphi'}$, and acts on it locally nilpotently. But $\fa$ has no invariants on this space by the induction hypothesis, since this space of invariants is dual to $(\pi_{S_{t_i},\varphi})_{e',\varphi'}=0$. Thus $((\pi_{R_{t_{i-1}},\varphi})^*)^{e',\varphi'}=0$, and thus $(\pi_{S_{t_{i-1}},\varphi})_{e',\varphi'}=0$.
\proofend

\begin{remark}
Let  $(S,\varphi)$ be a  Whittaker pair with $\varphi\in \WS(\pi)$, and let $G(S,\varphi)\subset G$ be the subgroup defined in Definition \ref{def:act}.
The same argument shows that the cover of the subgroup of $G(S,\varphi)$ generated by unipotent elements acts locally finitely on $\pi_{S,\varphi}^*$ if $F$ is Archimedean, and acts on $\pi_{\varphi}$ by $\pm 1$ if $F$ is  non-Archimedean.
\end{remark}
}

\subsection{Proof of Theorem \ref{thm:cusp}}\label{subsec:Cusp}
\DimaA{By a quasi-cuspidal $\pi$ we mean a smooth (not necessarily admissible or finitely-generated) representation such that the Jacquet module $r_P(\pi)$ vanishes for any proper parabolic subgroup $P\subset G$.}

Let $\pi$ be quasicuspidal and let $\cO\in \WS(\pi)$. Suppose by way of contradiction that $\cO$ is not $F$-distinguished. Thus there exists a proper parabolic subgroup $P\subset G$, a
Levi subgroup $L\subset P$ and a nilpotent $f\in \fl$ such that $\varphi\in \cO$, where $\varphi\in \fg^*$ is given by the Killing form pairing with $f$.  Let $h$ be a neutral element for $f$ in $\fl$.
Choose a rational-semisimple element $Z\in \fg$ such that $L$ is the centralizer of $Z$, $\fp:=\fg^{Z}_{\geq 0}$ is the Lie algebra of $P$, and all the positive eigenvalues of $Z$ are bigger than all the eigenvalues of $h$ by at least 2. Note that $\fn:=\fg^{Z}_{> 0}$ is the nilradical of $\fp$. Let $S:=h+Z$. By construction we have $\fn\subset \fg^S_{>2}$ and thus the degenerate Whittaker quotient $\pi_{S,\varphi}$ is a quotient of $r_P\pi$. By Theorem \ref{thm:MaxModAction}, the maximality of $\cO$ implies $\pi_{S,\varphi}\simeq \pi_{\varphi}$.
Thus $r_P\pi$ does not vanish, in contradiction with the condition that $\pi$ is quasi-cuspidal. \proofend

\section{Proof of Theorem \ref{thm:adm}, and relation to admissible and special orbits}\label{sec:PfAdm}

\subsection{Proof of Theorem \ref{thm:adm}}\label{subsec:PfA}
Fix a nilpotent $\varphi \in \fg^*$.
\DimaC{
Let $\gamma=(e,h,f)$ be an $\sl_{2}$-triple such that $\varphi$ is given by pairing with $f$ under the Cartan-Killing form on $\g$.

Let $(G_{\gamma})_{ss}$ be the  the  subgroup of $G_{\gamma}$ generated by the exponents of the derived algebra for $\g_{\gamma}$, and $\widetilde{(G_{\gamma})_{ss}}$ be the corresponding subgroup of $\widetilde{G_{\gamma}}$.

Let $\widetilde{K}_{\gamma}\subset \widetilde{(G_{\gamma})_{ss}}$ be the anisotropic (and hence compact) part.

\begin{prop}\label{prop:CommTilde}
If $F=\R$ then
\begin{equation}\label{=CommTilde}
\widetilde{(G_{\gamma})_{ss}}
\cong \widetilde{M}_{\gamma}\times \widetilde{K}_{\gamma}/\Delta(\widetilde{M}_{\gamma}\cap \widetilde{K}_{\gamma}),
\end{equation}
\end{prop}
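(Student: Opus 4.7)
The plan is to first establish an almost-direct product decomposition $(G_{\gamma})_{ss} = M_{\gamma}\cdot K_{\gamma}$ at the level of the real algebraic group, and then lift this decomposition to the metaplectic cover. Since $G_{\gamma}$ is reductive, $(G_{\gamma})_{ss}$ is a real semisimple algebraic group, and its real Lie algebra splits as a direct sum of simple ideals, each of either compact or non-compact type. A simple real Lie algebra is spanned by its nilpotent elements exactly when it is of non-compact type, so $M_{\gamma}$ coincides with the (connected) subgroup corresponding to the sum of non-compact simple factors, while $K_{\gamma}$ corresponds to the sum of compact simple factors. Their Lie algebras commute and are complementary, so $M_{\gamma}$ and $K_{\gamma}$ commute pointwise in $(G_{\gamma})_{ss}$, the product map $M_{\gamma}\times K_{\gamma}\to (G_{\gamma})_{ss}$ is surjective, and its kernel is the finite central subgroup $\Delta(M_{\gamma}\cap K_{\gamma})$.

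Next, I would pass to the metaplectic cover $\widetilde{(G_{\gamma})_{ss}}\to (G_{\gamma})_{ss}$ with kernel $\{1,\varepsilon\}$. The key step is to verify that $\widetilde{M}_{\gamma}$ and $\widetilde{K}_{\gamma}$ commute pointwise in $\widetilde{(G_{\gamma})_{ss}}$. For any $\tilde m\in\widetilde{M}_{\gamma}$ and $\tilde k\in\widetilde{K}_{\gamma}$ the commutator $[\tilde m,\tilde k]$ projects to $1$ in $(G_{\gamma})_{ss}$ and hence lies in $\{1,\varepsilon\}$. Fixing $\tilde m$ and using that $K_{\gamma}$ is generated by its identity component (a compact connected Lie group), the map $\tilde k\mapsto[\tilde m,\tilde k]$ is continuous from a connected space to the discrete two-element set and takes value $1$ at the identity, so it is identically $1$. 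A symmetric argument in $\tilde m$, together with the fact that $M_{\gamma}$ is connected (being generated by one-parameter unipotent subgroups, each of which lifts canonically to $\widetilde{(G_{\gamma})_{ss}}$ by Lemma~\ref{lem:cov}), gives the commutativity on all of $\widetilde{M}_{\gamma}\times\widetilde{K}_{\gamma}$.

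With commutativity established, the multiplication map
\[
\mu:\widetilde{M}_{\gamma}\times\widetilde{K}_{\gamma}\longrightarrow\widetilde{(G_{\gamma})_{ss}},\qquad(\tilde m,\tilde k)\mapsto\tilde m\tilde k,
\]
is a homomorphism. It is surjective because its image is a subgroup that projects onto $M_{\gamma}\cdot K_{\gamma}=(G_{\gamma})_{ss}$ and contains $\widetilde{M}_{\gamma}$ (hence contains the non-trivial central element $\varepsilon$), so it covers both sheets of the double cover. Its kernel consists of pairs $(\tilde m,\tilde k)$ with $\tilde m\tilde k=1$, i.e.\ $\tilde k=\tilde m^{-1}$ lying in $\widetilde{M}_{\gamma}\cap\widetilde{K}_{\gamma}$, which is precisely $\Delta(\widetilde{M}_{\gamma}\cap\widetilde{K}_{\gamma})$. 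Passing to the quotient gives the claimed isomorphism.

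The main obstacle is the pointwise commutativity in the second paragraph, which requires careful use of connectedness: one must ensure $\widetilde{K}_{\gamma}$ and $\widetilde{M}_{\gamma}$ are each connected enough for the continuity argument, and handle any non-identity components by reducing to the identity component via central elements. This is where the assumption $F=\mathbb{R}$ enters essentially, since the continuity argument exploits the classical topology and the structure theory of real semisimple groups.
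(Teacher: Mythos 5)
Your proposal is correct and takes essentially the same route as the paper: the splitting $[\fg_{\gamma},\fg_{\gamma}]=\fm_{\gamma}\oplus\fk_{\gamma}$ gives the almost-direct product $(G_{\gamma})_{ss}\cong M_{\gamma}\times K_{\gamma}/\Delta(M_{\gamma}\cap K_{\gamma})$, and the isomorphism of covers reduces to showing $\widetilde{M}_{\gamma}$ and $\widetilde{K}_{\gamma}$ commute, which both you and the paper deduce from the commutator landing in the central discrete subgroup $\{1,\eps\}$ together with connectedness of $M_{\gamma}$ and $K_{\gamma}$. The only cosmetic difference is that the paper finishes this step by contradiction via splitting of the covers, while you argue directly on identity components and use centrality of $\eps$ for the remaining components.
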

\begin{proof}
For the Lie algebras we have $[\fg_{\gamma},\fg_{\gamma}]=\fk_{\gamma}\oplus \fm_{\gamma}$, thus
\begin{equation}\label{=Comm}
({G}_{\gamma})_{ss}\cong {M}_{\gamma}\times {K}_{\gamma}/\Delta({M}_{\gamma}\cap {K}_{\gamma}).
\end{equation}
Thus, in order to prove \eqref{=CommTilde} it is enough to show that $\widetilde{M_{\gamma}}$ and $\widetilde{K_{\gamma}}$ commute. Fix $\tilde k\in \widetilde{K_{\gamma}}$. By \eqref{=Comm}, the commutator map $\tilde m \mapsto \tilde m \tilde k \tilde m^{-1}\tilde k^{-1}$ maps $\widetilde{M_{\gamma}}$ to $\{1,\eps\}$. Suppose, by way of contradiction, that the image is non-trivial. Then $\widetilde{M_{\gamma}}$ is disconnected and thus $\widetilde{M_{\gamma}}\simeq \Z_2 \times M_{\gamma}$. Thus for some $m\in M_{\gamma}\subset \widetilde{M_{\gamma}}$, the commutator map $\tilde k \mapsto  m \tilde k  m^{-1}\tilde k^{-1}$ is non-trivial. Thus $\widetilde{K_{\gamma}}$ also splits, which implies that it commutes with $\widetilde{M_{\gamma}}$.
\end{proof}
}
\DimaC{

\begin{prop}\label{prop:MtoGss}
If $\widetilde{M_{\gamma}}$ has a genuine finite-dimensional representation then so does $\widetilde{(G_{\gamma})_{ss}}$.
\end{prop}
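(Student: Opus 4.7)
The plan is to combine the given genuine representation of $\widetilde{M_{\gamma}}$ with a carefully chosen representation of $\widetilde{K_{\gamma}}$, using the almost-direct-product decomposition from Proposition \ref{prop:CommTilde}. I focus on the case $F=\R$, where Proposition \ref{prop:CommTilde} applies; in the non-Archimedean setting there is no compact anisotropic factor, so $\widetilde{K_{\gamma}}$ does not contribute and the statement is essentially tautological.

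The first task is to analyze the intersection $\tilde{Z} := \widetilde{M_{\gamma}} \cap \widetilde{K_{\gamma}}$. Since $\fm_{\gamma}$ and $\fk_{\gamma}$ are complementary summands of $[\fg_{\gamma},\fg_{\gamma}]$, the subgroup $M_{\gamma}\cap K_{\gamma}$ is discrete, and being closed in the compact group $K_{\gamma}$, it is finite. Since $\widetilde{M_{\gamma}}$ and $\widetilde{K_{\gamma}}$ commute by Proposition \ref{prop:CommTilde}, $\tilde{Z}$ lies in the center of each factor.

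Next, I will pick an irreducible subrepresentation $\rho_M$ on a space $V_M$ of the given genuine finite-dimensional representation of $\widetilde{M_{\gamma}}$. By Schur's lemma, $\tilde{Z}$ acts on $V_M$ through a character $\chi$, with $\chi(\eps)=-1$. The main step, and the principal obstacle, is to produce a finite-dimensional irreducible representation $\rho_K$ on a space $V_K$ of the compact group $\widetilde{K_{\gamma}}$ whose restriction to $\tilde{Z}$ is also $\chi$. For this I invoke Peter--Weyl for $\widetilde{K_{\gamma}}$: the induced representation $\Ind_{\tilde{Z}}^{\widetilde{K_{\gamma}}}(\chi)$ is nonzero and decomposes as an orthogonal sum of finite-dimensional irreducibles; by Frobenius reciprocity together with Schur's lemma (since $\tilde{Z}$ is central) any such irreducible constituent $\rho_K$ has central character $\chi$ on $\tilde{Z}$.

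Finally, I will form the external tensor product $\rho := \rho_M \boxtimes \rho_K$, a representation of $\widetilde{M_{\gamma}} \times \widetilde{K_{\gamma}}$. The antidiagonal element $(x, x^{-1}) \in \Delta(\tilde{Z})$ acts on $V_M \otimes V_K$ by $\chi(x)\chi(x)^{-1}\cdot \Id = \Id$, so $\rho$ descends to the quotient $\widetilde{(G_{\gamma})_{ss}}$. The descended representation is genuine: the lift of $\eps$ via the $\widetilde{M_{\gamma}}$-factor acts on $V_M$ by $-1$, hence on $V_M \otimes V_K$ by $-\Id$. Matching the central character on $\tilde{Z}$ is the only subtle point, and Peter--Weyl for $\widetilde{K_{\gamma}}$ supplies it immediately.
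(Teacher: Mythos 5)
Your treatment of the Archimedean case is correct, and it is essentially the paper's argument repackaged: the paper also combines Proposition \ref{prop:CommTilde} with the Peter--Weyl theorem, but it does so by inducing $\rho_0$ from $\widetilde{M_{\gamma}}$ to $\widetilde{(G_{\gamma})_{ss}}$, restricting to $\widetilde{K_{\gamma}}$ (which by Mackey-type considerations is $\ind$ from $\widetilde{K}_{\gamma}\cap\widetilde{M}_{\gamma}$), extracting a finite-dimensional $\widetilde{K_{\gamma}}$-isotypic component, and observing that $\widetilde{M_{\gamma}}$ preserves it since the two factors commute. Your construction of an external tensor product $\rho_M\boxtimes\rho_K$ with matched central character on $\widetilde{M_{\gamma}}\cap\widetilde{K_{\gamma}}$ achieves the same thing and is fine.

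The genuine gap is your dismissal of the non-Archimedean case. The claim that ``in the non-Archimedean setting there is no compact anisotropic factor'' is false: anisotropic semisimple groups over $p$-adic fields are plentiful (e.g.\ $\SL_1(D)$ for a division algebra $D$, anisotropic orthogonal and unitary groups), their groups of $F$-points are compact, and such groups do occur as factors of $G_{\gamma}$ (for classical groups the reductive centralizer of an $\sl_2$-triple involves the isometry groups of forms on multiplicity spaces, which are often anisotropic). Hence $\widetilde{K_{\gamma}}$ can be nontrivial, $(G_{\gamma})_{ss}\neq M_{\gamma}$, and the statement is far from tautological for $F\neq\R$. Moreover you cannot simply rerun your argument there, because Proposition \ref{prop:CommTilde} is stated and proved only for $F=\R$ (its proof uses connectedness of real groups), so the commuting almost-direct-product decomposition is not available. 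The paper treats $F\neq\R$ by a different route: the kernel $C'$ of $\rho_0$ projects onto an open normal subgroup of $M_{\gamma}$, which must equal $M_{\gamma}$ since $M_{\gamma}$ is generated by unipotents, so $\rho_0$ being genuine forces $\widetilde{M_{\gamma}}$ to split; one then finds an open subgroup $K'\subset\widetilde{K_{\gamma}}$ containing $\eps$ and commuting with $C'$, uses compactness of $K'/(K'\cap C')$ to produce a genuine finite-dimensional representation of $K'C'$, and induces along the finite-index inclusion $K'C'\subset\widetilde{(G_{\gamma})_{ss}}$. Some argument of this kind is needed to cover the $p$-adic case, and your proposal omits it.
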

\begin{proof}
Let $\rho_0$ be a genuine finite-dimensional representation of $\widetilde{M_{\gamma}}$.

Assume first $F\neq \R$. Let $C'$ denote the kernel of $\rho_0$ and $C$ denote the projection of $C'$ to $M_{\gamma}$. Then $C$ is a normal open subgroup of $M_{\gamma}$ and thus $C=M_{\gamma}$. Since $\rho$ is genuine, the projection $C' \to C$ is an isomorphism and thus defines a splitting of $\widetilde{M_{\gamma}}$.
As in the proof of Proposition \ref{prop:CommTilde} one shows that there exists an open subgroup $K'\subset \widetilde{K_{\gamma}}$ that commutes with $C'$ and includes $\eps$.
Since the quotient $(K'C')/C'\cong K'/(K'\cap C')$ is compact, it has a finite-dimensional genuine representation $\tau$. By composing with the natural projection, $\tau$ lifts to a representation of $K'C'$. Since $K'C'$ is of finite index in $\widetilde{(G_{\gamma})_{ss}}$, the induction of $\tau$ to $\widetilde{(G_{\gamma})_{ss}}$ is still finite-dimensional (and genuine).

Assume now that $F=\R$.
Then $$(\ind^{\widetilde{(G_{\gamma})_{ss}}}_{\widetilde{M}_{\gamma}}\rho_{0})|_{\widetilde{K}_{\gamma}} =\ind^{\widetilde{K}_{\gamma}}_{\widetilde{K}_\cap \widetilde{M}_{\gamma}}(\rho_{0}|_{\widetilde{K}_\cap \widetilde{M}_{\gamma}}).$$
By the Peter-Weyl theorem, this implies that the induction has a finite-dimensional $\widetilde{K_{\gamma}}$-isotypic component $\rho$. By Proposition \ref{prop:CommTilde}, $\widetilde{M_{\gamma}}$ preserves $\rho$ and thus $\rho$ is a genuine finite-dimensional representation of  $\widetilde{(G_{\gamma})_{ss}}$.
\end{proof}

\begin{prop}\label{prop:MtoG}
If $\widetilde{M_{\gamma}}$ has a genuine finite-dimensional representation then so does $\widetilde{G}_{\varphi}$.
\end{prop}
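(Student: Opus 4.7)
The plan is to build up from $\widetilde{M_\gamma}$ to $\widetilde{G_\varphi}$ in three steps, each enlarging the group while keeping the representation finite-dimensional and genuine.

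First, apply Proposition \ref{prop:MtoGss} to obtain a genuine finite-dimensional representation $\rho$ of $\widetilde{(G_\gamma)_{ss}}$. Second, extend $\rho$ to a genuine finite-dimensional representation $\tilde\rho$ of $\widetilde{G_\gamma}$. Third, pull $\tilde\rho$ back to $\widetilde{G_\varphi}$ along the quotient that kills the unipotent radical of $G_\varphi$.

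For the second step, observe that the identity component $G_\gamma^0$ satisfies $G_\gamma^0 = Z^0 \cdot (G_\gamma)_{ss}$ where $Z^0$ is the connected center of $G_\gamma^0$, an algebraic torus, and the intersection $Z^0 \cap (G_\gamma)_{ss}$ is finite. Let $\widetilde{Z^0}$ be the preimage of $Z^0$ in $\widetilde{G_\gamma}$; it is a locally compact abelian group. Pick an irreducible subrepresentation $\rho_0 \subset \rho$ with central character $\omega$ on the finite subgroup $A := \widetilde{Z^0} \cap \widetilde{(G_\gamma)_{ss}}$. Because $A$ is a closed subgroup of the locally compact abelian group $\widetilde{Z^0}$, Pontryagin duality yields an extension of $\omega$ to a character $\chi$ of $\widetilde{Z^0}$. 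Since $\eps \in A$ and $\rho_0$ is genuine, $\chi(\eps) = \omega(\eps) = -1$, so $\chi$ is automatically genuine. Define $\tilde\rho(zs) := \chi(z) \rho_0(s)$ on $\widetilde{Z^0} \cdot \widetilde{(G_\gamma)_{ss}} = \widetilde{G_\gamma^0}$; the compatibility $\chi|_A = \omega$ makes this well-defined, giving a genuine finite-dimensional representation of $\widetilde{G_\gamma^0}$. Since the component group $G_\gamma/G_\gamma^0$ is finite, the induced representation $\Ind_{\widetilde{G_\gamma^0}}^{\widetilde{G_\gamma}} \tilde\rho$ is still finite-dimensional and genuine.

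For the third step, recall from \S\ref{subsec:cov} that the unipotent radical $U_\varphi$ of $G_\varphi$ has a unique lift $\tilde U \subset \widetilde{G_\varphi}$ (Lemma \ref{lem:cov}), that $\tilde U$ is normal, and that $\widetilde{G_\varphi}/\tilde U \cong \widetilde{G_\gamma}$. Pulling $\tilde\rho$ back along the projection $\widetilde{G_\varphi} \onto \widetilde{G_\gamma}$ produces a genuine finite-dimensional representation of $\widetilde{G_\varphi}$, as required. The only mildly subtle point is the extension across the central torus in step two, but the combination of irreducibility of $\rho_0$, Pontryagin duality, and the automatic genuineness of $\chi$ forced by $\eps \in A$ makes this go through cleanly.
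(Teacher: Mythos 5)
Your overall architecture coincides with the paper's: Proposition \ref{prop:MtoGss} first, then an extension over a central abelian piece obtained by extending a character via Pontryagin duality, then induction from a finite-index subgroup, then pulling back along the epimorphism $\widetilde{G_\varphi}\onto\widetilde{G_\gamma}$ (your step three is exactly the paper's last line and is fine). The place where you genuinely deviate is the choice of the abelian piece: you take $\widetilde{Z^0}$, the preimage in $\widetilde{G_\gamma}$ of the connected center of $G_\gamma$, whereas the paper takes $\tilde Z$, the center of the covering group $\widetilde{G_\gamma}$ itself. This is where your argument has a gap. You assert without proof that $\widetilde{Z^0}$ is a locally compact \emph{abelian} group, but a double cover of an abelian group need not be abelian: commutativity downstairs only gives commutators lying in $\{1,\eps\}$ upstairs, and metaplectic-type covers are precisely where such commutator pairings can be nontrivial (this is the same subtlety that Proposition \ref{prop:CommTilde} has to fight for $\widetilde{M_\gamma}$ and $\widetilde{K_\gamma}$). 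If $\eps$ were a commutator in $\widetilde{Z^0}$, there would be no genuine character of $\widetilde{Z^0}$ at all, and your extension step would fail. The same unproved commutation is hidden in the formula $\tilde\rho(zs):=\chi(z)\rho_0(s)$: for this to define a homomorphism you need $\widetilde{Z^0}$ to commute elementwise with $\widetilde{(G_\gamma)_{ss}}$ inside the cover, and you need $A$ to be central in $\widetilde{(G_\gamma)_{ss}}$ so that Schur's lemma yields the scalar action $\omega$; none of this follows merely from centrality of $Z^0$ in $G_\gamma$.

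These points are repairable, but they are the real content of the step you call "mildly subtle": for $z$ central in $G_\gamma$ the map $g\mapsto[\tilde z,\tilde g]\in\{1,\eps\}$ is a homomorphism on $(G_\gamma)_{ss}$ and kills it because each generator $\exp(X)=\exp(X/2)^2$ is a square, which gives the needed commutation and centrality of $A$; the abelianness of $\widetilde{Z^0}$ itself amounts to the true but non-obvious fact that the preimage of an $F$-torus in the metaplectic double cover is abelian. The paper sidesteps all of this by working with $\tilde Z=Z(\widetilde{G_\gamma})$, abelian by definition, and only needs that $\tilde Z\,\widetilde{(G_\gamma)_{ss}}$ has finite index in $\widetilde{G_\gamma}$ (true since preimages of squares of central elements of $G_\gamma$ are central in $\widetilde{G_\gamma}$). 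Finally, your use of "identity component" and "component group" does not make literal sense for non-Archimedean $F$, where the topological identity component is trivial; what you actually need, and what is true over local fields, is that $Z^0(F)\cdot(G_\gamma)_{ss}$ is an open subgroup of finite index in $G_\gamma$, after which your induction step goes through.
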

\begin{proof}
Let $\tilde Z$ denote the center of $\widetilde{G_{\gamma}}$ and let $\tilde H:=\tilde Z\widetilde{(G_{\gamma})_{ss}}$.
 Let us  show that $\widetilde{H}$ has a finite-dimensional genuine representation.
By Proposition \ref{prop:MtoGss}, $\widetilde{(G_{\gamma})_{ss}}$  has an irreducible genuine finite-dimensional representation $(\rho_1,V_1)$.

Notice that $\tilde Z \cap \widetilde{(G_{\gamma})_{ss}} $ acts on $(\rho_1,V_1)$ by a character, that we will denote by $\chi_1$.  By the classical theory of Pontryagin duality for locally compact abelian groups, we can extend the character $\chi_{1}$  to a character of $\tilde Z$, see \cite[Theorem 5]{Dix}. This defines a genuine action of $\tilde H$ on $\rho_1$. Let $\rho_2$ be the induction of this representation to $\widetilde{G_{\gamma}}$. Since $\tilde H$ has finite index in $\widetilde{G_{\gamma}}$, $\rho_2$ is finite-dimensional.
By composing $\rho_2$ with the epimorphism  $\widetilde{G}_{\varphi}\to  \widetilde{G_{\gamma}}$, we obtain a genuine finite-dimensional representation of  $\widetilde{G_{\varphi}}$.
\end{proof}

Theorem \ref{thm:adm} follows now from Theorem \ref{thm:MaxFin} and Proposition \ref{prop:MtoG}.
}


\subsection{Admissible, quasi-admissible and special orbits}\label{subsubsec:RelSpecOrb}

\begin{prop}\label{prop:NAAdm}
Assume that $F$ is non-Archimedean.
Then $\varphi$ is quasi-admissible if and only if the cover $\widetilde{(G_{\varphi})_0}$ (see \S \ref{subsec:cov}) splits over an open normal subgroup of finite index.
\end{prop}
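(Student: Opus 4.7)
The plan is to establish the equivalence via the two implications, making essential use of the pro-$p$ structure of compact open subgroups of $G_\varphi$ in the forward direction and a Clifford-type extension argument in the backward direction, paralleling Proposition \ref{prop:MtoG}.

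For the $(\Rightarrow)$ direction, suppose $\varphi$ is quasi-admissible with a genuine finite-dimensional representation $\rho$ of $\widetilde{G_\varphi}$. I would first observe that smoothness of $\rho$ makes $K:=\ker\rho$ an open subgroup of $\widetilde{G_\varphi}$, and the genuineness of $\rho$ forces $\varepsilon\notin K$. Setting $\tilde H:=K\cap \widetilde{(G_\varphi)_0}$, this is an open normal subgroup of $\widetilde{(G_\varphi)_0}$ that excludes $\varepsilon$, hence projects isomorphically onto an open normal subgroup $H\subset(G_\varphi)_0$, which yields the required splitting. To upgrade to finite index, I would exploit that any sufficiently small compact open subgroup of $(G_\varphi)_0$ lying in the image of $\exp$ can be chosen to be pro-$p$, so its image under $\rho$ must be a finite $p$-group in $GL_n(\C)$. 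Combined with the fact that $(G_\varphi)_0$ is the open normal subgroup generated by $\exp(\fg_\varphi)$, one concludes that $\rho(\widetilde{(G_\varphi)_0})$ is finite and hence $\tilde H$ has finite index in $\widetilde{(G_\varphi)_0}$.

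For the $(\Leftarrow)$ direction, suppose the cover splits over an open normal subgroup $H\subset (G_\varphi)_0$ of finite index, realized by a subgroup $H'\subset \widetilde{(G_\varphi)_0}$. Then $\widetilde{(G_\varphi)_0}/H'$ is a finite group in which the image of $\varepsilon$ is a nontrivial central element, so there is a finite-dimensional genuine representation $\sigma$ of $\widetilde{(G_\varphi)_0}$ obtained by pulling back any irreducible constituent of the regular representation of this finite quotient on which $\varepsilon$ acts by $-\Id$. To promote $\sigma$ to a genuine representation of $\widetilde{G_\varphi}$, I would imitate the strategy of Proposition \ref{prop:MtoG}: using that $\widetilde{(G_\varphi)_0}$ is normal in $\widetilde{G_\varphi}$, I would extend the central character of $\sigma$ from the center of $\widetilde{(G_\varphi)_0}$ to the full center of $\widetilde{G_\varphi}$ via Pontryagin duality, then invoke Clifford theory on the intermediate group to produce a finite-dimensional genuine representation of $\widetilde{G_\varphi}$.

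I expect the main obstacle to be the forward direction, specifically the step that upgrades the open kernel to a kernel of finite index. A priori, smooth finite-dimensional representations of $l$-groups can have infinite discrete image, and ruling this out for $\widetilde{(G_\varphi)_0}$ requires exploiting its specific structure as the normal subgroup generated by the image of $\exp$, together with the pro-$p$ nature of small neighborhoods of the identity. In the backward direction, the subtler step is controlling how the representation extends across the (possibly infinite) component group $\widetilde{G_\varphi}/\widetilde{(G_\varphi)_0}$; the extension must be done carefully so that the result remains finite-dimensional, which is where the centrality of $\varepsilon$ and the Clifford-theoretic choice of an irreducible constituent play a decisive role.
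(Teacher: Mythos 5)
Your forward direction contains the critical gap. The splitting part is fine (and matches the paper: the kernel of $\rho|_{\widetilde{(G_\varphi)_0}}$ is open, normal, and misses $\eps$, so it projects isomorphically to an open normal subgroup of $(G_\varphi)_0$), but the finite-index step does not follow from the argument you give. It is true that every sufficiently small compact open subgroup is pro-$p$ and has finite image in $\GL_n(\C)$, and even that every individual element of the image of $\exp$ generates a relatively compact (pro-$p$) subgroup and hence acts with finite $p$-power order. But the inference ``$(G_\varphi)_0$ is generated by such elements, hence $\rho(\widetilde{(G_\varphi)_0})$ is finite'' is false as a general principle: the additive group $F$ is the union of its pro-$p$ compact open subgroups $p^{-k}\cO_F$, each of which has finite image under a nontrivial smooth character $\chi$, yet $\chi(F)$ is infinite. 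So finite order of the generators (or finiteness on each compact piece) does not bound the image of the non-compact group $(G_\varphi)_0$, and your argument would equally ``prove'' that every smooth character of a unipotent one-parameter subgroup has finite image. What is actually needed — and what the paper proves — is that the unipotent elements act \emph{trivially}, not merely with finite order: using Lemma \ref{lem:BZ} the paper shows the kernel $C$ of $\rho|_{\widetilde{(G_\varphi)_0}}$ contains all unipotent elements; since $(G_\varphi)_0$ is compact modulo the normal subgroup generated by its unipotent elements, $C$ is cocompact, and an open cocompact subgroup has finite index. Your proposal never addresses triviality on unipotents, which is exactly the crux you yourself identified as the main obstacle.

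In the backward direction there are two smaller problems. First, the image $H'$ of the splitting need not be normal in $\widetilde{(G_\varphi)_0}$ (only the full preimage $\widetilde H$ is), so ``$\widetilde{(G_\varphi)_0}/H'$ is a finite group'' is unjustified as stated; this is fixable either by passing to the normal core of $H'$ (which still misses the central element $\eps$), or, as the paper does, by simply inducing the genuine character of $\widetilde H\cong H'\times\{1,\eps\}$ to $\widetilde{(G_\varphi)_0}$, which stays finite-dimensional because the index is finite. Second, your proposed extension from $\widetilde{(G_\varphi)_0}$ to $\widetilde{G_\varphi}$ by Pontryagin duality plus Clifford theory over the possibly infinite quotient $\widetilde{G_\varphi}/\widetilde{(G_\varphi)_0}$ is not justified: Clifford theory does not produce finite-dimensional extensions across infinite-index subgroups in general. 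The efficient route is to restrict the genuine finite-dimensional representation of $\widetilde{(G_\varphi)_0}$ to $\widetilde{M_\gamma}$ and invoke Proposition \ref{prop:MtoG}, whose proof handles the passage to $\widetilde{G_\varphi}$ using the specific structure of $\widetilde{G_\gamma}$ (the finite-index subgroup $\tilde Z\widetilde{(G_\gamma)_{ss}}$ and the projection $\widetilde{G_\varphi}\onto\widetilde{G_\gamma}$), rather than a direct Clifford argument.
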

\begin{proof}
First, if the cover $\widetilde{(G_{\varphi})_0}$ (see \S \ref{subsec:cov}) splits over an open normal subgroup $H \subset (G_{\varphi})_0$ of finite index then the cover $\widetilde{H}$ has a one-dimensional genuine representation. The induction of this representation to $\widetilde{(G_{\varphi})_0}$ is still genuine and finite-dimensional.

Now assume that $\widetilde{G_{\varphi}}$ has a genuine finite-dimensional representation $\rho$. Restrict $\rho$ to $\widetilde{(G_{\varphi})_0}$ and let $C$ denote the kernel of the restriction. Then $C$ is an open normal subgroup. Let us show that it has finite index. Indeed, since $\rho$ is finite-dimensional, Lemma \ref{lem:BZ} implies that $C$ includes all the unipotent elements of $\widetilde{(G_{\varphi})_0}$. Thus, $C$ is cocompact and open and hence has finite index. Since $\rho$ is genuine, the restriction of the covering map to $C$ is one-to-one. Thus, the cover splits over the image of $C$ in $(G_{\varphi})_0$.
\end{proof}

\begin{prop}\label{prop:AdmQuasi}
All admissible orbits are quasi-admissible.
\end{prop}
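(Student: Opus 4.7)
The plan is to convert the splitting of the cover $\widetilde{G_\varphi} \to G_\varphi$ over $(G_\varphi)_0$ provided by admissibility into a finite-dimensional genuine representation of $\widetilde{G_\varphi}$. To set up, fix $\varphi \in \mathcal{O}$ and let $s: (G_\varphi)_0 \to \widetilde{G_\varphi}$ be a continuous splitting. Then $\widetilde{(G_\varphi)_0}$ is generated by $s((G_\varphi)_0)$ and the central element $\epsilon$ with $s((G_\varphi)_0) \cap \{1,\epsilon\} = \{1\}$, so we may define a one-dimensional genuine character $\chi$ of $\widetilde{(G_\varphi)_0}$ by $\chi(s(g)\epsilon^k) = (-1)^k$. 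This character is well-defined, continuous, and sends $\epsilon$ to $-1$.

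In the Archimedean case, since $G_\varphi$ is the group of real points of an algebraic group, its identity component $(G_\varphi)_0$ has finite index in $G_\varphi$. I would therefore form the induced representation $\mathrm{Ind}_{\widetilde{(G_\varphi)_0}}^{\widetilde{G_\varphi}}\chi$; it has dimension $[G_\varphi:(G_\varphi)_0]<\infty$, and it remains genuine because $\epsilon$ is central in $\widetilde{G_\varphi}$ and continues to act by $-\mathrm{Id}$ on the induced space. This directly exhibits $\mathcal{O}$ as quasi-admissible.

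In the non-Archimedean case, $(G_\varphi)_0$ may fail to have finite index in $G_\varphi$ (for example, any split torus in the reductive part of $G_\varphi$ contributes an infinite discrete quotient modulo $(G_\varphi)_0$), so a direct induction as above would yield an infinite-dimensional representation. The plan here is to invoke Proposition \ref{prop:NAAdm}: it suffices to show that the restricted cover $\widetilde{(G_\varphi)_0} \to (G_\varphi)_0$ splits over some open normal subgroup of finite index in $(G_\varphi)_0$. But by hypothesis the cover splits over all of $(G_\varphi)_0$, which is itself an open normal subgroup of $(G_\varphi)_0$ of finite index (namely index one), so the required hypothesis is trivially met. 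The main conceptual point — and the reason admissibility is enough without further work in the $p$-adic case — is that Proposition \ref{prop:NAAdm} reformulates quasi-admissibility of $\widetilde{G_\varphi}$ in terms of a splitting condition internal to $(G_\varphi)_0$, so the strong splitting guaranteed by admissibility immediately subsumes it.
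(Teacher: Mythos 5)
Your argument is correct, but it takes a different route from the paper. The paper's proof is uniform in $F$: from the splitting over $(G_{\varphi})_0$ it only extracts that $\widetilde{M_{\gamma}}$ splits (since $M_{\gamma}$, being generated by unipotents, lies in $(G_{\varphi})_0$), takes a genuine character of $\widetilde{M_{\gamma}}$, and then invokes Proposition \ref{prop:MtoG} to promote it to a finite-dimensional genuine representation of $\widetilde{G_{\varphi}}$. Your Archimedean argument is more elementary: since $[G_{\varphi}:(G_{\varphi})_0]<\infty$ over $\R$, inducing the genuine character of $\widetilde{(G_{\varphi})_0}$ (obtained from the splitting) directly to $\widetilde{G_{\varphi}}$ already gives a finite-dimensional genuine representation, with no need for the structure theory of $\widetilde{G_{\gamma}}$ (Propositions \ref{prop:CommTilde}, \ref{prop:MtoGss}, \ref{prop:MtoG}); the only imprecision is that when $G$ is a nontrivial finite central extension, $(G_{\varphi})_0$ is the preimage of the identity component rather than the identity component itself, which does not affect the finite-index claim. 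In the non-Archimedean case, however, your ``proof'' is purely a citation of the ``if'' direction of Proposition \ref{prop:NAAdm} with the finite-index subgroup taken to be $(G_{\varphi})_0$ itself. This is logically legitimate (that proposition precedes the present one and does not depend on it), but note that its proof, as written, only produces a genuine finite-dimensional representation of $\widetilde{(G_{\varphi})_0}$; passing from there (or from $\widetilde{M_{\gamma}}$) to a representation of the full group $\widetilde{G_{\varphi}}$ --- which is what quasi-admissibility demands --- is exactly the content of Proposition \ref{prop:MtoG} that the paper's proof makes explicit. So in the $p$-adic case your shortcut does not avoid the main work, it merely hides it inside the cited proposition; an argument independent of Proposition \ref{prop:NAAdm} would still have to address the passage from $(G_{\varphi})_0$ to $G_{\varphi}$, since there the index $[G_{\varphi}:(G_{\varphi})_0]$ is typically infinite, as you correctly observe.
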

\begin{proof}

Let $\cO$ be an admissible orbit,  let $\varphi \in \cO$ and let $\gamma$ be a corresponding $\sl_2$-triple. Then, by definition, the cover  splits over the group $(G_{\varphi})_0$
generated by exponents of $\fg_{\varphi}$. This group includes $M_{\gamma}$ and thus $\widetilde{M_{\gamma}}$ splits, and has a genuine character $\chi$. By Proposition \ref{prop:MtoG} this implies that $\cO$ is quasi-admissible.
\end{proof}

\begin{remark}
Any $F$-distinguished orbit is quasi-admissible, since for such orbits $M_{\gamma}$ is trivial. Over non-Archimedean $F$, $F$-distinguished orbits are admissible since the metaplectic cover splits over compact subgroups, see \cite[Theorem 4.6.1]{MT}. Over $F=\R$, the minimal orbit in $U(2,1)$ is $\R$-distinguished but not admissible. In general, the $\R$-distinguished orbits for semi-simple groups are classified in \cite[Theorems 8-14]{PT} (under the name compact orbits), and comparing this classification with the classification of admissible orbits \DimaN{given in \cite[Theorem 3]{Oht} for classical groups, and \cite{Noel,Noel2} for exceptional groups,} we see that for the
\DimaC{groups
\begin{equation}\label{=CompProb}
SU(p,q) \text{(with }p,q\geq 1), EII,EV,EVI,EVIII,EIX
\end{equation}
 there exist $\R$-distinguished non-admissible orbits\footnote{For $SU(m,n)$, the $\R$-distinguished orbits are the ones described by partitions in which all rows of the same size have also the same signs, while admissible orbits are described in Theorem \ref{thm:Oht}\eqref{it:SU} below.}. On the other hand, for other real simple groups, all $\R$-distinguished orbits are admissible.
Thus it is possible that for simple groups not appearing in the list \eqref{=CompProb} admissibility is equivalent to quasi-admissibility.
We conjecture that quasi-admissibility is equivalent to the splitting of $\widetilde{M_{\gamma}}$ for all groups.}
\end{remark}

Let us now discuss the relation to special orbits.

\begin{thm}[{\cite[Corollaries 5.9 and 6.3]{Nev}, \cite[Main Theorem]{Nev2}}]\label{thm:Nev}
Let $F$ be non-Archimedean. If $G$ is classical then the set of admissible orbits coincides with the set of special orbits. If $G$ is split exceptional \DimaN{different from $E_8$} then the set of admissible orbits includes the set of special orbits.
\end{thm}
\DimaN{It is conjectured in \cite{Nev2} that the same holds for $E_8$.}

For $F=\R$, the sets of special and admissible orbits coincide for orthogonal, symplectic and general linear groups. However, for unitary groups all orbits are special but most orbits are not admissible. See Theorem \ref{thm:Oht} below for these facts. Also, for several exceptional groups, some split and some non-split, there are special non-admissible orbits and admissible non-special orbits - see \cite{Noel,Noel2}.

It is conjectured that in the non-Archimedean case the Whittaker support consists of special orbits. By \cite{Mog,JLS} this holds  for classical ($p$-adic) groups.

The analogous conjecture cannot hold for exceptional $G$ if $F=\R$.
Namely, for the minimal representation $\pi_{\min}$ of $G_2(\R)$ constructed in \cite{VogG2}, $\WS(\pi)$ consists of the minimal orbit $\cO_{\min}$ of $G_2$, which is admissible but not special.
\DimaN{
Let us explain the notion of minimal representation and the relation to Whittaker support.
\begin{defn}
We call a smooth representation $\pi$ of a real reductive group \emph{minimal}
if its annihilator variety is the closure of the minimal orbit in $\fg^*(\C)$.
The annihilator variety is defined to be the set of common zeros of the symbols of the elements of the annihilator ideal of $\pi$ in the universal enveloping algebra of $\fg$.
\end{defn}
\begin{prop}\label{prop:min}
The Whittaker supports of minimal representations consist of minimal orbits. \end{prop}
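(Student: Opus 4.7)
The plan is to combine a bound on $\WO(\pi)$ coming from \cite{Mat} with a short argument ruling out the zero orbit that uses Theorem \ref{thm:MaxFin} together with the irreducibility of $\pi$.

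By the definition of a minimal representation $\pi$ of a simple group $G$, the associated variety (if $F=\R$), respectively the wave-front set (if $F$ is non-Archimedean), of $\pi$ coincides with the closure $\overline{\cO_{\min,\bar F}}\cap\fg^*$ of the minimal nilpotent coadjoint orbit of $\fg(\bar F)$. The result of \cite{Mat} (with its non-Archimedean analog due to M\oe glin--Waldspurger) shows that every $\cO \in \WO(\pi)$ is contained in this closure. Since the only $F$-rational orbits inside $\overline{\cO_{\min,\bar F}}\cap \fg^*$ are the zero orbit and the various $F$-forms of $\cO_{\min,\bar F}$, each of which is a minimal non-zero $F$-orbit, it follows that every $\cO \in \WS(\pi)$ is either $\{0\}$ or a minimal orbit.

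It remains to exclude $\{0\} \in \WS(\pi)$. If this held, then $\{0\}$ being the unique bottom element of the closure order would force $\WO(\pi) = \{\{0\}\}$ and $\pi_{\{0\}} = \pi_G \neq 0$. Applying Theorem \ref{thm:MaxFin} to the trivial $\sl_2$-triple $\gamma=(0,0,0)$, for which $M_\gamma = G$, then asserts that $G$ acts locally finitely on $\pi_G$, producing a non-zero finite-dimensional quotient of $\pi$ on which $G$ acts trivially; this contradicts the irreducibility and infinite-dimensionality of a minimal $\pi$. The principal technical hurdle will be extracting the required form of \cite{Mat}, valid for smooth representations of moderate growth in the Archimedean case and for smooth representations in the non-Archimedean case, rather than only for $(\fg,K)$-modules or admissible representations; once this input is secured, the two-step argument sketched above is straightforward.
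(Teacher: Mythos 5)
Your first step is the same input the paper uses: Matumoto's result \cite{Mat} shows $\pi_{\cO}=0$ unless $\cO=\{0\}$ or $\cO$ is minimal, so the only thing left is to exclude the zero orbit from $\WS(\pi)$. The problem is in your second step, and it is a genuine error rather than a stylistic difference. You identify $\pi_{\{0\}}$ with the coinvariant space $\pi_G$, but with the paper's definitions this is false: the neutral element for $\varphi=0$ is $S=0$, so $\fu=\fg^S_{\geq 1}=0$, $U=\{e\}$, $\cW_{0,0}=\ind_{\{e\}}^G(\C)$, and Lemma \ref{lem:Frob2} gives $\pi_{\{0\}}\cong\pi$, not $\pi_G$. (Indeed $\pi_{\{0\}}\neq 0$ for every non-zero $\pi$, which is why $\{0\}\in\WO(\pi)$ always; if $\pi_{\{0\}}$ really were $\pi_G$, its non-vanishing would already contradict irreducibility with no need for Theorem \ref{thm:MaxFin} — a sign that something is off.) With your identification the appeal to Theorem \ref{thm:MaxFin} collapses: $G$ acts trivially on $\pi_G$, so "locally finite" is vacuous there, and local finiteness does not manufacture a finite-dimensional quotient of $\pi$; no contradiction is reached as written.

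The repair is exactly the paper's argument: if $\{0\}\in\WS(\pi)$, apply Theorem \ref{thm:MaxFin} with $\gamma=(0,0,0)$, for which $\pi_{\varphi}\cong\pi$ itself; the theorem then says $\widetilde{M_{\gamma}}$ acts locally finitely on $\pi$, where $M_{\gamma}$ is the subgroup of $G$ generated by unipotent elements (not all of $G$, as you state — another small inaccuracy, though harmless here). Local finiteness of this large subgroup on an irreducible infinite-dimensional $\pi$ is impossible, so $\{0\}\notin\WS(\pi)$ and the Whittaker support consists of minimal orbits. Your closing concern about the precise form of \cite{Mat} for smooth representations is reasonable to flag, but the paper uses the citation in the same way, so it is not where your write-up diverges; the divergence is the misidentification of $\pi_{\{0\}}$.
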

\begin{proof}
Let $\pi$ be a minimal representation. By \cite[Corollary 4]{Mat} this implies that $\pi_{\cO}=0$ unless $\cO=\{0\}$ or $\cO$ is minimal. Since $\pi$ is infinite-dimensional, Theorem \ref{thm:MaxFin} implies $\{0\}\notin \WS(\pi_{\min})$.
\end{proof}}

The conjecture \DimaN{on speciality of $\WS(\pi)$} also cannot be extended to complex reductive groups.

Let us now prove Proposition \ref{prop:ClasQuas} that states that admissibility, quasi-admissibility and speciality are equivalent for the groups $O(p,q)$, $\SO(p,q)$ and $\Sp_{2n}(\R)$. Our proof is based on Theorem \ref{thm:Nev} and the following theorem from \cite{Oht}. For the formulation, recall that the nilpotent orbits in real classical groups are given by signed partitions satisfying certain conditions. Fortunately, the signs have no effect on the admissibility and speciality.

\begin{thm}[{\cite[Theorem 3]{Oht}}]\label{thm:Oht}  Let $\cO\subset \fg^*$ be a nilpotent orbit and $\lambda$ be the corresponding partition.
\begin{enumerate}[(i)]
\item \label{it:OSpU} Let $G$ is one of the groups $O(p,q)$, $\SO(p,q)$, $U(p,q)$ or $\Sp_{2n}(\R)$.
Then $\cO$ is admissible if and only if
for each even row (i.e., row with even length) in $\lam$, the number
of odd rows in $\lam$, which are shorter than the even row, is even and for each
odd row in $\lam$, the number of even rows in $\lam$, which are longer than the odd
row, is even.

\item \label{it:SU} Let $G=SU(p,q)$.
Then $\cO$ is admissible if and only if
for each even row (i.e., row with even length) in $\lam$, the number
of odd rows in $\lam$, which are shorter than the even row, is even and for each
odd row in $\lam$, the number of even rows in $\lam$, which are longer than the odd
row, is even.

\item \label{it:Other} For all other real classical groups, all nilpotent orbits are admissible.
\end{enumerate}
\end{thm}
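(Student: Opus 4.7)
The statement to prove is a case-by-case classification of admissible nilpotent orbits in real classical groups. My plan is to reduce admissibility to a splitting question for an explicit double cover of a reductive centralizer acting on an explicit symplectic space, and then compute that splitting block-by-block according to the Jordan partition.

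First I would fix $\varphi \in \cO$ with Jordan partition $\lam = (\lam_1 \geq \lam_2 \geq \cdots)$ (with signs, in the classical-group sense), choose an $\sl_2$-triple $\gamma = (e,h,f)$ with $f$ corresponding to $\varphi$ under the Killing form, and recall the standard description of $G_\gamma$ as a product of classical groups indexed by the distinct row lengths of $\lam$: each odd row of $G = O(p,q), \SO(p,q), \Sp_{2n}(\R)$ contributes an orthogonal factor, each even row a symplectic factor, and vice versa for the dual type; the unitary case contributes unitary factors throughout. By Proposition \ref{prop:CommTilde} (or the analogous $p$-adic reductive structure), admissibility of $\cO$ is equivalent to the splitting of the double cover $\widetilde{G_\varphi} \to G_\varphi$ over $(G_\varphi)_0$, which in turn reduces to analyzing the pull-back via the homomorphism $G_\gamma \to \Sp(\g^h_1, \omega_\varphi)$ followed by the metaplectic cover.

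Next I would decompose the symplectic $G_\gamma$-module $(\g^h_1, \omega_\varphi)$ explicitly in terms of $\lam$. The key calculation is that $\g^h_1$ receives contributions only from pairs of Jordan blocks whose lengths differ by one; more precisely, an even-length block of size $2k$ paired with an odd-length block of shorter length yields an irreducible $G_\gamma$-symplectic summand whose restriction to the corresponding small classical factors is the standard tensor product representation. This identification is the technical heart of the argument: it translates the combinatorial data (a pair consisting of one even and one shorter odd row) into a concrete symplectic building block.

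Then I would determine, for each such building block, whether the metaplectic cover $\widetilde{\Sp}(\g^h_1) \to \Sp(\g^h_1)$ splits when restricted to that factor of $G_\gamma$. Using the standard theory of the Weil representation and its restriction to dual reductive pairs (or, equivalently, computing the explicit $2$-cocycle of Rao), one shows that splitting over an orthogonal or symplectic factor acting on the tensor-product representation happens precisely when a certain dimension parameter is even. Assembling these local splittings, one finds that the total cover splits iff for each even row of $\lam$ the count of shorter odd rows is even, and for each odd row the count of longer even rows is even, giving case (\ref{it:OSpU}). For $SU(p,q)$ one has to keep track of an additional determinant constraint, but the same parity analysis applies, yielding (\ref{it:SU}). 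For the remaining classical groups (e.g.\ $\GL_n(\R)$, $\GL_n(\C)$, $\GL_n(\H)$, $U^*(2n)$, $SO^*(2n)$, quaternionic groups, and the complex classical groups), I would verify that the relevant symplectic representation is either trivial or carries an invariant complex/quaternionic structure that forces the cover to split automatically, giving (\ref{it:Other}).

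The main obstacle is the explicit cocycle calculation in the third step: one needs to compute the restriction of the Rao cocycle of $\Sp(\g^h_1)$ to each reductive factor of $G_\gamma$ on its natural irreducible symplectic summand, and recognize the resulting obstruction class as the parity invariant appearing in the theorem. This is where the statement genuinely uses the structure of $\g^h_1$ and not merely the abstract structure of $G_\gamma$, and it is the step where signed partitions and the real-form subtleties (which distinguish $O(p,q)$ from its complex form, and $SU(p,q)$ from $U(p,q)$) enter.
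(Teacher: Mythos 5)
The paper does not prove this theorem; it quotes it verbatim from Ohta's 1991 paper ([Oht91a]), as the bracketed citation in the theorem header indicates. So there is no internal proof to compare your sketch against.

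That said, your outline does follow the broad strategy that Ohta himself uses (reduce admissibility to splitting of the pullback of the metaplectic cover along $G_\gamma \to \Sp(\g^h_1, \omega_\varphi)$, decompose $\g^h_1$ block-by-block in terms of the Jordan partition, and compute the obstruction factor-by-factor). But several of your intermediate claims are incorrect or imprecise, which matters because they would derail the cocycle calculation you correctly identify as the heart of the argument. First, the factor types in $G_\gamma$ are not assigned the way you wrote: for $\Sp_{2n}(\R)$, an odd Jordan block length contributes a \emph{symplectic} factor and an even length an \emph{orthogonal} factor, which is the opposite of what happens for $O(p,q)$ and $\SO(p,q)$; lumping all three together with a single rule is wrong for the symplectic case. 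Second, $\g^h_1$ is not built only from pairs of blocks whose lengths differ by one: any pair of blocks of \emph{opposite parity} contributes, since the weight-$1$ space in the $\sl_2$-decomposition of $\Hom$ between blocks of sizes $a$ and $b$ is nonzero exactly when $a$ and $b$ have opposite parity. Restricting to ``differ by one'' would throw away summands that do contribute to the cover. Third, the final asymmetric combinatorial criterion (shorter odd rows versus longer even rows) is not visible from anything in your sketch and would have to emerge from a genuine Rao-cocycle computation that you leave as a black box -- but that computation is essentially the entire content of Ohta's paper. Given that the present paper simply cites the result, the honest move is to do the same; if you wanted to reprove it you would need to fix the block structure, carry out the cocycle reduction concretely for each dual-pair type over $\R$, and actually derive the parity condition rather than assert it.
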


\begin{proof}[Proof of Proposition \ref{prop:ClasQuas}]
First of all, comparing Theorem \ref{thm:Oht}\eqref{it:OSpU} to the description of special orbits in \cite[\S 6.3]{CM}, and using Theorem \ref{thm:Nev} (for the case $F\neq \R$) we see that for the groups $O(V)$, $\SO(V)$, and $\Sp_{2n}(F)$ the set of admissible orbits coincides with the set of special orbits. Next, by Proposition \ref{prop:AdmQuasi}, this set is included in the set of quasi-admissible orbits. It is left to show that non-admissible orbits are not  quasi-admissible either.

We will do it for the symplectic group, since the construction for the orthogonal case is very similar.  Let $\cO$ be a non-admissible orbit and $\lambda$ be the corresponding partition. Since every odd part in $\lambda$ appears with even multiplicity, Theorems \ref{thm:Oht} and \ref{thm:Nev} imply that there exists an odd part $\lambda_i$ in $\lambda$ such that the number of even parts bigger than $\lambda_i$ (counted with multiplicity) is odd.
Let 2m be the multiplicity of $\lambda_i$ in $\lambda$. By  \cite[\S 5.3]{Nev}, the centralizer $G_{\varphi}$ includes a  group $H$ isomorphic to $\Sp_{2m}(F)$, over which the cover does not split. Since $\Sp_{2m}(\R)$ is simple and has no non-trivial algebraic covers, if $F=\R$ then $\widetilde{H}$ cannot have genuine finite-dimensional representations, and thus $\cO$ is not quasi-admissible.
If $F\neq \R$ then $\cO$ is not quasi-admissible by Proposition \ref{prop:NAAdm}.
\end{proof}

\DimaF{
To complete the picture for real classical groups we will need the following lemma.

\begin{lem}[{\cite[\S 3, Lemma 7]{Kaz}}]\label{lem:Kaz}
Let $V$ be a Hermitian space (of arbitrary signature) and $SU(V)$ be the corresponding special unitary group. Consider $V$ as a real vector space and define a symplectic form on $V$ to be the real part of the hermitian form. Then the corresponding metaplectic cover of $SU(V)$ splits.
\end{lem}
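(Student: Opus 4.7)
The plan is to argue topologically via fundamental groups. The metaplectic double cover $\widetilde{\Sp(V_{\mathbb{R}})}\to \Sp(V_{\mathbb{R}})$ corresponds, by covering space theory, to the unique index-two subgroup of $\pi_1(\Sp(V_{\mathbb{R}}))\cong \mathbb{Z}$. Since $SU(V)$ is connected, the pullback double cover over $SU(V)$ admits a continuous group-theoretic section if and only if the image of the inclusion-induced map $\pi_1(SU(V))\to \pi_1(\Sp(V_{\mathbb{R}}))$ lies inside $2\mathbb{Z}$. My plan is to compute this image explicitly and verify that it equals $2\mathbb{Z}$.

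First I would reduce to the compact setting via the Cartan decomposition. Let $(p,q)$ be the signature of the Hermitian form $h$ and put $n:=p+q$; if $p=0$ or $q=0$ then $SU(V)$ is simply connected and the claim is immediate, so assume $p,q\geq 1$. Then $SU(V)$ deformation retracts onto $K:=S(U(p)\times U(q))$, and $\Sp(V_{\mathbb{R}})$ retracts onto a maximal compact $U(V_{\mathbb{R}},J')\cong U(n)$, where $J'$ is a complex structure on $V_{\mathbb{R}}$ compatible with the symplectic form and yielding a positive definite Hermitian form. The key geometric point, which I would verify in an orthonormal basis for $h$, is that in order to produce a positive definite Hermitian form with the prescribed imaginary part, one must replace the original complex structure $J$ by $-J$ on the negative-definite subspace of $h$. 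Consequently the inclusion $K\hookrightarrow U(n)$ takes the block form $(A,B)\mapsto A\oplus \overline{B}$, with complex conjugation applied to the $U(q)$ factor.

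Passing to $\pi_1$, the canonical generators of $\pi_1(U(p)\times U(q))\cong \mathbb{Z}^{2}$ come from the determinant loops in each factor, and $\pi_1(K)$ is the kernel of the sum map $(a,b)\mapsto a+b$, an infinite cyclic group generated by $(1,-1)$. The complex conjugation on the second factor turns the induced map $\pi_1(K)\to \pi_1(U(n))\cong \mathbb{Z}$ into $(a,b)\mapsto a-b$, so the generator $(1,-1)$ maps to $2$, and the image is $2\mathbb{Z}$, completing the proof. The main obstacle is the careful bookkeeping of the two complex structures $J$ and $J'$ on $V_{\mathbb{R}}$: it is precisely the sign-flip on the negative-definite part, which becomes complex conjugation on the $U(q)$ factor, that produces the crucial factor of $2$ in the image on $\pi_1$. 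An alternative route would be to realize a splitting via the Fock-type model of the oscillator representation, in which $SU(V)$ acts through a square root of the determinant that specializes to $\pm 1$ on $SU(V)$, but the topological argument above is conceptually cleaner and avoids constructing the Weil representation for indefinite signature.
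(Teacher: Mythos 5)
Your proof is correct, but note that there is nothing internal to compare it against: the paper offers no argument for Lemma \ref{lem:Kaz}, which is imported wholesale from Kazhdan \cite{Kaz}; the classical proof (which you mention as an alternative at the end) goes through the explicit model of the oscillator representation, where the restriction of the metaplectic cover to the unitary group is identified with a square-root-of-the-determinant cover, visibly trivial over $SU(V)$. Your route is purely topological and self-contained. The reduction is sound: since $\Mp(2n,\R)$ is the connected double cover of $\Sp(V_{\R})$, corresponding to $2\Z\subset\pi_1(\Sp(V_{\R}))\cong\Z$, and $SU(V)$ is connected, the pullback extension splits (continuously, via the identity component of the pullback) if and only if the image of $\pi_1(SU(V))\to\pi_1(\Sp(V_{\R}))$ lies in $2\Z$; the Cartan decompositions reduce this to the compact picture $S(U(p)\times U(q))\hookrightarrow U(n)$. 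The one genuinely substantive point — that a compatible positive complex structure forces $J\mapsto -J$ on exactly one of the two definite blocks, so the embedding reads $(A,B)\mapsto A\oplus\overline{B}$ and the induced map on $\pi_1(U(p)\times U(q))\cong\Z^2\to\Z$ is $(a,b)\mapsto a-b$ — is right, and it sends the generator $(1,-1)$ of $\pi_1(S(U(p)\times U(q)))$ to $2$, so the image is exactly $2\Z$. (Which block gets conjugated depends on the sign convention relating the Hermitian and symplectic forms — the lemma as quoted says ``real part,'' which is alternating only in Kazhdan's convention for the form — but either choice yields image $2\Z$, and your simply-connected treatment of $p=0$ or $q=0$ covers the degenerate cases.) What your approach buys is independence from the Weil representation; what the classical argument buys is an explicit splitting homomorphism, which is closer in spirit to how the paper actually uses the lemma in Corollary \ref{cor:U} via \cite[\S 5.3]{Nev} — but as a proof of the statement itself, yours is complete.
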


\begin{cor}\label{cor:U}
All nilpotent orbits in $SU(p,q)$ and in $U(p,q)$ are quasi-admissible.
\end{cor}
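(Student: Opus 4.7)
The plan is to show that the metaplectic cover splits over $M_{\gamma}$, which yields a genuine character of $\widetilde{M_{\gamma}}$, and then to invoke Proposition \ref{prop:MtoG} to obtain a finite-dimensional genuine representation of $\widetilde{G_{\varphi}}$. This gives quasi-admissibility by definition.

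First I would recall the structure of the centralizer for unitary groups. Let $G=U(p,q)$ (the $SU$ case is an immediate consequence since $SU(p,q)\subset U(p,q)$ and $M_{\gamma}$ is contained in the derived subgroup). Using the Jacobson--Morozov theory for unitary groups and the standard description of nilpotent orbits via signed partitions, one has a decomposition $G_{\gamma}\cong\prod_{i}U(p_{i},q_{i})$, where the product ranges over the distinct rows of the partition attached to $\varphi$, and $(p_{i},q_{i})$ encodes the multiplicities of the $\pm$ signs. Since unipotent elements have determinant one, the subgroup $M_{\gamma}$ generated by unipotents lies in $\prod_{i}SU(p_{i},q_{i})$, and, as each $SU(p_{i},q_{i})$ is generated by its unipotent elements, we have $M_{\gamma}=\prod_{i}SU(p_{i},q_{i})$.

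Next I would analyze the symplectic $G_{\gamma}$-representation $(\fg_{1}^{h},\omega_{\varphi})$ that enters the definition of the metaplectic cover $\widetilde{G_{\gamma}}$. The key structural input, essentially as in \cite[\S 5]{Nev}, is that this symplectic representation is a direct sum of pieces each of which, restricted to the relevant $SU(p_{i},q_{i})$ (or to $SU(p_{i},q_{i})\times SU(p_{j},q_{j})$ when $i\neq j$), is a sum of Hermitian modules regarded as real symplectic vector spaces via the real part of the Hermitian form. The off-diagonal pieces are of hyperbolic type and the cover manifestly splits over them, so the obstruction is concentrated on the diagonal Hermitian pieces.

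Now I invoke Kazhdan's lemma (Lemma \ref{lem:Kaz}): the metaplectic cover of the real symplectic group of any Hermitian $SU(p_{i},q_{i})$-module splits over $SU(p_{i},q_{i})$. Combining these splittings, the restriction of $\widetilde{\Sp(\omega_{\varphi})}\to \Sp(\omega_{\varphi})$ to $M_{\gamma}=\prod_{i}SU(p_{i},q_{i})$ splits. A splitting $s\colon M_{\gamma}\to\widetilde{M_{\gamma}}$ identifies $\widetilde{M_{\gamma}}$ with $M_{\gamma}\times\{1,\eps\}$, so the sign character of $\{1,\eps\}$ lifts to a one-dimensional genuine character of $\widetilde{M_{\gamma}}$. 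Proposition \ref{prop:MtoG} then produces a finite-dimensional genuine representation of $\widetilde{G_{\varphi}}$, proving that $\cO$ is quasi-admissible. The main obstacle is purely bookkeeping: verifying that the symplectic representation on $\fg_{1}^{h}$ truly decomposes into the Hermitian pieces required by Kazhdan's lemma, for which one relies on the explicit normal forms of nilpotent elements in $\u(p,q)$ and the associated Jacobson--Morozov $\mathfrak{sl}_{2}$-triples.
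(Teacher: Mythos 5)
Your overall skeleton --- split the metaplectic cover over $M_{\gamma}$, get a genuine character of $\widetilde{M_{\gamma}}$, and then invoke Proposition \ref{prop:MtoG} --- is the same as the paper's, but the structural claim you insert in the middle is incorrect, and it hides exactly the step the paper's proof is about. For $\fu(p,q)$ the symplectic module $\fg^h_1$ decomposes, as a module over $M_{\gamma}\cong\prod_r SU(H(r))$ (with $H(r)$ the highest-weight space of the isotypic component of highest weight $r$ in the standard representation $W$), into pieces attached to pairs of rows of \emph{different parity}, i.e.\ into copies of $H(r)\otimes\overline{H(r')}$ with $r\not\equiv r'\pmod 2$. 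In particular every piece is a cross-piece on which two distinct factors of $M_{\gamma}$ act, and there are no ``diagonal'' pieces of the kind you want to reduce to. Moreover these cross-pieces are not of hyperbolic type in general: when $\dim H(r)\geq 3$ or $\dim H(r')\geq 3$ the representation $H(r)\otimes\overline{H(r')}$ of $SU(H(r))\times SU(H(r'))$ is not self-conjugate, hence carries no invariant real structure and no invariant Lagrangian, so the cover does not ``manifestly split'' over these pieces. (It does happen to split by a Lagrangian argument in small cases such as $SU(2)\times SU(2)$, which may be what suggested the claim.)

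The missing idea --- the actual content of the paper's proof --- is the observation that $SU(H(r))\times SU(H(r'))$ embeds into $SU\bigl(H(r)\otimes\overline{H(r')}\bigr)$, because the tensor action of two determinant-one operators has determinant one, while the symplectic form on the piece comes from the tensor-product Hermitian form; Lemma \ref{lem:Kaz} applied to this larger special unitary group then splits the cover on each piece, and by the analysis of \cite[\S 5.3]{Nev} this yields the splitting of $\widetilde{M_{\gamma}}$. Note also that you quote Lemma \ref{lem:Kaz} as giving a splitting over $SU(p_i,q_i)$ for ``any Hermitian module'': as stated the lemma concerns only the standard module $V$ of $SU(V)$, and to use it for another Hermitian module one must check that the action lands in the \emph{special} unitary group of that module rather than merely the unitary group (the cover of $\Sp_2(\R)$ does not split over $U(1)$ acting on $\C$); this determinant check is precisely the embedding step above. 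Your preliminary reduction between $U(p,q)$ and $SU(p,q)$ is harmless, since $M_{\gamma}$ and the symplectic module $(\fg^h_1,\omega_{\varphi})$ are the same for both groups.
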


\begin{proof}
By Proposition \ref{prop:MtoG}, it is enough to show that $\widetilde{M_{\gamma}}$ splits for any $\sl_2$-triple $\gamma=(e,h,f)$ in $\fs \fu(p,q)$.
Let $W=\C^{p+q}$ denote the standard representation of $\fg$ and $\langle \cdot, \cdot \rangle$ denote the fixed hermitian form on $W$ of signature $(p,q)$.
Note that $\gamma$ defines a decomposition  $W=\bigoplus_{r\geq 0}W(r),$ where $W(r)$ is the direct sum of all simple $\gamma$-submodules of highest weight $r$.
For each $r$, let $H(r)$ denote the highest weight subspace of $W(r)$, and define a  sesquilinear form on $H(r)$ by $\langle v,w \rangle_r:=\langle v, f^r w\rangle$ if $r$ is even and $\langle v,w \rangle_r:=i\langle v, f^r w\rangle$ if $r$ is odd. Since $f$ is skew-hermitian, the form $\langle \cdot, \cdot \rangle_r$ is hermitian. Note that $\langle v,w \rangle_r$ is non-degenerate for all $r$ and that   $M_{\gamma}$  is isomorphic to $\prod_r SU(H(r))$.

By \cite[\S 5.3]{Nev} (which is written uniformly for all fields $F$), the splitting on $\widetilde{M_\gamma}$ is implied by the splitting of the metaplectic cover of $SU(H(r))\times SU(H(r'))$  inside $\Sp(H(r)\otimes \overline{H(r')})$ for all pairs $(r,r')$ of different parity. However,  $SU(H(r))\times SU(H(r'))$ is a subgroup of $SU(H(r)\otimes \overline{H(r')})$ and by Lemma \ref{lem:Kaz} the metaplectic cover splits on the latter group.
\end{proof}

}

Theorems \ref{thm:Nev} and \ref{thm:Oht} and Corollary \ref{cor:U} imply the following corollary.
\begin{cor}
For classical groups, all special orbits are quasi-admissible.
\end{cor}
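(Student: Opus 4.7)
The plan is to assemble the corollary from the results already established for each family of classical groups, splitting into cases by the field $F$ and by the type of classical group.

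First I would handle the non-Archimedean case uniformly. By Theorem \ref{thm:Nev}, when $F$ is non-Archimedean and $G$ is classical, the set of special orbits coincides with the set of admissible orbits. By Proposition \ref{prop:AdmQuasi} every admissible orbit is quasi-admissible, so this case is immediate. This reduces the work to the Archimedean case $F=\R$.

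For $F=\R$ I would run through the real classical groups. For $\Sp_{2n}(\R)$, $O(p,q)$ and $\SO(p,q)$, Proposition \ref{prop:ClasQuas} gives the equivalence of special, admissible and quasi-admissible, so we are done for these groups. For $U(p,q)$ and $SU(p,q)$, Corollary \ref{cor:U} states that \emph{every} nilpotent orbit is quasi-admissible, so a fortiori every special orbit is quasi-admissible. The remaining real classical groups (the complex classical groups viewed as real groups, $GL_n(\R)$, $GL_n(\mathbb{H})$, $\mathrm{SO}^*(2n)$, etc.) are covered by Theorem \ref{thm:Oht}\eqref{it:Other}, which asserts that every nilpotent orbit is already admissible; combined with Proposition \ref{prop:AdmQuasi}, this gives quasi-admissibility of all orbits, hence in particular of the special ones.

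Since there is no real obstacle here beyond making sure the list of classical groups is exhausted, the main point to verify is that the case analysis above covers every classical group $G$ over $F$ in both the Archimedean and non-Archimedean settings. I would therefore write the proof as a short three-line case split, citing Theorem \ref{thm:Nev} and Proposition \ref{prop:AdmQuasi} for non-Archimedean $F$, and citing Proposition \ref{prop:ClasQuas}, Corollary \ref{cor:U} and Theorem \ref{thm:Oht}\eqref{it:Other} (together with Proposition \ref{prop:AdmQuasi}) for the various families over $\R$.
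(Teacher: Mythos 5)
Your proposal is correct and follows essentially the same route as the paper, which derives the corollary from Theorem \ref{thm:Nev} and Proposition \ref{prop:AdmQuasi} in the non-Archimedean case, and from Proposition \ref{prop:ClasQuas}, Corollary \ref{cor:U}, and Theorem \ref{thm:Oht}\eqref{it:Other} over $\R$. The case split you describe is exactly the intended assembly of these ingredients.
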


It is possible that all special orbits are quasi-admissible for all groups.

\section{Generalized Whittaker models for non-maximal orbits}\label{sec:compar}
\setcounter{lemma}{0}

The notion of quasi-Whittaker model and the method of \S \ref{sec:MaxOrb} allow us to relate  degenerate Whittaker models corresponding to different nilpotent orbits.
Let $(h,\varphi)$ be a neutral pair,  let a rational semi-simple $Z\in \fg$ commute with $h$ and with $\varphi$ and let $S:=h+Z$.
\begin{prop}\label{prop:LowerOrb}
 Let $\psi\in (\fg^*)^h_{\geq -1}\cap(\fg^*)^S_{-2}$. Then we have an epimorphism $$\cW_{h,\varphi,\psi}\onto \cW_{S,\varphi+\psi}.$$
\end{prop}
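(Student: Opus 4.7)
My plan is to extend the deformation argument behind Proposition~\ref{prop:isom} by one further ``critical'' step, running along the family $S_t := h + tZ$ for $t \in [0,1]$, and treating $t=1$ as a boundary value at which the Whittaker triple $(S_t,\varphi,\psi)$ collapses into the Whittaker pair $(S,\varphi+\psi)$.

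I would first verify the data is compatible. Since $[h,Z]=[Z,\varphi]=0$ we have $\varphi\in(\fg^*)^h_{-2}\cap(\fg^*)^Z_0\subset(\fg^*)^S_{-2}$, so $(S,\varphi+\psi)$ is a Whittaker pair. Decomposing $\psi=\sum_{a\geq -1}\psi_a$ into joint $h,Z$-eigencomponents (possible because $[h,Z]=0$), the condition $\psi\in(\fg^*)^S_{-2}$ forces $\psi_a$ to have $Z$-weight $-2-a$, hence $S_t$-weight $a(1-t)-2t$. Since each $a\geq -1$, this is strictly greater than $-2$ for $t\in[0,1)$ and equals $-2$ exactly at $t=1$. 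Thus $(S_t,\varphi,\psi)$ is a genuine Whittaker triple for $t\in[0,1)$, and for these $t$ the form $\omega_\psi$ vanishes on $\fu_t$ (as $\psi$ kills $\fg^{S_t}_{\geq 2}\supset[\fu_t,\fu_t]$), so the Heisenberg structure underlying $\cW_{S_t,\varphi,\psi}$ is simply $\omega_\varphi|_{\fu_t}$ with central character $\chi_{\varphi+\psi}$.

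Replaying the proof of Proposition~\ref{prop:isom} verbatim on the family $(S_t,\varphi,\psi)$ for $t\in[0,T]$, where $T<1$ is chosen past all critical values in this interval, with the Lagrangians $\fr_t,\fl_t$ of \eqref{=lt} and the character $\chi_{\varphi+\psi}$, and invoking Lemmas~\ref{lem:help}, \ref{lem:lrw}, \ref{lem:Heis}, \ref{lem:IndSt} together with Corollary~\ref{cor:StvN}, produces a chain of epimorphisms
\[
\cW_{h,\varphi,\psi}\simeq\ind_{R_0}^G\chi_{\varphi+\psi}\onto\cdots\simeq\ind_{R_T}^G\chi_{\varphi+\psi}\simeq\cW_{S_T,\varphi,\psi}.
\]
For the terminal transition, Corollary~\ref{cor:StvN} gives $\cW_{S,\varphi+\psi}\simeq\ind_{L_1}^G\chi_{\varphi+\psi}$ for any maximal isotropic subalgebra $\fl_1\subset\fu_1$ of $\omega_{\varphi+\psi}$; choosing $\fl_1$ compatibly with $\fr_T$ and applying Lemma~\ref{lem:IndSt} delivers the required epimorphism $\cW_{S_T,\varphi,\psi}\onto\cW_{S,\varphi+\psi}$.

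The main obstacle is producing such an $\fl_1$ containing (the image of) $\fr_T$. This is the boundary analogue of Lemma~\ref{lem:lrw} at $t=1$, and here both the ambient nilpotent subalgebra and the Heisenberg form jump: $\fu_T$ and $\fu_1$ are generally incomparable, since some $Z$-negative directions in $\fu_T$ leave $\fu_t$ as $t\to 1$, and $\omega_{\varphi+\psi}|_{\fu_1}\neq\omega_\varphi|_{\fu_1}$ because $\psi$ begins to pair with $\fg^S_2$ at $t=1$. I expect this to reduce to $(h,Z)$-weight bookkeeping: the contribution $\omega_\psi|_{\fr_T\cap\fu_1}$ vanishes by matching the $(h,Z)$-weights of the $\psi_a$ against those of $[\fr_T,\fr_T]\cap\fg^S_2$, while the elements of $\fr_T$ exiting $\fu_1$ can be absorbed into the radical of $\omega_{\varphi+\psi}|_{\fu_1}$ by a Frobenius-type adjustment before extending to a full Lagrangian in $(\fu_1,\omega_{\varphi+\psi})$.
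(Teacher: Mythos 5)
Your deformation set-up coincides with the paper's own proof: the paper proves this proposition by running the chain of Proposition~\ref{prop:isom} with the fixed character $\chi_{\varphi+\psi}$ along $S_t=h+tZ$, taking $t_0=0$, the critical values $t_1<\dots<t_{n-1}<1$, and $t_n:=1$, so that $\cW_{h,\varphi,\psi}\simeq \ind_{R_0}^G\chi_{\varphi+\psi}\onto \ind_{L_{t_1}}^G\chi_{\varphi+\psi}\simeq \ind_{R_{t_1}}^G\chi_{\varphi+\psi}\onto\cdots\onto \ind_{L_{t_n}}^G\chi_{\varphi+\psi}\simeq\cW_{S,\varphi+\psi}$, using Lemma~\ref{lem:lrw} and the corollary of Lemma~\ref{lem:IndSt}. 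Your verification that $(S_t,\varphi,\psi)$ is a Whittaker triple for $t\in[0,1)$ and that $\psi$ does not perturb the symplectic form on $\fu_t$ there is correct. However, your proof stops at exactly the one step that is new relative to Proposition~\ref{prop:isom}: the terminal epimorphism onto $\cW_{S,\varphi+\psi}$ is only announced as an expectation (``I expect this to reduce to $(h,Z)$-weight bookkeeping''), so as written there is a genuine gap, and it sits precisely where the content of the proposition lies.

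Moreover, half of the obstacle you describe is illusory. Lemma~\ref{lem:lrw} only requires that there be no critical values strictly between its two parameters, so it applies with right endpoint $1$: for $t$ the last critical value (or your regular $T$) it gives $\fl_1=\fr_t\oplus(\fw_1\cap\fg_\varphi)$, in particular $\fr_t\subset\fl_1\subset\fu_1$, so no elements of $\fr_t$ ``exit'' $\fu_1$ and no absorption into the radical is needed. What does need checking is the interaction with $\omega_\psi$, and it is a one-line weight argument: every component of $\psi$ has $S_t$-weight strictly greater than $-2$, while $[\fr_t,\fr_t]\subset\fg^{S_t}_{\geq 2}$, so $\fr_t$ is isotropic for $\omega_{\varphi+\psi}$ and $\chi_{\varphi+\psi}$ is a character of $R_t$. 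Now extend $\fr_t$ to a maximal isotropic \emph{subspace} $\fl\subset\fu_1$ for $\omega_{\varphi+\psi}$; any such subspace contains the radical of $\omega_{\varphi+\psi}|_{\fu_1}$, which contains $[\fu_1,\fu_1]$ because $\varphi+\psi\in(\fg^*)^S_{-2}$ kills $\fg^S_{\geq 3}$, so $\fl$ is automatically a subalgebra on which $\varphi+\psi$ is a character. Then Corollary~\ref{cor:StvN} together with Lemma~\ref{lem:IndSt} gives $\ind_{\Exp(\fl)}^G\chi_{\varphi+\psi}\simeq\cW_{S,\varphi+\psi}$, and the inclusion $\fr_t\subset\fl$ gives the desired surjection. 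One genuine caution, which your weight-bookkeeping instinct correctly senses: the isotropic $\fl_1$ of \eqref{=lt} is built from $\omega_\varphi$, and for an arbitrary choice of the Lagrangian $\fm$ it need not be isotropic for $\omega_\psi$ (the form $\omega_\psi$ can pair $\fm$ or $\fw_1\cap\fg_\varphi$ nontrivially against $\fw_1\cap\fg_\varphi$ inside $\fg^S_2$); this is why one should either extend $\fr_t$ as above or choose the terminal maximal isotropic compatibly with $\omega_{\varphi+\psi}$, rather than appeal to the fixed $\fl_1$.
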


\begin{proof}
The proof is similar to that of Proposition \ref{prop:isom}.
Let $S_t:=h+tZ$ and let $t_0:=0, \, t_1,\dots,t_{n-1}<1$ be all the critical values between $0$ and $1$ and $t_n:=1$. By Lemmas \ref{lem:lrw}, \ref{lem:Frob2}, 
we have
\DimaH{\begin{equation*}
\cW_{h,\varphi,\psi} \simeq ind^G_{R_0}\chi_{\varphi+\psi}\onto ind^G_{L_{t_1}}\chi_{\varphi+\psi}\simeq ind^G_{R_{t_1}}\chi_{\varphi+\psi}\onto \dots  \onto ind^G_{L_{t_n}}\chi_{\varphi+\psi}\simeq  \cW_{S,\varphi+\psi}.
\end{equation*}}
\end{proof}

This proposition is strengthened by the following lemma.
\begin{lem}\label{lem:quasiIso}
Let $\psi\in (\fg^*)^h_{\geq -1}$. Then we have a natural isomorphism $\cW_{h,\varphi,\psi}\cong \cW_{h,\varphi}$.
\end{lem}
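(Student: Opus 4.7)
The plan is to show that, under the hypothesis $\psi\in(\fg^*)^h_{\geq -1}$, the data defining the two Heisenberg-induced representations literally coincide, so the isomorphism is essentially the identity. Unpacking the definition of both quasi-Whittaker and generalized Whittaker models (the latter being the case $\psi=0$), both $\cW_{h,\varphi,\psi}$ and $\cW_{h,\varphi}$ are obtained by Schwartz/compact induction from $U:=\Exp(\fu)$ with $\fu=\fg^h_{\geq 1}$ of an oscillator representation pulled back from the Heisenberg quotient $U/\Exp(\fk)$. Since $S=h$ in both cases, the subalgebras $\fu$, $\fv=\fg^h_{\geq 2}$ and the group $U$ are the same on the nose.

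Next I would identify the subalgebra $\fz$ in both cases. By definition $\fz=\fv\oplus(\fg^h_1\cap\fg_\varphi)$, but Lemma~\ref{lem:help}\eqref{it:KerOm} gives $\fg_\varphi\subset\fg^h_{\leq 0}$, so $\fg^h_1\cap\fg_\varphi=0$ and $\fz=\fv=\fg^h_{\geq 2}$ for both triples. In particular the subalgebra $\fk\subset\fz$ (on which the central character is to vanish) is determined by the restriction to $\fz$ of the relevant character, so it only remains to check that this restriction is the same in both cases.

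The key step is the weight-pairing observation: a component of $\fg^*$ of $h$-weight $r$ pairs non-trivially with $\fg^h_s$ only when $r+s=0$, by $h$-equivariance of the evaluation pairing. Since every weight of $\psi$ is $\geq -1$ while every weight occurring in $\fz=\fg^h_{\geq 2}$ is $\geq 2$, the pairing is zero and $\psi|_\fz=0$. Consequently $(\varphi+\psi)|_\fz=\varphi|_\fz$, so $\fk$ is identical in both constructions and the central characters of the Heisenberg group $U/\Exp(\fk)$ agree. By Stone--von Neumann (Theorem~\ref{thm:StvN}) the oscillator representations $\sigma_{\varphi,\psi}$ and $\sigma_{\varphi,0}=\sigma_\varphi$ are then canonically isomorphic, their trivial lifts to $U$ coincide, and inducing to $G$ yields the claimed natural isomorphism $\cW_{h,\varphi,\psi}\cong\cW_{h,\varphi}$.

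There is no serious obstacle; the lemma is essentially a matter of unpacking the definitions and applying the grading constraint, once one uses Lemma~\ref{lem:help}\eqref{it:KerOm} to kill the second summand in $\fz$.
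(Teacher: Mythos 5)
Your proof is correct. Where the paper argues via the Lagrangian-induction realization --- using the non-degeneracy of $\omega_\varphi$ on $\fg^h_1$ to choose a Lagrangian $\fl'\subset\fg^h_1$ inside $\ker\psi$, setting $\fl=\fl'\oplus\fg^h_{>1}$, and identifying both models with $\ind_{L}^G\chi_{\varphi+\psi}=\ind_L^G\chi_\varphi$ via Corollary \ref{cor:StvN} --- you instead compare the inducing data directly: since $\fg_\varphi\subset\fg^h_{\leq 0}$ forces $\fz=\fv=\fg^h_{\geq 2}$, and the weight constraint $\psi\in(\fg^*)^h_{\geq -1}$ makes $\psi$ vanish on $\fz$, the subalgebra $\fk$ and the central character of the Heisenberg quotient $\Exp(\fu)/\Exp(\fk)$ are identical for $(h,\varphi,\psi)$ and $(h,\varphi,0)$, so Stone--von Neumann (or simply the fact that the defining data coincide) gives the isomorphism after induction. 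Both arguments hinge on the same weight observation that a functional of $h$-weight $\geq -1$ kills $\fg^h_{\geq 2}$; the paper additionally needs $\psi$ to die on a weight-one Lagrangian, which it arranges by choice, whereas you avoid any choice of Lagrangian, making the isomorphism essentially the identity and arguably more visibly natural. The paper's formulation has the mild advantage of staying in the $\ind_L^G\chi$ format used throughout \S\ref{sec:MaxOrb} and in Proposition \ref{prop:LowerOrb}, with which this lemma is immediately combined, but your route is a perfectly valid and slightly more economical proof of the statement as written.
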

\begin{proof}
\DimaH{Since the form $\omega_{\varphi}$ on $\fg^h_1$ is non-degenerate, we can choose a Lagrangian subspace $\fl'\subset \fg^h_1$ on which $\psi$ vanishes.
Let $\fl:=\fl'\oplus \fg^h_{>1}$. Then $\fl$ is a maximal coisotropic subspace of $\fg_{h\geq 1}$ and $\psi$ vanishes on $\fl$. Let $L:=\Exp(\fl)$. Then
$\cW_{h,\varphi,\psi}\cong \ind_{L}^G\chi_{\varphi+\psi} = \ind_{L}^G\chi_{\varphi}\cong \cW_{h,\varphi}.$}
\end{proof}

\DimaH{Note that $ (\fg^*)^Z_{<0}\cap(\fg^*)^S_{-2}=(\fg^*)^h_{\geq -1}\cap(\fg^*)^S_{-2}$. Thus Proposition \ref{prop:LowerOrb} and Lemma \ref{lem:quasiIso} imply the following corollary.

\begin{cor}
Let $\psi\in (\fg^*)^Z_{<0}\cap(\fg^*)^S_{-2}$. Then we have an epimorphism $\cW_{h,\varphi}\onto \cW_{S,\varphi+\psi}$.
\end{cor}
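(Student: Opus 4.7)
The plan is to chain the isomorphism of Lemma \ref{lem:quasiIso} with the epimorphism of Proposition \ref{prop:LowerOrb}, using the identity $(\fg^*)^Z_{<0}\cap(\fg^*)^S_{-2}=(\fg^*)^h_{\geq -1}\cap(\fg^*)^S_{-2}$ asserted in the paragraph immediately above the corollary.

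First I would verify this identity. Since $h$ and $Z$ commute and are both semisimple, every $\psi\in(\fg^*)^S_{-2}$ decomposes as a sum of joint eigenvectors with $h$-weight $a$ and $Z$-weight $b$ satisfying $a+b=-2$. The condition $\psi\in(\fg^*)^Z_{<0}$ then becomes $b<0$, equivalently $a>-2$, while $\psi\in(\fg^*)^h_{\geq -1}$ becomes $a\geq -1$. Since $h$ is the neutral element of an $\sl_2$-triple, all $h$-eigenvalues on $\fg^*$ are integers, so $a>-2$ is equivalent to $a\geq -1$, and the two sets coincide.

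Next, for such $\psi$, Lemma \ref{lem:quasiIso} supplies a natural isomorphism $\cW_{h,\varphi,\psi}\cong\cW_{h,\varphi}$. On the other hand, the hypothesis $\psi\in(\fg^*)^h_{\geq -1}\cap(\fg^*)^S_{-2}$ is precisely the condition under which Proposition \ref{prop:LowerOrb} produces an epimorphism $\cW_{h,\varphi,\psi}\onto\cW_{S,\varphi+\psi}$. Composing, we obtain the desired epimorphism
\[
\cW_{h,\varphi}\;\cong\;\cW_{h,\varphi,\psi}\;\onto\;\cW_{S,\varphi+\psi}.
\]

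There is no substantive obstacle here, as both ingredients have already been established in the section; the corollary is essentially a repackaging. The only modest check is the weight-set equality above, which rests solely on the integrality of the $h$-eigenvalues coming from the $\sl_2$-triple structure of the neutral pair $(h,\varphi)$.
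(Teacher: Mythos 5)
Your proof is correct and follows exactly the paper's route: the paper states the identity $(\fg^*)^Z_{<0}\cap(\fg^*)^S_{-2}=(\fg^*)^h_{\geq -1}\cap(\fg^*)^S_{-2}$ and then cites Proposition \ref{prop:LowerOrb} together with Lemma \ref{lem:quasiIso}, just as you do. Your explicit verification of the weight-set identity (using $S=h+Z$ and the integrality of the $h$-eigenvalues) is a correct elaboration of a step the paper leaves implicit.
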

}
Together with Theorem \ref{thm:MaxModAction} we obtain
\begin{thm}\label{thm:ComparOrbit}
Let $\pi\in \Rep^{\infty}(G)$ and let $\cO\in \WS(\pi)$. Let $(h,\varphi)$ be a neutral Whittaker pair. Suppose that there exists a Whittaker pair $(S,\Phi)$ such that $\Phi\in \cO$, $\varphi\in (\fg^*)^S_{-2}$, $[h,S]=0$, and $\Phi-\varphi\in (\fg^*)^{S-h}_{<0}$. Then $G \cdot \varphi \in \WO(\pi).$
\end{thm}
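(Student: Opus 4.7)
The plan is to reduce directly to the corollary that follows Lemma~\ref{lem:quasiIso}, together with Theorem~\ref{thm:MaxModAction}. Set
$$ Z := S - h \quad \text{and} \quad \psi := \Phi - \varphi. $$
First I would check that the datum $(h,\varphi,Z,S)$ fits into the setup of Proposition~\ref{prop:LowerOrb}. The pair $(h,\varphi)$ is neutral by hypothesis, $S = h+Z$ tautologically, and $[h,Z]=[h,S]-[h,h]=0$ by the assumption $[h,S]=0$. Since $h$ and $S$ are commuting rational semi-simple elements, they are simultaneously diagonalizable with rational eigenvalues, so $Z=S-h$ is rational semi-simple as well. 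Finally, $(ad^{*} Z)\varphi = (ad^{*} S)\varphi - (ad^{*} h)\varphi = -2\varphi - (-2\varphi) = 0$, where we used $\varphi \in (\fg^{*})^{S}_{-2}$ and that $h$ is neutral for $\varphi$, so $Z$ commutes with $\varphi$ as required.

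Next I would verify that $\psi$ satisfies the hypotheses of the corollary between Lemma~\ref{lem:quasiIso} and Theorem~\ref{thm:ComparOrbit}. We have $\psi \in (\fg^{*})^{S}_{-2}$ because both $\Phi$ and $\varphi$ lie in $(\fg^{*})^{S}_{-2}$. The condition $\Phi-\varphi \in (\fg^{*})^{S-h}_{<0}$ is exactly $\psi \in (\fg^{*})^{Z}_{<0}$. Hence the corollary produces an epimorphism
$$ \cW_{h,\varphi} \onto \cW_{S,\varphi+\psi} = \cW_{S,\Phi}. $$
Tensoring with $\pi$ and passing to $G$-coinvariants yields an epimorphism
$$ \pi_{\varphi} = \pi_{h,\varphi} \onto \pi_{S,\Phi}. $$

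To conclude, I would apply Theorem~\ref{thm:MaxModAction} to the Whittaker pair $(S,\Phi)$: since $\Phi \in \cO \in \WS(\pi)$, the theorem gives $\pi_{S,\Phi} \neq 0$. Combined with the epimorphism above, this forces $\pi_{\varphi}\neq 0$, i.e. $G\cdot\varphi \in \WO(\pi)$, as desired. There is no real obstacle here; the only slightly delicate point is ensuring that the two conditions of the corollary hold, and in particular that the positivity/negativity conventions for $Z$ match, which is precisely the content of the hypothesis $\Phi-\varphi \in (\fg^{*})^{S-h}_{<0}$.
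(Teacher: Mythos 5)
Your proposal is correct and is essentially the paper's own argument: the paper obtains Theorem \ref{thm:ComparOrbit} precisely by applying the corollary following Lemma \ref{lem:quasiIso} with $Z=S-h$ and $\psi=\Phi-\varphi$ (your verifications that $Z$ is rational semi-simple, commutes with $h$ and $\varphi$, and that $\psi\in(\fg^*)^Z_{<0}\cap(\fg^*)^S_{-2}$ are exactly what makes that corollary applicable), and then invoking Theorem \ref{thm:MaxModAction} for the pair $(S,\Phi)$ to get $\pi_{S,\Phi}\neq 0$, hence $\pi_{\varphi}\neq 0$.
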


This theorem is strongest for the group $\GL_n$. In order to apply it to this case we will need the following proposition from linear algebra, that we will prove in the next subsection, following \cite[\S 4.2]{GGS}.

\begin{prop}\label{prop:GL}
Let $\fg_n:=\gl_n(\Q)$ and let $\cO', \cO \subset \fg^{*}_{n}$ be rational nilpotent orbits, with $\cO'\subset \overline{\cO}$. Then for any neutral pair $(h,\varphi)$ with $\varphi\in \cO'$  there exist a rational semi-simple $Z\in \DimaL{(\g_n)^h_0\cap (\g_n)_{\varphi}}$ and $\psi\in  (\fg_n^*)^Z_{<0}\cap (\fg_n^*)^{h+Z}_{-2}$ such that $\varphi+\psi\in \cO$.
\end{prop}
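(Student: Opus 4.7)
The strategy is to place $\psi$ inside the Slodowy slice at $\varphi$ and then manufacture a rational semi-simple $Z$ that gives $\psi$ the right weights. First, completing the neutral pair $(h,\varphi)$ to an $\sl_2$-triple $(e,h,\varphi)$ in $\fg_n$, the Slodowy slice $\varphi+\fg_n^e$ is transversal to $\cO'=G\cdot\varphi$ at $\varphi$, and every nilpotent orbit whose closure contains $\varphi$ meets it. Since $\cO'\subset\overline{\cO}$, I may pick $\psi'\in\fg_n^e$ with $\varphi+\psi'\in\cO$. Because $\fg_n^e$ is spanned by highest-weight vectors for the $\sl_2$-action, we have $\fg_n^e\subset(\fg_n)^h_{\geq 0}$, so every $h$-weight occurring in $\psi'$ is non-negative.

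Next I would put $\psi'$ into a convenient weight form. The centralizer of $h$ in $\GL_n(\Q)$ is a standard Levi subgroup $L$, isomorphic to a product of general linear groups indexed by the $h$-eigenspaces; $L$ acts on $\fg_n^e$ preserving the $h$-grading. After conjugating by an element of $L$, and using the evident invariance of the proposition under $G$-conjugation of the pair $(h,\varphi)$ to replace $\varphi$ by a suitable $G$-conjugate, I may arrange a Jordan basis of $\Q^n$ in which $h=\diag(h_1,\dots,h_n)$ and $\psi'=\sum_{(a,b)\in I}c_{ab}E_{ab}$ for a finite index set $I\subset\{1,\dots,n\}^2$ with $c_{ab}\in\Q^\times$. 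Taking $Z=\diag(z_1,\dots,z_n)$ in this basis and setting $\psi:=\psi'$, the required conditions $\psi\in(\fg_n^*)^Z_{<0}\cap(\fg_n^*)^{h+Z}_{-2}$ reduce to the linear system
\[
z_a-z_b \;=\; -2-(h_a-h_b), \qquad (a,b)\in I.
\]
On any solution, the $Z$-weight of $E_{ab}$ equals $-2-(h_a-h_b)\leq -2<0$ and the $(h+Z)$-weight of $E_{ab}$ equals $-2$, exactly as needed.

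The main obstacle, and the heart of the argument, is to guarantee consistency of this linear system: for every cycle in the graph with edges $I$ the signed sum of right-hand sides must vanish. For instance, edges $a\to b$, $b\to c$ and $a\to c$ in $I$ already force the contradiction $-4=-2$, so such patterns must be excluded by a careful choice of $\psi'$. Here the combinatorics of $\gl_n$ is decisive: the dominance order on partitions of $n$ is generated by elementary box-moves, and each such box-move can be implemented by adjoining a single elementary matrix $E_{ab}$ linking two vectors of a Jordan block decomposition. The plan is to choose a maximal chain $\lambda'=\mu_0\lessdot\mu_1\lessdot\dots\lessdot\mu_r=\lambda$ in dominance and to realize each successive covering step by a fresh single-generator contribution supported on disjoint coordinates, so that the accumulated support $I$ becomes a forest. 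The system is then triangular, hence solvable over $\Q$, producing the desired $Z$. The technical core is to show that these single-generator contributions can be assembled into one element of the Slodowy slice at the original $\varphi$ by working in a Jordan basis adapted to the full chain; this parallels the construction carried out in \cite[\S4.2]{GGS}.
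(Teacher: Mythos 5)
Your reduction of the problem to a linear system $z_a-z_b=-2-(h_a-h_b)$ for a diagonal $Z$ is the right way to see what has to be arranged, but the proposal stops exactly where the actual difficulty begins, and the route you sketch for closing it has concrete problems. First, the existence of \emph{some} $\psi'$ in the Slodowy slice with $\varphi+\psi'\in\cO$ gives you nothing towards consistency of that system; you acknowledge this and propose to build $\psi'$ step by step along a maximal chain of dominance covers, with each cover realized by a single elementary matrix ``supported on disjoint coordinates'' so that the support is a forest. That disjointness claim is not tenable as stated: successive covering moves necessarily involve the Jordan blocks created by the previous moves (e.g.\ passing $(2,2)\to(3,1)\to(4)$, the second move must touch the block of size $3$ just produced), and after each move the relevant Jordan basis changes, so ``assembling the single-generator contributions into one element of the Slodowy slice at the original $\varphi$'' is precisely the inductive construction that constitutes the whole proof in the paper --- it is deferred in your write-up, not carried out. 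Moreover, insisting that $\psi$ lie in $(\fg^*)^e$ is an extra constraint that the statement does not require and that the paper's construction does not satisfy: the raising element $Y=E_{p+r+1,p}$ of Lemma \ref{lem:TwoBlocks} is in general not a highest-weight vector for the triple, and it is unclear (and unargued) that one can always meet your forest and weight conditions while staying inside the slice. Finally, two smaller points: the rationality of a $\Q$-point of $(\varphi+(\fg^*)^e)\cap\cO$ is asserted without proof, and for the intended application (Corollary \ref{cor:GL} via Theorem \ref{thm:ComparOrbit}) one also needs $[h,Z]=0$ and $\mathrm{ad}^*(Z)\varphi=0$, i.e.\ $z_a$ constant along each Jordan block of $\varphi$; these add further equations to your system and are built into the paper's choice of $Z$, but never appear in your plan.

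For comparison, the paper avoids dominance covers and the Slodowy slice altogether: Lemma \ref{lem:part} produces an index $i$ with $\lambda_i>\mu_i>\lambda_{i+1}$ (after stripping common parts), Lemma \ref{lem:TwoBlocks} gives an explicit two-block computation raising $(p,q+r)$ to $(p+q,r)$ by one elementary matrix together with an explicit block-scalar $Z$, and the induction on $n$ removes the part $\mu_i$ and merges $\lambda_i,\lambda_{i+1}$ into a single part of a smaller partition, then reassembles $Z$ and $\psi$ blockwise. This is the combinatorial core your proposal labels ``the technical core'' and leaves unproven, so as it stands the proposal is a plausible plan rather than a proof, and its specific mechanism (disjointly supported cover-by-cover moves inside $(\fg^*)^e$) would need substantial repair to work.
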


\begin{corollary}\label{cor:GL}
Let $G$ be either $\GL_n(F)$ or $\GL_n(\C)$. Let $\pi\in \Rep^{\infty}(G)$. Let $\cO\in \WO(\pi)$ and $\cO'\subset \overline{\cO}$. Then $\cO'\in \WO(\pi)$.
\end{corollary}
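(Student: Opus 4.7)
The plan is to reduce to the maximal case and then combine the linear-algebra input of Proposition \ref{prop:GL} with Theorem \ref{thm:ComparOrbit}. First I would pass from an arbitrary $\cO \in \WO(\pi)$ to some $\widetilde{\cO} \in \WS(\pi)$ with $\cO \subset \overline{\widetilde\cO}$, which exists since there are only finitely many nilpotent orbits and $\WS(\pi)$ is by definition the set of maximal elements of $\WO(\pi)$ under the closure order. Since $\overline{\widetilde\cO}$ is closed, the hypothesis $\cO' \subset \overline{\cO}$ yields $\cO' \subset \overline{\widetilde\cO}$ as well. Hence, replacing $\cO$ by $\widetilde\cO$, we may assume from the outset that $\cO \in \WS(\pi)$.

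Next, choose $\varphi \in \cO'$ together with a neutral Whittaker pair $(h,\varphi)$, which exists by Lemma \ref{lem:sl2}. Apply Proposition \ref{prop:GL} to $(h,\varphi)$ and to the pair of orbits $\cO' \subset \overline{\cO}$, obtaining a rational semi-simple element $Z \in \fg_n$ and an element $\psi \in (\fg_n^*)^Z_{<0} \cap (\fg_n^*)^{h+Z}_{-2}$ such that $\Phi := \varphi + \psi$ lies in $\cO$. Set $S := h+Z$, so that $(S,\Phi)$ is a Whittaker pair. The construction of Proposition \ref{prop:GL} (which amounts to a partition-combinatorial statement from \cite[\S 4.2]{GGS}) also provides $[h,Z]=0$ and $Z\cdot \varphi = 0$; these give $\varphi \in (\fg^*)^S_{-2}$ and $S-h = Z$, so that $\Phi-\varphi = \psi \in (\fg^*)^{S-h}_{<0}$. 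Thus all four hypotheses of Theorem \ref{thm:ComparOrbit} ($\Phi \in \cO$, $\varphi \in (\fg^*)^S_{-2}$, $[h,S]=0$, $\Phi-\varphi \in (\fg^*)^{S-h}_{<0}$) are verified, and the theorem yields $\cO' = G\cdot\varphi \in \WO(\pi)$, as desired.

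The case $G = \GL_n(\C)$ is handled in exactly the same way: nothing in the argument depends on the particular local field, and Proposition \ref{prop:GL} is a statement of rational linear algebra that applies verbatim once one views $\GL_n(\C)$ as a real reductive group and transports the construction along the inclusion $\gl_n(\Q) \hookrightarrow \gl_n(\C)$. The point is that $Z$, $h$ and $\psi$ can all be produced over $\Q$, hence lie automatically in the real (or complex) Lie algebra.

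The only place where real work is hidden is in invoking Proposition \ref{prop:GL}; the corollary itself is essentially a bookkeeping deduction from Theorem \ref{thm:ComparOrbit}. Accordingly the main obstacle is not present at this stage, and the body of the proof is the three-line assembly above.
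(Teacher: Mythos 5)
Your proof is correct and follows essentially the same route as the paper's: replace $\cO$ by a maximal orbit in $\WS(\pi)$ whose closure contains both $\cO$ and $\cO'$, apply Proposition \ref{prop:GL} to a neutral pair $(h,\varphi)$ with $\varphi\in\cO'$, and conclude via Theorem \ref{thm:ComparOrbit} with $S=h+Z$, $\Phi=\varphi+\psi$. If anything, you are slightly more explicit than the paper in checking the hypotheses $[h,S]=0$ and $\varphi\in(\fg^*)^S_{-2}$, which indeed come from the construction in the proof of Proposition \ref{prop:GL} rather than its bare statement.
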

\begin{proof}
Since $\cO\in \WO(\pi)$, there exists $\cO_1\in \WF(\pi)$ with $\cO\subset \overline{\cO_1}$. Then $\cO\subset \overline{\cO_1}$ as well. Choose a neutral pair $(h,\varphi)$ with $\varphi\in \cO'$ and apply Proposition \ref{prop:GL} to this pair and the orbit $\cO_1$. Then set $S:=h+Z$ and $\Phi:=\varphi+\psi$. By Theorem \ref{thm:ComparOrbit} we obtain $\cO'\in \WO(\pi)$.
\end{proof}
For admissible $\pi$ this corollary is \cite[Theorem D]{GGS}.

\DimaK{
In \S \ref{subsec:SL} below we formulate and prove a certain analog of this corollary for $\SL_n(F)$.}


\subsection{Proof of Proposition \ref{prop:GL}}\label{subsec:PfGL}

Let us first introduce some notation.
 A \emph{composition} $\eta$ of $n$ is a sequence of natural (positive) numbers $\eta_1,\dots,\eta_k$ with $\sum\eta_i=n$. The length of $\eta$ is $k$. A  partition $\lambda$ is a composition such that $\lambda_1\geq \lambda_2\geq \dots\geq \lambda_k$.
For a composition $\eta$ we denote by $\eta^\geq$ the corresponding partition.
A partial order on partitions of $n$ is defined by \begin{equation}
\lambda\geq \mu \text{ if }\sum_{i=1}^j \lambda_i\geq \sum_{i=1}^j \mu_i\text{ for any }1\leq j \leq \mathrm{length}(\lambda), \mathrm{length}(\mu).
\end{equation}

We will use the notation $\diag(x_1,\dots,x_k)$ for diagonal and block-diagonal matrices. For a natural number $k$ we denote by $J_k\in \fg_k$ the \emph{lower}-triangular Jordan block of size $k$, and by $h_k$ the diagonal matrix $h_k:=\diag(k-1,k-3,\dots,1-k)$. For a composition $\eta$ we denote \begin{equation}J_{\eta}:=\diag(J_{\eta_1},\dots, J_{\eta_k})\in \fg_n \text{ and }h_{\eta}:= \diag(h_{\eta_1},\dots, h_{\eta_k})\in \fg_n.\end{equation}
Note that $[h_{\eta},J_{\eta}]=-2J_{\eta}$ and $(J_{\eta},h_{\eta})$ can be completed to an $\sl_2$-triple.
Let $E_{ij}$ denote the elementary matrix with 1 in the $(i,j)$ entry and zeros elsewhere.


 Identify $\fg_n^*$ with $\fg_n$ using the trace form. Denote by $\cO_{\eta}$ the orbit of $J_{\eta}$.
 By the Jordan theorem all nilpotent orbits are of this form.
It is well known that $\cO_{\eta}\subset \overline{\cO_{\gam}}$ if and only if $\eta^{\geq} \leq \gam^{\geq}$.
Here, one can take the closure $\overline{\cO_{\gam}}$ in any of the topologies on $\fg_n$ defined by norms on $\Q$, or in the Zariski topology - all these closures coincide.


\begin{lemma}\label{lem:TwoBlocks}
Let $p,q,r \in \Z$ with $p> r\geq0, \, q>0$. Let $Z:=\diag((p+q-r)\Id_p,0_{q+r}))\in \g_{p+q+r}$, $Y:=E_{p+r+1,p}$, $X:=J_{p,q+r}+Y$ and $S:=h_{p,q+r}+Z$. Then $$X,Y\in \fg^S_{-2}, \,\,\, Y\in \fg^Z_{<0}\text{ and }X\in \cO_{p+q,r}.$$
%
\end{lemma}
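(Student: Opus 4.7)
The plan is to verify the three assertions by direct matrix computation, using two standard facts: the commutator identity $[D, E_{ij}] = (d_i - d_j) E_{ij}$ for any diagonal $D$, and the $\sl_2$-relation $[h_k, J_k] = -2 J_k$ applied block-wise to $h_{p,q+r}$ and $J_{p,q+r}$.

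For parts (1) and (2), I would first decompose $J_{p,q+r}$ as a sum of matrix units $E_{i+1,i}$ sitting strictly within the two Jordan blocks. The diagonal $Z$ is constant on each of the index ranges $\{1,\dots,p\}$ and $\{p+1,\dots,p+q+r\}$, so $[Z, E_{i+1,i}] = 0$ for every such matrix unit; combined with $[h_{p,q+r}, J_{p,q+r}] = -2 J_{p,q+r}$, this yields $J_{p,q+r} \in \fg^S_{-2}$. For the cross term $Y = E_{p+r+1,p}$, I would read off the relevant diagonal entries of $h_{p,q+r}$ (namely $1-p$ at position $p$ and $q-r-1$ at position $p+r+1$) and of $Z$ (namely $p+q-r$ at position $p$ and $0$ at position $p+r+1$), obtaining $[h_{p,q+r}, Y] = (p+q-r-2)\,Y$ and $[Z, Y] = -(p+q-r)\,Y$. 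Their sum is $-2Y$, which proves $Y \in \fg^S_{-2}$; and since $p > r$ and $q > 0$ force $-(p+q-r) < 0$, also $Y \in \fg^Z_{<0}$.

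The substantive step is (3), the determination of the Jordan type of $X = J_{p,q+r} + E_{p+r+1, p}$. My plan is to pin it down from just two invariants: the nilpotency index of $X$ and the dimension of $\ker X$. Tracing the orbit of $e_1$ under $X$ shows that the chain $e_1 \mapsto e_2 \mapsto \cdots \mapsto e_p$ continues through the coupling $Y$ to $e_{p+r+1}$ and then via the second Jordan block all the way to $e_{p+q+r}$ before dying, so the nilpotency index is exactly $p+q$. Solving $Xv = 0$ coordinate-wise yields the single nontrivial relation $c_p + c_{p+r} = 0$ (together with $c_{p+q+r}$ free and all other coefficients forced to $0$), so $\dim \ker X = 2$ when $r \geq 1$ and $\dim \ker X = 1$ when $r = 0$. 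Since $X$ acts on a space of dimension $p+q+r$ with exactly two Jordan blocks (or one, if $r=0$) and largest block of size $p+q$, the partition is forced to be $(p+q, r)$, i.e.\ $X \in \cO_{p+q, r}$.

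The only place requiring care is the off-diagonal term $Y$: it both produces the $-2$-eigenvalue of $\ad(S)$ via an exact cancellation between the contributions of $h_{p,q+r}$ and $Z$, and it is the mechanism that glues the two Jordan blocks of $J_{p,q+r}$ into a single chain of length $p+q$ rather than two chains of lengths $p$ and $q+r$. The hypothesis $p > r$ is precisely what ensures this glued chain is the longer one, so no real obstacle arises beyond careful index bookkeeping.
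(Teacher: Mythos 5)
Your computation is correct: the eigenvalue bookkeeping for $[h_{p,q+r},\cdot]$ and $[Z,\cdot]$ on $J_{p,q+r}$ and on $Y=E_{p+r+1,p}$ checks out, and pinning down the Jordan type of $X$ from its nilpotency index $p+q$ (which is where $p>r$ enters) together with $\dim\ker X$ is a valid way to conclude $X\in\cO_{p+q,r}$, including the degenerate case $r=0$. The paper gives no argument at all, dismissing the lemma as ``a straightforward computation,'' and your direct verification is exactly the intended computation.
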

\DimaN{
\begin{proof}
An operator conjugating $X$ to $J_{p+q,r}$ is given in the standard basis by
\begin{equation}\label{=glconj}
ge_i=\begin{cases}e_i & 1\leq i \leq p \\
e_{i+r} & p<i\leq p+q \\
(-1)^{q+r-1}(e_{i-q-r}-e_{i-q}) & p+q<i\leq p+q+r \\
\end{cases}
\end{equation}
\end{proof}
}
%
%
%
%
%

\begin{lemma}[{\cite[Lemma 4.2.2]{GGS}}]\label{lem:part}
Let $\lambda,\mu$ be partitions of $n$ with $\lambda \geq \mu$. Then there exists an index $i\leq \length(\lambda)$ such that $\lambda_i\geq \mu_i\geq \lambda_{i+1}$. Here, if $i= \length(\lambda)$ we take $\lambda_{i+1}=0$.
\end{lemma}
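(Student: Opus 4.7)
The plan is to take $i$ to be the largest index in $\{1,\dots,\length(\lambda)\}$ satisfying $\lambda_i \geq \mu_i$, and then verify both required inequalities at this index.

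First I would confirm that such an $i$ exists in the stated range. The $j=1$ instance of the dominance inequality reads $\lambda_1 = \sum_{j=1}^1 \lambda_j \geq \sum_{j=1}^1 \mu_j = \mu_1$, so the candidate set is nonempty, and by construction the chosen $i$ satisfies $\lambda_i \geq \mu_i$, which is the first half of the conclusion.

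Next I would verify $\mu_i \geq \lambda_{i+1}$ by splitting into two cases. If $i = \length(\lambda)$, then by the convention stated in the lemma $\lambda_{i+1} = 0$, while $\mu_i \geq 0$, so the inequality holds. If instead $i < \length(\lambda)$, then the maximality of $i$ forces $\lambda_{i+1} < \mu_{i+1}$; combining this with the fact that $\mu$ is weakly decreasing gives $\lambda_{i+1} < \mu_{i+1} \leq \mu_i$, as required.

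There is no serious obstacle here: the argument is a short existence argument using only the $k=1$ case of the dominance order and the defining monotonicity of the parts of $\mu$. The only minor point to watch is the boundary case $i=\length(\lambda)$, which is handled by the stated convention $\lambda_{\length(\lambda)+1}=0$.
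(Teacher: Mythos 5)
Your proof is correct. Note that the paper does not actually prove this lemma; it simply quotes it from \cite[Lemma 4.2.2]{GGS}, so your argument supplies a self-contained proof rather than paralleling one in the text. Your route -- taking $i$ maximal in $\{1,\dots,\length(\lambda)\}$ with $\lambda_i\geq\mu_i$ -- is clean, and it is worth noting that it uses only the $j=1$ instance $\lambda_1\geq\mu_1$ of the dominance order together with the monotonicity of $\mu$, so it in fact proves a slightly stronger statement than the lemma as stated. One small point you leave implicit: in the case $i<\length(\lambda)$ you compare $\lambda_{i+1}$ with $\mu_{i+1}$, which requires $\mu_{i+1}$ to be defined. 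Either adopt the standard convention that parts beyond $\length(\mu)$ are zero (in which case, if $i+1>\length(\mu)$, the index $i+1$ would satisfy $\lambda_{i+1}\geq 0=\mu_{i+1}$ and contradict maximality, so this subcase never arises), or observe that for partitions of the same $n$ the hypothesis $\lambda\geq\mu$ forces $\length(\lambda)\leq\length(\mu)$, so $i+1\leq\length(\mu)$ automatically. With that one-line remark the argument is complete.
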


We are now ready to prove Proposition \ref{prop:GL}.
\begin{proof}[Proof of Proposition \ref{prop:GL}]
We prove the proposition by induction on $n$. The base case $n=1$ is obvious.
For the induction step, assume that the proposition holds for all $n' < n$.
Let $\mu\leq \lambda$ be the partitions corresponding to $\cO'$ and $\cO$.
If $\mu$ has length 1 then $\lambda=\mu$ and the proposition is obvious. If $\mu$ has length 2 then the proposition follows from Lemma \ref{lem:TwoBlocks}. Thus we assume $\length(\mu)\geq 3$. We can also assume that $\lam$ and $\mu$ do not have common parts.

By Lemma \ref{lem:part} there exists an index $i\leq \length(\lambda)$ such that
\begin{equation}\label{=i}
\lambda_i> \mu_i> \lambda_{i+1}.
\end{equation}
Let $n':=n-\mu_i$ and $p:=\lam_i+\lam_{i+1}-\mu_i$.
Let $\mu'$ be the partition of $n'$ obtained from $\mu$ by omitting $\mu_i$ and $\lambda'$ be obtained from $\lambda$ by replacing the two parts $\lambda_i$ and $\lambda_{i+1}$ by a single part $p$. It follows from \eqref{=i} that $\lambda_{i}>p>\lambda_{i+1}$,  and thus $\lam'$ is a partition of $n'$.
\eqref{=i} also implies $\lam'> \mu'$.
Let $\alp$ be the reordering of $\lam'$ obtained by putting the part $p$ on the first place.

Choose a neutral pair $(h',\varphi')$ in $\g_{n'}$ with $\varphi'\in \cO_{\mu'}$. By the induction hypothesis, there exist a rational semi-simple $Z'\in \g_{n'}$ and $\psi'\in  (\fg_{n'})^{Z'}_{<0}\cap (\fg_{n'})^{h'+Z'}_{-2}$ such that $\varphi'+\psi'\in \cO_{\lam'}$. Conjugating by $\GL_{n'}(\Q)$ we may assume that $\varphi'+\psi'=J_{\alp}$. This implies that $Z'$ is diagonal and that the first $p$ entries are equal. By subtracting a scalar matrix, we may assume that  the first $p$ diagonal entries of $Z'$ are zeroes.

Now let $(h,\varphi)$ be a neutral pair with $\varphi\in \cO_{\mu}$. Conjugating by $\GL_{n}(\Q)$ we may assume that
\begin{equation}
\varphi=\diag(J_{\mu_i},\varphi') \text{ and } h=\diag(h_{\mu_i},h').
\end{equation}
Let
\begin{equation}
Z:=\diag((\lam_{i}-\lam_{i+1})\Id_{\mu_i},Z')), \quad Y:=E_{p+\lam_{i+1}+1,p}, \quad  \quad \psi:=Y+\diag(0,\psi') \, \in \fg_n.
\end{equation}

Let us show that $Z$ and $\psi$ satisfy the requirements of the theorem.
Indeed, we have $\psi'\in \fg^{Z'}_{<0}\cap \fg^{h'+Z'}_{-2}$ by construction and $Y\in \fg^{Z}_{<0}\cap \fg^{h+Z}_{-2}$ by Lemma \ref{lem:TwoBlocks}. To see that $\varphi+\psi\in \cO$ note that $\varphi+\psi=Y+J_{\beta}$, where $\beta_1=\mu_i$ and $\beta_{j}=\alp_{j-1}$ for any $j>1$. Decompose $Y+J_{\beta}=\diag(A,J_{\lam''})$, where $A\in \g_{\lam_i+\lam_{i+1}}$, and $\lam''$ is obtained from $\lam'$ by omitting the part $p$.  By   Lemma \ref{lem:TwoBlocks} we have $A\in \cO_{\lam_{i},\lam_{i+1}}$ and therefore $Y+J_{\beta}=\diag(A,J_{\lam''})\in \cO_{\lam}=\cO.$
\end{proof}

\subsection{\DimaK{The case of $\SL_n(F)$}}\label{subsec:SL}
\DimaK{
First of all, let us fix a set of representatives for nilpotent orbits - for an arbitrary field $L$ of characteristic zero, after introducing some notation.

For a composition $\eta=(\eta_1,\dots,\eta_k)$ we denote $d(\eta):=\mathrm{gcd}(\eta_1,\dots,\eta_k)$. For  $a\in L^{\times}$ we denote $D_a:=\diag(a,1,\dots,1)J_{\eta}$ and $J_{\eta}^a:=D_aJ_{\eta}D_a^{-1}$.
Note that $[h_{\eta},J_{\eta}^{a}]=-2J_{\eta}^a$ and $(J_{\eta}^a,h_{\eta})$ can be completed to an $\sl_2$-triple.
Denote 
by $\cO_{\eta}^a$ the $\SL_n(L)$-orbit of $J_{\eta}^a$.

\begin{lem}[cf. {\cite[Proposition 4]{NevSlSp}}]
$\,$
\begin{enumerate}[(i)]
\item Every orbit is of the form $\cO_{\eta}^a$ for some composition $\eta$ and some $a\in L^{\times}$. \item  $\cO_{\eta}^a=\cO_{\alp}^b$ if and only if both $\eta^{\geq}=\alp^{\geq}$ and $a/b\in (L^{\times})^{d(\eta)}$.
\end{enumerate}
\end{lem}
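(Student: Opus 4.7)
The plan is to reduce this classification of nilpotent $\SL_n(L)$-orbits to Jordan's theorem over $\GL_n(L)$ together with a determinantal computation on the centralizer $Z_{\GL_n(L)}(J_\eta)$.

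For part (i), I would invoke Jordan normal form: given any nilpotent $x\in\sl_n(L)$, there exist $g\in\GL_n(L)$ and a composition $\eta$ of $n$ with $gxg^{-1}=J_\eta$. Setting $a:=\det(g)^{-1}$, the element $D_a g$ lies in $\SL_n(L)$ and conjugates $x$ to $D_a J_\eta D_a^{-1}=J_\eta^a$, so $x\in\cO_\eta^a$.

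For part (ii), the necessity of the partition condition $\eta^\geq=\alp^\geq$ is immediate from Jordan's theorem, since $\SL_n(L)$-conjugation is in particular $\GL_n(L)$-conjugation and therefore preserves Jordan type. Next, I would observe that the $\SL_n(L)$-orbits contained in a given $\GL_n(L)$-orbit are in bijection with $L^\times/\det Z_{\GL_n(L)}(J_\eta)$, via the map sending the class of $g\in\GL_n(L)$ to the orbit of $g J_\eta g^{-1}$. The central computation, which I take as the key step, is the identity $\det Z_{\GL_n(L)}(J_\eta)=(L^\times)^{d(\eta)}$. It follows from the standard description of the centralizer: in a basis adapted to the Jordan decomposition, a matrix centralizing $J_\eta$ has lower-triangular Toeplitz diagonal blocks (together with similar off-diagonal blocks when parts of $\eta$ repeat), and its determinant equals $\prod_i d_i^{\eta_i}$ where $d_i\in L^\times$ are the leading Toeplitz entries. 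Since $d(\eta)=\gcd(\eta_i)$, Bezout's identity identifies the image of $\det$ as $(L^\times)^{d(\eta)}$.

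Combining these, the orbit of $J_\eta^a=D_a J_\eta D_a^{-1}$ corresponds to the class of $\det(D_a)=a$ in $L^\times/(L^\times)^{d(\eta)}$, giving $\cO_\eta^a=\cO_\eta^b$ iff $a/b\in(L^\times)^{d(\eta)}$. To close part (ii) I must also verify that $\cO_\eta^a=\cO_\alp^a$ whenever $\eta^\geq=\alp^\geq$: writing $J_\alp=PJ_\eta P^{-1}$ for a block-permutation matrix $P$, we have $J_\alp^a=(D_a P)J_\eta(D_a P)^{-1}$, so by the preceding analysis this orbit corresponds to $a\det(P)\bmod(L^\times)^{d(\eta)}$. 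The main obstacle is therefore to show that $\det(P)\in(L^\times)^{d(\eta)}$; this reduces to the identity $(-1)^{\eta_i\eta_j}=\bigl((-1)^{\eta_i\eta_j/d(\eta)}\bigr)^{d(\eta)}$, valid because $d(\eta)$ divides every part of $\eta$. Each pairwise sign contribution to $\det(P)$ is thus already a $d(\eta)$-th power, and the product $\det(P)$ lies in $(L^\times)^{d(\eta)}$ as required.
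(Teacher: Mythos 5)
Your strategy is the standard one, and it is essentially the argument behind the result the paper simply quotes: the paper itself gives no proof of this lemma, deferring to \cite[Proposition 4]{NevSlSp}, where the classification is likewise obtained by splitting each $\GL_n(L)$-orbit into $\SL_n(L)$-orbits measured by $L^{\times}/\det Z_{\GL_n(L)}(J_\eta)$ and computing that determinant image. Your part (i), the bijection of $\SL_n(L)$-orbits inside a $\GL_n(L)$-orbit with $L^{\times}/\det Z_{\GL_n(L)}(J_\eta)$, and the block-permutation step (each factor $(-1)^{\eta_i\eta_j}$ is a $d(\eta)$-th power, since $d(\eta)$ is odd whenever $\eta_i\eta_j$ is) are all correct.

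One step needs repair: your pointwise determinant formula on the centralizer. When parts of $\eta$ repeat, the determinant of an element of $Z_{\GL_n(L)}(J_\eta)$ is \emph{not} $\prod_i d_i^{\eta_i}$ with $d_i$ the leading entries of the diagonal Toeplitz blocks --- already for $\eta=(1,1)$ the centralizer is all of $\GL_2(L)$ and the formula fails; it depends on the whole matrix of leading coefficients within each group of equal parts. The correct statement, which is what you actually need for the inclusion $\det Z_{\GL_n(L)}(J_\eta)\subseteq (L^{\times})^{d(\eta)}$ (i.e.\ for the ``only if'' direction of (ii)), is: $\det$ is trivial on the unipotent radical of the centralizer, while the reductive quotient is $\prod_{d}\GL_{m_d}(L)$ ($m_d$ the multiplicity of the part $d$), acting on $\bigoplus_d (L[t]/(t^d))^{m_d}$ as $1\otimes A_d$, so that $\det$ sends $(A_d)_d$ to $\prod_d \det(A_d)^{d}$. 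The image is then $\prod_d (L^{\times})^{d}=(L^{\times})^{d(\eta)}$ by Bezout, exactly as you conclude, and the reverse inclusion (needed for the ``if'' direction) already follows from your block-scalar elements. With this correction the sketch is complete and coincides with the argument in the cited reference.
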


Identify $\sl_n(L)$ with its dual space using the trace form.

\begin{prop}\label{prop:SL}
Let $L$ be any field of characteristic zero, $\fg:=\sl_n(L)$ and let
$\lambda>\mu$ be partitions. Let $d:=\mathrm{gcd}(d(\lambda),d(\mu))$. Let $a,b\in L^{\times}$ such that $a/b\in (L^{\times})^d$. Then for any neutral pair $(h,\varphi)$ with $\varphi\in \cO^b_{\mu} $  there exist a rational semi-simple $Z\in \DimaL{(\g)^h_0\cap \g_{\varphi}}$ and $\psi\in  \fg^Z_{<0}\cap \fg^{h+Z}_{-2}$ such that $\varphi+\psi\in \cO^a_{\lam}$.
\end{prop}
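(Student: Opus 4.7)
The plan is to adapt the proof of Proposition~\ref{prop:GL} by induction on $n$, carefully tracking the scalar invariant that distinguishes $\SL_n$-orbits within each $\GL_n$-orbit via the identity $Q J_\eta Q^{-1} \in \cO_\eta^{\det Q}$ for any $Q \in \GL_n(L)$ (proved by writing $Q = D_{\det Q}\cdot(D_{\det Q}^{-1}Q)$ with $D_{\det Q}^{-1}Q \in \SL_n$).

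First I would establish a scalar-sensitive base case for two-block partitions. For $\mu=(\mu_1,\mu_2)$, $\lam=(\lam_1,\lam_2)$ with $\lam>\mu$, take $\varphi=J_\mu^b$, $h=h_\mu$, $Z=\diag((\lam_1-\lam_2)I_{\mu_1},0_{\mu_2})$ (shifted into $\sl_n$), and introduce a scalar parameter $c\in L^\times$ by setting $\psi=c\,E_{\mu_1+\lam_2+1,\mu_1}$. The conditions $\psi\in\fg^Z_{<0}\cap\fg^{h+Z}_{-2}$ hold exactly as in Lemma~\ref{lem:TwoBlocks}. Constructing explicit Jordan chains for $\varphi+\psi$ yields a change-of-basis matrix $P$ with $P^{-1}J_\lam P=\varphi+\psi$ and $\det P\equiv\pm b^{-(\lam_1-1)}c^{\mu_2}\pmod{(L^\times)^{d(\lam)}}$, hence $\varphi+\psi\in\cO_\lam^{\pm b^{-(\lam_1-1)}c^{\mu_2}}$. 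From $\mu_1+\mu_2=\lam_1+\lam_2$ one checks $d=\gcd(d(\mu),d(\lam))=\gcd(\mu_2,d(\lam))$, so the image of $c\mapsto c^{\mu_2}$ modulo $(L^\times)^{d(\lam)}$ is precisely $(L^\times)^d/(L^\times)^{d(\lam)}$, and for every $a$ with $a/b\in(L^\times)^d$ one can solve for $c$ (absorbing the sign into $c$ or $b$ as needed, using $\charc L=0$).

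For the induction step I would decorate the construction of Proposition~\ref{prop:GL} with scalar parameters. After removing common parts, pick an index $i$ with $\lam_i>\mu_i>\lam_{i+1}$, and set $p=\lam_i+\lam_{i+1}-\mu_i$, $\mu'=\mu\setminus\{\mu_i\}$, and $\lam'$ obtained from $\lam$ by replacing $\lam_i,\lam_{i+1}$ with $p$. Factor $b=b_0 b_0'$ with $b_0\in L^\times$ to be chosen, and apply the induction hypothesis to $(\mu',\lam',b_0',a')$ for any $a'/b_0'\in(L^\times)^{d'}$, $d'=\gcd(d(\mu'),d(\lam'))$. After $\SL_{n'}$-conjugation, assume $\varphi'+\psi'=\diag(J_p^{a'},J_{\lam'\setminus\{p\}})$ and set
\[
\varphi=\diag(J_{\mu_i}^{b_0},\varphi'),\quad h=\diag(h_{\mu_i},h'),\quad Z=\diag((\lam_i-\lam_{i+1})I_{\mu_i},Z'),\quad \psi=c\,E_{\mu_i+\lam_{i+1}+1,\mu_i}+\diag(0,\psi'),
\]
with a trace adjustment on $Z$. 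Using $\diag(J_{\mu_i}^{b_0},J_p^{a'})\in\cO_{(\mu_i,p)}^{b_0 a'}$ and the two-block base case applied to the $(\mu_i+p)$-sub-block, one obtains $\varphi+\psi\in\cO_\lam^{\pm (b_0 a')^{-(\lam_i-1)}c^{\min(\mu_i,p)}}\pmod{(L^\times)^{d(\lam)}}$.

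The main obstacle will be the scalar bookkeeping that realizes the desired orbit. Writing $\gamma=a/b$ and $\alpha=a'/b_0'$, one must show that the equation
\[
c^{\min(\mu_i,p)}\alpha^{\lam_i-1}\equiv\pm\gamma\,b^{\lam_i}\pmod{(L^\times)^{d(\lam)}}
\]
always admits a solution with $c\in L^\times$ and $\alpha\in(L^\times)^{d'}$. This reduces to the divisibility chain $d\mid d'\mid p$ (from $d\mid d(\mu),d(\lam)$ and $d\mid\lam_i,\lam_{i+1},\mu_i$, whence $d\mid p$, so $d$ divides both $d(\mu')$ and $d(\lam')$), together with $d(\lam)\mid\lam_i$. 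These imply $\gcd(\min(\mu_i,p),\,d'(\lam_i-1),\,d(\lam))\mid\gcd(d,\lam_i)$, using the partition-sum identity $\lam_i+\lam_{i+1}=p+\mu_i$ to propagate the gcd between $\mu$- and $\lam$-parts. Granted this divisibility, the image of the two-parameter family contains the coset $\gamma\,b^{\lam_i}\cdot(L^\times)^{d(\lam)}$, the equation is solvable, and the induction closes.
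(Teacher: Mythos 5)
Your route is genuinely different from the paper's, and substantially heavier. The paper first kills the scalars: since $(L^{\times})^{d}=(L^{\times})^{d(\lambda)}\cdot(L^{\times})^{d(\mu)}$ (Bezout), one may modify $a$ by $d(\lambda)$-th powers and $b$ by $d(\mu)$-th powers to reach $a=b$, and then the outer automorphism of $\sl_n(L)$ given by conjugation by $\diag(a,1,\dots,1)$ (which sends $\cO^a_\eta$ to $\cO^1_\eta$ and preserves neutral pairs and all eigenspace conditions) reduces to $a=b=1$. After that the only new input beyond Proposition \ref{prop:GL} is the observation that the conjugation in Lemma \ref{lem:TwoBlocks} can be realized by a matrix of determinant one, so the $\GL$ induction runs verbatim inside $\SL$; no determinant bookkeeping is needed. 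Your alternative — carrying the scalars through the induction and solving a congruence in $L^{\times}/(L^{\times})^{d(\lambda)}$ at the end — is viable, and its arithmetic core is correct: the divisibility $\gcd(p,\,d'(\lambda_i-1),\,d(\lambda))\mid d$ does hold (a prime power dividing $d(\lambda)$ divides $\lambda_i$, hence is coprime to $\lambda_i-1$, hence must divide $d'$, hence $d(\mu')$, and it divides $\mu_i=\lambda_i+\lambda_{i+1}-p$), so your two-parameter family does cover the coset of $a$.

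However, as written there are real gaps. (1) ``After removing common parts'' is not a harmless WLOG in the scalar-decorated setting: removing a common part can strictly \emph{increase} the gcd (e.g.\ $\lambda=(6,1)$, $\mu=(4,2,1)$ have $d=1$, but after removing the common part $1$ the combined gcd is $2$), so the inductive statement for the stripped pair is weaker than what you need; one must add a separate argument distributing $b$ between the split-off common block and the rest (it works because $\gcd(d_0,d(\lambda))\mid d$, but this step is missing). (2) The sign cannot be ``absorbed into $c$ or $b$'': if the relevant exponents are even, $-1$ need not lie in $(L^{\times})^{d(\lambda)}$ or $(L^{\times})^{d(\mu)}$. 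What actually saves the base case is that the Jordan-basis determinant sign is $(-1)^{\lambda_2(\mu_2-\lambda_2)}$, which \emph{is} a $d(\lambda)$-th power since $d(\lambda)\mid\lambda_2$; this must be computed (also in the inductive step), not waved away. (3) Minor but real slips: with $X=PJ_\lambda P^{-1}$ the orbit parameter is $\det P$, whereas your convention $P^{-1}J_\lambda P=X$ inverts it; and the exact $c$-exponents are $\mu_2-\lambda_2$ in the base case and $p-\lambda_{i+1}$ in the step (not $\mu_2$, $\min(\mu_i,p)$), and in the step the scalars $b_0,a'$ sit on different blocks so the base-case formula applies only after an extra diagonal conjugation rescaling $c$. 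All of these are harmless modulo $(L^{\times})^{d(\lambda)}$, but they are exactly the kind of bookkeeping your approach is responsible for, and the paper's reduction to $a=b=1$ avoids all of it.
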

\begin{proof}
Since we can multiply $a$ by $(L^{\times})^{d(\lam)}$ and $b$ by  $(L^{\times})^{d(\mu)}$ without changing the orbits, we can assume $a=b$. Then, applying the automorphism of $\sl_n(L)$ given by conjugation by $\diag(a,1,\dots,1)$, we can assume $a=b=1$. Next, note that, in the notation of Lemma \ref{lem:TwoBlocks},
\DimaN{the matrix $g$ in \eqref{=glconj} that conjugates $X$ to $J_{p+q,r}$ lies in $\SL_n(\Q)$}. Now, the proposition follows by induction in the same way as Proposition \ref{prop:GL}.
\end{proof}

By Theorem \ref{thm:ComparOrbit} we obtain the following corollary.
\begin{corollary}\label{cor:SL}
Let
$\lambda>\mu$ be partitions. Let $d:=\mathrm{gcd}(d(\lambda),d(\mu))$. Let $a,b\in F^{\times}$ such that $a/b\in (F^{\times})^d$.
Let $\pi\in \Rep^{\infty}(\SL_n(F))$ and assume that $\cO_{\lam}^a\in \WS(\pi)$. Then $\cO_{\mu}^b\in \WO(\pi)$.
\end{corollary}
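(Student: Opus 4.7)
\smallskip
\noindent\textbf{Proof proposal.} The statement is arranged to follow by combining Proposition \ref{prop:SL} (applied with $L=F$) with Theorem \ref{thm:ComparOrbit}. The overall plan is: starting from an arbitrary neutral representative of the smaller orbit $\cO_\mu^b$, invoke Proposition \ref{prop:SL} to deform it into a representative of the larger orbit $\cO_\lambda^a$ along a prescribed $Z$-negative direction, then quote Theorem \ref{thm:ComparOrbit} to transport the nonvanishing of the generalized Whittaker quotient from the top orbit down to $\cO_\mu^b$.

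Concretely, I would fix $\fg=\sl_n(F)$ and choose any neutral pair $(h,\varphi)$ with $\varphi\in \cO_\mu^b$. Applying Proposition \ref{prop:SL} with $L=F$ to this pair, to the partitions $\lambda>\mu$, and to the scalars $a,b$ (the hypothesis $a/b\in (F^\times)^d$ is precisely what the proposition requires), I obtain a rational semi-simple $Z\in\fg$ and an element $\psi\in\fg^Z_{<0}\cap\fg^{h+Z}_{-2}$ with $\varphi+\psi\in\cO_\lambda^a$. Set $S:=h+Z$ and $\Phi:=\varphi+\psi$. I would then verify the four hypotheses of Theorem \ref{thm:ComparOrbit}: (a) $\Phi\in\cO_\lambda^a$ by construction; (b) $\Phi-\varphi=\psi\in(\fg^*)^Z_{<0}=(\fg^*)^{S-h}_{<0}$ by construction; (c) $[h,S]=[h,Z]=0$; and (d) $\varphi\in(\fg^*)^S_{-2}$, which, since neutrality gives $\mathrm{ad}^*(h)\varphi=-2\varphi$, reduces to $\mathrm{ad}^*(Z)\varphi=0$ (and then $\Phi\in(\fg^*)^S_{-2}$ follows from $\psi\in(\fg^*)^{h+Z}_{-2}$). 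The conclusion of Theorem \ref{thm:ComparOrbit} then yields $\SL_n(F)\cdot\varphi=\cO_\mu^b\in\WO(\pi)$, as desired.

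The only delicate point — and the only obstacle I anticipate — is items (c) and (d): the statement of Proposition \ref{prop:SL} records only the two weight conditions on $\psi$, not the assertion that $Z$ centralizes both $h$ and $\varphi$. However, inspecting the inductive construction (which, as the proof of Proposition \ref{prop:SL} indicates, proceeds ``in the same way as Proposition \ref{prop:GL}''), one sees that at each step $Z$ has block form $\diag(c\,\Id_{\mu_i},Z')$ while $h=\diag(h_{\mu_i},h')$ and $\varphi=\diag(J_{\mu_i}^b,\varphi')$, so that $Z$ lies in the joint centralizer of $h$ and $\varphi$: the scalar block $c\,\Id_{\mu_i}$ commutes with both $h_{\mu_i}$ and $J_{\mu_i}^b$, and $[h',Z']=[\varphi',Z']=0$ by induction. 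The cleanest way I would implement this is to augment the conclusion of Proposition \ref{prop:SL} to record explicitly that $[Z,h]=0$ and $\mathrm{ad}^*(Z)\varphi=0$ (these cost nothing in its proof), after which Corollary \ref{cor:SL} becomes an immediate consequence of Theorem \ref{thm:ComparOrbit}.
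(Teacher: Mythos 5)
Your proposal is correct and takes exactly the paper's route: the paper deduces Corollary \ref{cor:SL} directly from Proposition \ref{prop:SL} together with Theorem \ref{thm:ComparOrbit}, setting $S:=h+Z$ and $\Phi:=\varphi+\psi$, precisely as Corollary \ref{cor:GL} is deduced from Proposition \ref{prop:GL}. Your additional observation that the hypotheses $[h,S]=0$ and $\varphi\in(\fg^*)^S_{-2}$ of Theorem \ref{thm:ComparOrbit} require $Z$ to commute with $h$ and to satisfy $\ad^*(Z)\varphi=0$ — facts not recorded in the statement of Proposition \ref{prop:SL} but supplied by its inductive construction — is a fair and accurate account of a point the paper leaves implicit.
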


\begin{remark}
In Proposition \ref{prop:SL}, the condition $a/b\in (L^{\times})^d$ is necessary. Indeed, one can show for $n=4, \, \lam=(4), \mu=(2,2), b=1$ and $a\notin (L^{\times})^2$,
no $Z,\psi \in \sl_4(K)$ satisfy the conditions of the proposition.
However, we do not know whether the condition $a/b\in (L^{\times})^d$ is necessary for Corollary \ref{cor:SL}.
\end{remark}
}

\section{Global setting}\label{sec:Glob}
\setcounter{lemma}{0}

\subsection{\DimaO{Basic notions}}

Let $K$ be a number field and let $\A=\A_{K}$ be its ring of adeles. In this section we let $\chi$ be a unitary character  of $\A$, which is trivial on $K$\DimaO{ and such that for  any Archimedean place $\nu$ the restriction $\chi|_{K_\nu}$ of $\chi$ to $K_{\nu}$ is $\exp(2\pi i |x|)$, and for any non-Archimedean place $\nu$, the kernel of $\chi|_{K_\nu}$ is the ring of integers.}
Then $\chi$ defines an isomorphism between $\A$ and $\hat{\A}$ via the map $a\mapsto \chi_{a}$, where $\chi_{a}(b)=\chi(ab)$ for all $b\in \A$. This isomorphism restricts to an  isomorphism
\begin{equation}\label{eq:chi_isomorphism}
 \widehat{\A/K}\cong \{\psi\in \hat{\A}\, | \psi|_{K}\equiv 1\}=\{\chi_{a}\, | \, a\in K\}\cong K.
\end{equation}
Given an algebraic group $\bfG$ defined over $K$ we will denote its Lie algebra by  $\fg$ and we will denote the group of its adelic (resp. $K$-rational) points by $\bfG(\A)$ (resp. $\bfG(K)$). We will also define the Lie algebras $\fg(\A)$ and $\fg(K)$ in a similar way.

Given a Whittaker pair $(S,\varphi)$ on $\g(K)$, we set $\fu=\fg_{\geq 1}^{S}$ and $\fn$ to be the radical of the form $\omega_{\varphi}|_{\fu}$, where $\omega_{\varphi}(X,Y)=\varphi([X,Y])$, as before. Let $\fl\subset \fu$ be any choice of a maximal isotropic Lie algebra with respect to this form, and let $\bfU=\exp \fu$, $\bfN=\exp \fn$ and $\bfL=\exp \fl$. Observe that we can extend $\varphi$ to a linear functional on $\g(\A)$ by linearity and, furthermore, the character $\chi_{\varphi}^{L}(\exp X)=\chi(\varphi(X))$ defined on $\bfL(\A)$ is automorphic, that is, it is trivial on $\bfL(K)$. We will denote its restriction to $\bfN(\A)$ simply by $\chi_{\varphi}$. 

\DimaE{Let $G$ be a finite central extension of $\bfG(\A)$, such that the cover $G\onto \bfG(\A)$ splits over $\bfG(K)$. Fix a discrete subgroup $\Gamma\subset G$ that projects isomorphically onto
$\bfG(K)$. Note that $\bfU(\A)$ has a canonical lifting into $G$, see e.g. \cite[Appendix I]{MW_Scr}.}

\begin{definition}
 Let $(S,\varphi)$ be a Whittaker pair for  $\g(K)$ and let $\bfU,\bfL, \bfN,\chi_{\varphi}$ and $\chi_{\varphi}^{\bf L}$ be as above. For an automorphic function $f$, we define its \emph{$(S,\varphi)$--Fourier coefficient} to be
\begin{equation}\label{eq:Whittaker-Fourier-coefficient}
 \DimaH{\cF}_{S,\varphi}(f):=\int_{N(\A)/N(K)}\chi_{\varphi}(n)^{-1}f(n)dn.
\end{equation}
We also define its \emph{$(S,\varphi,L)$--Fourier coefficient} to be
 \begin{equation}\label{eq:extended_Whittaker-Fourier_coefficient}
 \DimaH{\cF}_{S,\varphi}^{\bf L}(f):=\int_{L(\A)/L(K)}\chi_{\varphi}^{L}(l)^{-1}f(l)dl.
 \end{equation}
Observe that $ \DimaH{\cF}_{S,\varphi}$ and $ \DimaH{\cF}_{S,\varphi}^{\bf L}$ define linear functionals on the space of automorphic forms.
\DimaH{For a subrepresentation $\pi$ of the space of automorphic forms on $G$, we will denote their restrictions to $\pi$ by $ \DimaH{\cF}_{S,\varphi}(\pi)$ and $ \DimaH{\cF}_{S,\varphi}^{\bf L}(\pi)$ respectively.\footnote{To forestall confusion, we emphasize that $L$ here stays for ``Lagrangian" (actually, maximal isotropic), and not Levi.}}
\end{definition}

\DimaO{
\begin{example}
Let $\bfG=\GL_3$. First let $\varphi$ be given by the trace form pairing with the matrix $E_{21}+E_{32}$, where $E_{ij}$ are elementary matrices. Let $h:=2E_{11}-2E_{33}$. Then $(h,\varphi)$ is a neutral Whittaker pair. In this case $N=L=U$ is the group of unipotent upper-triangular matrices and $\cF_{h,\varphi}= \cF^L_{h,\varphi}$ is a classical (non-degenerate) Fourier coefficient.

Now let $\psi$  be given by the trace form pairing with $E_{31}$ and $H=E_{11}-E_{33}$. Then $(H,\psi)$ is another neutral Whittaker pair. In this case $U$ is  the group of unipotent upper-triangular matrices, while $N=\{\Id + cE_{13}\, \vert \, c\in K\}$. There are infinitely many choices for $L$. Two of them are $L_1=\{\Id + bE_{12}+ cE_{13}\, \vert \, b,c\in K\}$ and $L_2=\{\Id +  cE_{13}+d E_{23}\, \vert \, c,d\in K\}$.
\end{example}
}
One defines quasi-Fourier coefficients in a similar way.
In order to adapt our arguments to the global setting we will have to replace Lemma  \ref{lem:Heis} by the following one.

\begin{lemma}[cf. {\cite[Lemma 6.0.2]{GGS}}] \label{lem:Glob}
 \DimaK{Let $\pi$ be subrepresentation  of the space of automorphic forms on $G$.} Let $(S,\varphi,\varphi')$ be a Whittaker triple.
 Then $ \DimaH{\cF}_{S,\varphi,\varphi'}(\pi)\neq 0$ if and only if $ \DimaH{\cF}_{S,\varphi,\varphi'}^{\bf L}(\pi)\neq 0$. More specifically, if $ \DimaH{\cF}_{S,\varphi,\varphi'}(f)\neq 0$ for some $f\in \pi$ then $ \DimaH{\cF}_{S,\varphi,\varphi'}^{\bf L}(\pi(u)f)\neq 0$ for some $u\in U(K)$.
\end{lemma}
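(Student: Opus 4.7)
The plan is to mirror the argument of \cite[Lemma 6.0.2]{GGS}, which handles the neutral case $\varphi'=0$, and track how the extra functional $\varphi'$ enters. The crucial observation is that $\varphi'\in(\fg^*)^S_{>-2}$ annihilates $[\fu,\fu]\subset \fg^S_{\geq 2}$. Consequently $\omega_{\varphi+\varphi'}=\omega_\varphi$ on $\fu$, so $\fn$ remains the radical and $\fl$ is still maximal isotropic; the Heisenberg structure on $\overline{\bfU}:=\bfU/\bfN'$, with $\bfN':=\Exp(\fn\cap\ker(\varphi+\varphi'))$, is unchanged, and only the central character of the center $\overline{\bfN}=\bfN/\bfN'$ is replaced by $\chi_{\varphi+\varphi'}$. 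One checks directly that $\chi_{\varphi+\varphi'}$ is a well-defined automorphic character of $\bfN(\A)$ (its additivity on $\fn$ uses $\varphi|_{[\fn,\fn]}=0$ and $\varphi'|_{\fg^S_{\geq 2}}=0$) and that $\chi^{\bfL}_{\varphi+\varphi'}|_{\bfL(K)}=1$.

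The easy direction ($\cF^{\bfL}_{S,\varphi,\varphi'}\neq 0\Rightarrow \cF_{S,\varphi,\varphi'}\neq 0$) is a straightforward unfolding. Since $\fn\supset[\fu,\fu]$, the group $\bfN$ is normal in $\bfL$, and $\bfL(K)\cap\bfN(\A)=\bfN(K)$; fibering the $\bfL(\A)/\bfL(K)$-integral over $\bfN(\A)\bfL(K)\bs\bfL(\A)$ and using the $\bfN$-periodicity of the integrand (which follows from the isotropy of $\fl$ and from $\chi^{\bfL}_{\varphi+\varphi'}|_{\bfN(\A)}=\chi_{\varphi+\varphi'}$) yields
\begin{equation*}
\cF^{\bfL}_{S,\varphi,\varphi'}(f)\;=\;\int_{\bfN(\A)\bfL(K)\bs \bfL(\A)}(\chi^{\bfL}_{\varphi+\varphi'})^{-1}(l_0)\,\cF_{S,\varphi,\varphi'}(\pi(l_0)f)\,dl_0,
\end{equation*}
so non-vanishing of the left hand side on $\pi$ forces the same for $\cF_{S,\varphi,\varphi'}$.

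For the harder direction (and the stronger conclusion), given $f\in\pi$ with $\cF_{S,\varphi,\varphi'}(f)\neq 0$, I would consider the function $\Phi(u):=\cF_{S,\varphi,\varphi'}(\pi(u)f)$ on $\bfU(\A)$. Routine computations using the automorphy of $f$, the normality of $\bfN$ in $\bfU$, and the $\bfU(\A)$-invariance of $\chi_{\varphi+\varphi'}$ under conjugation (a consequence of $[\fu,\fn]\subset\fn$ together with $\omega_\varphi|_{\fn\times\fu}=0$ and $\varphi'|_{\fg^S_{\geq 2}}=0$) show that $\Phi(\gamma n u)=\chi_{\varphi+\varphi'}(n)\Phi(u)$ for all $\gamma\in\bfU(K)$, $n\in\bfN(\A)$, $u\in\bfU(\A)$. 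Hence $\Phi$ descends to a function on $\overline{\bfU}(K)\bs\overline{\bfU}(\A)$ transforming by $\chi_{\varphi+\varphi'}$ under $\overline{\bfN}$. Choose a Lagrangian subgroup $\bfV\subset\bfU$ defined over $K$ and complementary to $\bfL$ modulo $\bfN'$. Applying the Fourier expansion of $\Phi$ along $\bfV$ (the global Stone-von-Neumann realization) produces, up to a positive volume constant $c$,
\begin{equation*}
\Phi(e)\;=\;c\sum_{\gamma\in\bfV(K)/\bfN'(K)}\cF^{\bfL}_{S,\varphi,\varphi'}(\pi(\gamma)f).
\end{equation*}
Since the left hand side equals $\cF_{S,\varphi,\varphi'}(f)\neq 0$, at least one summand is nonzero, yielding the desired $u=\gamma\in\bfU(K)$.

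The main obstacle is making the Fourier expansion in the last step rigorous: one must identify the pointwise evaluation of $\Phi$ at $e$ and the $\bfL$-period integral as two Schr\"odinger models of the unique irreducible $\chi_{\varphi+\varphi'}$-isotypic subspace of the space of functions on $\overline{\bfU}(K)\bs\overline{\bfU}(\A)$, and compute the constant relating them via Poisson summation along $\bfV(K)/\bfN'(K)\subset\bfV(\A)/\bfN'(\A)$. This is precisely what is carried out in \cite[Lemma 6.0.2]{GGS} for the neutral case; the argument goes through verbatim once $\chi_\varphi$ is replaced by $\chi_{\varphi+\varphi'}$, which is legitimate by the observations of the first paragraph.
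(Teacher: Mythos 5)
Your proposal is correct and takes essentially the same route as the paper, whose entire ``proof'' is the citation of \cite[Lemma 6.0.2]{GGS} together with the observation (already recorded in the definition of quasi-Whittaker models) that $\varphi'$ vanishes on $[\fu,\fu]\subset\fg^S_{\geq 2}$, so that $\omega_{\varphi+\varphi'}=\omega_{\varphi}$ and the Heisenberg/Fourier-expansion argument of GGS goes through verbatim with $\chi_{\varphi}$ replaced by $\chi_{\varphi+\varphi'}$ --- exactly the adaptation you spell out. The only (cosmetic) slip is in the indexing of your expansion: since every maximal isotropic subgroup of $\bfU$ contains $\bfN$, the sum should run over $\bfV(K)/\bfN(K)$, i.e.\ over the characters of $\bfL(K)\bfN(\A)\bs\bfL(\A)$ dual to $(\fu/\fl)(K)$ via $\omega_{\varphi}$ and $\chi$, rather than over $\bfV(K)/\bfN'(K)$.
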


Also, note that the global analog of Lemma \ref{lem:BZ} is proven by decomposition to Fourier series. \DimaD{For more details see \cite[\S 5.2]{SH}.}

\DimaK{For two nilpotent $\bfG(K)$-orbits $\cO,\cO'\in \fg(K)$ we will say $\cO'\leq \cO$ if  for any completion $F$ of $K$, the closure of $\cO$ in $\fg(F)$ includes $\cO'$. For a subrepresentation $\pi$ of the space of automorphic forms on $G$, we denote by $\WO(\pi)$ the collection of all nilpotent $\bfG(K)$-orbits in $\fg^*(K)$ such that $\cF_{\varphi}(\pi)\neq 0$ for any $\varphi\in \cO$. We denote the set of maximal orbits in $\WO(\pi)$ by $\WS(\pi)$. }

\subsection{\DimaO{Main results}}
\DimaB{ Repeating the arguments in \S  \ref{sec:MaxOrb}-\ref{sec:cors} we obtain the following theorem.

\begin{thm}\label{thm:Glob}
\DimaH{ Let $\pi$ be subrepresentation  of the space of automorphic forms on $G$, and \DimaC{let  $\varphi\in \fg^*(K)$ be nilpotent.}} Assume that $\bfG(K)\cdot \varphi \in  \WS(\pi)$, and let $f\in \pi$. Then
\begin{enumerate}[(i)]
\item \label{it:GlobDeg} For any rational semi-simple $S\in \fg$ with $ad^*(S)\varphi=-2\varphi$ there exists $f'\in \pi$ such that $ \DimaH{\cF}_{S,\varphi}(f')\neq 0$.
\item \label{it:GlobCusp} If $\pi$ is cuspidal then $\varphi$ does not belong to the Lie algebra of any \DimaM{proper} Levi subgroup of $\bfG(K)$ defined \DimaC{over $K$}.
\end{enumerate}
\end{thm}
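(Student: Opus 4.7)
The plan is to follow the blueprint the authors have made explicit: mimic the proofs of Theorems \ref{thm:MaxModAction} and \ref{thm:cusp} with two global substitutions. First, Lemma \ref{lem:Heis} is replaced by Lemma \ref{lem:Glob}, which equates (up to a translate by an element of $\bfU(K)$) non-vanishing of $\cF_{S,\varphi}$ on $\pi$ with non-vanishing of $\cF^{\bf L}_{S,\varphi}$ on $\pi$ for any maximal isotropic $\fl\subset \fu$. Second, both Lemma \ref{lem:BZ} and Proposition \ref{prop:AV0} are replaced by ordinary Fourier decomposition on a compact abelian adelic quotient $\bfA(\A)/\bfA(K)$, where $\bfA$ is a unipotent abelian $K$-subgroup: if all Fourier integrals of elements of $\pi$ against non-trivial automorphic characters of $\bfA$ vanish, then integration over $\bfA(\A)/\bfA(K)$ is trivial on $\pi$. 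These two substitutions fuse the archimedean and non-archimedean local cases into a single global argument.

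For part (i), I would write $S=h+Z$ with $(h,\varphi)$ neutral (Lemma \ref{lem:Z}) and $Z\in\fg^S_0$, and deform via $S_t:=h+tZ$ for rational $t\in[0,1]$, with critical values $0=t_0<t_1<\dots<t_{n+1}=1$. At each step, the global analog of Lemma \ref{lem:lrw} produces inclusions $\fr_{t_i}\subset \fl_{t_{i+1}}$ of maximal isotropic subspaces whose quotient $\bfA_i:= L_{t_{i+1}}/R_{t_i}$ is an abelian unipotent $K$-group. Lemma \ref{lem:Glob} together with Fourier decomposition on $\bfA_i(\A)/\bfA_i(K)$ then yields the global analog of Proposition \ref{prop:isom}: non-vanishing of the $(S_{t_i},\varphi)$-Fourier coefficient propagates to non-vanishing of the $(S_{t_{i+1}},\varphi)$-Fourier coefficient, provided no quasi-Fourier coefficient $\cF_{S_{t_{i+1}},\varphi,\psi'}$ with non-zero $\psi'\in (\fg^*)^{S_{t_{i+1}}}_{-1}\cap (\fg^*)^e$ survives on $\pi$. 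Any such non-vanishing quasi-Fourier coefficient, by global analogs of Propositions \ref{prop:step} and \ref{prop:quasi2gen}, would produce a nilpotent $\Phi\in \fg^*(K)$ with $\varphi\in \overline{\bfG(K)\cdot\Phi}\setminus \bfG(K)\cdot\Phi$ and $\bfG(K)\cdot\Phi\in \WO(\pi)$, contradicting the maximality of $\bfG(K)\cdot\varphi$ in $\WS(\pi)$. Iterating from $t_0=0$ (where $\cF_{h,\varphi}$ is non-vanishing by hypothesis) through $t_{n+1}=1$ produces $f'\in\pi$ with $\cF_{S,\varphi}(f')\neq 0$.

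For part (ii), argue by contradiction: suppose $\varphi$ lies in the Lie algebra $\fm$ of a Levi subgroup $\mathbf{M}$ of a proper $K$-parabolic $\mathbf{P}\subset \bfG$ with unipotent radical $\mathbf{U}_P$. Choose a neutral element $h$ for $\varphi$ inside $\fm(K)$, and a rational semisimple $Z\in\fg(K)$ with centralizer $\fm$, so that $\fp=\fg^Z_{\geq 0}$ and all positive $\ad$-eigenvalues of $Z$ exceed all eigenvalues of $\ad(h)$ by at least $2$; note $\ad^*(Z)\varphi=0$ since $Z$ centralizes $\fm\ni f$, so $(S,\varphi)$ with $S:=h+Z$ is a Whittaker pair. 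By the choice of $Z$, the nilradical $\fu_P:=\fg^Z_{>0}$ lies in $\fg^S_{\geq 2}$ and hence inside the radical of $\omega_\varphi|_{\fu}$; moreover $\varphi$ vanishes on $\fu_P$ (Killing orthogonality of Levi and nilradical). The integral defining $\cF_{S,\varphi}(f)$ therefore factors through the constant term $f_{\mathbf{P}}(x):=\int_{\mathbf{U}_P(\A)/\mathbf{U}_P(K)}f(ux)\,du$, which vanishes identically on cuspidal $\pi$. But part (i) would force $\cF_{S,\varphi}$ to be non-vanishing on $\pi$ --- contradiction.

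The main obstacle is the careful global setup of the quasi-Fourier coefficient $\cF_{S,\varphi,\varphi'}$ as an automorphic linear functional on $\pi$, in particular ensuring that $\chi_{\varphi+\varphi'}$ descends to a character of $\bfN(\A)/\bfN(K)$ when $\varphi'\in\fg^*(K)$ is chosen appropriately, and verifying that the combinatorial-geometric backbone of Section \ref{sec:MaxOrb} --- especially Lemmas \ref{lem:help}, \ref{lem:lrw}, \ref{lem:GHconj} and Propositions \ref{prop:step}, \ref{prop:quasi2gen} --- carries over verbatim from $F$ to $K$. Once in place, Lemma \ref{lem:Glob} plays the role of the Stone--von-Neumann/Heisenberg identification and Fourier decomposition on adelic compact abelian quotients replaces both Lemma \ref{lem:BZ} and Proposition \ref{prop:AV0}; the chain of Lagrangian exchanges and Slodowy-slice deformations then transplants word-for-word to the global setting.
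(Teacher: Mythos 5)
Your proposal is correct and follows essentially the same route as the paper: the paper proves Theorem \ref{thm:Glob} precisely by repeating the arguments of \S \ref{sec:MaxOrb} (deformation $S_t=h+tZ$, Lagrangian exchanges $\fr_{t_i}\subset\fl_{t_{i+1}}$, quasi-Whittaker coefficients, and Propositions \ref{prop:isom}, \ref{prop:step}, \ref{prop:quasi2gen}) and \S \ref{subsec:Cusp} (factoring $\cF_{S,\varphi}$ through the constant term along $\bf P$), with Lemma \ref{lem:Heis} replaced by Lemma \ref{lem:Glob} and Lemma \ref{lem:BZ}/Proposition \ref{prop:AV0} replaced by Fourier decomposition on the compact quotients $\bfA_i(\A)/\bfA_i(K)$. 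Your write-up just makes explicit the details the paper leaves to the reader.
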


For $\bfG=\GL_n$, part \eqref{it:GlobDeg} generalizes \cite[Proposition 5.3]{Cai}. 

Part \eqref{it:GlobCusp} was conjectured in \cite[\S 4]{Ginz}.

\DimaD{
In order to formulate a global analog of Theorem \ref{thm:MaxFin} and deduce an analog of Theorem \ref{thm:adm}, we will introduce the global Weil representation $\varpi$ (\cite{Weil}) and Fourier-Jacobi coefficients, following \cite[\S 5.2]{SH}.

For a symplectic space $V$ over $K$, $\varpi_V$  is the only irreducible unitarizable representation of the double cover Jacobi group $\widetilde{J(V)}:=\widetilde{\Sp(V(\A))}\ltimes \cH(V(\A))$ with central character $\chi$, where $\cH(V)$ is the Heisenberg group of $V$. It has an automorphic realization given by theta functions
$$\theta_{f}(g)=\sum_{a\in \mathcal{E}(K)}\omega_{\chi}(g)f(a), \, \text{where }f\in \Sc(\cE(\A)), \, \mathcal{E} \text{ is a Lagrangian subspace of }V, \text{ and }g\in  \widetilde{J(V)}.$$

Fix an $\sl_2$-triple $\gamma=(x,h,y)$ in $\fg(K)$ and let $\varphi\in \fg^*$ be given by the Killing form pairing with $y$. Let $V:=\fg^h_1$, with the symplectic form $\omega_{\varphi}(A,B):=\varphi([A,B])$.
Then we have a natural map $\ell_{\gamma}:U\rtimes \widetilde{G_{\gamma}}\to \widetilde{J(V)}$.
We define a map $FJ:\pi\otimes \varpi_V\to C^{\infty}\DimaE{(\Gamma \backslash  \widetilde{G_{\gamma}})}$ by
$$f \otimes \eta \mapsto  \int_{U(K)\backslash U(\A)} f(u\tilde g) \theta_{\eta}(\ell_{\gamma}(u,\tilde g))du$$

Then, arguing as in \S \ref{subsec:MaxFin} we obtain from Theorem \ref{thm:Glob} \eqref{it:GlobDeg} the following corollary.
\begin{cor}
If $\bfG(K)\cdot\varphi\in \WS(\pi)$ then the subgroup $\widetilde{M_{\gamma}}$ acts on the image of $FJ$ by $\pm \Id$.
\end{cor}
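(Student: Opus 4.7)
My plan is to adapt the local argument from \S\ref{subsec:MaxFin} to the global setting, with adelic Fourier expansion along unipotent subgroups of $M_\gamma$ playing the role of Proposition \ref{prop:AV0} and Lemma \ref{lem:BZ}. First I would observe that $M_\gamma$ is generated by unipotent elements, and that these lift canonically to $\widetilde{M_\gamma}$ by Lemma \ref{lem:cov}; hence $\widetilde{M_\gamma}$ is generated by the lifted one-parameter subgroups $u_X(t)=\exp(tX)$ for nilpotent $X\in\fm_\gamma$, together with (at most) the central element $\epsilon$ above $1\in M_\gamma$. Since $\epsilon$ acts on the image of $FJ$ by a single sign $\pm 1$ (the Weil representation factor is genuine of order two), it is enough to show that each $u_X(\A)$ acts trivially on the image.

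Next I would fix a nilpotent $X\in\fm_\gamma$, extend it via Jacobson--Morozov to an $\sl_2$-triple $(x',h',y')\subset\fm_\gamma$ with $x'=X$, and let $\varphi'\in\fg^*(K)$ be the Killing-dual of $y'$. Since $(x',h',y')$ commutes with $\gamma$, we have $[h,h']=[h,\varphi']=0$, and setting $S:=h+h'/2$ gives $\ad^*(S)\varphi=-2\varphi$ and $\ad^*(S)\varphi'=-\varphi'$, so $(S,\varphi,\lambda\varphi')$ is a Whittaker triple for every $\lambda\in K$. For $f\in\pi$, $\eta\in\varpi_V$ and $\tilde g\in\widetilde{G_\gamma}$, the function $\phi(t):=FJ(f\otimes\eta)(\tilde g\,u_X(t))$ is invariant under $t\in K$ and thus descends to $\A/K$. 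Using the identification \eqref{eq:chi_isomorphism}, expand
$$\phi(t)=\sum_{\alpha\in K}c_\alpha(\tilde g)\,\chi(\alpha t),\qquad c_\alpha(\tilde g)=\int_{\A/K}\phi(t)\,\chi(-\alpha t)\,dt.$$
Unfolding the defining integral of $FJ$ together with the theta-series expansion of $\eta$, and using that $M_\gamma$ normalizes $U$ and preserves $\omega_\varphi$, a direct computation parallel to the one in Lemma \ref{lem:Glob} identifies $c_\alpha(\tilde g)$ for $\alpha\neq 0$ with the $(S,\varphi,\alpha\varphi')$--quasi-Whittaker Fourier coefficient of $\pi(\tilde g)f$, paired against a fixed theta datum built from $\eta$.

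To conclude I would invoke maximality: if some $c_\alpha(\tilde g)$ with $\alpha\neq 0$ were nonzero, then the quasi-Whittaker coefficient $\cF_{S,\varphi,\alpha\varphi'}(\pi)$ would be nonzero, and the global versions of Propositions \ref{prop:step} and \ref{prop:quasi2gen}, available in this section by the arguments already adapted for Theorem \ref{thm:Glob}, would produce a nilpotent orbit $\bfG(K)\cdot\Phi\in\WO(\pi)$ with $\varphi\in\overline{\bfG\cdot\Phi}\setminus\bfG\cdot\Phi$, contradicting $\bfG(K)\cdot\varphi\in\WS(\pi)$. Therefore $c_\alpha\equiv 0$ for $\alpha\neq 0$, the function $\phi$ is constant, and $u_X(\A)$ acts trivially on the image of $FJ$. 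Varying $X$ over nilpotents of $\fm_\gamma$ and combining with the sign action of $\epsilon$, we conclude that $\widetilde{M_\gamma}$ acts on the image of $FJ$ by $\pm\Id$.

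The main obstacle is the identification of $c_\alpha(\tilde g)$ with a quasi-Whittaker Fourier coefficient in the middle paragraph; this is where one must interchange the Jacobi integral defining $FJ$ with the outer Fourier integral against $\chi(-\alpha t)$, and track how the theta factor transforms under conjugation by $u_X(t)$. The underlying combinatorics is the same as in the local root-exchange of \S\ref{sec:MaxOrb}, but the Weil factor requires a short adelic book-keeping argument analogous to the oscillator-representation manipulation used in the proof of Lemma \ref{lem:Glob}.
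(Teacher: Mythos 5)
Your proposal is essentially the paper's own argument: the paper proves this corollary simply by "arguing as in \S\ref{subsec:MaxFin}", i.e.\ attaching to each rational nilpotent $X\in\fm_\gamma$ an $\sl_2$-triple and the Whittaker triple $(h+h'/2,\varphi,\lambda\varphi')$, replacing Lemma \ref{lem:BZ} by Fourier expansion over $\A/K$, identifying the nontrivial Fourier coefficients with quasi-Fourier coefficients, and ruling these out by the global analogues of Proposition \ref{prop:step} and Proposition \ref{prop:quasi2gen} (Theorem \ref{thm:Glob} / $\WS=\QWS$), exactly as you outline. One small correction: as written, $\phi(t):=FJ(f\otimes\eta)(\tilde g\,u_X(t))$ is \emph{not} automatically $K$-periodic for general $\tilde g$, since the automorphy of the image of $FJ$ is left-invariance under rational points; you should instead expand $t\mapsto FJ(f\otimes\eta)(u_X(t)\tilde g)$ (using that the image of $FJ$ is stable under right translation, so the nonzero coefficients are again quasi-Fourier coefficients of translates of $f$), and then pass from left-invariance under the groups $u_X(\A)$ to the asserted action statement using that $M_\gamma$ is normal in $G_\gamma$ and that $\eps$ acts by $-\Id$ through the genuine Weil factor. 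With that standard adjustment the argument goes through and coincides with the intended proof.
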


\DimaN{\begin{cor}
If $\bfG$ is quasi-split over $K$ \DimaR{and semi-simple}, and $f$ is not constant then there exists a neutral Whittaker pair $(h,\varphi)$ with $\varphi\neq 0$ such that $\cF_{h,\varphi}(f)\neq 0$.
\end{cor}}

Since the Weil representation $\varpi_V$ is genuine, the subgroup of $\widetilde{M_{\gamma}}$ that acts  trivially on the image of $FJ$ projects isomorphically on $M_{\gamma}$.
This implies  the following corollary.
\begin{cor}
If $\bfG(K)\cdot\varphi\in \WS(\pi)$ then the cover $\widetilde{M_{\gamma}}$ splits over $M_{\gamma}$.
\end{cor}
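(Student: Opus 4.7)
The plan is to exploit the sign character on $\widetilde{M_\gamma}$ arising from the previous corollary. First, I would observe that the image of $FJ$ is a nonzero subspace of $C^{\infty}(\Gamma \backslash \widetilde{G_\gamma})$ (the nonvanishing follows from Theorem \ref{thm:Glob}\eqref{it:GlobDeg} applied to suitable $f\in\pi$, since a nonzero $(S,\varphi)$-Fourier coefficient can be re-expressed as a value of $FJ$ after choosing a polarization and a Schwartz function whose theta does not vanish) on which, by the previous corollary, $\widetilde{M_\gamma}$ acts by $\pm\Id$. This defines a sign character $\chi : \widetilde{M_\gamma} \to \{\pm 1\}$, and I would let $H := \ker \chi$ be the candidate complement.

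The next step is to evaluate $\chi$ at the nontrivial central element $\eps \in \widetilde{M_\gamma}$ lying over $1 \in M_\gamma$. Since $\eps$ maps to the identity of $G$, it acts trivially on the automorphic representation $\pi$. On the other hand, $\varpi_V$ is genuine, so $\eps$ acts on $\varpi_V$, and in particular on its theta realization, by $-\Id$. Using the $\widetilde{M_\gamma}$-equivariance of $FJ$ and the fact that $FJ$ is built from the diagonal action on the tensor product $\pi \otimes \varpi_V$, the action of $\eps$ on the source is by $-\Id$, which transports to the image as multiplication by $-1$. Hence $\chi(\eps) = -1$, so $\chi$ is surjective and $H$ has index exactly $2$ in $\widetilde{M_\gamma}$.

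Finally, a counting argument closes the proof: since $\eps \notin H$, the subgroup $H$ meets the two-element kernel $\{1,\eps\}$ of the projection $p : \widetilde{M_\gamma} \to M_\gamma$ trivially, so $p|_H$ is injective. Together with $H\cdot\{1,\eps\} = \widetilde{M_\gamma}$, which holds because $H$ has index $2$ and does not contain $\eps$, this forces $p|_H$ to be surjective. Therefore $p|_H : H \IsoTo M_\gamma$ is a group isomorphism, and its inverse is the required splitting $M_\gamma \hookrightarrow \widetilde{M_\gamma}$.

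The hard part is really pinning down the sign in the second paragraph: verifying that $\eps$ acts on the image of $FJ$ by $-1$ rather than by $+1$ (which would occur if the genuine minus sign on $\varpi_V$ were somehow absorbed into the identification of $\pi\otimes \varpi_V$ with its image in $C^\infty(\Gamma\backslash \widetilde{G_\gamma})$). This requires carefully unpacking how $\eps \in \widetilde{M_\gamma}$ enters the theta-function formula for $FJ$ through the map $\ell_\gamma$, and then combining the genuineness of $\varpi_V$ with the triviality of the action of $\eps$ on $\pi$. Once this sign is pinned down, the remainder of the argument is purely group-theoretic.
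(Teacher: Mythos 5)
Your proposal is correct and follows essentially the same route as the paper, which in one line notes that since $\varpi_V$ is genuine, $\eps$ acts by $-1$ on the (nonzero) image of $FJ$ while acting trivially through $\pi$, so the subgroup acting trivially — the kernel of your sign character — projects isomorphically onto $M_{\gamma}$ and provides the splitting. Your additional care with the nonvanishing of the image of $FJ$ and with pinning down $\chi(\eps)=-1$ merely makes explicit what the paper leaves implicit.
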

Arguing as in the proof of Proposition \ref{prop:ClasQuas} we deduce from this corollary that if \DimaO{$G$ is classical and linear} then all the orbits in $\WF(\pi)$ are special. This was already shown in \cite[Theorem 2.1]{GRS_Sp} and \cite[Theorem 11.2]{JLS}.

\DimaH{
Finally, the following analog of Theorem \ref{thm:ComparOrbit} holds, with an analogous proof.
\DimaQ{
\begin{thm}\label{thm:GlobComparOrbit}
Let $(h,\varphi) \in \fg(K)\times \fg^*(K)$ be a neutral Whittaker pair.
If there exists a Whittaker pair $(S,\Phi)\in \fg(K)\times \fg^*(K)$ such that $\bfG(K)\cdot\Phi\in \WS(\pi)$,  $\varphi\in (\fg^*)^S_{-2}$, $[h,S]=0$, and $\Phi-\varphi\in (\fg^*)^{S-h}_{<0}$ then $\bfG(K) \cdot \varphi \in \WO(\pi).$
\end{thm}
}

As in \S \ref{sec:compar}, this theorem together with Proposition \ref{prop:GL} implies Corollary \ref{intcor:GlobGL}.}
}}
\DimaK{Furthermore, Theorem \ref{thm:GlobComparOrbit} and Proposition \ref{prop:SL} imply the following version of Corollary \ref{intcor:GlobGL} for $SL_n$.

\begin{corollary}\label{cor:SLGlob}
Let
$\lambda>\mu$ be partitions. Let $d:=\mathrm{gcd}(d(\lambda),d(\mu))$. Let $a,b\in K^{\times}$ such that $a/b\in (K^{\times})^d$.
Let $\pi$ be a subrepresentation  of the space of automorphic forms on $\SL_n(\A)$ and assume that $\cO_{\lam}^a\in \WS(\pi)$. Then $\cO_{\mu}^b\in \WO(\pi)$.
\end{corollary}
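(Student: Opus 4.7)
The plan is to imitate the proof of Corollary \ref{cor:GL}, substituting Theorem \ref{thm:GlobComparOrbit} for Theorem \ref{thm:ComparOrbit} and using Proposition \ref{prop:SL} (with $L=K$) in place of Proposition \ref{prop:GL}.

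First I would fix a neutral pair $(h,\varphi)\in \fg(K)\times \fg^*(K)$ with $\varphi\in \cO_\mu^b$, using Jacobson--Morozov over the characteristic-zero field $K$. Then, since $a/b\in (K^\times)^d$ by hypothesis, I would apply Proposition \ref{prop:SL} over $K$ to the partitions $\mu<\lambda$ and the scalars $b,a$, producing a rational semi-simple $Z\in \fg(K)$ and an element $\psi\in (\fg^*)^Z_{<0}\cap (\fg^*)^{h+Z}_{-2}$ with $\varphi+\psi\in \cO_\lambda^a$. Setting $S:=h+Z$ and $\Phi:=\varphi+\psi$, I would verify the hypotheses of Theorem \ref{thm:GlobComparOrbit}: the orbit $\bfG(K)\cdot \Phi=\cO_\lambda^a$ lies in $\WS(\pi)$ by assumption; the pair $(S,\Phi)$ is a Whittaker pair since the identities $ad^*(h)\varphi=-2\varphi$, $ad^*(Z)\varphi=0$ and $ad^*(h+Z)\psi=-2\psi$ combine to give $ad^*(S)\Phi=-2\Phi$; the same identities show $\varphi\in (\fg^*)^S_{-2}$; $[h,S]=[h,Z]=0$ from the construction; and $\Phi-\varphi=\psi\in (\fg^*)^{S-h}_{<0}$ by construction. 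Theorem \ref{thm:GlobComparOrbit} then delivers $\bfG(K)\cdot \varphi=\cO_\mu^b\in \WO(\pi)$.

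The only subtle point is confirming that the $Z$ supplied by Proposition \ref{prop:SL} commutes with both $h$ and $\varphi$; this commutativity is used above and is implicit in its statement (it is required for $h+Z$ to be rational semi-simple and for the condition $\psi\in (\fg^*)^{h+Z}_{-2}$ to be meaningful), but I would double-check it explicitly by inspecting the inductive construction in the proof of Proposition \ref{prop:GL}, where $Z$ is assembled block-diagonally from a scalar block on the Jordan block of $\varphi$ being merged (cf.\ Lemma \ref{lem:TwoBlocks}) together with an inductively produced $Z'$ commuting with the smaller neutral pair $(h',\varphi')$. Once this commutativity is recorded, the corollary follows from the global comparison theorem without further work.
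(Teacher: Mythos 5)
Your proposal is correct and matches the paper's (very terse) proof, which likewise deduces the corollary directly from Theorem \ref{thm:GlobComparOrbit} combined with Proposition \ref{prop:SL}; your verification of the hypotheses of Theorem \ref{thm:GlobComparOrbit} and your observation that the $Z$ produced in Proposition \ref{prop:SL} commutes with $h$ and $\varphi$ (visible from the block-diagonal construction via Lemma \ref{lem:TwoBlocks}) are exactly the details left implicit there.
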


\DimaU{The methods of this section were further developed in \cite{GSPhysLeviDist,GSPhysEul}.}
}

\subsection{\DimaO{Corollaries for cuspidal representations}}
\DimaO{Theorem \ref{thm:Glob}\eqref{it:GlobCusp} implies the following corollary.}

\begin{cor}\label{cor:GlobCusp}
\DimaH{ Let $\pi$ be a cuspidal subrepresentation  of the space of automorphic forms on $G$, and \DimaC{let  $\varphi\in \fg^*(K)$ s.t.}} $\bfG(K)\cdot \varphi \in  \WS(\pi)$. Then  \DimaM{the quotient of the stabilizer of $\varphi$ in $\bfG(K)$ by the center of $\bfG(K)$} is $K$-anisotropic.  Moreover, assume that $\bfG$ is \DimaL{split and }classical, and let $\lambda$ be the  partition corresponding to $\varphi$. \DimaL{Let $l$ denote the length of $\lam$}. Then
\begin{enumerate}[(i)]
\item \label{it:CuspGL} If $\bfG=\GL_n$ or $\bfG=\SL_n$ \DimaL{then $l=1$}, {\it i.e.} $\pi$ is generic.
\item \label{it:CuspSp} If $\bfG=\Sp_{\DimaN{2n}}$  then $\lambda$ is totally even, \textit{i.e.} consists of even parts only.
\item \label{it:CuspO} If $\bfG=\SO_n$ or $\bfG=\mathrm{O}_n$ then $\lambda$ is totally odd, \DimaL{and the multiplicity of each part does not exceed $(l+1)/2$}.
\end{enumerate}
\end{cor}
\DimaL{
\begin{proof}
First assume by way of contradiction that \DimaM{the stabilizer of $\varphi$ in $\bfG(K)$ includes a $K$-split torus $T$ that is not central in $\bfG(K)$. Let $L$ be the centralizer of $T$\ in $\bfG(K)$. Then $L$ is a proper Levi subgroup and its Lie algebra includes $\varphi$, contradicting Theorem \ref{thm:Glob}\eqref{it:GlobCusp}. Thus, the quotient of the stabilizer of $\varphi$ in $\bfG(K)$ by the center of $\bfG(K)$ is $K$-anisotropic.}

This immediately implies \eqref{it:CuspGL}.

For \eqref{it:CuspSp} and \eqref{it:CuspO} let $V$ denote the standard representation of $\bfG(K)$, $\omega$ denote the bilinear form on $V$ and $\eps$ the sign of $\omega$, {\it i.e.} $\omega(v,w)=\eps\omega(w,v)$. Choose an $\sl_2$-triple $\gamma=(e,h,f)\in \fg(K)$ such that $\varphi$ is given by the Killing form pairing with $f$. Then $\gamma$ defines a decomposition $$V=\bigoplus_{i=1}^kV_i\otimes W_i,$$
where $V_i$ is the irreducible $i$-dimensional representation of $\sl_2(K)$ and $W_i$ is the multiplicity space of $V_i$ in $V$. Note that $m_i:=\dim W_i$ is the multiplicity of $i$ in $\lambda$ (which might be 0). The form $\omega$ on $V$ defines a bilinear form $\omega_i$ on each $W_i$, that satisfies $\omega_i(v,w)=\eps(-1)^i\omega_i(w,v)$. Since the \DimaM{center of $\bfG(K)$ is finite,} the stabilizer of $\varphi$ in $\bfG(K)$ is $K$-anisotropic, \DimaM{thus so is the centralizer of $\gamma$, and thus all the}  $(W_i,\omega_i)$ are anisotropic. In particular, none of the forms $\omega_i$ is symplectic. This implies that $\lambda$ is totally even in case \eqref{it:CuspSp} and totaly odd in case \eqref{it:CuspO}.

To finish the proof, assume by way of contradiction that $\eps=1$, $\lambda$ is totally odd and $m_i>(l+1)/2$ for some $i$. Let $(H,Q)$ denote the 2-dimensional quadratic space with $Q(x,y):=xy$ and $(K,q)$ denote the one-dimensional quadratic space with $q(x)=x^2$. Let $(W,\omega'):=\bigoplus (W_i,\omega_i)$.
Then $\dim W=l$. Let $n=2k+j,$ where $j\in \{0,1\}$.   Then
$$H^k\oplus K^j\simeq (V,\omega)\simeq (W,\omega')\oplus H^{(n-l)/2}.$$  By Witt's cancelation theorem, this implies $H^{(l-j)/2}\oplus K^j\simeq (W,\omega')$. Let $U:=W_i\cap H^{(l-j)/2}$. Then $\dim U\geq m_i+(l-j)-l=m_i-j>(l+1)/2-j\geq (l-j)/2$, and thus $U$ includes an isotropic vector.  This contradicts the condition that $W_i$ is anisotropic.
\end{proof}
For $\GL_n$ this is a classical result of Piatetski-Shapiro, and the case of $\Sp_{\DimaO{2n}}$ was shown  in \cite{GRS_Sp,Shen}.

\DimaU{
\begin{remark}
Corollary \ref{cor:GlobCusp} implies that the smallest possible partition in the Whittaker support of a cuspidal automorphic representation $\pi$ of $\Sp_{2n}$ is $2^n$. This bound is sharp for even $n$ by \cite{Ike}. For $SO_{n,n}$ and $O_{n,n}$ we obtain the lower bound $3^{n/2}1^{n/2}$ if $n$ is even and $53^{(n-3)/2}1^{(n-1)/2} $ if $n$ is odd. For $O_{n+1,n}$ and $SO_{n+1,n}$ obtain the lower bound $3^{n/2}1^{n/2+1}$ if $n$ is even and $3^{(n+1)/2}1^{(n-1)/2} $ if $n$ is odd. These
bounds are conjectured to be sharp in \cite[Conjecture 2.14]{JL}.
\end{remark}
}
\DimaN{
\begin{cor}
Let $\bf G$ be a split classical group of rank at least 3. Let $\pi$ be an irreducible automorphic representation of $G$, and let $\pi=\bigotimes_{\nu}\pi_{\nu}$ be its decomposition to local factors. Suppose that for some place $\nu$, every orbit in $\WS(\pi_{\nu})$ lies inside the Zariski closure of a complex next-to-minimal orbit.
Then $\pi$ cannot be realized in the cuspidal spectrum.
\end{cor}
\begin{proof}
It is easy to see that any orbit in $\WO(\pi)$ lies inside some orbit in $\WO(\pi_{\nu})$. Indeed, for any neutral Whittaker pair $(h,\varphi)$ and any maximal isotropic subspace $\fl\subset \fg$, the Fourier coefficient $\cF_{h,\varphi}^L$ is an $L(K)$-equivariant functional on $\pi$. Its existence implies the existence of an $L(K_{\nu})$-equivariant functional on $\pi_{\nu}$.

Thus, any orbit in $\WS(\pi)$ has a minimal or a next-to-minimal partition. For $\GL_n$, $\SL_n$ and $\Sp_n$, with $n\geq 4$ the minimal partition is $(2,1^{n-2})$ and the next-to-minimal one is $(2^2,1^{n-4})$. Both are different from $(n)$, and include odd parts if $n\geq 6$. For $SO_{k,k+i}$ with $i\in \{0,1\}$ and $k\geq 2$, the minimal partition is $(2^2,1^{2k+i-4})$ and thus includes even parts. There are at most two  next-to-minimal partitions for $SO_{k,k+i}$. One is  $(3,1^{2k+i-3})$ which has length $l:=n-2$, and for $n\geq 6$ we have $m:=n-3>(l+1)/2$. The other is $2^4 1^{2k+i-8}$ (it only appears for $k\geq 4$) and it includes even parts. The Corollary follows now from Corollary \ref{cor:GlobCusp}.
\end{proof}

\DimaU{Most of this corollary can also be deduced from \cite{JSL}.}
In \cite[Corollary G]{GSPhys} an analogous statement is proven for $E_6,E_7$ and $E_8$, by expressing any minimal or next-to-minimal automorphic form on these groups through its  Whittaker-Fourier coefficients, \emph{i.e.} period integrals over the nilradical of a Borel subgroup of $G$ against a character of this subgroup, following \cite{MilSah,AGKLP}. We expect this corollary to extend to $F_4$ and $G_2$ as well.
Our interest in minimal and next-to-minimal representations is driven by the special role played by them and by their Fourier coefficients in string theory, cf.  \cite{GMV} and \cite[Part II]{FGKP}.
}
}


\end{document}